\newcommand{\query}[1]%
{\mbox{}\marginpar{\raggedright\hspace{0pt}{\small\em #1}}}%
\newcommand{\xdownarrow}[1]{%
  {\left\downarrow\vbox to #1{}\right.\kern-\nulldelimiterspace}
}
\theoremstyle{plain}
\newtheorem{theorem}{Theorem}[section]
\newtheorem{lemma}[theorem]{Lemma}
\newtheorem{proposition}[theorem]{Proposition}
\newtheorem{corollary}[theorem]{Corollary}
\newtheorem*{theorem*}{Theorem}
\theoremstyle{definition}
\newtheorem{definition}[theorem]{Definition}
\newtheorem{example}[theorem]{Example}
\newtheorem{notation}[theorem]{Notation}
\theoremstyle{remark}
\newtheorem{remark}[theorem]{Remark}
\def\ZZ{{\mathbb Z}}
\def\NN{{\mathbb N}}
\def\QQ{{\mathbb Q}}
\def\RR{{\mathbb R}}
\def\CC{{\mathbb C}}
\def\coker{{\rm CoKer}}
\def\ker{{\rm Ker}}
\def\im{{\rm Im}}
\def\cyl{{\rm cyl}}
\def\HH{{\mathbb H}}
\def\H{{\mathcal H}}
\begin{document}

\author[M. Agust\'in, J. Fern\'andez de Bobadilla]{M. Agust\'in, J. Fern\'andez de Bobadilla}

\title[Intersection Space Constructible Complexes]{Intersection Space Constructible Complexes}

\address{(1) IKERBASQUE, Basque Foundation for Science, Maria Diaz de Haro 3, 48013, Bilbao, Spain
(2) BCAM, Basque Center for Applied Mathematics, Mazarredo 14, E48009 Bilbao, Spain}

\email{martaav22@gmail.com}

\email{jbobadilla@bcamath.org}

\thanks{First author is supported by ERCEA 615655 NMST Consolidator Grant, MINECO by the project reference
MTM2013-45710-C2-2-P, by the Basque Government through the BERC 2014-2017 program,
by Spanish Ministry of Economy and Competitiveness MINECO: BCAM Severo Ochoa excellence accreditation SEV-2013-0323. Second author is supported by ERCEA 615655 NMST Consolidator Grant, MINECO by the project reference
MTM2013-45710-C2-2-P, by the Basque Government through the BERC 2014-2017 program,
by Spanish Ministry of Economy and Competitiveness MINECO: BCAM Severo Ochoa excellence accreditation SEV-2013-0323 and
by Bolsa Pesquisador Visitante Especial (PVE) - Ciências sem Fronteiras/CNPq Project number:  401947/2013-0.}

\subjclass[2010]{Primary 32S60, 14F05,55N33, 55N30, 55U30} \keywords{Intersection Spaces, Intersection Cohomology, Poincare Duality for Singular Spaces}

\begin{abstract}
We present an obstruction theoretic inductive construction of intersection space pairs, which generalizes Banagl's construction of intersection spaces for arbitraty
depth stratifications. We construct intersection space pairs for pseudomanifolds with compatible trivial structures at the link fibrations; this includes the case of
toric varieties. We define intersection space complexes in an axiomatic way, similar to Goresky-McPherson axioms for intersection cohomology. We prove that
if the intersection space exists, then the pseudomanifold has an intersection space complex whose hypercohomology recovers the cohomology of the intersection space pair.
We characterize existence and uniqueness of intersection space complexes in terms of the derived category of constructible complexes.
We show that intersection space complexes of algebraic varieties lift to the derived category of Mixed Hodge Modules, endowing intersection space 
cohomology with a Mixed Hodge Structure. We find classes of examples
admitting intersection space complex, and counterexamples not admitting them; they are in particular the first examples known not admitting Banagl intersection
spaces. We prove that the (shifted) Verdier dual of an intersection space complex is an intersection space complex. We prove a generic Poincare duality theorem
for intersection space complexes.
\end{abstract}

\maketitle

\section{Introduction}

Intersection spaces have been recently introduced by Banagl as a Poincare duality homology theory for topological pseudomanifolds
which is an alternative to Goreski and McPherson intersection homology. When they are available they present the advantages of being spatial modifications
of the given topological pseudomanifold, to which one can later apply algebraic topology functors in order to obtain invariants. In this sense, if one
applies (reduced) singular cohomology one obtains a homology theory with internal cup products and a Poincar\'e duality is satisfied between the homology
theories corresponding to complementary perversities. Moreover, one can apply many other functors leading
to richer invariants. The idea of intersection spaces was sketched for the first time in~\cite{Ban1}, and was fully developed for spaces with
isolated singularities in~\cite{Ban10}.

In~\cite{Ban10} Banagl carefully analyzed the case of quintic $3$-folds with ordinary double points appearing in the conifold transition and noticed that,
in the same way that intersection cohomology gives the cohomology of a small resolution, cohomology of intersection spaces gives the cohomology of a
smoothing in this case. This fitted with predictions motivated by string theory (see Banagl papers for full explanations).

This motivated further work
by Banagl, Maxim and Budur (\cite{BM1}, \cite{BM2}, \cite{BBM}, \cite{Max}) in which the relation between the cohomology of intersection spaces for the middle
perversity and the
Milnor fibre of a hypersurface $X$ with isolated singularities is analyzed. The latest evolution of the results of these papers, contained in~\cite{BBM},
is the construction of a perverse sheaf in $X$ whose hypercohomology computes the cohomology of the intersection space of $X$ in all degrees except for
the top degree. Such a perverse sheaf is a modification of the nearby cycle complex and, in fact, when the monodromy is semi-simple in the eigenvalue $1$,
the middle perversity intersection space perverse sheaf is a direct summand of the nearby cycle complex.

The results described up to now are valid for only isolated singularities (sometimes even assuming that they are hypersurface singularities).
In~\cite{Ban10} Banagl generalizes the construction for the case of topological pseudomanifolds with two strata and trivial link fibration
and sketches a method for more general class of non-isolated singularities. In~\cite{BaCh} intersection spaces are constructed for the case of two strata
assuming non-trivial conditions on the fibration by links. The only case in which intersection spaces are constructed for a topological pseudomanifold
with more than two strata is in \cite{Ban2}. There, the depth $1$ strata are circles or intervals, and the depth $2$ strata are isolated singularities;
in this case, it is the topology of the strata which is very restrictive.

In~\cite{BM1} the following open questions are proposed: Is there a sheaf theoretic approach to intersection spaces, similar to the one
of Goreski and McPherson \cite{GorMP}
for intersection cohomology?. Up to which kind of singularities the intersection space constructions can be extended? Is intersection space cohomology
of algebraic varieties endowed with a Hodge structure?

The papers \cite{BM2}, \cite{BBM} are contributions to the first and third question for the case of isolated singularities. The paper~\cite{BaHu} is also
a contribution towards the third question.

In a recent paper~\cite{Ge}, Genske takes a new viewpoint: instead of only giving up the topological construction and focusing in producing a complex of
sheaves at the original space, he constructs a complex of vector spaces which is related with the complex computing the (co)homology of the original
space $X$, but that satisfies Poinca\'re duality. The construction is valid for any analytic variety (Poincar\'e duality is satisfied in the compact case).
His construction is a bit further to original Banagl ideas than ours, since his procedure is to make a modification which is global in a neighbourhood of
the singular set, instead of stratifying it conveniently and making a fibrewise construction.

Finally, in order to finish our review of existing results, let us mention the rational Poincar\'e spaces approach developed in~\cite{Kl}.

The present paper is a contribution to the three
questions formulated above for the general singularity case. Before explaining our results in detail in the next Section, let us enumerate them in a very
condensed way:

We realize that, for constructing Banagl's intersection spaces in a
general pseudomanifold $X$, one needs to adopt the viewpoint of pairs of spaces and associate an {\em intersection space pair}, which is a
spatial modification of the pair $(X, Sing(X))$. We find a procedure which runs inductively on the codimension of the strata andh, if it is not obstructed,
produces the intersection space pair. We show that, if the link fibrations of the pseudomanifold are trivial and the trivializations of these fibrations verify some compatibility
conditions, the intersection space pair exists. This includes the case of toric varieties.

We prove that, if an intersection
space pair exists for a topological pseudomanifold and a given perversity, then there exist a constructible complex of sheaves in our original space $X$
that satisfies a set of properties of the
same kind that those that characterize intersection cohomology complexes in \cite{GorMP}; we call this complex an {\em intersection space complex} for the
given perversity. Its hypercohomology recovers the reduced cohomology of the intersection space in the case of isolated singularities. In the case
of depth $1$ topological pseudomanifold, it recovers the cohomology of the intersection space relative to the singular stratum (like in~\cite{Ban10}), which
is the one that satisfies Poincar\'e duality for complementary perversities. For depth $2$ and higher, if the dimension of the strata is sufficiently
high, the intersection space construction is intrinsically a construction of pairs of spaces, as we will see below; the hypercohomology of our
intersection space complex computes the rational cohomology of the pair of spaces.

Next, we leave the realm of topology and shift to a sheaf theoretic viewpoint studying under which conditions intersection space complexes exist.
We find obstructions
for existence and uniqueness of intersection space complexes and give spaces parametrizing the possible intersection space complexes in case that the
obstruction for existence vanish. Both of these obstructions vanish in the case of isolated singularities and the obstruction for existence also vanish
in the case considered in \cite{Ban2}, as one should expect. If one assumes that the topological pseudomanifold is an algebraic variety, we show
how to carry our constructions in the category of mixed Hodge modules, yielding a polarizable mixed Hodge structure in the hypercohomology of the
intersection space complex, and hence in the cohomology of intersection spaces when they exist.

We turn to analyze classes of topological pseudomanifolds in which we can prove the existence of intersection space complexes. We show that they exist for any
perversity when the successive link fibrations are trivial (without the compatibility conditions needed to construct the intersection space pair).
We also prove the existence if the
homological dimension of the strata with respect to local systems is at most $1$. This includes the case treated in~\cite{Ban2}. On the other hand, building
on the obstructions for existence, we produce the first examples of topological pseudomanifolds such that intersection space complexes do not
exist for given perversities. As a consequence, Banagl intersection spaces can not exist either. One of the examples is a normal algebraic variety
whose stratification has depth $1$ and whose transversal singularity is an ordinary double point of dimension $3$ (those appearing in the conifold
transition examples); the perversity used is the middle one.

Finally, we turn to duality questions. We show that the Verdier dual of an intersection space complex with a given perversity is an intersection space complex
with the complementary perversity. The proof resembles the one given in~\cite{GorMP} for intersection cohomology complexes. However, since (unlike
intersection cohomology complexes) intersection space complexes are not unique, this does not yield self dual sheaves for the middle perversity
on algebraic varieties. In the case of depth $1$ stratifications, we prove that generic choices of the intersection space complex yield the same
Betti numbers in hypercohomology and we obtain Poincar\'e duality at the level of generic Betti numbers for complementary perversities.

Some open questions and further directions are hinted at the end of next section.

\section{Main results}

This section is the guide to the paper. Here we describe in detail the content of the paper section by section, with cross-references to the main results.
The reader may jump to the corresponding sections for a full exposition.

\subsection{Topological constructions}

The paper starts with a topological construction of pairs of intersection spaces in Section~\ref{sec:top}.

Banagl construction of the intersection space~\cite{Ban10} for a $d$-dimensional topological pseudomanifold $X$ with isolated singularities for a given perversity
$\bar p$ runs as follows. Let $\Sigma=\{p_1,...,p_r\}$ be the singular set. Around $p_i$ consider a conical neighbourhood $B_i$. Let $L_i$ denote the link $\partial B_i$.
Let $\bar q$ be the complementary perversity of $\bar p$. Consider a homological truncation
$$(L_i)_{\leq \bar q(d)}\to L_i.$$
This is a mapping of spaces inducing isomorphisms in homology in degrees up to $\bar q(d)$ and such that $H_i((L_i)_{\leq \bar q(m)})$ vanish for
$i>\bar q(d)$. Assume for simplicity that the truncation map is an inclusion. Construct the intersection space replacing each of the $B_i$'s by the
cone over $(L_i)_{\leq \bar q(d)}$, call the resulting space $Z$. The vertices of the cone are called $\Sigma=\{p_1,...,p_r\}$ as well.
The intersection space is the result of attaching the cone over $\Sigma$ to $Z$. If the truncation map is not an inclusion, one may force this
using an appropriate homotopy model for it. The intersection space homology is the reduced homology of the intersection space.

The construction for the case of topological pseudomanifolds $X=X_d\supset X_{d-m}$ of dimension $d$ with a single singular stratum of codimension $m$ contained
in~\cite{Ban10} and~\cite{BaCh} is the following generalization. Let $T$ be a tubular neighbourhood of the singular set $X_{d-m}$. Consider the locally trivial
fibration $T\to X_{d-m}$, and let $\partial T\to X_{d-m}$ be the associated fibration of links. Consider a fibrewise homology truncation
$$\partial T_{\leq \bar q(m)}\to \partial T.$$
This is a morphism of locally trivial fibrations which is a $\bar q(m)$-homology truncation at each fibre. Remove
$T$ from $X$ and replace it by the fibrewise cone over $\partial T_{\leq \bar q(m)}$ (see Definition~\ref{def:fibrecone}); call the resulting space $Z$.
As before $\Sigma$ is a subspace of $Z$. The intersection space is the result of attaching the cone over $\Sigma$ to $Z$ and the intersection space homology
is the reduced homology of the intersection space.

Notice that the intersection space homology coincides with the relative homology $H_*(Z,\Sigma)$. This observation is the starting point of our
construction for more than two strata and of our constructible complex approach to intersection space homology. Consider a topological pseudomanifold
with stratification
$$X=X_d\supset X_{d-2} \supset ... \supset X_0 \supset X_{-1}=\emptyset$$
and a suitable system of tubular neighbourhoods of the strata (the conical structure of Definition \ref{def:conical}). A great
variety of topological pseudomanifolds can be endowed with this structure (see Remark \ref{re:fixconical}).

Our topological construction of intersection spaces consist in modifying
the space $X$ inductively, going each step deeper in the codimension of the strata, by taking successive fibrewise homology truncations.
In doing so, one necessarily
modifies the singular set $X_{d-2}$ and, as a result, the singular set is not going to be contained in the modified space $Z$. Instead, one obtains
a modification $Y$ of $X_{d-2}$ contained in $Z$. One obtains a pair $(Z,Y)$, and it is the homology of this pair our definition of intersection space
homology. An important feature of the construction is that one needs to adopt the viewpoint of pairs of spaces right from the beginning
if one wants to have a chance of proving duality results; this incarnates in the need of taking homology truncations of pairs of spaces. This new
feature appears from the depth $2$ strata and hence it did not appear in Banagl constructions explained above. It also may happen that, if the singular
stratum, is of small dimension in comparison with the perversity, the situation does not appear at all.

As in Banagl construction, the homology truncations need not be inclusions. This forces us to work with an adequate homotopy model for $X$.

It is important to record for future reference that, at the $k$-th inductive step of the
construction, one obtains a pair of spaces $(I_k^{\bar p}X,I_k^{\bar p}(X_{d-2}))$ which contain $X_{d-k-1}$ and verify
\begin{enumerate}
 \item the pair $(I_k^{\bar p}X\setminus X_{d-k-1},I_k^{\bar p}(X_{d-2})\setminus X_{d-k-1})$ is an intersection space pair of $X\setminus X_{d-k-1}$.
 \item there is a system of tubular neighbourhoods $T_{k-1}$ of $X_{d-k-1}\setminus X_{d-k-2}$ in $I_k^{\bar p}X\setminus X_{d-k-2}$ such that we have locally
 trivial fibrations of pairs
 $$T_{k-1}\cap (I_k^{\bar p}X\setminus X_{d-k-2},I_k^{\bar p}(X_{d-2})\setminus X_{d-k-2})\to X_{d-k-1}\setminus X_{d-k-2},$$
 $$\partial T_{k-1} \cap (I_k^{\bar p}X\setminus X_{d-k-2},I_k^{\bar p}(X_{d-2})\setminus X_{d-k-2})\to X_{d-k-1}\setminus X_{d-k-2},$$
 being the first the fibrewise cone over the second (see Definition~\ref{def:fibrecone}).
\end{enumerate}

The second locally trivial fibration is called {\em the fibration of link pairs at the $k$-th step of the construction}.

The construction follows the scheme of obstruction theory: it is inductive and, at each step, choices are made. The next step may be obstructed and this
may depend on the previous choices. The obstruction consists in the impossibility of constructing a fibrewise homology truncation of the fibration of
link pairs at the $k$-th step of the construction.

When there is a set of choices so that the process terminates, we say that an intersection space pair exists. It needs
not be unique.

In the case where the conical structure is trivial, that is, if the link fibrations are trivial and the trivializations are compatible with each other
(see Definition \ref{def:strongtriv}), the intersection space pair exists
(see Theorem \ref{th:existTr}). This includes the case of arbitrary toric varieties.

\subsection{From topology to constructible complexes}

In the rest of the paper, we investigate the existence and uniqueness of intersection space pairs and their duality properties by sheaf theoretic methods.
For this, we associate to each intersection space pair an element in the derived category of constructible complexes whose
hypercohomology computes the rational cohomology of the intersection space pair.

To get this, we need to construct a sequence of intersection space pairs which modify the pair $(X_d,X_{d-2})$ in increasingly smaller
neighbourhoods of the strata of $X$. This is done in Section~\ref{sec:sequence}.

In Section~\ref{sec:sheafification}, we exploit the sequence of intersection spaces to derive a constructible complex $IS$ (see Definition~\ref{def:IS})
and prove, in Theorem~\ref{th:COHOMOLOGY}, that the hypercohomology of $IS$ recovers the cohomology of the intersection space pair. Finally, in
Theorem~\ref{th:existints}, we prove that $IS$ satisfies a set of properties in the same spirit that those that characterize intersection cohomology
complexes in~\cite{GorMP}. This is the basis for the axiomatic treatment of the next section.

\subsection{A derived category approach to intersection space (co)homology}

In Section~\ref{sec:axiomatic}, we take an axiomatic approach to intersection space complexes in the same way as Goresky-McPherson approach to
intersection cohomology in \cite[section 3.3]{GorMP}. We define two sets of properties in the derived category of cohomologically constructible
sheaves on $X$. The first set are the properties of the intersection cohomology sheaf composed with a shift.
The second set of properties are inspired by Theorem~\ref{th:existints}.

We will call a complex of sheaves verifying the second set of properties
intersection space complex of $X$ (Definition \ref{def:intspacecomplex}). Theorem~\ref{th:existints} implies that if there exist an intersection
space pair of $X$ (see Definition \ref{def:intersection space}), then there exist an intersection space complex of $X$ whose hypercohomology coincides with the cohomology of the intersection space pair. Moreover, we compare the support and cosupport
properties of intersection cohomology complexes and intersection space complexes. From the comparison, one sees that intersection space complexes,
except possibly in the case of isolated singularities, can not be perverse sheaves.

At this point, we investigate in a purely sheaf theoretical way in which conditions an intersection space complex may exist.
For $k=2,..., d$, we define $U_k:=X \setminus X_{d-k}$ and we denote the canonical inclusions by $i_k:U_k \rightarrow U_{k+1}$ and
$j_k: X_{d-k} \setminus X_{d-(k+1)} \rightarrow U_{k+1}$.

In Theorem~\ref{th:existunic},
we give necessary and sufficient conditions for the existence of intersection space complexes. As in the topological setting, the construction proceeds
inductively on each time deeper strata. At the $(k+1)$-th step, we have constructed an intersection space complex $IS_{k}$ on $X\setminus X_{d-k-1}$
such that the complex $j_{k+1}^*i_{k+1*}IS_{k}$ on $X_{d-k-1}\setminus X_{d-k-2}$ is cohomologically cohomologically locally constant.
Comparing with the topological construction these local systems are the
cohomology local systems of the fibration of link pairs at the $(k+1)$-th step of the construction. One can consider the natural triangle in the derived
category:
$$\tau_{\leq \bar q(k+1)} j_{k+1}^*i_{k+1*}IS_{k}\to j_{k+1}^*i_{k+1*}IS_{k}\to \tau_{> q(k-1)} j_{k+1}^*i_{k+1*}IS_{k}\xrightarrow{[1]}.$$
The obstruction to perform the next step in the construction is the obstruction to split the triangle in the derived category. This is the constructible
sheaf counterpart of the possibility to construct fibrewise homology truncations in the topological world. In Theorem~\ref{th:existunic}, we study the
parameter spaces classifying the possible intersection space complexes in step $k+1$ having fixed the step $k$ (they are not unique in general).
In the same theorem, for the sake of comparison we provide a proof of the existence and uniqueness of intersection cohomology complexes using the same
kind of techniques.

There are extension groups controlling the obstructions for existence and uniqueness at each step. They are recorded in Corollary~\ref{cor:obstructions}.

If $X$ is an algebraic variety, we can lift our construction of intersection space complexes to the category of mixed Hodge modules over $X$. This is
Theorem~\ref{th:mhm} and, as a corollary, one obtains a mixed Hodge structure in the cohomology of intersection spaces. The
obstructions for existence and uniqueness are the same kind as extension groups, but taken in the category of
mixed Hodge modules (see Corollary~\ref{cor:obstructionsmhm}). Using the same techniques, we show that,
for an arbitrary perversity, the intersection cohomology complexes are mixed Hodge modules (Theorem~\ref{th:icmhm}). This puts a mixed Hodge structure on
intersection cohomology with arbitrary perversity. We believe that this should be well known, but we provide a proof since it is a simple consequence of our ideas.

\subsection{Classes of spaces admitting intersection space complexes and counterexamples}

From the previous section, it is clear that the spaces admitting intersection space complexes need to be special. In this Section~\ref{sec:examples}, we find
two sufficient conditions for this, yielding an ample class of (yet special) examples.

By the results explained previously it is clear that pseudomanifolds admitting a trivial conical structure (see Definition \ref{def:strongtriv}),
the intersection space complex exists by the combination of Theorems \ref{th:existTr} and \ref{th:existints}.

The point is that if one only needs the existence of intersection space complexes one can relax the triviality properties. In Theorem~\ref{th:exintspacecomplex}
we prove that if the link fibrations are trivial in the sense of Definition~\ref{def:Tr}, then the intersection space complex exists. This 
leaves out the hipothesis on the compatibility of trivializations.

The next class of examples admitting intersection space complexes are pseudomanifolds whose strata are of ``cohomological dimension at most $1$ for local systems''
in the sense of Definition~\ref{def:homdim}. This is proved in Theorem~\ref{th:homdim}. This includes the case
studied by Banagl in~\cite{Ban2} and the case of complex analytic varieties with critical set of dimension $1$ which are sufficiently singular, in the sense
that there are no positive dimensional compact strata (Corollary~\ref{cor:smalldim}).

From our previous results, it is clear that a necessary condition for the existence of an intersection space is the existence of an intersection space complex.
We find a few examples (see Example~\ref{ex:hopf}, Example~\ref{exhopf2} and Example~\ref{ex:algebraic}) not admitting intersection space complex.
The last example is an algebraic variety with two strata and the perversity is the middle one. So, one should not expect that algebraicity helps
in the existence of intersection spaces. The idea to produce the examples is to observe that, if an space admits intersection space complexes, certain
differentials in the Leray spectral sequence of the fibrations of links (which in our sheaf theoretic treatment is a local to global 
spectral sequence) have to vanish (Proposition~\ref{prop:lerayobs} and Corollary~\ref{cor:cex}).

\subsection{Duality}

In Section~\ref{sec:dual}, we prove that, if $\bar p$ and $\bar q$ are complementary perversities and $IS_{\bar p}$ is an intersection space complex
for perversity $\bar p$, then its Verdier dual is an intersection space complex for perversity $\bar q$ (Theorem~\ref{th:verdual}). So, the Verdier
duality functor exchanges the sets of intersection space complexes for complementary perversities. The proof follows the axiomatic treatment of~\cite{GorMP}
for intersection cohomology complexes.

A surprising consequence is that the existence of intersection space complexes is equivalent for complementary perversities
(Corollary~\ref{cor:dualityexchange}).

Then, we move to the case of depth $1$ stratifications and prove, in Proposition~\ref{prop:genericbetti},
that, for generic choices of the intersection space complexes, the Betti numbers are always
the same (they are minimal). Then, in Theorem~\ref{th:genericduality}, we show that the Betti numbers symmetry predicted by Poincar\'e duality
for complementary perversities is satisfied for generic intersection space Betti numbers.

\subsection{Open questions}

Here is a list of natural questions for further study:
\begin{enumerate}
 \item We conjecture that the intersection space complexes associated via Definition~\ref{def:IS} to the intersection space pairs constructed for pseudomanifolds
 with trivial conical structure in Theorem~\ref{th:existTr} are self Verdier dual when the strata are of even codimension and the perversity is the middle one.
 \item Assume that the intersection space complex exist.  Does there exist an associated rational homotopy intersection space? Are there further
 restrictions than the existence of the intersection space complex?
 \item If the intersection space complex exist, can one define on its hypercohomology a natural internal cup product? Can one find a product
 turning its space of sections into a differential graded algebra inducing a cup product? This would lead to a definition of ``intersection space rational homotopy
 type''.
 \item Toric varieties have intersection space pairs. Compute their Betti numbers in terms of the combinatorics of the fan.
 \item Generalize the generic Poincare duality Theorem~\ref{th:genericduality} to the case of arbitrary depth stratifications.
 \item This is a suggestion of Banagl: relate our sheaf theoretical methods with the characteristic class obstructing Poincare duality discussed in~\cite{BaCh}.
 If the characteristic class vanishes, is the intersection space complex self-dual in the case of pseudomanifolds
 with trivial conical structure, even codimensional strata and middle perversity?
\end{enumerate}

\section{A topological construction of intersection spaces}

\label{sec:top}

\subsection{Topological preliminaries}

First, we give some basic definitions about $t$-uples of spaces in order to fix notation.

\begin{definition}
\begin{enumerate}
 \item A $t$-uple of spaces is a set of topological spaces $(Z_1,...,Z_t)$.
 \item A morphism from a $t$-uple of spaces into a space $(Z_1,...,Z_t)\rightarrow Z$
 is a set of morphisms $\varphi_i:Z_1 \rightarrow Z$.
 \item A morphism between $t$-uples of spaces $(Z_1,...,Z_t)\rightarrow (Z'_1,...,Z'_t)$ is a set of morphisms
 $\varphi_i:Z_i \rightarrow Z'_i$.
 \item  The mapping cylinder of a morphism $\varphi=(\varphi_1,...,\varphi_t):(Z_1,...,Z_t)\rightarrow Z$, $cyl(\varphi)$, is the union of the $t$-uple
 $(Z_1,...,Z_t) \times [0,1]$ with $(Z, \im(\varphi_2),...,\im (\varphi_t))$ with the equivalence relation $\sim$ such that for $i=1,...,t$ and for every
 $x\in Z_i$, we have $(x,1)\sim \varphi_i(x)$.
\end{enumerate}
\end{definition}

\begin{remark}
Remember that the mapping cylinder of a morphism of spaces $f:X\rightarrow Y$ is $cyl(f):=(X\times[0,1] \sqcup Y) /\sim$ where $\sim$ is the equivalence relation such that for every $x\in X$, $(x,1)\sim f(x)$.
\end{remark}

\begin{definition}
\label{def:fibrecone}
Let $\sigma:(Z_1,...,Z_t)\to B$ be a locally trivial fibration of $t$-uples of spaces. The \emph{cone} of $\sigma$ over the base
$B$ is the locally trivial fibration
$$\pi:cyl(\sigma)\to B,$$
where $cyl(\sigma)$ is the mapping cylinder of $\sigma$, $\pi(x,t):=\sigma(x)$ for $(x,t)\in (Z_1,...,Z_t) \times[0,1]$ and $\pi(b):=b$
for $b\in B$(the definition of $\pi$ is compatible with the identifications made to construct $cyl(\sigma)$). The cone over a fibration
has a canonical {\it vertex section}
$$s:B\to cyl(\sigma)$$
sending any $b\in B$ to the vertex of the cone $(cyl(\sigma))_b$.
\end{definition}

The following figure shows schematicatically $cyl (\sigma)$ when the fiber of $\sigma$ is the pair $(\mathbb T, \Sigma)$ where $\mathbb T$ is a torus
and $\Sigma$ is isomorphic to $S^1$, and the base $B$ is a circle. The fibre $\Sigma$ is depicted into the torus $\mathbb T$ as a discontinuous circle.

\begin{figure}[H]
  \centering
 \includegraphics[width=12cm]{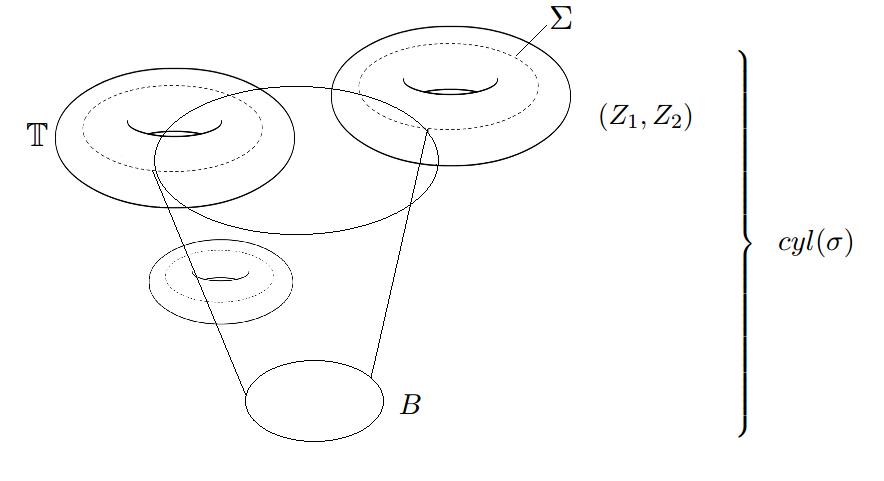}
  \caption{}\label{fig:fibration_cone}
\end{figure}

We use a definition of topological pseudomanifold similar to \cite[Definition 4.1.1]{Ban07}.

\begin{definition}\label{def:strspace}
A \emph{topological pseudomanifold} is a paracompact Hausdorff topological space with a filtration by closed subspaces

$$X=X_d\supset X_{d-2} \supset ... \supset X_0 \supset X_{-1}=\emptyset. $$

such that

\begin{itemize}
\item Each pair $(X_i,X_{i-1})$ is a locally finite relative $CW$-complex.

\item Every non-empty $X_{d-k} \setminus X_{d-k-1}$ is a topological manifold of dimension $d-k$ called pure stratum of $X$.

\item $X\setminus X_{d-2}$ is dense in $X$.

\item \textbf{Local normal triviality.} For each point $x \in X_{d-k} \setminus X_{d-k-1}$, there exists an open neighborhood $U$ of $x$ in $X$,
a compact topological pseudomanifold $L$ of dimension $k-1$ with stratification

    $$ L = L_{k-1} \supset L_{k-3} \supset...\supset L_0 \supset L_{-1}=\emptyset$$

    and a homeomorphism

          $$\varphi:U \xrightarrow{\cong} \RR^{d-k} \times c^\circ(L),$$

    where $c^\circ(L)$ is the open cone of $L$, such that it preserve the strata, that is, $\varphi(U\cap X_{d-r})=\RR^{d-k} \times × c^\circ(L_{k-r-1})$.

    $L$ is called the link of $X$ over the point $x$.
\end{itemize}
\end{definition}

The following new notion is important in our constructions.

\begin{definition}
\label{def:conical}
Let $(X,Y)$ be a pair of topological spaces and let
$$X_{d-k}\supset X_{d-k-1} \supset ... \supset X_0 \supset X_{-1}=\emptyset$$
be a topological pseudomanifold such that $X_{d-k}$ is a subspace of $Y$. We say that the pair $(X,Y)$ has a
{\it conical structure
with respect to the stratified subspace} if for every $r\geq k$ there exits an open neighbourhood
$TX_{d-r}$ of $X_{d-r}\setminus X_{d-r-1}$ in $X\setminus X_{d-r-1}$, with the following properties:
\begin{enumerate}
\item Let $\overline{TX_{d-r}}$ be the closure of $TX_{d-r}$ in $X$. There is a locally trivial fibration of $2(r-k+1)$-uples of
spaces
$$\begin{array}{c}
(\overline{TX_{d-r}}\setminus X_{d-r-1})\cap  {(X,Y,\overline{TX_{d-k}}, X_{d-k}, \overline{TX_{d-k-1}}, X_{d-k-1},
...,\overline{TX_{d-r+1}}, X_{d-r+1})}\\ \xdownarrow{5mm} \sigma_{d-r}\\ X_{d-r}\setminus X_{d-r-1} \end{array}$$
such that its restriction to the boundary
$$\begin{array}{c}
(\partial\overline{TX_{d-r}}\setminus X_{d-r-1})\cap { (X,Y,\overline{TX_{d-k}}, X_{d-k}, \overline{TX_{d-k-1}}, X_{d-k-1},
...,\overline{TX_{d-r+1}}, X_{d-r+1})}\\ \xdownarrow{5mm} \sigma^\partial_{d-r}\\ X_{d-r}\setminus X_{d-r-1}\end{array}$$
is a locally trivial fibration.
\item\label{prop2} The fibration $\sigma_{d-r}$ is the cone of $\sigma^\partial_{d-r}$ over the base $X_{d-r}\setminus X_{d-r-1}$.
\item Let $k\leq r_1 < r_2 \leq d$ and consider the isomorphism induced by property (\ref{prop2})
$$\overline{TX_{d-r_1}}\cap (\overline{TX_{d-r_2}}\setminus X_{d-r_2-1}) \cong
(\overline{TX_{d-r_1}}\cap (\partial\overline{TX_{d-r_2}}\setminus X_{d-r_2-1})) \times [0,1]/\sim$$
where $\sim$ is the equivalence relation of the mapping cylinder.

If we remove $X_{d-r_1-1}$ in both parts of this isomorphism, we obtain an isomorphism
$$\phi_{r_1, r_2}:(\overline{TX_{d-r_1}}\setminus X_{d-r_1-1})\cap \overline{TX_{d-r_2}} \cong
((\overline{TX_{d-r_1}}\setminus X_{d-r_1-1})\cap \partial\overline{TX_{d-r_2}}) \times [0,1).$$
Note that since $X_{d-r_2}$ is contained in $X_{d-r_1-1}$, the vertex section of the cone is not included in the previous spaces.

With this notation, we have the equality
$$(\sigma_{d- r_1})_{|(\overline{TX_{d-r_1}}\setminus X_{d-r_1-1})\cap \overline{TX_{d-r_2}}}= \phi_{r_1, r_2}^{-1} \circ
((\sigma_{d- r_1})_{|(\overline{TX_{d-r_1}}\setminus X_{d-r_1-1})\cap \partial\overline{TX_{d-r_2}}}, Id_{[0,1)}) \circ \phi_{r_1, r_2},$$
that is, the fibration $\sigma_{d- r_1}$ in the intersection $(\overline{TX_{d-r_1}}\setminus X_{d-r_1-1})\cap \overline{TX_{d-r_2}}$ is
determined by its restriction to $(\overline{TX_{d-r_1}}\setminus X_{d-r_1-1})\cap \partial\overline{TX_{d-r_2}}$.

Figure \ref{fig:propiedad_3} illustrates this property.
\item Let $k\leq r_1 < r_2 \leq d$. If $\partial \overline{TX_{d-r_2}}\cap (X_{d-r_1}\setminus X_{d -r_1-1}) \neq \emptyset$, then we have the following
equality of $2(r_1-k+1)$-uples
$$(\overline{TX_{d-r_1}}\setminus X_{d-r_1-1})\cap (X,Y,\overline{TX_{d-k}}, X_{d-k}, \overline{TX_{d-k-1}}, X_{d-k-1},...,\overline{TX_{d-r_1+1}}, X_{d-r_1+1})
\cap $$ $$\cap\partial\overline{TX_{d-r_2}}=\sigma_{d-r_1}^{-1}(\partial\overline{TX_{d-r_2}}\cap (X_{d-r_1}\setminus X_{d -r_1-1}))$$
and, in this space, we have
$$\sigma^\partial_{d-r_2}\circ \sigma_{d-r_1} =\sigma^\partial_{d-r_2}.$$

Figure \ref{fig:propiedad_4} illustrates this property.
\end{enumerate}
\end{definition}

\begin{notation}
\label{not:fibrelinkbundle}
Consider a tuple $(Y_1,...,Y_l)$ of subspaces whose componets are a subset of the components of the tuple
$(X,Y,\overline{TX_{d-k}}, X_{d-k}, \overline{TX_{d-k-1}}, X_{d-k-1},...,\overline{TX_{d-r+1}}, X_{d-r+1})$
considered in the previous definition. The fibre of the fibration $\sigma^\partial_{d-r}$ restricted to the tuple $(Y_1,...,Y_l)$ is called
the {\em fibre of the link bundle of $(Y_1,...,Y_l)$ over $X_{d-r}$}.
\end{notation}

\begin{remark}
The fibration $\sigma^\partial_{d-r}$ is the fibration of links of $X_{d-r}\setminus X_{d-r-1}$ and the fibration $\sigma_{d-r}$ is the fibration associated to a tubular neighborhood.

The fact that these fibrations of $2(r-k+1)$-uples are locally trivial yields that
the intersection of the link $L^x$ of over the point $x \in X_{d-r}\setminus X_{d-r-1}$ with the open neighbourhoods
$TX_{d-k},TX_{d-k-1},...,TX_{d-r+1}$ only depends on the connected component of $X_{d-r}\setminus X_{d-r-1}$ containing $x$.
\end{remark}

Let $k\leq r_1 < r_2 \leq d$. The following figure shows how the open neighbourhoods $TX_{d-r}$ intersect each other.
\begin{figure}[H]
  \centering
  \includegraphics[width=13cm]{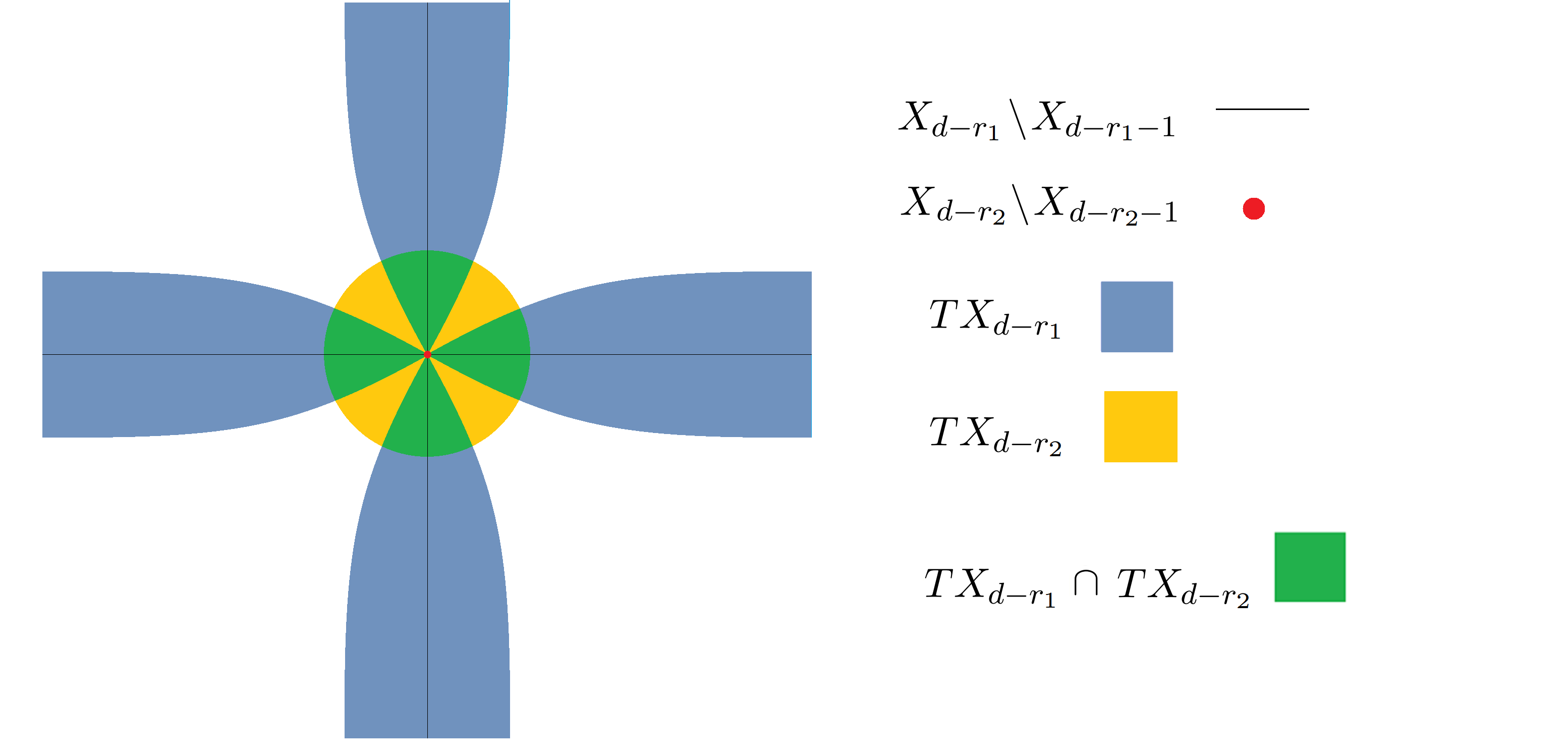}
  \caption{}\label{fig:conical_structure}
\end{figure}

The following figures show how are the morphisms
$\sigma_{d-r_1}$ and $\sigma^\partial_{d-r_2}$ in $(\overline{TX_{d-r_1}}\setminus X_{d-r_1-1})\cap \overline{TX_{d-r_2}}$ because of 
Properties (3) and (4) of Definition \ref{def:conical}.

\begin{figure}[H]
  \centering
  \includegraphics[width=13cm]{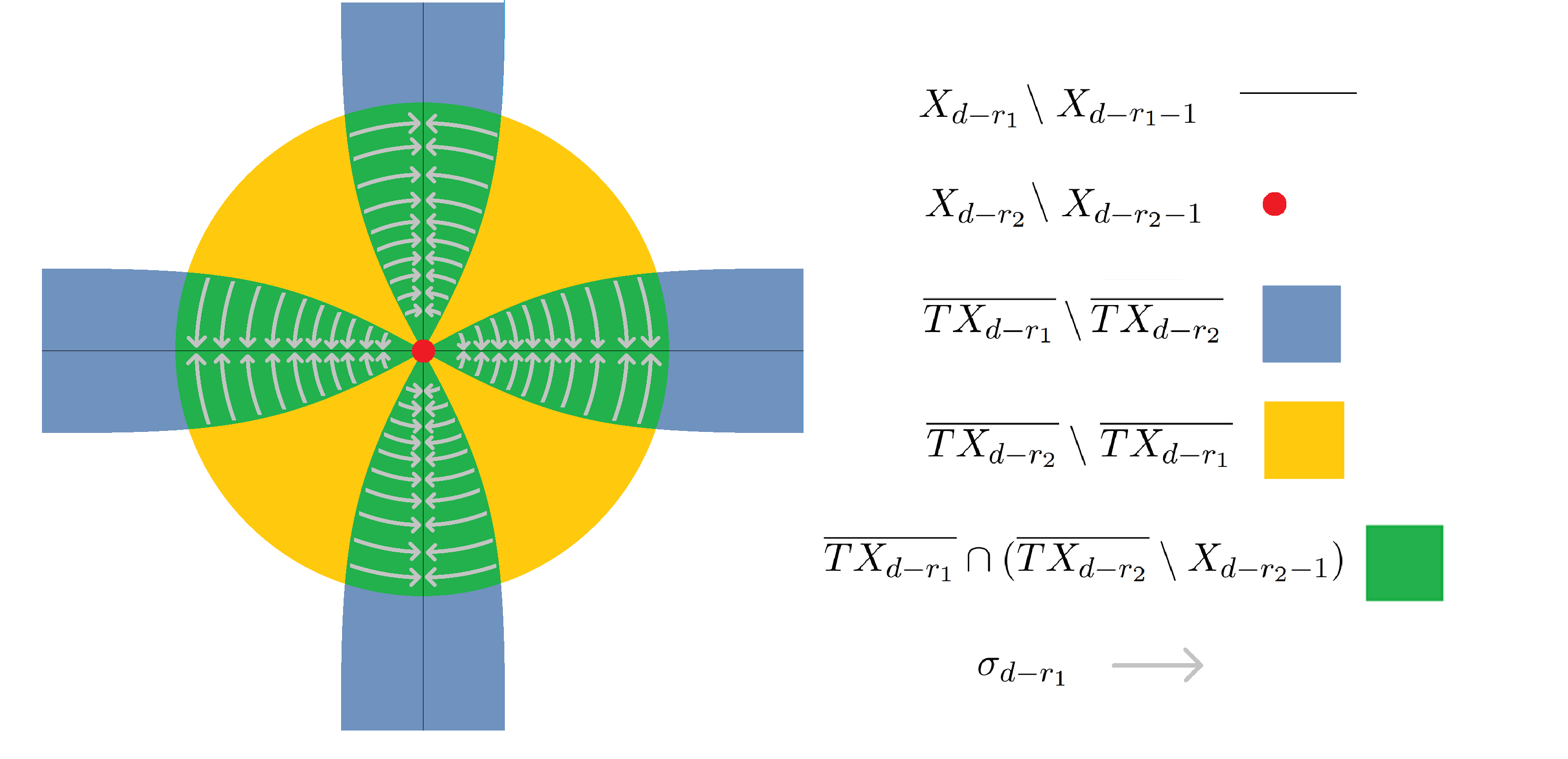}
  \caption{Property (3)}\label{fig:propiedad_3}
\end{figure}

\begin{figure}[H]
  \centering
  \includegraphics[width=13cm]{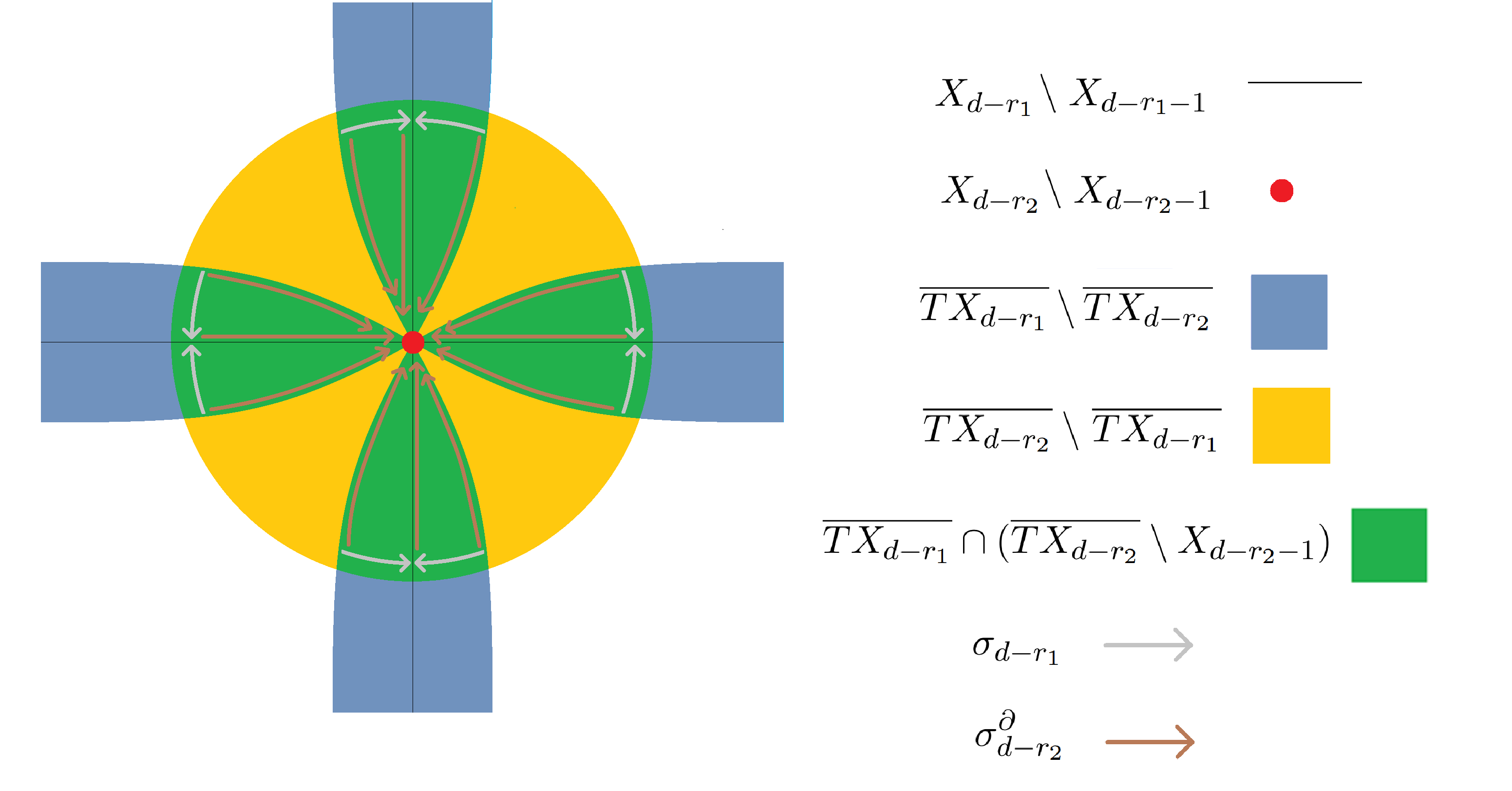}
  \caption{Property (4)}\label{fig:propiedad_4}
\end{figure}

\begin{remark}
Using properties (3) and (4) of Definition \ref{def:conical}, we can also deduce that
$$(\overline{TX_{d-r_1}}\setminus X_{d-r_1-1})\cap (X,Y,\overline{TX_{d-k}}, X_{d-k}, \overline{TX_{d-k-1}}, X_{d-k-1},...,\overline{TX_{d-r_1+1}}, X_{d-r_1+1})
\cap \overline{TX_{d-r_2}}=$$ $$=\sigma_{d-r_1}^{-1}(\overline{TX_{d-r_2}}\cap (X_{d-r_1}\setminus X_{d -r_1-1}))$$
and, in this space, we have
$$\sigma_{d-r_2}\circ \sigma_{d-r_1} =\sigma_{d-r_2}.$$
\end{remark}

\begin{notation}
Along this chapter, we will use the superindex $\partial$ to denote the fibrations of boundaries of suitable tubular neighborhoods.
\end{notation}

\begin{definition}\label{def:Tr}
We say that a conical structure verifies the \emph{$r$-th triviality property} $(T_r)$ if the locally trivial fibration $\sigma^\partial_{d-r}$ is trivial,
that is, the following two properties hold for every connected component $S_{d-r}$ of $X_{d-r} \setminus X_{d-r-1}$.
\begin{enumerate}
  \item There exist an isomorphism
  $$(\sigma^\partial_{d-r})^{-1}(S_{d-r})\cong L\times S_{d-r},$$
  where $L$ denotes the $2(r-k+1)$-uple of the links of $S_{d-r}$ in
  $$(X,Y,\overline{TX_{d-k}}, X_{d-k}, \overline{TX_{d-k-1}}, X_{d-k-1},...,\overline{TX_{d-r+1}}, X_{d-r+1}).$$
  \item Under the identification given by property (1), $\sigma^\partial_{d-r}$ restricted to $L \times S_{d-r}$ is the canonical projection over $S_{d-r}$.
\end{enumerate}
\end{definition}

\begin{definition}
\label{def:systemtr}
Let $(X,Y)$ be a pair of spaces with a conical structure as in Definition \ref{def:conical} which verifies the \emph{$r$-th triviality property} $(T_r)$
for any $r$.
Fix a trivialization
$$(\sigma^\partial_{d-r})^{-1}(S_{d-r})\cong L\times S_{d-r}$$
over each connected componet of each stratum. The set of all trivializations is called a {\em system of trivializations for the conical structure}.
\end{definition}

Let $(X,Y)$ be a pair of spaces with a conical structure as in Definition \ref{def:conical}. Fix a system of trivializations for the conical structure.

Let $k\leq r_1 <r_2 \leq d$ verifying that there exist connected components $S_{d-r_1}$ and $S_{d-r_2}$ of $X_{d-r_1} \setminus X_{d-r_1-1}$ and
$X_{d-r_2} \setminus X_{d-r_2-1}$, respectively, such that $(\sigma_{d-r_2}^\partial)^{-1}(S_{d-r_2})\cap S_{d-r_1} \neq \emptyset$, or what is the same,
that the closure $\overline{S_{d-r_1}}$ contains $S_{d-r_2}$.

By definition,  $(\sigma_{d-r_2}^\partial)^{-1}(S_{d-r_2})\cap \sigma_{d-r_1}^{-1}(S_{d-r_1})$ and
$\sigma_{d-r_1}^{-1}((\sigma_{d-r_2}^\partial)^{-1}(S_{d-r_2}) \cap S_{d-r_1})$ are $2(k-r+1)$-uplas.
To simplify the notation, along the following reasoning we denote by
$(\sigma_{d-r_2}^\partial)^{-1}(S_{d-r_2})\cap \sigma_{d-r_1}^{-1}(S_{d-r_1})$ and
$\sigma_{d-r_1}^{-1}((\sigma_{d-r_2}^\partial)^{-1}(S_{d-r_2}) \cap S_{d-r_1})$ the first componets of these $2(k-r+1)$-uplas. Consider also the space
$(\sigma_{d-r_2}^\partial)^{-1}(S_{d-r_2})\cap S_{d-r_1}$.

Since we have fixed a system of trivializations, we have isomorphisms
$$L^{S_{d-r_1}}_{d-r_2} \times S_{d-r_2} \cong (\sigma_{d-r_2}^\partial)^{-1}(S_{d-r_2})\cap S_{d-r_1}$$
and
$$L^{\sigma_{d-r_1}^{-1}(S_{d-r_1})}_{d-r_2}\times S_{d-r_2} \cong (\sigma_{d-r_2}^\partial)^{-1}(S_{d-r_2})\cap \sigma_{d-r_1}^{-1}(S_{d-r_1})$$
where $L^{S_{d-r_1}}_{d-r_2}$ denotes the fibre of the link bundle of $S_{d-r_1}$ over $S_{d-r_2}$ and $L^{\sigma_{d-r_1}^{-1}(S_{d-r_1})}_{d-r_2}$ denotes
the fibre of the link bundle of $\sigma_{d-r_1}^{-1}(S_{d-r_1})$ over $S_{d-r_2}$.
Moreover, by the property (4) of Definition \ref{def:conical}, we have an equality
$$\sigma_{d-r_1}^{-1}(S_{d-r_1}) \cap (\sigma_{d-r_2}^\partial)^{-1}(S_{d-r_2})=\sigma_{d-r_1}^{-1}((\sigma_{d-r_2}^\partial)^{-1}(S_{d-r_2})\cap S_{d-r_1})$$
and, again using the fixed system of trivializations, we obtain an isomorphism
$$\sigma_{d-r_1}^{-1}((\sigma_{d-r_2}^\partial)^{-1}(S_{d-r_2})\cap S_{d-r_1})\cong c(L^X_{d-r_1}) \times ((\sigma_{d-r_2}^\partial)^{-1}(S_{d-r_2})\cap S_{d-r_1})$$
where $L^X_{d-r_1}$ is the fibre of the link bundle of $X$ over $S_{d-r_1}$.

Combining the previous isomorphisms, we have
$$L^{\sigma_{d-r_1}^{-1}(S_{d-r_1})}_{d-r_2}\times S_{d-r_2}\cong c(L^X_{d-r_1}) \times ((\sigma_{d-r_2}^\partial)^{-1}(S_{d-r_2})\cap S_{d-r_1}) \cong
c(L^X_{d-r_1}) \times L^{S_{d-r_1}}_{d-r_2} \times S_{d-r_2}$$
Let us denote by $\gamma$ the isomorphism
\begin{equation} 
\label{map:gamma}
\gamma:c(L^X_{d-r_1}) \times L^{S_{d-r_1}}_{d-r_2} \times S_{d-r_2} \to L^{\sigma_{d-r_1}^{-1}(S_{d-r_1})}_{d-r_2}\times S_{d-r_2}.
\end{equation}

Since under the equivalences given by the trivializations the morphisms $\sigma_{d-r_1}$ and $\sigma_{d-r_2}^\partial$ are the canonical projections, using the
property (4) of Definition \ref{def:conical}, the diagram
$$\xymatrix{c(L^X_{d-r_1}) \times L^{S_{d-r_1}}_{d-r_2} \times S_{d-r_2}
\ar[r] \ar[d]_\gamma & L^{S_{d-r_1}}_{d-r_2} \times S_{d-r_2} \ar[d]\\
L^{\sigma_{d-r_1}^{-1}(S_{d-r_1})}_{d-r_2}\times S_{d-r_2}\ar[r] & S_{d-r_2}}$$
where all the morphisms except $\gamma$ are the canonical projections, is commutative.

So $\gamma$ verifies the following condition:
$$\xymatrix@R=1mm{\gamma:c(L^X_{d-r_1}) \times L^{S_{d-r_1}}_{d-r_2} \times S_{d-r_2}\ar[r] & L^{\sigma_{d-r_1}^{-1}(S_{d-r_1})}_{d-r_2}\times S_{d-r_2}\\
(x,y,z) \ar[r] & (\gamma_1(x,y,z),z) }$$

\begin{definition}
\label{def:systriv}
We say that the system of trivializations is {\em compatible} if for any two connected components $S_{d-r_1}$ and $S_{d-r_2}$ as above, the map
if $\gamma_1$ does not depend on $z$, that is, if there exist an isomorphism $\beta: c(L^X_{d-r_1}) \times L^{S_{d-r_1}}_{d-r_2} \to L^{\sigma_{d-r_1}^{-1}(S_{d-r_1})}_{d-r_2}$
such that $\gamma=(\beta, Id_{S_{d-r_2}})$.
\end{definition}

\begin{definition}\label{def:strongtriv}
We say that the conical structure is \emph{trivial} if it  verifies the \emph{$r$-th triviality property} $(T_r)$
for any $r$ and there exists a compatible system of trivializations.
\end{definition}

\begin{remark}
\label{re:fixconical}
For a great variety of topological pseudomanifolds
$$ X=X_d\supset X_{d-2} \supset ... \supset X_0 \supset X_{-1}=\emptyset,$$
the pair $(X,X_{d-2})$ has a conical structure with respect to the stratification
$$X_{d-2} \supset ... \supset X_0 \supset X_{-1}=\emptyset.$$
Whitney stratifications, for instance, verify this property (see \cite{GWPL}).
We consider every topological pseudomanifold which appear in sections \ref{sec:top}, \ref{sec:sequence} and \ref{sec:sheafification} holds this.
Moreover,
we fix such a conical structure and denote the relevant neighbourhoods $TX_{d-r}$ for $r$ varying.

From Section \ref{sec:axiomatic}, it is not necessary to do this assumption
\end{remark}

\begin{remark}
Toric varieties with its canonical stratification are topological pseudomanifolds which have a {\em trivial} conical structure with respect to the
stratification. This is derived easily via the torus action.
\end{remark}

\subsection{An inductive construction of intersection spaces}

Given a topological pseudomanifold we define an inductive procedure on the depth of the strata. The procedure depends on choices made at each
inductive step, and may be obstructed for a given set of choices or carried until the deepest stratum. If for a given set of choices can be carried until
the end, it produces a pair of spaces which generalizes Banagl intersection spaces.

\begin{definition}\label{def:perversity}
A perversity is a map $\bar p:\ZZ_{\geq 2}\rightarrow \ZZ_{\geq 0}$ such that $\bar p(2)=0$ and $\bar p(k) \leq \bar p(k+1) \leq \bar p(k) + 1$.
\end{definition}

Some special perversities are

\begin{itemize}
\item The zero perversity, $\bar 0(k)=0$.
\item The total perversity, $\bar t(k)=k-2$.
\item The lower middle perversity, $\bar m(k)=\lfloor\frac{k}{2}\rfloor -1$.
\item The upper middle perversity, $\bar n(k)=\lceil\frac{k}{2}\rceil -1$.
\item Given a perversity $\bar p$, the complementary perversity is $\bar t-\bar p$. It is usually denoted by $\bar q$.
\end{itemize}

The lower and the upper middle perversities are complementary.

For our construction we need the notion of fibrewise homology truncation of fibrations of pairs of locally finite $CW$-complexes:

\begin{definition}
\label{def:fibrewisetruncation}
Let $\sigma:(X,Y)\to B$ be a locally trivial fibration. We say that $\sigma$ admits a \emph{fibrewise rational
$q$-homology truncation} if
there exits a morphism of pairs of spaces $$\phi:(X_{\leq q},Y_{\leq q})\to (X,Y)$$ such that $\sigma\circ \phi$ is a locally
trivial fibration and, for any $b\in B$,
\begin{enumerate}
 \item the homomorphism in homology of fibres
$$H_i((X_{\leq q})_b,(Y_{\leq q})_b;\QQ)\to H_i(X_b,Y_b;\QQ)$$
is an isomorphism if $i\leq q$.
\item the homology group $H_i((X_{\leq q})_b,(Y_{\leq q})_b;\QQ)$ vanishes if $i> q$.
\end{enumerate}
\end{definition}

\begin{notation}
 Given a pair of spaces $(X,Y)$ we denote by $(X,Y)_{\leq q}$ the pair $(X_{\leq q},Y_{\leq q})$ appearing in the definition above.
\end{notation}

The following figure shows a fibrewise rational $1$-homology truncation of the fibration $\sigma$ in Figure \ref{fig:fibration_cone}. The fiber of the
resulting fibration becomes a pointed circle.
\begin{figure}[H]
  \centering
  \includegraphics[width=13cm]{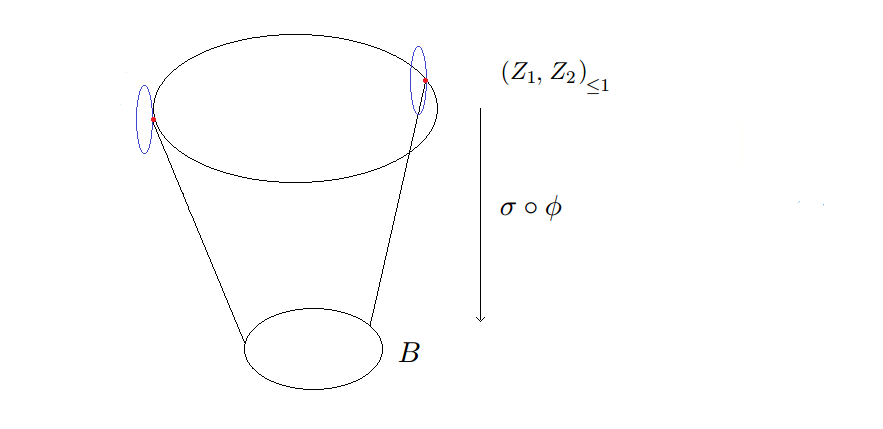}
  \caption{}\label{fig:truncation}
\end{figure}

\begin{definition}
Given a pair of spaces $(X,Y)$ with a conical structure as in Definition \ref{def:conical}, a fibrewise rational $q$-homology truncation of $\sigma^\partial_{d-r}$
$$\xymatrix{(\partial\overline{TX_{d-r}}\setminus X_{d-r-1}\cap (X,Y))_{\leq q}\ar[rd]_{(\sigma^\partial_{d-r})_{\leq q}}\ar[dd]^{\phi_{d-r}^\partial}& \\
& X_{d-r}\setminus X_{d-r-1} \\ \partial\overline{TX_{d-r}}\setminus X_{d-r-1} \cap (X,Y) \ar[ru]^{\sigma^\partial_{d-r}} &}$$
is \emph{compatible with the conical structure} if, for every $r'>r$,
$$\sigma_{d-r'} \circ (\sigma^\partial_{d-r})_{\leq q}: (\sigma^\partial_{d-r})_{\leq q}^{-1}(\overline{TX_{d-r'}}\cap (X_{d-r}\setminus X_{d-r-1}))
\to X_{d-r'}\setminus X_{d-r'-1}$$
is the cone of
$$\sigma_{d-r'}^\partial \circ (\sigma^\partial_{d-r})_{\leq q}: (\sigma^\partial_{d-r})_{\leq q}^{-1}(\partial\overline{TX_{d-r'}}\cap (X_{d-r}\setminus X_{d-r-1}))
\to X_{d-r'}\setminus X_{d-r'-1}$$
\end{definition}

\begin{proposition}\label{prop:fibcom}
Given a pair of spaces $(X,Y)$ with a conical structure as in Definition \ref{def:conical}, if there exist a fibrewise rational $q$-homology truncation of $\sigma^\partial_{d-r}$,
then there exist a fibrewise rational $q$-homology truncation of $\sigma^\partial_{d-r}$ compatible with the conical structure.
\end{proposition}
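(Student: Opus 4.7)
The plan is to upgrade the given fibrewise truncation by a finite sequence of modifications, one for each deeper stratum, each of which imposes the cone structure without destroying the homological truncation property. We proceed by descending induction on the stratum index $r'$ from $r'=d$ down to $r'=r+1$, writing $\phi$ for the current fibrewise truncation.

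At the inductive step, assume $\phi$ already satisfies the cone condition at every stratum index $r''$ with $r''>r'$. Consider the base region $U_{r'} := \overline{TX_{d-r'}}\cap (X_{d-r}\setminus X_{d-r-1})$ and its boundary $\partial U_{r'} := \partial\overline{TX_{d-r'}}\cap (X_{d-r}\setminus X_{d-r-1})$. Property (2) of Definition \ref{def:conical} realizes $U_{r'}$ as the open mapping cylinder of the restriction $\sigma^\partial_{d-r'}|_{\partial U_{r'}}: \partial U_{r'}\to X_{d-r'}\setminus X_{d-r'-1}$ (the vertex section is excluded because it lies in $X_{d-r-1}$), which yields a canonical retraction $\rho:U_{r'}\to \partial U_{r'}$. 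Modify $\phi$ over $U_{r'}$ by replacing its restriction with the pullback $\rho^*\bigl(\phi|_{\phi^{-1}(\partial U_{r'})}\bigr)$, and leave $\phi$ unchanged outside the preimage of $U_{r'}$. Since the two descriptions agree along $\phi^{-1}(\partial U_{r'})$, they glue continuously.

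Two verifications are required. First, the modified map is still a fibrewise $q$-homology truncation: the new fibre over $b\in U_{r'}$ is, by construction, the fibre of the old $\phi$ over $\rho(b)\in \partial U_{r'}$, which by local triviality of $\sigma^\partial_{d-r}$ is canonically identified with the link fibre over $b$; hence the conditions of Definition \ref{def:fibrewisetruncation} remain satisfied. Second, the cone conditions at deeper levels $r''>r'$, arranged at previous inductive steps, are preserved. This is precisely where Properties (3) and (4) of Definition \ref{def:conical} come in: they assert that on the overlap $U_{r'}\cap \overline{TX_{d-r''}}$ the retraction $\rho$ intertwines $\sigma_{d-r''}$ and $\sigma^\partial_{d-r''}$ with their restrictions to $\partial U_{r'}$, so pulling back along $\rho$ sends the previously established cone structure at level $r''$ to itself.

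The main obstacle is this last compatibility check, a careful but essentially routine diagram chase using Properties (3) and (4) of Definition \ref{def:conical}; those properties were formulated for exactly this purpose. Once the induction terminates at $r'=r+1$, the resulting truncation satisfies the cone condition for every $r'>r$ and is therefore compatible with the conical structure.
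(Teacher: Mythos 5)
Your proof proposal follows the same fundamental strategy as the paper's: propagate the cone structure inductively over the deeper strata using Property (2) of Definition~\ref{def:conical} (which realizes each tubular neighbourhood as a cylinder over its boundary), and invoke the compatibility properties to ensure the resulting extensions/modifications are coherent. The difference is purely organizational. The paper first \emph{restricts} the given truncation to the core region
$$(\sigma^\partial_{d-r})_{\leq q}^{-1}\bigl((X_{d-r}\setminus X_{d-r-1})\setminus \textstyle\bigcup_{r'>r} TX_{d-r'}\bigr)$$
and then \emph{extends} outward over the deeper tubes by coning, processing the shallowest deep stratum first (so the domain of definition monotonically increases and each region is touched exactly once); compatibility is ascribed to Property (3). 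You instead keep the truncation over the full base and \emph{replace} it region by region by the pullback along the cone retraction, processing deepest first, so overlaps $U_{r'}\cap U_{r''}$ are re-modified and you must show the second modification does not destroy the cone structure established by the first; you ascribe this to Properties (3) and (4). Both arguments leave this compatibility check as a ``can be verified''/``routine diagram chase'' step, so your proof is at the same level of detail as the paper's. One small caution: your replacement-over-overlaps order makes the compatibility claim slightly heavier than the paper's restrict-then-extend formulation, since you need, e.g., that the cone retraction $\rho_{r'}$ preserves the cone structure at every deeper level $r''>r'$; the paper's version needs only the single fact (Property (3) with $r_1<r_2$) that each extension at level $r''$ respects the previously established cone structure at a shallower level $r'$. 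This is a genuine simplification in the paper's ordering, though your version remains correct provided the claimed consequence of Properties (3)--(4) holds.
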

\begin{proof}
Let us consider a fibrewise rational $q$-homology truncation of $\sigma^\partial_{d-r}$, $(\sigma^\partial_{d-r})_{\leq q}$.

Then, the restriction of $(\sigma^\partial_{d-r})_{\leq q}$ to
$$(\sigma^\partial_{d-r})_{\leq q}^{-1}((X_{d-r}\setminus X_{d-r-1})\setminus \bigcup_{r'>r} TX_{d-r'})$$
is a fibrewise rational $q$-homology truncation of the restriction of $\sigma^\partial_{d-r}$ to
$$(\sigma^\partial_{d-r})^{-1}((X_{d-r}\setminus X_{d-r-1})\setminus \bigcup_{r'>r} TX_{d-r'})$$

Using the property (2) of Definition \ref{def:conical}, we can extend the previous restriction to a fibrewise rational $q$-homology truncation over
$$(X_{d-r}\setminus X_{d-r-1})\setminus \bigcup_{r''>r'} TX_{d-r''}$$
for any $r'>r$ inductively. Moreover, using the property (3) of Definition \ref{def:conical}, we can check that these extensions are compatible with the
conical structure. So, when $r'=d$, we obtain a fibrewise rational $q$-homology truncation of $\sigma^\partial_{d-r}$ compatible with the conical structure.
\end{proof}

\begin{definition}
\label{def:comptriv}
Given a pair of spaces $(X,Y)$ with a conical structure verifying the triviality property $(T_r)$ for any $r$ (see Definition \ref{def:Tr}), choose a
system of trivializations as in Definition~\ref{def:systemtr}.
A fibrewise rational $q$-homology truncation of $\sigma^\partial_{d-r}$
$$\xymatrix{(\partial\overline{TX_{d-r}}\setminus X_{d-r-1}\cap (X,Y))_{\leq q}\ar[rd]_{(\sigma^\partial_{d-r})_{\leq q}}\ar[dd]^{\phi_{d-r}^\partial}& \\
& X_{d-r}\setminus X_{d-r-1} \\ \partial\overline{TX_{d-r}}\setminus X_{d-r-1} \cap (X,Y) \ar[ru]^{\sigma^\partial_{d-r}} &}$$
is \emph{compatible with the trivialization} if the following conditions hold.
\begin{enumerate}
\item Given a connected component $S_{d-r}$ of $X_{d-r} \setminus X_{d-r-1}$, if $L_{d-r}=(L^X_{d-r},L^Y_{d-r})$ denotes the
fibre of the link bundle of $(X,Y)$ over $S_{d-r}$ and
$$(\sigma^\partial_{d-r})^{-1} (S_{d-r}) \cong L_{d-r} \times S_{d-r}$$
is the isomorphism induced by the system of trivializations,
then there exist a pair of spaces $(L_{d-r})_{\leq q}:=((L^X_{d-r})_{\leq q}, (L^Y_{d-r})_{\leq q})$ such that
the group $H_i((L^X_{d-r})_{\leq q},(L^Y_{d-r})_{\leq q};\QQ)$  vanishes if $i> q$, we have an isomorphism
$$(\sigma^\partial_{d-r})_{\leq q}^{-1} (S_{d-r}) \cong (L_{d-r})_{\leq q} \times S_{d-r}$$
and, under these identifications, $((\sigma^\partial_{d-r})_{\leq q})_{|(L_{d-r})_{\leq q} \times S_{d-r}}$
is the canonical projection and $(\phi_{d-r}^\partial)_{|(L_{d-r})_{\leq q} \times S_{d-r}} = (\phi_1, Id_{S_{d-r}})$ where $\phi_1:(L_{d-r})_{\leq q} \to L_{d-r}$
is a morphism such that
$$H_i(\phi_1):H_i((L^X_{d-r})_{\leq q},(L^Y_{d-r})_{\leq q};\QQ)\to H_i(L^X_{d-r},L^Y_{d-r};\QQ)$$
is an isomorphism if $i\leq q$.
\item Given $r'>r$ and a connected component $S_{d-r'}$ of $X_{d-r'} \setminus X_{d-r'-1}$ such that
$$(\sigma_{d-r'}^\partial)^{-1}(S_{d-r'})\cap S_{d-r} \neq \emptyset,$$
let $L^{S_{d-r}}_{d-r'}$ and $L^{\sigma^{-1}_{d-r}(S_{d-r})}_{d-r'}$ denote the fibres of the link bundles of $S_{d-r}$ and $\sigma^{-1}_{d-r}(S_{d-r})$
over $S_{d-r'}$ respectively.
Moreover, let
$$\gamma:c(L^X_{d-r})\times L^{S_{d-r}}_{d-r'} \times S_{d-r'} \cong L^{\sigma^{-1}_{d-r}(S_{d-r})}_{d-r'} \times S_{d-r'}$$ be
the isomorphism defined in Equation~(\ref{map:gamma}), in the discussion preceeding Definition~\ref{def:systemtr}.
Then, the image of the composition
$$\xymatrix@C=2cm{c((L^X_{d-r})_{\leq q})\times L^{S_{d-r}}_{d-r'} \times S_{d-r'}\ar[r]^{(c(\phi_1),Id)} &
c(L^X_{d-r})\times L^{S_{d-r}}_{d-r'} \times S_{d-r'} \ar[d]^{\gamma}\\ & L^{\sigma^{-1}_{d-r}(S_{d-r})}_{d-r'} \times S_{d-r'}}$$
is equal to $A \times S_{d-r'}$ for some subset $A\subset L^{\sigma^{-1}_{d-r}(S_{d-r})}_{d-r'}$.
\end{enumerate}
\end{definition}

\begin{remark}
If the conical structure is trivial (see Definition \ref{def:strongtriv}), the condition (1) of the previous definition implies the condition (2).
\end{remark}

\begin{remark}
If a fibrewise rational $q$-homology truncation of $\sigma^\partial_{d-r}$ is compatible with the trivialization, then it is also compatible with the conical structure.
\end{remark}

\textsc{The initial step of the induction}.

Let $X$ be a topological pseudomanifold such that the pair $(X,X_{d-2})$ has a conical structure with respect to the stratification, we consider the open
neighbourhoods $TX_{d-r}$ fixed in Remark~\ref{re:fixconical}.

Let $m$ be the minimum such that $X_{d-m} \setminus X_{d-m-1} \neq \emptyset$. If the fibration
$$\sigma^\partial_{d-m}:\partial\overline{TX_{d-m}}\setminus X_{d-m-1}\to X_{d-m}\setminus X_{d-m-1}$$
predicted in Definition~\ref{def:conical} does not admit
a fibrewise rational $\bar q(m)$-homology truncation, then the intersection space does not exist. Otherwise we choose a
fibrewise rational $\bar q(m)$-homology truncation compatible with the conical structure (see Proposition \ref{prop:fibcom})
\begin{equation}
\label{diag:trunc}
 \xymatrix{(\partial\overline{TX_{d-m}}\setminus X_{d-m-1})_{\leq \bar q(m)}\ar[rd]_{(\sigma^\partial_{d-m})_{\leq \bar q(m)}}\ar[dd]^{\phi_{d-m}^\partial}& \\ & X_{d-m}\setminus X_{d-m-1} \\ \partial\overline{TX_{d-m}}\setminus X_{d-m-1} \ar[ru]^{\sigma^\partial_{d-m}} &}
\end{equation}

We construct a new space $X'_m$, a homotopy equivalence $\pi_m:X'_m\to X$ with contractible fibres and a subspace
$I^{\bar p}_{m} X\hookrightarrow X'_m$ as follows.

Define the map
$$(\sigma_{d-m})_{\leq \bar q(m)}:cyl((\sigma^\partial_{d-m})_{\leq \bar q(m)})\to X_{d-m}\setminus X_{d-m-1}$$
to be the cone of the fibration $(\sigma^\partial_{d-m})_{\leq \bar q(m)}$ over $X_{d-m}\setminus X_{d-m-1}$.
By property (2) of Definition~\ref{def:conical} there exits a fibre bundle morphism
$$\phi_{d-m}:cyl((\sigma^\partial_{d-m})_{\leq \bar q(m)})\to \overline{TX_{d-m}}\setminus X_{d-m-1}$$
over the base $X_{d-m}\setminus X_{d-m-1}$ which preserves the vertex sections. Let
$$\theta_{d-m}:cyl((\sigma^\partial_{d-m})_{\leq \bar q(m)})\to X\setminus X_{d-m-1}$$
be the composition of the fibre bundle morphism $\phi_{d-m}$ with the natural inclusion of the closed subset $\overline{TX_{d-m}}\setminus X_{d-m-1}$ into $X\setminus X_{d-m-1}$.
Let $\cyl(\theta_{d-m})$ be the mapping cylinder of $\theta_{d-m}$.
It is by definition the union $cyl((\sigma^\partial_{d-m})_{\leq \bar q(m)})\times[0,1]\coprod (X\setminus X_{d-m-1})$
under the usual equivalence relation.
Denote by $$s_{d-m}:X_{d-m}\setminus X_{d-m-1}\to cyl((\sigma^\partial_{d-m})_{\leq \bar q(m)})$$
the vertex section. We define $Z_m$ to be the result of quotienting $\cyl(\theta_{d-m})$ by the equivalence relation which identifies,
for any
$x\in X_{d-m}\setminus X_{d-m-1}$, the subspace $s_{d-m}(x)\times [0,1]$ to a point.

In order to follow in an easier way our further constructions, observe at this point that the mapping cylinder $cyl(\phi^\partial_{d-m})$ of the
vertical map $\phi^\partial_{d-m}$ of diagram~(\ref{diag:trunc}) above is a subspace both of $\cyl(\theta_{d-m})$ and of $Z_m$.

The following figures show $\cyl(\theta_{d-m})$ and $Z_m$.

\begin{minipage}{0.65\textwidth}
  \includegraphics[width=1\linewidth]{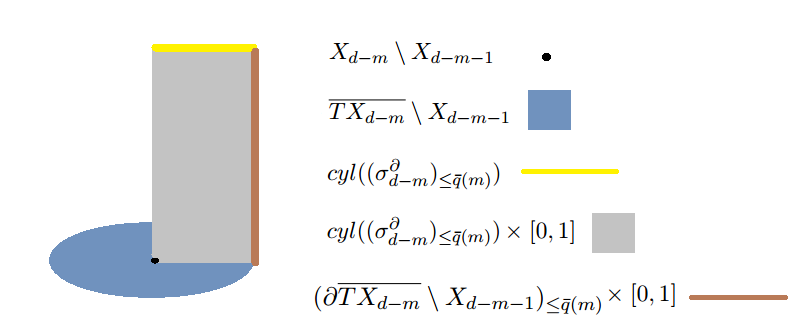}
  \captionof{figure}{$\cyl(\theta_{d-m})$}\label{fig:cylinder_theta}
\end{minipage}
\begin{minipage}{0.35\textwidth}
  \includegraphics[width=1\linewidth]{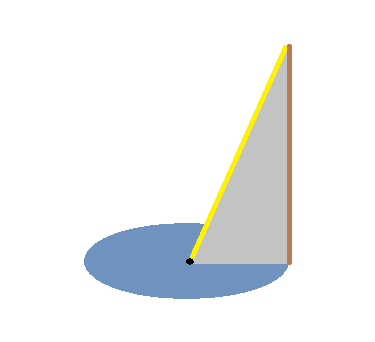}
  \captionof{figure}{$Z_m$}\label{fig:homotopic_model}
\end{minipage}

\bigskip

The equivalence relation collapses the vertical line over the origin of the horizontal plane in
Figure~\ref{fig:cylinder_theta}, therefore the yellow line in Figure~\ref{fig:homotopic_model} becomes diagonal.

We have a natural projection map $\pi_m:Z_m\to X\setminus X_{d-m-1}$
which is a homotopy equivalence whose fibres are contractible and has a natural section denoted by $\alpha_m$. $\alpha_m$ is a closed inclusion of
$X\setminus X_{d-m-1}$ into $Z_m$. In the previous figure, $\pi_m$ is the projection onto the horizontal plane and $\alpha_m$ is the inclusion of the
horizontal plane in the rest of the picture.

Define $X'_m$ as the set $Z_m\cup X_{d-m-1}$. The projection map extends to a projection
$$\pi_m:X'_m\to X$$

Consider in $X'_m$ the topology spanned by all the open subsets of $Z_m$ and the collection of subsets of the form
$\pi_m^{-1}(U)$ for any open subset $U$ of $X$.

This projection is also a homotopy equivalence whose fibres are contractible and such that the natural section $\alpha_m$ extends to it giving a
closed inclusion of $X$ into $X'_m$.

Define the step $m$ intersection space
$I^{\bar p}_{m}X$ to be the subspace of $X'_m$ given by
$$I^{\bar p}_{m}X:=cyl((\sigma^\partial_{d-m})_{\leq \bar q(m)})\times\{0\}\cup cyl(\phi_{d-m}^\partial) \cup (X \setminus TX_{d-m}),$$
with the restricted topology.

The following figure shows $I^{\bar p}_{m}X$.
\begin{figure}[H]
  \centering
  \includegraphics[width=13cm]{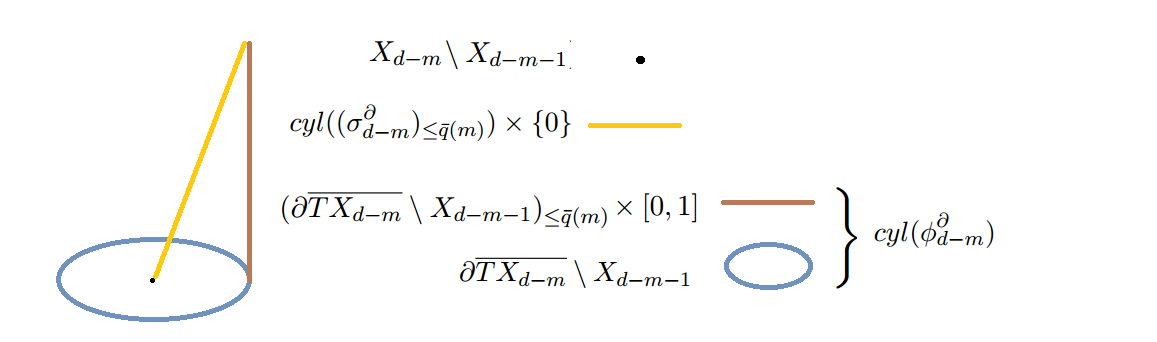}
  \caption{ }\label{fig:intersection_space}
\end{figure}

\begin{remark}\label{rem:description}
Note that we have the equality $cyl(\phi_{d-m}^\partial) = \pi_m^{-1}(\partial\overline{TX_{d-m}}\setminus X_{d-m-1})$.
\end{remark}

With the above definitions we have the following chains of inclusions
$$X'_m\supset X\supset X_{d-m}\supset...\supset X_0,$$
$$X'_m\supset I^{\bar p}_{m}X\supset X_{d-m}\supset...\supset X_0,$$
where $X$ is embedded in $X'_m$ via the section $\alpha_m$.

An immediate consequence of our construction is

\begin{lemma}
The pairs $(X'_m,X_{d-m})$ and $(I^{\bar p}_{m}X,X_{d-m})$ have a conical structure with respect to the stratified subspace
$X_{d-m-1}\supset...\supset X_0$, given by the following open neighbourhoods of $X_{d-r}\setminus X_{d-r-1}$: $\pi_m^{-1}(TX_{d-r})$ is a neighbourhood
in $X'_m\setminus X_{d-r-1}$ and $\pi_m^{-1}(TX_{d-r})\cap I^{\bar p}_{m}X$ is a neighbourhood in $I^{\bar p}_{m}X\setminus X_{d-r-1}$.
\end{lemma}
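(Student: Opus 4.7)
The plan is to verify conditions (1)--(4) of Definition~\ref{def:conical} for each of the pairs $(X'_m, X_{d-m})$ and $(I^{\bar p}_m X, X_{d-m})$, using as key tool the projection $\pi_m$, which is a homotopy equivalence that is the identity outside $\pi_m^{-1}(\overline{TX_{d-m}})$ and which, inside this set, is the natural collapsing/cylinder projection. For each $r > m$, take $TX'_{d-r} := \pi_m^{-1}(TX_{d-r})$; this is open in $X'_m$ since $\pi_m$ is continuous and $TX_{d-r}$ is open in $X$, and analogously $TX'_{d-r} \cap I^{\bar p}_m X$ is open in $I^{\bar p}_m X$. The candidate fibrations are
$$\sigma'_{d-r} := \sigma_{d-r} \circ \pi_m, \qquad \sigma'^{\,\partial}_{d-r} := \sigma^\partial_{d-r} \circ \pi_m,$$
restricted to the appropriate tuple, which now includes the additional components $X_{d-m}$ (in place of $Y$) and, for $r > m+1$, the new neighbourhoods $\overline{TX'_{d-s}}$, $X_{d-s}$ for $m+1 \leq s < r$.

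For property (1), local triviality of $\sigma'_{d-r}$ follows from local triviality of $\sigma_{d-r}$ combined with the fact that the truncation $(\sigma^\partial_{d-m})_{\leq \bar q(m)}$ was chosen compatible with the conical structure (Proposition~\ref{prop:fibcom}); compatibility ensures that, over any local trivialization chart of $\sigma_{d-r}$, the preimage under $\pi_m$ of the fibre decomposes as a product, where the modification in $\overline{TX_{d-m}}$ varies only along the base coordinate of $\sigma^\partial_{d-m}$ in a way compatible with $\sigma_{d-r}$. Property (2) (the cone structure of $\sigma'_{d-r}$ over $\sigma'^{\,\partial}_{d-r}$) is inherited: the original $\sigma_{d-r}$ is the cone of $\sigma^\partial_{d-r}$, and by definition of compatibility with the conical structure the truncated fibration $(\sigma^\partial_{d-m})_{\leq \bar q(m)}$ also has a cone whose restriction to the bundle pulled back from $\sigma_{d-r}$ is again a cone; the two cone structures glue over $\pi_m$ to give the required cone structure of $\sigma'_{d-r}$.

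For properties (3) and (4), both of which assert that on intersections $\overline{TX_{d-r_1}} \cap \overline{TX_{d-r_2}}$ the fibrations are determined by their boundary restrictions and the induced fibration equations $\sigma^\partial_{d-r_2} \circ \sigma_{d-r_1} = \sigma^\partial_{d-r_2}$ hold, the strategy is to pull back via $\pi_m$: since $\pi_m$ commutes with all the original $\sigma_{d-r}$ and $\sigma^\partial_{d-r}$ for $r > m$ by construction, and since the property itself is a pointwise identity of morphisms, it transfers directly to the primed versions on $\pi_m^{-1}(\overline{TX_{d-r_1}} \cap \overline{TX_{d-r_2}})$. The case $r_1 = m+1$ (when the $2$-uple $(X'_m, X_{d-m})$ appears) is handled in the same way after noting that $X_{d-m}$ is itself embedded in $X'_m$ via the section $\alpha_m$ and is preserved by $\pi_m$.

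The case of $(I^{\bar p}_m X, X_{d-m})$ is handled by restricting the whole argument to $I^{\bar p}_m X$: the proposed neighbourhoods, fibrations, and tuples all restrict, since $I^{\bar p}_m X$ was built as the fibrewise cone over $(\sigma^\partial_{d-m})_{\leq \bar q(m)}$ compatibly with the conical structure (Remark~\ref{rem:description}), so every fibre of $\sigma'_{d-r}$ has a well-defined intersection with $I^{\bar p}_m X$ inheriting the cone structure. The main obstacle I anticipate is purely bookkeeping: keeping track of the enlarged tuples (now with $X_{d-m}$ adjoined as an extra singular component) and verifying that the compatibility condition of the chosen truncation indeed gives the required product decompositions on the overlaps $\pi_m^{-1}(\overline{TX_{d-r_1}} \cap \overline{TX_{d-r_2}})$; this is the point at which Proposition~\ref{prop:fibcom} is essential, and everything else reduces to continuity and routine verification.
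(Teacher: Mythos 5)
The paper gives no proof of this lemma at all — it is introduced with the phrase ``An immediate consequence of our construction is'' and then left to the reader. So there is no paper argument to compare against line by line; what you have produced is the (sketch of a) verification that the authors regard as routine. Your approach — take $TX'_{d-r}:=\pi_m^{-1}(TX_{d-r})$, define $\sigma'_{d-r}:=\sigma_{d-r}\circ\pi_m$ and $\sigma'^{\,\partial}_{d-r}:=\sigma^\partial_{d-r}\circ\pi_m$, and check conditions (1)--(4) of Definition~\ref{def:conical} using the fact that the chosen truncation is compatible with the conical structure — is the natural one and is consistent with the construction in the paper; the transfer of properties (3) and (4) along $\pi_m$ works because $\pi_m\circ\alpha_m=\mathrm{id}$, so the target strata $X_{d-r}\setminus X_{d-r-1}$ sit in $X'_m$ fixed by $\pi_m$, and the original identities $\sigma^\partial_{d-r_2}\circ\sigma_{d-r_1}=\sigma^\partial_{d-r_2}$ pull back verbatim.

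Two small imprecisions are worth noting, neither fatal. First, what you actually need is not Proposition~\ref{prop:fibcom} itself (an existence statement) but the \emph{definition} of compatibility with the conical structure, which was imposed on the truncation chosen at the initial step; Proposition~\ref{prop:fibcom} just guarantees such a choice exists. Second, the argument for local triviality in property (1) is stated rather loosely (``the preimage under $\pi_m$ of the fibre decomposes as a product''); the actual mechanism is that, over a chart trivializing $\sigma_{d-r}$, property (4) of the original conical structure gives a product decomposition of $\overline{TX_{d-m}}\cap\sigma_{d-r}^{-1}(V)$, and compatibility of the truncation then guarantees that the cylinder construction defining $\pi_m^{-1}$ respects that product, so $\sigma'_{d-r}$ trivializes over $V$. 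With these clarifications the sketch gives a correct proof of what the paper asserts without proof.
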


\textsc{The inductive step}.

At this point we are ready to set up the inductive step of the construction of intersection spaces.
The inductive step is different in nature to the initial step
in the following sense. The necessary condition to be able to carry the initial step is that a link fibration admits a
fibrewise rational $q$-homology truncation.
In the inductive step, this condition is replaced by the fact that a fibration of link pairs admits a fibrewise rational $q$-homology truncation. The smaller space
in the pair is constructed by iterated modifications of $X_{d-2}=X_{d-m}$. Define
$$I^{\bar p}_{m}(X_{d-2}):=X_{d-2}.$$

We assume by induction that, for $k\geq m$, we have constructed
\begin{enumerate}
 \item [(i)] a space $X'_{k}$ and a projection
$$\pi_{k}:X'_{k}\to X$$
which is a homotopy equivalence with contractible fibres, together with a section $\alpha_{k}$ providing a closed inclusion of $X$ into $X'_k$.
 \item [(ii)] subspaces $I^{\bar p}_{k}(X_{d-2})\subset I^{\bar p}_{k}X\subset X'_{k}$ such that, embedding $X$ into $X'_{k}$ via $\alpha_{k}$,
 we have the topological pseudomanifold
 $$X_0\subset X_1\subset...\subset X_{d-k-1}$$
 embedded into $I^{\bar p}_{k}(X_{d-2})$,
 \item [(iii)] the pairs $(X'_k,I^{\bar p}_{k}X)$, $(I^{\bar p}_{k}X, I^{\bar p}_{k}(X_{d-2}))$ have respective conical
 structures with respect to the stratified subspace described in the previous point.
 The open neighbourhoods of $X_{d-r}\setminus X_{d-r-1}$ appearing in these structures are $\pi_k^{-1}(TX_{d-r})$ in $X'_k\setminus X_{d-k-1}$ and
 $\pi_k^{-1}(TX_{d-r})\cap I^{\bar p}_{k}X$ in $I^{\bar p}_{k}X\setminus X_{d-k-1}$ respectively.
\end{enumerate}

If $X_{d-k-1}\setminus X_{d-k-2}$ is empty we define $X'_{k+1}:=X'_{k}$, $\pi_{k+1}:=\pi'_{k}$, $\alpha_{k+1}:=\alpha_{k}$, $I^{\bar p}_{k+1} X := I^{\bar p}_{k} X$,
and $I^{\bar p}_{k+1} (X_{d-2}) := I^{\bar p}_{k} (X_{d-2})$. It is clear that the required conditions are satisfied.

If $X_{d-k-1}\setminus X_{d-k-2}$ is not empty, since the pair $(I^{\bar p}_{k}X,I^{\bar p}_{k}(X_{d-2}))$
has a conical structure with respect to
the stratified subspace
$$X_0\subset X_1\subset...\subset X_{d-k-1},$$
we have a locally trivial fibration of pairs
$$\sigma^\partial_{d-k-1}:(\partial\overline{\pi_k^{-1}(TX_{d-k-1})}\setminus X_{d-k-2})\cap (I^{\bar p}_{k}X,I^{\bar p}_{k}(X_{d-2}))\to X_{d-k-1}\setminus X_{d-k-2}.$$

If the fibration does not admit
a fibrewise rational $\bar q(k+1)$-homology truncation, then the intersection space construction cannot be completed with the previous choices.

Otherwise we choose a
fibrewise rational $\bar q(k+1)$-homology truncation compatible with the conical structure (see Proposition \ref{prop:fibcom})

$$\xymatrix{((\partial\overline{\pi_k^{-1}(TX_{d-k-1})}\setminus X_{d-k-2})\cap (I^{\bar p}_{k}X,I^{\bar p}_{k}(X_{d-2}))_{\leq\bar q(k+1)}\ar[rd]_{(\sigma^\partial_{d-k-1})_{\leq \bar q(k+1)}}\ar[dd]^{\phi_{d-k-1}^\partial}& \\& X_{d-k-1}\setminus X_{d-k-2}\\ (\partial\overline{\pi_k^{-1}(TX_{d-k-1})}\setminus X_{d-k-2})\cap (I^{\bar p}_{k}X, I^{\bar p}_{k}(X_{d-2})) \ar[ru]^{\sigma^\partial_{d-k-1}} &}$$

We construct now a homotopy equivalence $\pi_{k+1}:X'_{k+1}\to X$ with contractible fibres and a pair of subspaces
$(I^{\bar p}_{k+1} X, I^{\bar p}_{k+1} (X_{d-2}))\hookrightarrow X'_{k+1}$ as follows.

Let
$$(\sigma_{d-k-2})_{\leq \bar q(k+1)}:cyl((\sigma^\partial_{d-k-1})_{\leq \bar q(k+1)})\to X_{d-k-1}\setminus X_{d-k-2}$$
be the cone of the fibration of pairs $(\sigma^\partial_{d-k-1})_{\leq \bar q(k+1)}$ over $X_{d-k-1}\setminus X_{d-k-2}$.
Recall that, according with Definition~\ref{def:fibrecone},
$cyl((\sigma^\partial_{d-k-1})_{\leq \bar q(k+1)})$ is a pair of spaces.

By property (2) of Definition~\ref{def:conical} there exits a morphism of fibre bundles of pairs
$$\phi_{d-k-1}:cyl((\sigma^\partial_{d-k-1})_{\leq \bar q(k+1)})\to  (\overline{\pi_k^{-1}(TX_{d-k-1})}\setminus X_{d-k-2})\cap (I^{\bar p}_{k}X,I^{\bar p}_{k}(X_{d-2}))$$
over the base $X_{d-k-1}\setminus X_{d-k-2}$ which preserves the vertex sections.

Let
$$\theta_{d-k-1}:cyl((\sigma^\partial_{d-k-1})_{\leq \bar q(k+1)})\to X'_k\setminus X_{d-k-2}$$
be the composition of the fibre bundle morphism $\phi_{d-k-1}$ with the natural inclusion
$$ (\overline{\pi_k^{-1}(TX_{d-k-1})}\setminus X_{d-k-2})\cap (I^{\bar p}_{k}X,I^{\bar p}_{k}(X_{d-2})) \hookrightarrow X'_k \setminus X_{d-k-2}.$$
Let $\cyl(\theta_{d-k-1})$ be the mapping cylinder of $\theta_{d-k-1}$.
It is by definition the union of the pair $cyl((\sigma^\partial_{d-k-1})_{\leq \bar q(k+1)})\times[0,1]$ with the pair
$(X'_k\setminus X_{d-k-2}, \im (\phi_{d-k-1})_2)$
with the usual equivalence relation (where $(\phi_{d-k-1})_2$ denotes the second component of the fibre bundle of pairs $\phi_{d-k-1}$).

Denote by $$s_{d-k-1}:X_{d-k-1}\setminus X_{d-k-2}\to cyl((\sigma^\partial_{d-k-1})_{\leq \bar q(k+1)})$$
the vertex section. We define $Z_{k+1}$ to be the pair of spaces which results of quotienting $\cyl(\theta_{d-k-1})$
by the equivalence relation which identifies, for any
$x\in X_{d-k-1}\setminus X_{d-k-2}$, the subspace $s_{d-k-1}(x)\times [0,1]$ to a point.

We denote the spaces forming the pair $Z_{k+1}$ by $Z_{k+1}=(Z_{k+1}^1,Z_{k+1}^2)$.
We have a natural projection map $\rho_{k+1}:Z_{k+1}^1\to X'_k\setminus X_{d-k-2}$
which is a homotopy equivalence whose fibres are contractible, and has a natural section denoted by $\beta_{k+1}$. The composition
$$\pi_{k+1}:=\pi_k|_{X'_k\setminus X_{d-k-2}}\circ \rho_{k+1}:Z^1_{k+1}\to X\setminus X_{d-k-2}$$
is a homotopy equivalence with contractible fibres, and has a section $\alpha_{k+1}:=\beta_{k+1}\circ\alpha_k|_{X\setminus X_{d-k-2}}$ providing a
closed inclusion of $X\setminus X_{d-k-2}$ into $Z^1_{k+1}$.

Define $X'_{k+1}$ as the set $Z_{k+1}^1\cup X_{d-k-2}$. The projection maps $\rho_{k+1}$ and $\pi_{k+1}$ extends to projections
\begin{equation}
\label{eq:rho}
\rho_{k+1}:X'_{k+1}\to X'_k
\end{equation}
\begin{equation}
\label{eq:pi}
\pi_{k+1}:X'_{k+1}\to X.
\end{equation}

Consider the topology in $X'_{k+1}$ spanned by the all the open subsets of $Z_{k+1}$
and the collection of subsets of the form
$\pi_{k+1}^{-1}(U)$ for any open subset $U$ of $X$. With this topology the projections are also homotopy equivalences whose fibres are contractible,
and such that the natural sections $\beta_{k+1}$ and $\alpha_{k+1}$ extend to them as closed inclusions.

Define the step $k+1$ intersection space pair
to be the pair of subspaces of $X'_{k+1}$ given by
$$(I^{\bar p}_{k+1}X,I^{\bar p}_{k+1}(X_{d-2})):=cyl((\sigma^\partial_{d-k-1})_{\leq \bar q(k)})\times\{0\}\cup cyl(\phi_{d-k-1}^\partial) \cup
(I^{\bar p}_{k}X,I^{\bar p}_{k}(X_{d-2})) \setminus \pi_k^{-1}(TX_{d-k-1}),$$
with the restricted topology.

\begin{remark}
Note that we have the equality $ cyl(\phi_{d-k-1}^\partial)=\pi_{k+1}^{-1}(\partial\overline{TX_{d-k-1}}\setminus X_{d-k-2})$.
\end{remark}

With the definitions above, and using that the homology truncation is compatible with the conical structure, it is easy to check that conditions
(i)-(iii) are satisfied replacing $k$ by $k+1$ and the induction step is complete.

\begin{definition}
\label{def:intersection space}
Given a topological pseudomanifold $X_d\supset...\supset X_0$ such that the pair $(X_d, X_{d-2})$ has a conical structure with respect to the stratification
(see Definition \ref{def:comptriv} and Remark \ref{re:fixconical}), we say that it has an intersection space pair
if there exist successive choices of suitable fibrewise homology truncations
so that the construction above can be carried up to $k=d$. In that case the pair
$$(I^{\bar p}X,I^{\bar p}(X_{d-2}))=(I^{\bar p}_{d}X,I^{\bar p}_{d}(X_{d-2}))$$
is called {\it an intersection space pair associated with the stratification}.
\end{definition}

\begin{definition}
We denote $X':=X'_d$. The \emph{homotopy model} of $X$ is the homotopy equivalence $\pi_d$ which we denote $\pi:X'\to X$. The section $\alpha_d$ is denoted by
$\alpha:X\to X'$ and provides a closed inclusion of $X$ into $X'$.
\end{definition}

\begin{remark}
\label{rem:nonuniquess}
If the intersection space pair exists it does not have to be unique up to homotopy.
The different choices of fibrewise homology truncations may yield different choices of intersection spaces.
The construction of intersection spaces follow the scheme of obstruction theory in algebraic topology:
previous choices of fibrewise homology truncation may affect the possibility of finishing the
construction in the subsequent steps.
\end{remark}

\subsection{Intersection spaces for pseudomanifolds having trivial conical structures}
\label{subsec:constr_trivial}
Let $X$ be a topological pseudomanifold with a trivial conical structure (see Definition \ref{def:strongtriv}). Fix a compatible system of
trivializations (see Definitions~\ref{def:systemtr} and \ref{def:systriv}). We carry the inductive construction of the intersection space pair as above,
but we add the following property to the properties (i)-(iii) which are checked along the induction:
\begin{enumerate}
 \item [(iv)] the conical structures of the pairs $(X'_k,I^{\bar p}_{k}X)$, $(I^{\bar p}_{k}X, I^{\bar p}_{k}(X_{d-2}))$ with respect to
 $$X_0\subset X_1\subset...\subset X_{d-k-1}$$
 are trivial, and a compatible system of trivializations is inherited from the inductive construction.
 \end{enumerate}

 At the initial step of the construction we have a topological pseudomanifold
 $$X\supset X_{d-m}\supset...\supset X_0$$
 with a trivial conical structure and a compatible set of trivializations (as before the codimension of the first non-open stratum is $m$).

The compatible system of trivializations gives us a fixed trivialization of the fibration
$$\sigma^\partial_{d-m}:\partial\overline{TX_{d-m}}\setminus X_{d-m-1}\to X_{d-m}\setminus X_{d-m-1}.$$
Choose a rational $\bar q(m)$-homology truncation of the fibre. This is always possible and elementary. Now, using the trivialization, the rational
$\bar q(m)$-homology truncation of the fibre propagates to a fibrewise $\bar q(m)$-homology truncation of the fibration above. This is the truncation
chosen at the initial step.

Now, using the compatibility of our system of trivializations, it is easy to show that the pairs
$(X'_k,I^{\bar p}_{m}X)$, $(I^{\bar p}_{m}X, I^{\bar p}_{m}(X_{d-2}))$ satisfy the required properties (i)-(iv). The compatible systems of
trivializations required in property (iv) are inherited, by construction, by the compatible system of trivializations used at the beggining.

The inductive step of the construction is carried in the same way: the fixed trivializations propagate rational homology truncations of the corresponding
fibrations of pairs of links.

We have proven:

\begin{theorem}
\label{th:existTr}
If $X$ is a topological pseudomanifold with a trivial conical structure (see Definition \ref{def:strongtriv},) then there exist an intersection space pair
associated with it for every perversity.
\end{theorem}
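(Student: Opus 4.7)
The plan is to run the inductive construction of the intersection space pair described in the previous subsection, while carrying along as an additional inductive invariant property~(iv): namely, that at each stage $k$ the pairs $(X'_k,I^{\bar p}_{k}X)$ and $(I^{\bar p}_{k}X, I^{\bar p}_{k}(X_{d-2}))$ inherit from $X$ a trivial conical structure together with a compatible system of trivializations (in the sense of Definitions~\ref{def:strongtriv} and~\ref{def:systriv}) with respect to the stratified subspace $X_0\subset\dots\subset X_{d-k-1}$. The point is that once (iv) holds at stage $k$, producing the fibrewise $\bar q(k+1)$-homology truncation required for stage $k+1$ is automatic: the fixed trivialization of the link-pair fibration $\sigma^\partial_{d-k-1}$ reduces the problem to choosing a rational $\bar q(k+1)$-homology truncation of a single pair (the fibre pair), which is always elementary for pairs of locally finite $CW$-complexes, and then propagating it by product with the base. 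Thus the construction is never obstructed.

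Concretely, I would argue as follows. At the initial step, with $m$ the smallest codimension of a non-open stratum, the fixed trivialization of $\sigma^\partial_{d-m}$ over each component $S_{d-m}$ writes it as $L_{d-m}\times S_{d-m}\to S_{d-m}$. Choose any rational $\bar q(m)$-homology truncation $(L_{d-m})_{\leq \bar q(m)}\to L_{d-m}$ of the fibre; taking the product with $S_{d-m}$ yields a fibrewise truncation that is trivially compatible with the trivialization in the sense of Definition~\ref{def:comptriv}(1), and condition (2) there is automatic because the full conical structure is trivial (this is the content of the first remark after Definition~\ref{def:comptriv}). By Proposition~\ref{prop:fibcom} (or rather by direct inspection using the product structure), this truncation is compatible with the conical structure, so the initial step of the construction goes through.

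For the inductive step, assume properties (i)--(iv) hold at stage $k$. The compatible system of trivializations on $(I^{\bar p}_{k}X,I^{\bar p}_{k}(X_{d-2}))$ trivializes the link-pair fibration $\sigma^\partial_{d-k-1}$ on each component $S_{d-k-1}$ as a product of the fibre link pair with $S_{d-k-1}$. As in the initial step, pick any rational $\bar q(k+1)$-homology truncation of this fibre pair; the product with $S_{d-k-1}$ gives the desired fibrewise truncation, automatically compatible with the trivialization. One then runs the mapping-cylinder and quotient constructions verbatim to produce $X'_{k+1}$, $\pi_{k+1}$, $\alpha_{k+1}$, and $(I^{\bar p}_{k+1}X,I^{\bar p}_{k+1}(X_{d-2}))$, and conditions (i)--(iii) follow as in the previous subsection.

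The main work is checking that property (iv) is preserved. For this one has to verify that the tubular neighbourhoods $\pi_{k+1}^{-1}(TX_{d-r})$ in the new pairs again carry trivial conical fibrations with compatible trivializations. Triviality of each individual link-pair bundle is clear: the construction only modifies things fibrewise over $S_{d-k-1}$, and the product structure we used propagates to the mapping cylinder $cyl((\sigma^\partial_{d-k-1})_{\leq \bar q(k+1)})$ and hence to the neighbourhoods of deeper strata after the quotient. The delicate point, which I expect to be the main obstacle, is verifying the compatibility condition of Definition~\ref{def:systriv} for pairs of strata $(S_{d-r_1},S_{d-r_2})$ after modification: one must check that the gluing map $\gamma$ for the new space is of the form $(\beta,\mathrm{Id})$. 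This is where the hypothesis that the original system was compatible is used crucially, together with the fact that the fibrewise truncations were chosen by multiplying a fibre truncation with the base (so no base-dependent twist is introduced). Once this compatibility is traced through the mapping cylinder and the collapsing of the vertex section, property (iv) holds at stage $k+1$ and the induction closes. Running it up to $k=d$ produces the intersection space pair.
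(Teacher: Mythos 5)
Your proposal is correct and follows essentially the same approach as the paper: augment the inductive invariant with property~(iv) (triviality of the conical structure and inheritance of a compatible system of trivializations), use the fixed trivialization to reduce the fibrewise truncation at each step to a single truncation of the fibre pair (always possible) propagated as a product with the base, and check that (iv) persists. The paper's own proof is even terser at the step you flag as delicate — it asserts that compatibility of the inherited trivializations "is easy to show" and is "inherited, by construction" — so your explicit remark that one must trace the product structure through the mapping cylinder and vertex-collapse to keep $\gamma$ of the form $(\beta,\mathrm{Id})$ is, if anything, a useful amplification of what the paper leaves implicit.
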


\begin{corollary}\label{rem:existToricr}
Let $X$ be a toric variety. Then, $X$ has
an intersection space pair for every perversity.
\end{corollary}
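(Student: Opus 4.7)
The plan is to deduce the corollary directly from Theorem~\ref{th:existTr} after verifying that every toric variety $X$, endowed with its canonical orbit stratification, admits a trivial conical structure in the sense of Definition~\ref{def:strongtriv}. Recall that $X$ carries the action of an algebraic torus $T$ with a dense open orbit, and that the canonical stratification has strata given by the $T$-orbits $O_\sigma$ indexed by the cones $\sigma$ of the defining fan; each $O_\sigma$ is itself an algebraic torus of real dimension $d-2\dim\sigma$, and the closure order corresponds to face containment.

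The first step is to produce the open neighbourhoods $TX_{d-r}$ of Definition~\ref{def:conical} from the fan data. For each cone $\sigma$, the affine chart $U_\sigma$ is a $T$-invariant open set containing $O_\sigma$ as its unique closed orbit, and by Sumihiro's equivariant local structure theorem it splits equivariantly as $O_\sigma\times V_\sigma$, where $V_\sigma$ is the affine toric variety associated with the star of $\sigma$ and provides the transverse slice. Taking the boundaries $\partial\overline{TX_{d-r}}$ to be level sets of a $T$-invariant radial function on $V_\sigma$ yields the link fibrations $\sigma^\partial_{d-r}$; the cone property (property~(2) of Definition~\ref{def:conical}) is automatic since the transverse slice is itself a cone on its link, and properties~(3) and~(4) follow from the nesting of affine charts $U_{\sigma'}\subset U_\sigma$ for face relations $\sigma\prec\sigma'$.

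The second step exhibits the trivializations. Since the $T$-action on $X$ restricts, through the quotient by the stabilizer, to a transitive action on $O_\sigma$, translating a single transverse slice $V_\sigma$ around $O_\sigma$ yields a canonical identification
$$(\sigma^\partial_{d-r})^{-1}(O_\sigma)\cong L_\sigma\times O_\sigma,$$
where $L_\sigma$ is the link of $O_\sigma$ inside $V_\sigma$, under which $\sigma^\partial_{d-r}$ becomes the canonical projection. This verifies the triviality property $(T_r)$ of Definition~\ref{def:Tr} for every $r$.

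The main point, and the only step which requires a small verification, is checking the compatibility condition of Definition~\ref{def:systriv}. Given two orbits $O_{\sigma_1}$ and $O_{\sigma_2}$ with $O_{\sigma_2}\subset\overline{O_{\sigma_1}}$, the comparison isomorphism $\gamma$ of \eqref{map:gamma} is built from the action of elements of $T$ moving a transverse slice at $O_{\sigma_2}$ inside the fibre over a chosen basepoint of $O_{\sigma_1}$. Because this action is the restriction of a single global group action on $X$, the dependence on the point of $O_{\sigma_2}$ factors out, so $\gamma$ has the form $(\beta,\mathrm{Id}_{O_{\sigma_2}})$ required in Definition~\ref{def:systriv}. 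Once the trivial conical structure is established, Theorem~\ref{th:existTr} applies verbatim and produces an intersection space pair for every perversity, finishing the proof.
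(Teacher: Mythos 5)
Your proof follows the same route as the paper: the paper's Remark after Remark~\ref{re:fixconical} records that toric varieties have a trivial conical structure ``derived easily via the torus action,'' and the corollary is then an immediate application of Theorem~\ref{th:existTr}; you have simply filled in the details the paper leaves to the reader. One small terminological slip: the transverse slice $V_\sigma$ is the affine toric variety of $\sigma$ viewed as a full-dimensional cone inside $N_\sigma=N\cap\mathrm{span}(\sigma)$, not the toric variety of the star of $\sigma$ (the latter is the orbit closure $\overline{O_\sigma}$), but this does not affect the argument.
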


\section{A sequence of Intersection Space pairs}

\label{sec:sequence}

Our aim is to associate with any choice of intersection space pair, a constructible complex on the original topological pseudomanifold $X$, whose
hypercohomology coincides with the hypercohomology of the intersection space $I^{\bar p} X$. In order to do so, we define an
increasing sequence of modified intersection space pairs, all of them
included in the homotopy model $X'$. We provide precise definitions of the sequence, but leave many of the straightforward checking to the reader.

\subsection{Systems of neighborhoods.}

Given a topological pseudomanifold
$$X=X_d\supset X_{d-2} \supset ... \supset X_0 \supset X_{-1}=\emptyset,$$
such that the pair $(X,X_{d-2})$ has a conical structure with respect to the stratification
$$X_{d-2} \supset ... \supset X_0 \supset X_{-1}=\emptyset$$
(see Remark~\ref{re:fixconical}), we  denote the relevant neighbourhoods by $TX_{d-r}$ for $r$ varying.

Property (2) of Definition~\ref{def:conical} states that the fibration $\sigma_{d-r}$ is the cone of the fibration $\sigma^\partial_{d-r}$
over the base $X_{d-r}\setminus X_{d-r-1}$.
This means precisely that $\overline{TX_{d-r}}\setminus X_{d-r-1}$ is equal to the product
$$\partial\overline{TX_{d-r}}\setminus X_{d-r-1}\times [0,1],$$
modulo the equivalence relation which identifies $(x,1)$ and $(y,1)$ if $\sigma^\partial_{d-r}(x)$ equals $\sigma^\partial_{d-r}(y)$.

For any $r\in 2,...,d$ and any $n\in\NN$ we define the open neighborhood $T^nX_{d-r}$ to be the quotient of
$$\partial\overline{TX_{d-r}}\setminus X_{d-r-1}\times (1-1/(n+1),1]$$
under the same equivalence relation.

The open subsets $T^nX_{d-r}$ for $n$ varying, form a system of tubular neighborhoods of $X_{d-r}\setminus X_{d-r-1}$ in $X\setminus X_{d-r-1}$, whose
intersection is the stratum $X_{d-r}\setminus X_{d-r-1}$. Moreover, for any fixed $n$ the collection of neighborhoods $T^nX_{d-r}$,
for $r$ varying, give a conical structure to
$(X,X_{d-2})$ with respect to the topological pseudomanifold $X_{d-2}\supset...\supset X_0$.

\subsection{The sequence of intersection space pairs.}
\label{sec:seq_int_space}

Suppose that there exists successive choices of suitable fibrewise homology truncations so that the construction of intersection space pairs described in the previous section can be carried up to $k=d$. Fix such a choice.

Fix $n\in \NN$. Following the inductive construction of the previous section we produce a sequence of pairs
$$(I^{\bar p,n}_{k}X,I^{\bar p,n}_{k}(X_{d-2}))$$
for $k=2,...,r$ as follows.

Let $m$ be the minimum such that $X_{d-m} \setminus X_{d-m-1} \neq \emptyset$. Define
$$K^n_m(X):=\pi_m^{-1}(\overline{TX_{d-m}}\setminus T^nX_{d-m})\subset X'_m,$$
$$I^{\bar p,n}_{m}(X):=I^{\bar p}_{m}(X)\cup K^n_m(X),$$
$$C^n_m(X_{d-2}):=\emptyset,$$
$$I^{\bar p,n}_{m}(X_{d-2}):=I^{\bar p}_{m}(X_{d-2})=X_{d-2}.$$

\begin{remark}
Note that $ \pi_m^{-1}(\overline{(TX_{d-m}}\setminus T^nX_{d-m})\setminus X_{d-m-1}) \cong cyl(\phi_{d-m}^\partial) \times [0, 1-1/(n+1)]$ (see Remark \ref{rem:description}).
\end{remark}

The following figure shows the previous modification in Figure \ref{fig:intersection_space}. $K^n_m(X)$ is the union of blue set and brown set.
\begin{figure}[H]
  \centering
  \includegraphics[width=\textwidth]{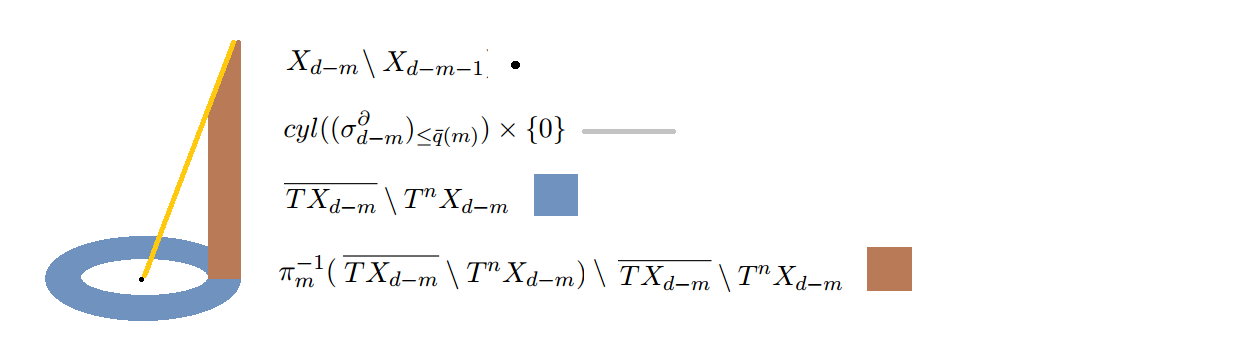}
  \caption{ }\label{fig:mod_int_space}
\end{figure}

The following figure shows $\overline{TX_{d-m}}\setminus T^nX_{d-m}$ in more dimensions than in the previous figure. $K^n_m(X)$
is the preimage of the blue set by the morphism $\pi_m$.
\begin{figure}[H]
  \centering
  \includegraphics[width=16cm]{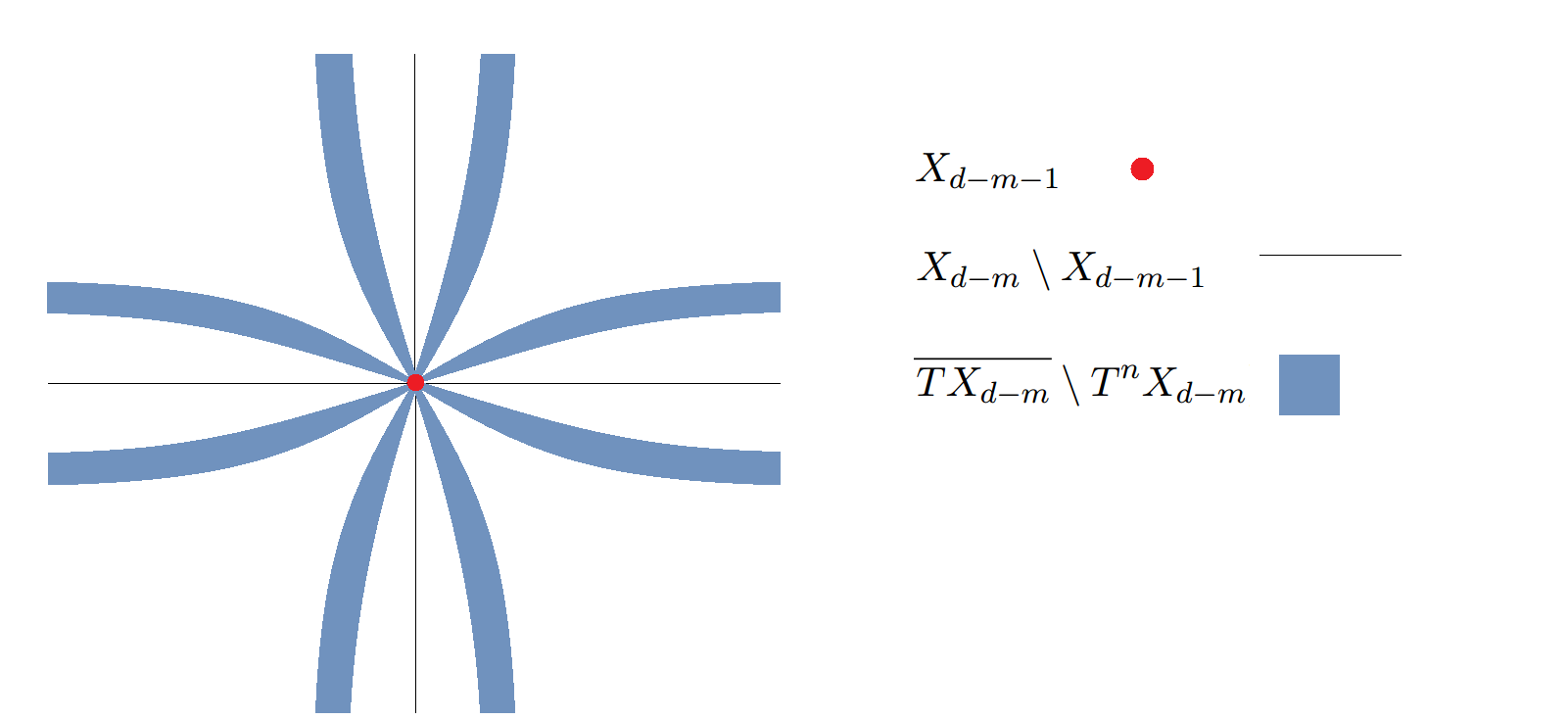}
  \caption{ }\label{fig:sequence_step1}
\end{figure}

Assume that $K^n_{k}(X),C^n_{k}(X_{d-2}), I^{\bar p,n}_{k}(X)$ and $I^{\bar p,n}_{k}(X_{d-2})$ have been defined.
Recall that $\rho_{k+1}$ is the projection defined in Equation~(\ref{eq:rho}).
Define
$$K^n_{k+1}(X):=\rho_{k+1}^{-1}\mathbf{\big(}((I^{\bar p}_{k}X\cap\pi_{k}^{-1}(\overline{TX_{d-(k+1)}}))\setminus \pi_{k}^{-1}(T^n X_{d-(k+1)}))\cup$$
$$\cup (K^n_k(X)\setminus \pi_{k}^{-1}(T^n X_{d-(k+1)}))\mathbf{\big)},$$
$$I^{\bar p,n}_{k+1}X:=I^{\bar p}_{k+1}X\cup K^n_{k+1}(X),$$
$$C^n_{k+1}(X_{d-2}):=\rho_{k+1}^{-1}\mathbf{\big(}((I^{\bar p}_{k}(X_{d-2})\cap\pi_{k}^{-1}(\overline{TX_{d-(k+1)}}))\setminus \pi_{k}^{-1}(T^nX_{d-(k+1)}))\cup$$
$$\cup (C^n_k(X_{d-2})\setminus \pi_{k}^{-1}(T^nX_{d-(k+1)}))\mathbf{\big)},$$
$$I^{\bar p,n}_{k+1}(X_{d-2}):=I^{\bar p}_{k+1}(X_{d-2})\cup C^n_{k+1}(X_{d-2}).$$

The following figure illustrates the second induction step. Recall that the codimension of the bigest non-open stratum is $m$. The figure
shows $(\overline{TX_{d-m}}\setminus T^nX_{d-m})\setminus T^n X_{d-m-1}$ in blue and green and
$(\overline{TX_{d-m-1}}\setminus T^n X_{d-m-1}) \setminus ( \overline{TX_{d-m}}\setminus T^nX_{d-m})$ in yellow. $K^n_{m+1}(X)$ is the union of
\begin{itemize}
  \item the preimage of the blue and green set by $\pi_{m+1}$
  \item the preimage of the yellow set by $\pi_{m+1}$ intersected with the preimage of $I^{\bar p}_{m}X$ by $\rho_{m+1}$
\end{itemize}
$C^n_{m+1}(X_{d-2})$ is the union of
\begin{itemize}
  \item the preimage of the blue and green set by $\pi_{m+1}$
  \item the preimage of the yellow set by $\pi_{m+1}$ intersected with the preimage of $I^{\bar p}_{m}(X_{d-2})$ by $\rho_{m+1}$
\end{itemize}

\begin{figure}[H]
  \centering
  \includegraphics[width=\textwidth]{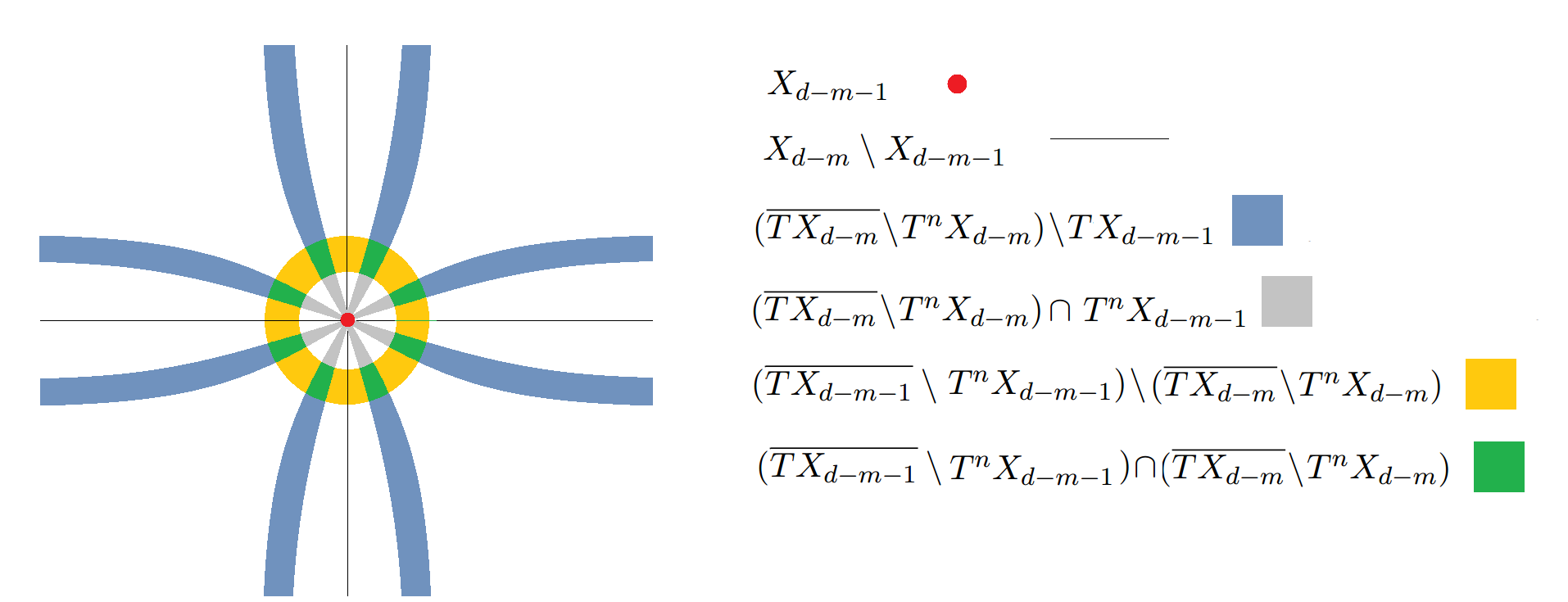}
  \caption{ }\label{fig:sequence_step2}
\end{figure}

Iterate the construction until $k=d$ and define
$$(I^{\bar p,n}X,I^{\bar p,n}(X_{d-2})):=(I^{\bar p,n}_{d}X,I^{\bar p,n}_{d}(X_{d-2})),$$
which is a pair of subsets of $X'$.

Since the closed subsets $K^n_{k}(X),C^n_{k}(X_{d-2})$ are increasingly larger when $n$ increases we have constructed
a sequence of pairs of closed subsets
$$(I^{\bar p}X,I^{\bar p}(X_{d-2}))\subset...\subset(I^{\bar p,n}X,I^{\bar p,n}(X_{d-2}))\subset (I^{\bar p,n+1}X,I^{\bar p,n+1}(X_{d-2}))\subset...$$

An easy inspection on the construction shows:

\begin{proposition}
\label{pro:tortura1}
The previous construction has the following properties.
\begin{enumerate}
 \item The inclusions $I^{\bar p,n}X\subset I^{\bar p,n+1}X$ and $I^{\bar p,n}(X_{d-2})\subset I^{\bar p,n+1}(X_{d-2})$ are strong deformation
 retracts for any $n\in\NN$. If we denote the inclusions by $\nu^n_{X_{d-2}}:I^{\bar p,n}(X_{d-2})\rightarrow I^{\bar p,n}X$,
 $i^n_X:I^{\bar p,n}X\rightarrow I^{\bar p,n+1}X$ and $i^n_{X_{d-2}}:I^{\bar p,n}(X_{d-2})\rightarrow I^{\bar p,n+1}(X_{d-2})$ and the retractions
 by $r^n_X:I^{\bar p,n+1}X\rightarrow I^{\bar p,n}X$ and $r^n_{X_{d-2}}:I^{\bar p,n+1}(X_{d-2})\rightarrow I^{\bar p,n}(X_{d-2})$,
 we have commutative diagrams
 \begin{equation}\label{diagtop}
 \xymatrix{I^{\bar p,n}(X_{d-2})\ar[r]^{\hspace{4mm}\nu^n} \ar[d]^{i_{X_{d-2}}^n} & I^{\bar p,n}X \ar[d]_{i_{X}^n}\\
 I^{\bar p,n+1}(X_{d-2})\ar[r]^{\hspace{5mm}\nu^{n+1}} \ar@/^/[u]^{r^n_{X_{d-2}}} & I^{\bar p,n+1}X \ar@/_/[u]_{r^n_{X}}}
 \end{equation}
 \item We have the equality $I^{\bar p,n}(X_{d-2})=I^{\bar p,n+1}(X_{d-2})\cap I^{\bar p,n}(X)$.
 \item\label{prop3} For any $x\in X \setminus X_{d-2}$, there exist a small contractible neighbourhood $U_x$ of $x$ in $X$ and a natural
 number $n_0$ such that, for every $n> n_0$, $\pi^{-1}(U_x)$ is contained in $I^{\bar p,n}X$ and $\pi^{-1}(U_x) \cap I^{\bar p, n}(X_{d-2})=\emptyset$.
 \item\label{prop4} For any $x\in X_{d-r}\setminus X_{d-r-1}$, there exists a small contractible neighbourhood $U_x$ of $x$ in $X$ and a natural number
 $n_0$ such that, for any $n > n_0$, the diagram (\ref{diagtop}) restricts to the diagram
 \begin{equation}\label{diagprneigh}
 \xymatrix{I^{\bar p,n}(X_{d-2})\cap \pi^{-1}(U_x)\ar[r]^{\hspace{4mm}\nu^n} \ar[d]^{i_{X_{d-2}}^n} & I^{\bar p,n}X \cap \pi^{-1}(U_x)\ar[d]_{i_{X}^n}\\
 I^{\bar p,n+1}(X_{d-2})\cap \pi^{-1}(U_x)\ar[r]^{\hspace{5mm}\nu^{n+1}} \ar@/^/[u]^{r^n_{X_{d-2}}} & I^{\bar p,+1n}X \cap \pi^{-1}(U_x)\ar@/_/[u]_{r^n_{X}}}
 \end{equation}
and we have the equalities $r^n_{X_{d-2}}( I^{\bar p,n+1}(X_{d-2})\cap \pi^{-1}(U_x)) = I^{\bar p,n}(X_{d-2})\cap \pi^{-1}(U_x)$ and
$r^n_{X}( I^{\bar p,n+1}(X)\cap \pi^{-1}(U_x)) = I^{\bar p,n}(X)\cap \pi^{-1}(U_x)$.

Then, the inclusions
 $$\pi^{-1}(U_x)\cap I^{\bar p,n}X\hookrightarrow\pi^{-1}(U_x)\cap I^{\bar p,n+1}X,$$
 $$\pi^{-1}(U_x)\cap I^{\bar p,n}(X_{n-2})\hookrightarrow\pi^{-1}(U_x)\cap I^{\bar p,n+1}(X_{n-2})$$
 are strong deformation retracts.
\end{enumerate}
\end{proposition}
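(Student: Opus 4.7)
The plan is to derive all four claims from the explicit collar-like structure of the sets $K^n_k(X)$ and $C^n_k(X_{d-2})$. The crucial observation, which comes directly from the definition of $T^nX_{d-r}$ in the preceding subsection together with Property (2) of Definition~\ref{def:conical}, is that $\overline{TX_{d-r}} \setminus T^nX_{d-r}$ is homeomorphic as a bundle over $X_{d-r}\setminus X_{d-r-1}$ to $(\partial\overline{TX_{d-r}}\setminus X_{d-r-1}) \times [0, 1-1/(n+1)]$. Enlarging $n$ to $n+1$ just enlarges the interval factor, and this admits a trivial linear deformation retract onto the smaller subinterval.

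For (1), I would define $r^n_X$ as follows. Off the relevant collar regions, $r^n_X$ is the identity. On $I^{\bar p,n+1}X \setminus I^{\bar p,n}X$, which by the inductive construction of $K^n_k(X)$ is the union (over $r$) of the preimages under the appropriate compositions of projections $\pi_k, \rho_k$ of slabs of the form $(\partial\overline{TX_{d-r}}\setminus X_{d-r-1}) \times [1-1/(n+1), 1-1/(n+2)]$, linearly retract the interval coordinate onto $1-1/(n+1)$. The key point to check is that these slab retractions glue consistently on the overlaps between tubular neighborhoods of distinct strata: this is precisely what Properties (3) and (4) of Definition~\ref{def:conical} guarantee, since they say the interval coordinate of the cone over a lower stratum commutes with restriction to the boundary of the cone of a deeper stratum. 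Restricting this same formula to $I^{\bar p,n+1}(X_{d-2})$ yields $r^n_{X_{d-2}}$: the sets $C^n_k(X_{d-2})$ are defined using the very same collar structure cut out by the subspace $I^{\bar p}_k(X_{d-2})$, so the retract preserves them. The commutativity of diagram~(\ref{diagtop}) is then immediate from the definitions of the inclusions.

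For (2), I would induct on $k$ and read off the equality $I^{\bar p,n+1}_k(X_{d-2}) \cap I^{\bar p,n}_k(X) = I^{\bar p,n}_k(X_{d-2})$ from the definitions: the extra piece $C^{n+1}_k(X_{d-2}) \setminus C^n_k(X_{d-2})$ lies over the collar slab $(1-1/(n+1), 1-1/(n+2)]$, which by construction is precisely the portion missing from $I^{\bar p,n}_k(X)$. Containment in one direction is obvious, and in the other direction any point of $I^{\bar p,n+1}_k(X_{d-2})$ lying in $I^{\bar p,n}_k(X)$ must fall into one of the pieces already present at stage $n$.

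For (3) and (4) I would use locality: every modification performed during the construction of $I^{\bar p}_k X$ and the $K^n_k(X), C^n_k(X_{d-2})$ is supported in the tubular neighborhoods $\pi_k^{-1}(TX_{d-r})$. For (3), choose $U_x \subset X \setminus X_{d-2}$ small enough to be contractible and disjoint from a closed conical neighborhood of $X_{d-2}$; then for $n \geq n_0$ sufficiently large, $U_x$ avoids every $T^nX_{d-r}$, so $\pi^{-1}(U_x) = \alpha(U_x) \subset I^{\bar p,n}X$ and $\pi^{-1}(U_x) \cap I^{\bar p,n}(X_{d-2}) = \emptyset$. For (4), pick $U_x$ homeomorphic to $\mathbb{R}^{d-r} \times c^\circ(L)$ by local normal triviality (Definition~\ref{def:strspace}); all the preceding constructions restricted to $\pi^{-1}(U_x)$ inherit a product form along the base direction and a fibrewise cone-over-truncation form along the link. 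The slab retraction of part (1) then restricts to a deformation retract of $\pi^{-1}(U_x) \cap I^{\bar p,n+1}X$ onto $\pi^{-1}(U_x) \cap I^{\bar p,n}X$ (and similarly for $X_{d-2}$), yielding diagram~(\ref{diagprneigh}) and the image equalities.

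The main obstacle I anticipate is not the existence of each individual slab retraction, which is elementary, but the bookkeeping needed to verify that the different slabs, corresponding to tubular neighborhoods of strata of differing codimensions, glue into a well-defined global strong deformation retract. This is where Property (3) of Definition~\ref{def:conical} must be invoked carefully to ensure that the collar parameter at codimension $r_1$ commutes with the cone construction at codimension $r_2 > r_1$; once that compatibility is pinned down, all four statements drop out as bookkeeping.
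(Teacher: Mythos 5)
Your proposal is correct and follows essentially the same strategy as the paper's own (sketch of) proof: identify the slab structure $(\overline{TX_{d-r}}\setminus T^n X_{d-r})\setminus X_{d-r-1}\cong (\partial\overline{TX_{d-r}}\setminus X_{d-r-1})\times[0,1-1/(n+1)]$, use the interval-collapsing retraction there to produce $r^n_X$ and $r^n_{X_{d-2}}$, and choose $U_x$ small enough to avoid all other tubular neighborhoods so that parts (3) and (4) reduce to the collar picture. You flag more explicitly than the paper does the need to verify the slab retractions glue across tubular neighborhoods of different strata via Properties (3)--(4) of Definition~\ref{def:conical}, which is a useful clarification. The one place where your argument diverges slightly, and where some care is warranted, is the choice of $U_x$ in part (4): invoking the local normal triviality homeomorphism $\varphi: U\cong \RR^{d-r}\times c^\circ(L)$ from Definition~\ref{def:strspace} does not automatically give a neighbourhood compatible with the \emph{fixed} conical structure and the fixed tubular neighborhoods $TX_{d-r}$, which is what one needs to know that the $K^n_k$, $C^n_k$ and the retractions restrict in the asserted product form. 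The paper instead takes a ball $V_x$ in the stratum, shrinks it away from $T^{n_0}X_{d-k}$ for all deeper $k>r$, and sets $U_x:=\sigma_{d-r}^{-1}(V_x)\cap T^{n_0-1}X_{d-r}$ using the fixed fibration $\sigma_{d-r}$; this guarantees the needed compatibility by construction, so you should replace your appeal to Definition~\ref{def:strspace} by this choice.
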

\begin{proof}[Sketch of the proof]
For any $r$, we have an equality
$$(\overline{TX_{d-r}}\setminus T^n X_{d-r})\setminus X_{d-r-1}=\partial\overline{TX_{d-r}}\setminus X_{d-r-1}\times [0,1-1/(n+1)].$$

So, there are canonical retractions $\overline{TX_{d-r}}\setminus T^{n+1} X_{d-r} \rightarrow \overline{TX_{d-r}}\setminus T^n X_{d-r}$.
These retractions induce retractions $K^{n+1}_r(X) \rightarrow K^{n}_r(X)$ and $C^{n+1}_r(X_{d-2}) \rightarrow C^{n}_r(X_{d-2})$ which produce the morphisms
$r^n_X$ and $r^n_{X_{d-2}}$ respectively.

Let $x\in X \setminus X_{d-2}$. A small ball $U_x$ around of $x$ verifies property (3) if there exist a natural number $n_0$ such that
$U_x$ does not intersect $T^{n_0} X_{d-r}$ for any $r$. Moreover, this number $n_0$ exists if and only if $U_x \cap X_{d-2}$ is empty.

Let $x\in X_{d-r}\setminus X_{d-r-1}$. A small neighbourhood of $x$, $U_x$, verifying property (4) can be constructed as follows:
let $V_x$ be a ball around $x$ in the stratum  $X_{d-r}\setminus X_{d-r-1}$.  Take $V_x$ small enough so that there exist a natural number $n_0$ such
that $V_x$ does not intersect $T^{n_0} X_{d-k}$ for any $k>r$.
Consider the retraction $\sigma_{d-r}$ appearing in Definition~\ref{def:conical}, (1). Define
$$U_x:=\sigma_{d-r}^{-1}(V_x)\cap T^{n_0-1} X_{d-r}.$$

The following figure shows $U_x$ where $x\in X_{d-m}\setminus X_{d-m-1}$ in Figure \ref{fig:sequence_step2}.

\begin{figure}[H]
  \centering
  \includegraphics[width=\textwidth]{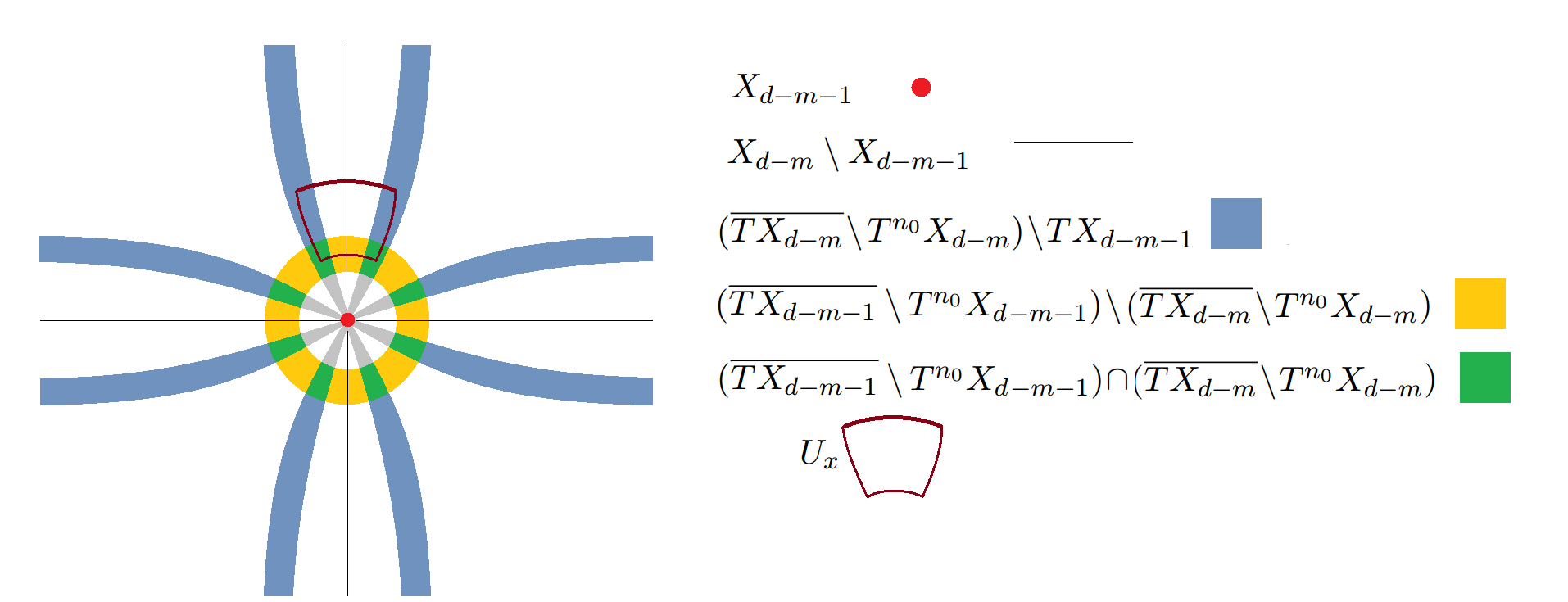}
  \caption{ }\label{fig:pr_neigh}
\end{figure}

\end{proof}

\begin{definition}\label{prneigh}
Let $x$ be any point of $X$. If $x\in X \setminus X_{d-2}$, a \emph{principal neighbourhood} of $x$ is a
small neighbourhood which verifies Property (3) of Proposition \ref{pro:tortura1}. If $x\in X_{d-2}$, a \emph{principal neighbourhood}
of $x$ is a small neighbourhood which verifies Property (4) of Proposition \ref{pro:tortura1}.
\end{definition}

\begin{definition}\label{rprneigh}
Let $x \in X_{d-r}\setminus X_{d-r-1}$. A \emph{carved principal neighbourhood} of $x$ is an open subset $U_x^*$
equal to $U_x \setminus X_{d-r}$ where $U_x$ is a principal neighbourhood of $x$.
\end{definition}

Analogously to Property (4) of Proposition \ref{pro:tortura1}, we have
\begin{proposition}
\label{pro:diagrprneigh2}
If $U_x^*$ is a carved principal neighbourhood of $x \in X_{d-r}\setminus X_{d-r-1}$,
there exist $n_0\in \NN$ such that, for every $n>n_0$, the diagram (\ref{diagtop}) restricts to the diagram
 \begin{equation}\label{diagrprneigh2}
 \xymatrix{I^{\bar p,n}(X_{d-2})\cap \pi^{-1}(U_x^*)\ar[r]^{\hspace{4mm}\nu^n} \ar[d]^{i_{X_{d-2}}^n} & I^{\bar p,n}X \cap \pi^{-1}(U_x^*)\ar[d]_{i_{X}^n}\\
 I^{\bar p,n+1}(X_{d-2})\cap \pi^{-1}(U_x^*)\ar[r]^{\hspace{5mm}\nu^{n+1}} \ar@/^/[u]^{r^n_{X_{d-2}}} & I^{\bar p,n+1}X \cap \pi^{-1}(U_x^*)\ar@/_/[u]_{r^n_{X}}}
 \end{equation}
\end{proposition}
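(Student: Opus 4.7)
\noindent\textbf{Proof proposal for Proposition \ref{pro:diagrprneigh2}.} The plan is to deduce this proposition directly from Property (4) of Proposition \ref{pro:tortura1}, by restricting the already-constructed diagram (\ref{diagprneigh}) further to the open subset
\[
\pi^{-1}(U_x^*) \;=\; \pi^{-1}(U_x) \setminus \pi^{-1}(X_{d-r}) \;\subset\; \pi^{-1}(U_x).
\]
First I would choose $n_0$ large enough that the conclusion of Property (4) of Proposition \ref{pro:tortura1} applies to the principal neighbourhood $U_x$ underlying $U_x^*$. This already produces the diagram (\ref{diagprneigh}) over $\pi^{-1}(U_x)$ for every $n>n_0$. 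What remains is to show that every one of the six arrows in that diagram sends the open part over $U_x^*$ into itself, and in particular restricts to a commutative square having the form (\ref{diagrprneigh2}).

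For the horizontal maps $\nu^n$, $\nu^{n+1}$ and the vertical inclusions $i^n_X$, $i^n_{X_{d-2}}$ there is nothing to prove: they are inclusions of subspaces of $X'$ and they commute strictly with $\pi$, so they automatically preserve the pullback of any subset of $X$, in particular the open subset $U_x^*$. The only non-trivial point is to verify that the retractions $r^n_X$ and $r^n_{X_{d-2}}$ also preserve $\pi^{-1}(U_x^*)$; equivalently, that they do not push a point whose $\pi$-image lies in $U_x\setminus X_{d-r}$ onto a point whose $\pi$-image lies in $X_{d-r}$.

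I expect this to be the main obstacle, and would handle it by unwinding the construction of the retractions as given in the sketch of proof of Proposition \ref{pro:tortura1}. By construction, $r^n_X$ and $r^n_{X_{d-2}}$ are built out of the canonical retractions
\[
\overline{TX_{d-s}}\setminus T^{n+1}X_{d-s} \longrightarrow \overline{TX_{d-s}}\setminus T^{n}X_{d-s}
\]
(for $s$ varying), which act only by rescaling the collar coordinate in $[0,1-1/(n+1)]$ via the product description of Definition \ref{def:conical}(2). Each such collar retraction commutes with the projection $\sigma^\partial_{d-s}$; in particular, it does not move any point to the slice $\{t=1\}$, which is precisely the preimage of $X_{d-s}\setminus X_{d-s-1}$ under $\sigma_{d-s}$. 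Lifting through $\pi$, the resulting retractions on $K^{n+1}_s(X)$ and $C^{n+1}_{s}(X_{d-2})$ therefore preserve the preimages $\pi^{-1}(X_{d-s})$, and in particular preserve $\pi^{-1}(X_{d-r})$. Since the inclusions glueing the collar pieces together are identities outside these collars, the full retractions $r^n_X$ and $r^n_{X_{d-2}}$ also preserve $\pi^{-1}(X_{d-r})$ and hence its complement.

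Combining these observations, all six arrows in diagram (\ref{diagprneigh}) restrict to $\pi^{-1}(U_x^*)$, and the resulting square is commutative as a restriction of a commutative square. This yields the desired diagram (\ref{diagrprneigh2}) for every $n>n_0$ and finishes the proof.
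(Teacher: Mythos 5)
Your argument is correct, and it matches the intent of the paper, which gives no proof beyond the remark ``Analogously to Property (4) of Proposition~\ref{pro:tortura1}''; restricting the diagram over $\pi^{-1}(U_x)$ supplied by Property (4) to the open part $\pi^{-1}(U_x^*)=\pi^{-1}(U_x)\setminus\pi^{-1}(X_{d-r})$ and then checking that the collar retractions only move points in the $t$-direction towards $0$ (away from the stratum) is precisely the expected route. Two small slips in phrasing, neither of which affects the argument: the slice $\{t=1\}$ is the image of the vertex section, not the preimage of $X_{d-s}\setminus X_{d-s-1}$ under $\sigma_{d-s}$ (that preimage is the whole tube); and what the collar retractions must be shown to preserve for your conclusion is the \emph{complement} of $\pi^{-1}(X_{d-r})$ --- i.e.\ that no point with $\pi$-image outside $X_{d-r}$ is pushed onto it --- rather than $\pi^{-1}(X_{d-r})$ itself, though this is exactly the content of your observation that the retractions never reach $t=1$.
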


For the next propositions recall that $\sigma_{d-r}^\partial$ is the fibration of Definition~\ref{def:conical}, (2).

\begin{proposition}
\label{cohprng}
If $U_x$ is a principal neighbourhood of $x\in X_{d-r}\setminus X_{d-r-1}$ for any $r>0$ and $n\in\NN$ big enough,
the cohomology group
$$H^i(I^{\bar p,n}X \cap \pi^{-1}(U_x),I^{\bar p,n}X_{d-2} \cap \pi^{-1}(U_x);\QQ)$$
is $0$ if $i\leq \bar q(r)$ and isomorphic to the $i$-th cohomology group of the pair
$(\sigma^\partial_{d-r})^{-1}(x)\subset (I^{\bar p}_{r-1}X,I^{\bar p}_{r-1}(X_{d-2}))$
if $i> \bar q(r)$.
\end{proposition}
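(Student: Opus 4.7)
The plan is to localize the computation near $x$, identify the local pair up to homotopy with the pair mapping cone of the truncation morphism on the link pair at $x$, and then apply relative Mayer--Vietoris together with the defining property of the $\bar q(r)$-homology truncation. The main delicate point is the pair-level homotopy identification, which relies on the compatibility of the chosen truncation with the conical structure.

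I first reduce the computation to step $r$ of the inductive construction. By the construction of principal neighbourhoods in the proof of Proposition~\ref{pro:tortura1}, $U_x\subset T^{n_0-1}X_{d-r}$ and $U_x\cap T^{n_0}X_{d-k}=\emptyset$ for every $k>r$; in particular $U_x\cap X_{d-r-1}=\emptyset$. Consequently none of the inductive steps beyond step $r$ modify the portion of $X'$ lying over $U_x$, so the factorization $\pi=\pi_r\circ\rho_{r+1}\circ\cdots\circ\rho_d$ yields canonical identifications of $(I^{\bar p,n}X,I^{\bar p,n}(X_{d-2}))\cap\pi^{-1}(U_x)$ with $(I^{\bar p,n}_r X,I^{\bar p,n}_r (X_{d-2}))\cap\pi_r^{-1}(U_x)$ for every $n$.

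Next, I identify this local pair with a mapping cone. Using the explicit formula for $I^{\bar p}_r X$ recalled at the end of the inductive step and the inclusion $U_x\subset TX_{d-r}$, which makes the contribution from $(I^{\bar p}_{r-1}X,I^{\bar p}_{r-1}(X_{d-2}))\setminus\pi_{r-1}^{-1}(TX_{d-r})$ vanish, the local pair $(I^{\bar p,n}_r X,I^{\bar p,n}_r(X_{d-2}))\cap \pi_r^{-1}(U_x)$ decomposes for $n$ large as the union of the truncated cone $\cyl((\sigma^\partial_{d-r})_{\leq\bar q(r)})$ restricted over a contractible ball $V_x\subset X_{d-r}\setminus X_{d-r-1}$, the carved annular region $K^n_r(X)\cap\pi_r^{-1}(U_x)$ fibred over $V_x$ with fibre the original link pair times an interval, and the mapping cylinder $\cyl(\phi^\partial_{d-r})$ also restricted to $V_x$, which interpolates between these two via the truncation morphism. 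Since $V_x$ is contractible and the truncation is compatible with the conical structure by Proposition~\ref{prop:fibcom}, this pair deformation retracts fibrewise onto the pair mapping cone $(Cf_1,Cf_2)$ of the truncation of link pairs at $x$
$$f=(f_1,f_2)\colon (\sigma^\partial_{d-r})^{-1}(x)_{\leq\bar q(r)}\longrightarrow (\sigma^\partial_{d-r})^{-1}(x),$$
computed inside $(I^{\bar p}_{r-1}X,I^{\bar p}_{r-1}(X_{d-2}))$, where $Cf_j=Z_j\cup_{f_j}CY_j$ with $(Z_1,Z_2):=(\sigma^\partial_{d-r})^{-1}(x)$ and $(Y_1,Y_2):=(Z_1,Z_2)_{\leq\bar q(r)}$.

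Finally, I apply relative Mayer--Vietoris to the cover $Cf_j=Z_j\cup CY_j$ with $Z_j\cap CY_j=Y_j$. Since each $CY_j$ is contractible, $H^\ast(CY_1,CY_2;\QQ)=0$, and the resulting sequence collapses to
$$\cdots\to H^i(Cf_1,Cf_2;\QQ)\to H^i(Z_1,Z_2;\QQ)\xrightarrow{f^\ast} H^i(Y_1,Y_2;\QQ)\to H^{i+1}(Cf_1,Cf_2;\QQ)\to\cdots.$$
By Definition~\ref{def:fibrewisetruncation} and universal coefficients over $\QQ$, $f^\ast$ is an isomorphism for $i\leq\bar q(r)$ and $H^i(Y_1,Y_2;\QQ)=0$ for $i>\bar q(r)$. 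A short diagram chase then yields $H^i(Cf_1,Cf_2;\QQ)=0$ for $i\leq\bar q(r)$ and $H^i(Cf_1,Cf_2;\QQ)\cong H^i((\sigma^\partial_{d-r})^{-1}(x);\QQ)$ for $i>\bar q(r)$, as required.
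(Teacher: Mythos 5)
The paper does not supply a proof of Proposition~\ref{cohprng}; Section~\ref{sec:sequence} explicitly leaves such verifications to the reader, so there is no paper argument to compare against. Your three-step argument --- localize to the inductive step that treats the stratum through $x$, identify the local pair up to homotopy with the pair mapping cone $(Cf_1,Cf_2)$ of the link-pair truncation, and compute via relative Mayer--Vietoris using $H^\ast(CY_1,CY_2;\QQ)=0$ together with the defining property of the $\bar q(r)$-truncation transferred to cohomology by universal coefficients over $\QQ$ --- is correct and is the natural route; the resulting long exact sequence gives precisely the stated vanishing for $i\leq\bar q(r)$ and the isomorphism with $H^i((\sigma^\partial_{d-r})^{-1}(x);\QQ)$ for $i>\bar q(r)$.

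Two small imprecisions are worth flagging. First, for $k>r$ the projections $\rho_{k+1}$ have nontrivial (though contractible) fibres over $U_x\cap TX_{d-k}$, which is in general nonempty even though $U_x\cap T^{n_0}X_{d-k}=\emptyset$; consequently $\rho_{r+1}\circ\cdots\circ\rho_d$ induces a \emph{homotopy equivalence} of pairs between $(I^{\bar p,n}X,I^{\bar p,n}(X_{d-2}))\cap\pi^{-1}(U_x)$ and $(I^{\bar p,n}_r X,I^{\bar p,n}_r(X_{d-2}))\cap\pi_r^{-1}(U_x)$ for $n$ large, not a literal identification, and the ``for every $n$'' should read ``for $n>n_0$''. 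Second, the inference $U_x\cap T^{n_0}X_{d-k}=\emptyset$ from $V_x\cap T^{n_0}X_{d-k}=\emptyset$ needs the compatibility $\sigma^\partial_{d-k}\circ\sigma_{d-r}=\sigma^\partial_{d-k}$ of Property~(4) of Definition~\ref{def:conical}, since $U_x=\sigma_{d-r}^{-1}(V_x)\cap T^{n_0-1}X_{d-r}$ is a tube and not contained in the stratum. Both gaps are straightforward to fill and do not affect the correctness of the cohomology computation, which only needs homotopy equivalence.
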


\begin{proposition}
\label{cohrprng}
If $U_x^*$ is a carved principal neighbourhood of $x\in X_{d-r}\setminus X_{d-r-1}$ for any $r>0$ and $n\in\NN$ big enough,
the cohomology group
$$H^i(I^{\bar p,n}X \cap \pi^{-1}(U_x),I^{\bar p,n}X_{d-2} \cap \pi^{-1}(U_x);\QQ)$$
is isomorphic to the $i$-th cohomology group of the pair $(\sigma^\partial_{d-r})^{-1}(x) \subset (I^{\bar p}_{r-1}X,I^{\bar p}_{r-1}(X_{d-2}))$
for every $i\in \ZZ$.
\end{proposition}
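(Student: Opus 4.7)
The plan is to reduce the statement to a homotopy equivalence of pairs: exhibit $(I^{\bar p,n}X\cap\pi^{-1}(U_x^*),I^{\bar p,n}(X_{d-2})\cap\pi^{-1}(U_x^*))$ as homotopy equivalent to $(\sigma^\partial_{d-r})^{-1}(x)$ sitting inside $(I^{\bar p}_{r-1}X,I^{\bar p}_{r-1}(X_{d-2}))$, from which the isomorphism in cohomology is immediate by homotopy invariance. The argument parallels Proposition~\ref{cohprng} but the carving removes precisely the vertex section of the cone, which is what forced the vanishing in low degrees in the principal case.

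First I would fix $n$ large enough so that Proposition~\ref{pro:diagrprneigh2} applies. This stabilizes the homotopy type: for $n'\geq n$ the inclusion at level $n'$ restricts to a strong deformation retract of pairs on $\pi^{-1}(U_x^*)$, so it suffices to work at one sufficiently large level.

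Next I would analyze the structure of $I^{\bar p,n}X\cap\pi^{-1}(U_x^*)$ using the inductive construction of Section~\ref{sec:top} together with the conical structure axioms. Because $U_x$ is a principal neighbourhood of $x\in X_{d-r}\setminus X_{d-r-1}$, it only meets strata whose closure contains $x$, of codimension at most $r$. The cone $cyl((\sigma^\partial_{d-r})_{\leq\bar q(r)})$ introduced at the $r$-th step of the construction has its vertex section inside $X_{d-r}$; the carving $U_x^*=U_x\setminus X_{d-r}$ excises exactly this vertex. What remains over $U_x^*$ in $I^{\bar p,n}X$ is the mapping cylinder $cyl(\phi^\partial_{d-r})$ restricted to the fibre over $x$, together with the outer shell $K^n_r\cap\pi^{-1}(U_x^*)$ and the analogous pieces contributed by deeper strata. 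By Property~(4) of Definition~\ref{def:conical} and Property~(\ref{prop4}) of Proposition~\ref{pro:tortura1}, these additional pieces deformation retract compatibly onto the restriction over $x$ while respecting the subspace $I^{\bar p,n}(X_{d-2})$.

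The key step is the observation that $cyl(\phi^\partial_{d-r})$ deformation retracts onto the image of $\phi^\partial_{d-r}$, which above the point $x$ is precisely the pair $(\sigma^\partial_{d-r})^{-1}(x)\subset I^{\bar p}_{r-1}X$; since the chosen truncation is a morphism of pairs, this retraction preserves $I^{\bar p}_{r-1}(X_{d-2})$. Composing with the previous collapses gives the required pair homotopy equivalence, and the stated isomorphism holds for every $i\in\ZZ$ because the cone apex that produced the truncation in Proposition~\ref{cohprng} is no longer present. The main obstacle is the bookkeeping: carefully checking that the successive deformation retracts associated with the various strata met by $U_x$ can be performed compatibly as retracts of pairs. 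I would handle this by reverse induction on codimension using the compatibility clauses of Definition~\ref{def:conical} and the fibrewise-over-strata structure of the $I^{\bar p}_k$, thereby upgrading the scalar statement of Proposition~\ref{cohprng} to its carved counterpart.
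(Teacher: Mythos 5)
The paper itself does not supply a proof of Proposition~\ref{cohrprng} (nor of the companion Proposition~\ref{cohprng}); both are stated without argument, so there is no canonical proof to compare against. Your strategy is the natural one for these two propositions and is, in outline, sound: stabilize $n$ via Proposition~\ref{pro:diagrprneigh2}, exhibit a pair deformation retraction of $\bigl(I^{\bar p,n}X\cap\pi^{-1}(U_x^*),\,I^{\bar p,n}(X_{d-2})\cap\pi^{-1}(U_x^*)\bigr)$ onto the link--fibre pair $(\sigma^\partial_{d-r})^{-1}(x)$ inside $\bigl(I^{\bar p}_{r-1}X,\,I^{\bar p}_{r-1}(X_{d-2})\bigr)$, and deduce the cohomology isomorphism by homotopy invariance. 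Note that the displayed $\pi^{-1}(U_x)$ in the statement is a typo for $\pi^{-1}(U_x^*)$ (this is what is actually used in the proof of Theorem~\ref{th:existints}), and you have correctly read it that way.

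One point in your description is off and deserves care if you flesh this out. You say that what remains over $U_x^*$ is ``the mapping cylinder $cyl(\phi^\partial_{d-r})$ restricted to the fibre over $x$, together with the outer shell $K^n_r\cap\pi^{-1}(U_x^*)$.'' But the paper's remark identifies $cyl(\phi^\partial_{d-r})$ with $\pi^{-1}(\partial\overline{TX_{d-r}}\setminus X_{d-r-1})$, which lies over the \emph{outer boundary} of the tube; since $U_x^*\subset T^{n_0-1}X_{d-r}\setminus X_{d-r}$ is interior to the tube, $cyl(\phi^\partial_{d-r})$ does not in fact meet $\pi^{-1}(U_x^*)$. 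What actually sits over $U_x^*$ is, on the inner portion $t\in(1-\tfrac{1}{n+1},1)$, the truncated cone minus its vertex section, i.e.\ $(\sigma^\partial_{d-r})_{\leq\bar q(r)}^{-1}(V_x)\times(\text{open interval})$; and on the collar portion $t\in(1-\tfrac{1}{n_0},\,1-\tfrac{1}{n+1}]$, the shell $K^n_r\cap\pi^{-1}(U_x^*)\cong cyl(\phi^\partial_{d-r})|_{V_x}\times(\text{interval})$, a thickened copy of the truncation mapping cylinder, not the cylinder itself. The retraction then proceeds in two stages: collapse the inner (truncated) part radially onto its boundary at $t=1-\tfrac{1}{n+1}$, where it is absorbed into the $\{0\}$-end of the cylinder fibre; then collapse the $t$-direction of the thickened cylinder, leaving $cyl(\phi^\partial_{d-r})|_{V_x}$, which finally retracts onto its target $(\sigma^\partial_{d-r})^{-1}(V_x)$ and then onto the fibre over $x$. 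All of this is compatible with the subspace $I^{\bar p,n}(X_{d-2})$ because the truncations and the cone/cylinder constructions are morphisms of pairs. With that correction your argument goes through, and the contrast with Proposition~\ref{cohprng}---that excising $X_{d-r}$ removes precisely the vertex section which produced the truncation there---is exactly the right way to view it.

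One further small caution: you invoke ``analogous pieces contributed by deeper strata,'' but by construction $U_x$ is taken disjoint from $T^{n_0}X_{d-k}$ for every $k>r$, so over $U_x^*$ the deeper-stratum modifications are vacuous; the only bookkeeping one genuinely needs is over the \emph{shallower} strata $X_{d-r'}\setminus X_{d-r'-1}$ with $r'<r$ that accumulate at $x$, and there the compatibility clauses of Definition~\ref{def:conical} (properties (3) and (4)) let you perform the retraction fibrewise over the base, as you indicate.
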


\section{Sheafification}

\label{sec:sheafification}

\subsection{Sheaf of cubical singular cochains}
\label{sec:cubical}
In this section, every topological space is hereditally paracompact and locally contractible. In particular, the topological pseudomanifold and the intersection
spaces of the previous section verify these properties.

In order to produce constructible complexes whose hypercohomology compute the cohomology of intersection spaces we use sheaves of singular cohomology
cochains. For technical reasons cubical cochains, as developed by Massey in \cite[Chapters 7 and 12]{Mas}, adapt best to our construction. Here
we sketch very briefly the main points we need; the reader should check \cite{Mas} for complete definitions and proofs.

We denote by $(C_\bullet(X,\QQ),\partial)$ the complex of cubical chains of a space $X$. The group $C_i(X,\QQ)$ is defined to be the quotient
$$C_i(X,\QQ):=Q_i(X, \QQ)/D_i(X,\QQ),$$
where $Q_i(X, \QQ)$ is the vector space spanned by maps from the $i$-cube to $X$ and $D_i(X,\QQ)$ is the subspace of degeneratemaps (maps which are
constant in one direction of the cube). The differential $\partial$ is defined in the usual way. The functor given by the homology of the complex
$(C_\bullet(\centerdot,\QQ),\partial)$ defines a homology theory with coefficients in $\QQ$.

Let $(C^\bullet(X,\QQ),\delta)$ be the complex of cubical cochains of $X$. It is by definition the dual of $(C_\bullet(X,\QQ),\partial)$, and hence
$C^i(X,\QQ)$ is the subspace of $Hom(Q_i(X, \QQ),\QQ)$ formed by elements vanishing at $D_i(X,\QQ)$. The functor given by the cohomology of the complex
$(C^\bullet(\centerdot,\QQ),\partial)$ defines a cohomology theory with coefficients in $\QQ$.

Let $f:X\to Y$ be a continuous map. We denote by
$$f_{\# \, i}:C_i(X,\QQ) \rightarrow C_i (Y,\QQ),$$
$$f^{\# \, i}:C^i(Y,\QQ) \rightarrow C^i (X,\QQ)$$
the associated transformations of complexex of cubical chains and cochains. They form morphisms of complexes
$$f_{\# }:(C_\bullet(X,\QQ),\partial) \rightarrow (C_\bullet (Y,\QQ),\partial),$$
$$f^{\# }:(C^\bullet(Y,\QQ),\delta) \rightarrow (C^\bullet (X,\QQ),\delta).$$

Let $f,g:X\rightarrow X$ two continuous maps. If $f$ and $g$ are homotopic, then $f_{\#}$ and $g_{\#}$ are homotopic morphism of complexes,
and the same
happens for $f^{\#}$ and $g^{\#}$. We need for later use an explicit form of a homotopy of complexes between $f^{\#}$ and $g^{\#}$.
Let
$$\rho: C_i (X,\QQ) \rightarrow C_{i+1} (I \times X,\QQ)$$
be the morphism such that,
if $\sigma_i$ is a singular $i$-cube in $X$, $\rho (\sigma_i)=Id_I \times \sigma_i$ (the homomorphism $\rho$ takes degenerate cubical chains to
degenerate cubical chains). Let $H:I \times X \rightarrow Y$ be an homotopy between $f$ and $g$, that is, $H_0 =f$ and $H_1=g$.
A homotopy between the morphism of complexes $f_\#$ and $g_\#$ is given by $H_\# \circ \rho$.
The dual morphism of $H_\# \circ \rho$ is an homotopy between $f^\#$ and $g^\#$.

\begin{lemma}\label{homdeg}
Let $h:Z\rightarrow X$ a continuous map. If $(H_t)_{|\im(h)}$ is independent of $t\in I$, then
for every $\sigma_i \in Q_i(Z,\QQ)$, $H_\# \circ \rho \circ h_\#(\sigma_i)$ is degenerate.
\end{lemma}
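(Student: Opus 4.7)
The plan is to unwind the definitions and check that the cube produced by $H_{\#}\circ\rho\circ h_{\#}$ is constant in the coordinate coming from the interval $I$.

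First I would fix $\sigma_i\in Q_i(Z,\QQ)$, regarded as a map $\sigma_i:[0,1]^i\to Z$. Then $h_{\#}(\sigma_i)=h\circ\sigma_i:[0,1]^i\to X$, and applying $\rho$ gives the $(i+1)$-cube
$$\rho\circ h_{\#}(\sigma_i)=Id_I\times(h\circ\sigma_i):[0,1]\times[0,1]^i\longrightarrow I\times X,\qquad (t,u)\mapsto(t,h(\sigma_i(u))).$$
Post-composing with $H$ yields
$$H_{\#}\circ\rho\circ h_{\#}(\sigma_i):[0,1]\times[0,1]^i\longrightarrow Y,\qquad (t,u)\mapsto H(t,h(\sigma_i(u))).$$

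The key observation is that for every $u\in[0,1]^i$ the point $h(\sigma_i(u))$ lies in $\im(h)$, and by hypothesis $H_t$ restricted to $\im(h)$ does not depend on $t\in I$. Hence $H(t,h(\sigma_i(u)))=H(0,h(\sigma_i(u)))$ for all $t\in[0,1]$, so the $(i+1)$-cube above is independent of its first coordinate. By the definition of degeneracy (a cube is degenerate when it is constant in at least one of the cube directions), this cube lies in $D_{i+1}(Y,\QQ)$, which is what we wanted.

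I do not foresee any real obstacle: the statement is essentially a direct computation, and the only thing to be careful about is matching the indexing convention of $\rho$ (namely, that the interval coordinate is inserted as the \emph{first} coordinate of the $(i+1)$-cube) with the convention that a cube is degenerate if it is constant in \emph{some} coordinate direction. Once this bookkeeping is in place, the hypothesis that $H_t|_{\im(h)}$ is stationary gives the conclusion immediately, and the argument extends by $\QQ$-linearity from generators $\sigma_i\in Q_i(Z,\QQ)$ to arbitrary chains.
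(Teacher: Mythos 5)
Your proof is correct. The paper states Lemma~\ref{homdeg} without a proof, evidently regarding it as a direct consequence of the definitions, and your computation is exactly the intended argument: unwinding $H_\#\circ\rho\circ h_\#$ on a generating $i$-cube $\sigma_i$ yields the $(i+1)$-cube $(t,u)\mapsto H(t,h(\sigma_i(u)))$, which is constant in the $t$-direction because $H_t|_{\im(h)}$ is stationary, hence lies in $D_{i+1}(Y,\QQ)$, and the claim for arbitrary $\sigma_i\in Q_i(Z,\QQ)$ follows by $\QQ$-linearity since $D_{i+1}(Y,\QQ)$ is a subspace.
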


Now we produce a sheafification of cubical cochains. This is an adaptation of the sheafification of singular chains appearing in~\cite{Ram}.

\begin{definition}
For every $i\in \ZZ_{\geq 0}$, let $C^i$ be the presheaf of vector spaces
$$\xymatrix{U \ar@{~>}[r] & C^i(U,\QQ)}$$
where the restriction morphisms are the obvious ones.

The {\it sheaf of cubical singular $i$-cochains} $\mathcal{C}_X^i$ is defined to be the sheafification of $C^i$.
\end{definition}

For every $i\in \ZZ_{\geq 0}$, let $C^i_\circ(X)$ be the vector subspace of $C^i(X)$ given by the set of cochains $\xi^i \in C^i(X)$ such that
there exist an open covering $\{U_j\}_{j\in J}$ of $X$ such that $\xi^i_{|U_j}=0$ for every $j\in J$. As in~\cite{Ram} one shows that the
sheafification is defined by
$$\mathcal{C}_X^i (U)= C^i(U)/ C^i_\circ(U).$$

At the level of sheaves we have functoriality as well. Let $f:X \rightarrow Y$ be a continuous map.
Then, $f$ induces a morphism of complexes of sheaves on $Y$
$$f^\#:\mathcal{C}_Y^\bullet\rightarrow f_*\mathcal{C}_X^\bullet.$$

As one expects, if $X$ is contractible then
$$H^i (\mathcal{C}^i (X)) \cong  \left\{ \begin{array}{ccc} \QQ & \text{if} & i=0 \\ 0 & \text{if} & i\neq 0 \end{array} \right.$$

This implies that the complex of sheaves $\mathcal{C}_X^\bullet$ is a resolution of the constant sheaf $\QQ_X$.

Moreover, for every $i\in \ZZ_{\geq 0}$, the sheaf $\mathcal{C}^i$ is flabby. Indeed,
it is enough to prove the restriction morphisms of the presheaf, $C^i(X,\QQ) \rightarrow C^i(U,\QQ)$, are surjective for every open subset $U\subset X$.
Given $\xi \in C^i(U,\QQ)$, let $\xi_X \in C^i(X,\QQ)$ be the linear morphism $C_i(X,\QQ) \rightarrow \QQ$ such that, for every singular
$i$-cube $\sigma$ in $X$, we have
$$\xi_X (\sigma)=\left\{ \begin{array}{ccc} \xi(\sigma) & \text{if} & \im (\sigma) \subset U \\ 0 & \text{if} & \im (\sigma) \not \subset U \end{array}\right.$$
Then, $(\xi_X)_{|U}=\xi$.

\begin{corollary}
For every $i\in \ZZ_{\geq 0}$, the $i$-th cohomology group $H^i(X, \QQ)$ is isomorphic to $i$-th group of cohomology of the complex
$\mathcal{C}_X^\bullet(X)$.
\end{corollary}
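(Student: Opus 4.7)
The plan is to deduce this corollary directly from the two properties of $\mathcal{C}_X^\bullet$ that have just been established in the text, namely: (a) $\mathcal{C}_X^\bullet$ is a resolution of the constant sheaf $\QQ_X$, and (b) each $\mathcal{C}_X^i$ is flabby. Once these two facts are in hand, the argument is purely formal and reduces to invoking two classical results.

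First, I would observe that flabby sheaves are acyclic for the global sections functor $\Gamma(X,-)$ on a paracompact Hausdorff space. Therefore the flabby resolution
$$0\to \QQ_X \to \mathcal{C}_X^0 \to \mathcal{C}_X^1 \to \mathcal{C}_X^2 \to \cdots$$
computes the right derived functors of $\Gamma(X,-)$ applied to $\QQ_X$. In other words, there is a canonical isomorphism
$$H^i(X,\QQ_X)\;\cong\;H^i\bigl(\Gamma(X,\mathcal{C}_X^\bullet)\bigr)\;=\;H^i(\mathcal{C}_X^\bullet(X)),$$
where the left-hand side denotes sheaf cohomology of the constant sheaf $\QQ_X$.

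Second, I would invoke the classical comparison theorem between sheaf cohomology with constant coefficients and singular cohomology, valid for any hereditarily paracompact, locally contractible Hausdorff space $X$ (the standing hypotheses in Section~\ref{sec:cubical}): one has
$$H^i(X,\QQ_X)\;\cong\;H^i(X,\QQ),$$
where the right-hand side is singular cohomology as defined at the start of this subsection. Chaining the two isomorphisms yields the corollary.

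The only possible subtlety is making sure the classical comparison between sheaf and singular cohomology applies to \emph{cubical} singular cohomology rather than to the simplicial version; this is harmless since cubical and simplicial singular cohomology with $\QQ$ coefficients agree on any space (as in~\cite{Mas}). An alternative, more self-contained route would be to analyze the short exact sequence $0\to C_\circ^\bullet(X)\to C^\bullet(X)\to \mathcal{C}_X^\bullet(X)\to 0$ and prove directly that $C_\circ^\bullet(X)$ is acyclic by a barycentric-subdivision-style argument; this would be the technical heart of the alternative proof, but given that the comparison with sheaf cohomology is standard, the route above is both shorter and better adapted to the sheaf-theoretic perspective adopted in the rest of the paper.
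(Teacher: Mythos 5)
The paper itself leaves the corollary unproved; it is stated immediately after establishing that $\mathcal{C}_X^\bullet$ is a flabby resolution of $\QQ_X$, and your proof is exactly the natural completion of that setup: flabbiness gives $H^i(\mathcal{C}_X^\bullet(X))\cong H^i(X,\QQ_X)$, and the standard comparison between sheaf cohomology of the constant sheaf and singular cohomology on the hereditarily paracompact, locally contractible spaces fixed at the start of the section gives $H^i(X,\QQ_X)\cong H^i(X,\QQ)$. Your argument is correct and is the approach the paper implicitly intends; the only small over-precision is that flabby sheaves are $\Gamma$-acyclic on an arbitrary topological space, so no paracompactness is needed for that step.
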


\subsection{The intersection space constructible complex}
\label{sec:ISCC}

Let $X$ be a topological pseudomanifold with stratification:
$$ X=X_d\supset X_{d-2} \supset ... \supset X_0 \supset X_{-1}=\emptyset $$
and a conical structure given by the stratification
such that there exist a set of choices so that the inductive construction of the intersection space of $X$ is not obstructed.
Let $X'$ be the homotopy model of $X$ and $\pi:X'\to X$ the homotopy equivalence.
Let $(I^{\bar p, \, n} X, I^{\bar p, \, n} (X_{d-2}))$ with $n\in\NN$ be the associated sequence of intersection space pairs and
$$j^n:I^{\bar p, \, n} X \rightarrow X'$$
$$\mu^n:I^{\bar p, \, n}(X_{d-2}) \rightarrow X'$$
the canonical inclusions.

In order to lighten the formulas appearing in this section we denote by  $\mathcal{C}_X^{n, \, \bullet}$ and $\mathcal{C}_{X_{d-2}}^{n, \, \bullet}$
the complex of sheaves of cubical singular chains in
$I^{\bar p, \, n} X$ and $I^{\bar p, \, n}(X_{d-2})$ respectively.

\begin{proposition}
For every $n\in \NN$, there exist a commutative diagram
\begin{equation}\label{morsh}
\xymatrix{j^{n+1}_* \mathcal{C}_X^{n+1, \, \bullet}  \ar[r]^{\nu^{n+1 \#}} \ar[d]^{i^{n \#}_X} &
\mu^{n+1}_* \mathcal{C}_{X_{d-2}}^{n+1, \, \bullet} \ar[d]^{i^{n \#}_{X_{d-2}}} \\
j^{n}_* \mathcal{C}_X^{n, \, \bullet} \ar[r]^{\nu^{n \#}} & \mu^{n}_* \mathcal{C}_{X_{d-2}}^{n, \, \bullet}}
\end{equation}
where all the morphisms are surjective.
\end{proposition}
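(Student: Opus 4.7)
The plan is to derive both the commutativity and the surjectivity of diagram~(\ref{morsh}) from properties of the topological square~(\ref{diagtop}) of Proposition~\ref{pro:tortura1}, in which all four arrows are inclusions of subspaces of $X'$. The ingredients will be contravariance and functoriality of the cubical cochain sheaf under pushforward, together with an extension-by-zero construction for cubical cochains through an inclusion.

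For commutativity, I would first apply the contravariant functor $\mathcal{C}^\bullet_{(-)}$ to the topological identity $i^n_X \circ \nu^n = \nu^{n+1} \circ i^n_{X_{d-2}}$ to obtain a commutative square of morphisms of complexes of sheaves on $I^{\bar p, n+1}X$. Pushing this square forward to $X'$ via $j^{n+1}_{\ast}$ and using the four identifications
\[
j^{n+1} \circ i^n_X = j^n, \quad j^{n+1} \circ \nu^{n+1} = \mu^{n+1}, \quad \mu^{n+1} \circ i^n_{X_{d-2}} = \mu^n, \quad j^n \circ \nu^n = \mu^n,
\]
together with the functoriality $(fg)_{\ast} = f_{\ast} g_{\ast}$, one identifies the four terms of the resulting square with those appearing in~(\ref{morsh}), so commutativity of~(\ref{morsh}) follows from commutativity of~(\ref{diagtop}).

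For surjectivity, it suffices to show that, for any inclusion of topological spaces $\iota \colon A \hookrightarrow B$, the associated morphism of sheaves $\iota^{\#} \colon \mathcal{C}^i_B \to \iota_{\ast} \mathcal{C}^i_A$ is surjective. I would argue at the presheaf level by extension by zero: given an open $V \subseteq B$ and $\xi \in C^i(V \cap A, \QQ)$, define $\tilde\xi \in C^i(V, \QQ)$ by $\tilde\xi(\sigma) := \xi(\sigma)$ whenever $\im(\sigma) \subseteq V \cap A$ and $\tilde\xi(\sigma) := 0$ otherwise. A degenerate cube $\sigma$ in $V$ is killed either because $\xi$ vanishes on degenerate cubes in $V \cap A$ or by the definition of $\tilde\xi$; hence $\tilde\xi$ is a genuine cubical cochain, and by construction $\tilde\xi|_{V \cap A} = \xi$. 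Surjectivity of presheaf morphisms is preserved by sheafification (as a stalkwise condition) and by pushforward, yielding surjectivity of each arrow in~(\ref{morsh}).

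The only part requiring care is the bookkeeping needed to identify sheaves on $X'$ after successive pushforwards; the substance of the argument, namely extension-by-zero of cubical cochains across an inclusion, is a standard feature of singular theories and no serious obstacle is expected at this step.
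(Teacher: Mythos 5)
Your proof is correct and follows essentially the same route as the paper: commutativity of~(\ref{morsh}) is obtained by applying the cubical cochain functor to the square of inclusions from Proposition~\ref{pro:tortura1} (the paper phrases this by restricting that square to each open $U\subset X'$ and reading off the square of cochain groups, which is the same bookkeeping you do via pushforwards), and surjectivity follows from the extension-by-zero construction for cubical cochains, which the paper invokes as a one-liner having established it earlier for flabbiness. One small caution about your last sentence: pushforward does not in general preserve surjectivity of \emph{sheaf} morphisms (stalkwise surjectivity is lost under $f_\ast$), but the extension-by-zero argument in fact gives surjectivity on \emph{all sections} of the relevant presheaves, and that stronger property does survive sheafification and $f_\ast$ and then implies sheaf surjectivity, so the conclusion stands; the phrasing just conflates the two notions.
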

\begin{proof}
For every open subset $U \subset X'$, the inclusions of Diagram~(\ref{diagtop}) restrict to a diagram
$$
\xymatrix{I^{\bar p,n}(X_{d-2})\cap U\ar[r]^{\hspace{4mm}\nu^n} \ar[d]^{i_{X_{d-2}}^n} & I^{\bar p,n}X \cap U\ar[d]_{i_{X}^n}\\
I^{\bar p,n+1}(X_{d-2})\cap U\ar[r]^{\hspace{5mm}\nu^{n+1}}  & I^{\bar p,n+1}X \cap U}
$$

So, we have the following diagram between the cubical cochain groups:
$$
\xymatrix{C^i(I^{\bar p,n+1}X \cap U, \QQ)\ar[r]^{\nu^{n+1 \#}} \ar[d]^{i^{n \#}_X} & C^i(I^{\bar p,n+1}(X_{d-2})\cap U, \QQ)\ar[d]^{i^{n \#}_{X_{d-2}}}\\
C^i(I^{\bar p,n}X \cap U, \QQ) \ar[r]^{\nu^{n \#}}  & C^i(I^{\bar p,n}(X_{d-2})\cap U, \QQ)}
$$

The morphisms of this diagram induce the morphisms of the proposition.

Moreover, these morphisms are surjective since every inclusion of topological spaces induces a surjection between the corresponding
cubical cochain groups.
\end{proof}

Denote by $\mathcal{K}^{n,\bullet}$ the kernel of $\nu^{n \#}$.
There is a canonical morphism
$$i^{n \#}:\mathcal{K}^{n+1,\bullet} \to \mathcal{K}^{n,\bullet}.$$

\begin{remark}
For every $i\in \ZZ_{\geq 0}$ and every $n\in \NN$, the $i$-th  rational cohomology group of the pair
$(I^{\bar p, \, n} X,I^{\bar p, \, n} (X_{d-2}))$ is isomorphic to $i$-th cohomology group of the complex $\mathcal{K}^{n,\bullet}$.
\end{remark}

\begin{definition}
Given a pair of natural numbers $n_1<n_2$, we will define
$$i^{n_1,n_2}:= i^{n_1 \#} \circ ... \circ i^{n_2-1 \#}: \mathcal{K}^{n_2,\bullet}\rightarrow \mathcal{K}^{n_1,\bullet}$$
\end{definition}

Then, the complexes of sheaves $\{\mathcal{K}^{n, \,\bullet}\}_{n\in \NN}$ and the morphisms $i^{n_1,n_2}$ form an inverse system and
we can
consider the inverse limit
$$\varprojlim_{n \in \NN} \mathcal{K}^{n, \,\bullet},$$
which is a complex of sheaves.

\begin{lemma}\label{qis}
$(\pi_* \varprojlim_{n \in \NN}\mathcal{K}^{n,\bullet})_{X\setminus X_{d-2}}$ is quasi-isomorphic to $\QQ_{X\setminus X_{d-2}}$.
\end{lemma}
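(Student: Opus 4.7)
The plan is to compute the cohomology sheaves of $(\pi_*\varprojlim_n\mathcal{K}^{n,\bullet})|_{X\setminus X_{d-2}}$ stalk-wise, by using Property~(\ref{prop3}) of Proposition~\ref{pro:tortura1} to reduce, in a suitable neighborhood of each point of $X\setminus X_{d-2}$, the relative complex $\mathcal{K}^{n,\bullet}$ to an ordinary cubical cochain complex, and then to invoke the resolution property of cubical cochains reviewed in Section~\ref{sec:cubical}.

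I first fix $x\in X\setminus X_{d-2}$ and a principal neighborhood $U_x$ of $x$ (Definition~\ref{prneigh}). By Property~(\ref{prop3}) of Proposition~\ref{pro:tortura1}, there is $n_0$ such that, for every $n\geq n_0$, $\pi^{-1}(U_x)\subset I^{\bar p,n}X$ and $\pi^{-1}(U_x)\cap I^{\bar p,n}(X_{d-2})=\emptyset$. Hence, for any open $V\subset\pi^{-1}(U_x)$ and $n\geq n_0$, the target $\mu^n_*\mathcal{C}^{n,\bullet}_{X_{d-2}}(V)$ is a space of cochains on the empty set, which vanishes, so
\[\mathcal{K}^{n,\bullet}(V)=j^n_*\mathcal{C}^{n,\bullet}_X(V)=\mathcal{C}^{n,\bullet}_X(V),\]
using $V\subset I^{\bar p,n}X$. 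Moreover, the transition $i^{n\#}$ on such $V$ is restriction along the inclusion $I^{\bar p,n}X\hookrightarrow I^{\bar p,n+1}X$, but both intersect $V$ in $V$ itself, so $i^{n\#}$ acts as the identity on $\mathcal{C}^{n,\bullet}_X(V)$. Consequently $(\varprojlim_n\mathcal{K}^{n,\bullet})|_{\pi^{-1}(U_x)}$ coincides with the sheaf of cubical cochains on $\pi^{-1}(U_x)$, which by Section~\ref{sec:cubical} is a flabby resolution of $\QQ_{\pi^{-1}(U_x)}$.

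Since the principal neighborhoods form a cofinal system of open neighborhoods of $x$, and for each such $U$ the restriction of $\pi$ to $\pi^{-1}(U)$ is a homotopy equivalence with contractible fibers onto the contractible set $U$, one obtains
\[\mathcal{H}^i\bigl(\pi_*\varprojlim_n\mathcal{K}^{n,\bullet}\bigr)_x=\varinjlim_{U\ni x\text{ principal}}H^i(\pi^{-1}(U),\QQ)=\begin{cases}\QQ & \text{if }i=0,\\ 0 & \text{if }i\neq 0.\end{cases}\]
The exchange of $\pi_*$ with the inverse limit of sections is legitimate because the transition maps $i^{n\#}$ are surjective (preceding proposition), so the inverse limit presheaf is already a sheaf. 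The surviving $\mathcal{H}^0=\ker(d^0)$ consists locally of locally constant cochains, and via $\pi_*$ and the connectedness of the fibers these correspond canonically to sections of $\QQ_{X\setminus X_{d-2}}$. This exhibits the complex as quasi-isomorphic to $\QQ_{X\setminus X_{d-2}}$.

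The main obstacle is the interplay between inverse limits, pushforward, and sheafification over the varying subspaces $I^{\bar p,n}X$; the key simplification, guaranteed by Property~(\ref{prop3}) of Proposition~\ref{pro:tortura1}, is that on principal neighborhoods of points of $X\setminus X_{d-2}$ the inverse system becomes constant past a finite stage, reducing the sheaf in question to the cubical cochain resolution whose pushforward is well understood.
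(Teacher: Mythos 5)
Your proof is correct and takes essentially the same route as the paper: restrict to a principal neighbourhood of a point of $X\setminus X_{d-2}$, observe via Property~(3) of Proposition~\ref{pro:tortura1} that the inverse system $\mathcal{K}^{n,\bullet}$ stabilizes there to the cubical cochain resolution of $\QQ$, and conclude using that $\pi$ is a homotopy equivalence with contractible fibres. The remark about surjectivity of the transition maps justifying the exchange of $\pi_*$ with $\varprojlim$ is superfluous (inverse limits of sheaves are already computed sectionwise), but this does not affect the argument.
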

\begin{proof}
Let $x \in X \setminus X_{d-2}$. We have the following equalities:

$$(\pi_* \varprojlim_{n \in \NN} \mathcal{K}^{n, \, \bullet})_x =
\varinjlim_{x \in U \text{open}} \pi_*(\varprojlim_{n \in \NN}  \mathcal{K}^{n, \, \bullet})(U)
= \varinjlim_{x \in U \text{open}} (\varprojlim_{n \in \NN} \mathcal{K}^{n, \, \bullet})(\pi^{-1}(U))=$$
$$=\varinjlim_{x \in U \text{open}} \varprojlim_{n \in \NN} (\mathcal{K}^{n, \, \bullet}(\pi^{-1}(U))).$$

Let $U_x$ be a principal neighbourhood of $x$ (see definition \ref{prneigh}). There exist $n_0\in \NN$ such that,
for every open subset $U\subset U_x$ and for every $n>n_0$, we have
$$\pi^{-1}(U) \cap I^{\bar p, n}X=\pi^{-1}(U)$$
and
$$\pi^{-1}(U) \cap I^{\bar p, n}(X_{d-2})=\emptyset$$

Consequently,
$\mathcal K^{n, \, \bullet}(\pi^{-1}(U))=j^n_*\mathcal C_X^{n, \, \bullet}(\pi^{-1}(U))=\mathcal C_{X'}^{\bullet}(\pi^{-1}(U))$
where $\mathcal C_{X'}$ is the sheaf of singular $i$-cochains in $X'$.

Moreover, for every $n>n_0$, $i^{n \#}=i^{n \#}_X=Id_{\mathcal C_{X'}^{\bullet}(\pi^{-1}(U))}$.
So, $\varprojlim_{n \in \NN} (\mathcal{K}^{n, \, \bullet}(\pi^{-1}(U)))=\mathcal C_{X'}^{\bullet}(\pi^{-1}(U))$.

Thus, we have shown that $(\pi_* \varprojlim_{n \in \NN}\mathcal{K}^{n, \,\bullet})_{X\setminus X_{d-2}}$ is quasi-isomorphic to
$\pi_*\QQ_{\pi^{-1}(X\setminus X_{d-2})}$, and the later sheaf is quasi-isomorphic to $\QQ_{X\setminus X_{d-2}}$ since
$\pi|_{\pi^{-1}(X\setminus X_{d-2})}$ is a homotopy equivalence.
\end{proof}

We study now the cohomology of the complex $\pi_* \varprojlim_{n \in \NN}\mathcal{K}^{n, \,\bullet}$ over each of the deeper strata of $X$
 and over the global sections.
With this purpose, we study the cohomology of $\pi_* \varprojlim_{n \in \NN}\mathcal{K}^{n, \,\bullet}$ in the principal and the carved
principal neighbourhoods (see definitions \ref{prneigh} and \ref{rprneigh}) and in the total space.

\begin{proposition}\label{propprng}
Let $U$ be equal to $X$ or a principal neighbourhood or a carved principal neighbourhood of some
$x\in X_{d-k}\setminus X_{d-(k+1)}$. Then,
$$H^i(\varprojlim_{n \in \NN} (\mathcal{K}^{n, \, \bullet}(\pi^{-1}(U))))\cong
\varprojlim_{n \in \NN} H^i(\mathcal{K}^{n, \, \bullet}(\pi^{-1}(U)))$$
\end{proposition}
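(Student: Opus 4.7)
The plan is to invoke the standard Milnor short exact sequence relating the cohomology of an inverse limit of cochain complexes to the inverse limit of cohomologies. For an inverse system $(K^n)$ of cochain complexes whose transitions are componentwise surjective, one has
$$0 \to \varprojlim\nolimits^1 H^{i-1}(K^n) \to H^i(\varprojlim K^n) \to \varprojlim H^i(K^n) \to 0,$$
and the $\varprojlim\nolimits^1$ term vanishes whenever the inverse system of cohomology groups satisfies the Mittag-Leffler condition. I will verify both hypotheses for the system $(\mathcal K^{n,\bullet}(\pi^{-1}(U)))$ in each of the three cases for $U$.

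First I would establish that the transitions $i^{n\#}:\mathcal K^{n+1,\bullet}(\pi^{-1}(U))\to\mathcal K^{n,\bullet}(\pi^{-1}(U))$ are surjective in every degree. The proposition preceding this one already shows that both vertical maps in Diagram~(\ref{morsh}) are surjective on sections over any open subset of $X'$; for the kernels, a direct extension-by-zero argument suffices. Given a relative cubical cochain $\xi$ on the pair $(I^{\bar p,n}X\cap\pi^{-1}(U),I^{\bar p,n}(X_{d-2})\cap\pi^{-1}(U))$, extend it to $\tilde\xi$ on $I^{\bar p,n+1}X\cap\pi^{-1}(U)$ by declaring $\tilde\xi(\sigma)=0$ whenever the image of $\sigma$ is not contained in $I^{\bar p,n}X$. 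Using the equality $I^{\bar p,n}(X_{d-2})=I^{\bar p,n+1}(X_{d-2})\cap I^{\bar p,n}X$ of Proposition~\ref{pro:tortura1}(2), one sees that $\tilde\xi$ still vanishes on every cube whose image lies in $I^{\bar p,n+1}(X_{d-2})$, so $\tilde\xi\in\mathcal K^{n+1,\bullet}(\pi^{-1}(U))$ is the required lift.

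Next I would show that the inverse system $(H^i(\mathcal K^{n,\bullet}(\pi^{-1}(U))))_n$ is eventually constant, hence trivially Mittag-Leffler. For $U=X$, Proposition~\ref{pro:tortura1}(1) gives compatible strong deformation retracts $r^n_X$ and $r^n_{X_{d-2}}$, which together form a deformation retraction of pairs. The induced maps on the relative cubical cochain complexes are chain homotopy equivalences: the explicit homotopy operator $H_{\#}\circ\rho$ of Section~\ref{sec:cubical} preserves the kernel structure by Lemma~\ref{homdeg}, because on cubes whose image already lies in $I^{\bar p,n}(X_{d-2})$ the retraction is stationary there and the homotopy output is degenerate, hence zero in the cubical complex. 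The same argument handles the principal neighbourhood case (using Proposition~\ref{pro:tortura1}(4)) and the carved principal neighbourhood case (using Proposition~\ref{pro:diagrprneigh2}); in each case the retractions are restrictions of the same canonical ones, so the compatibility of pair maps survives, and the chain homotopy equivalences hold for $n$ larger than an $x$-dependent threshold $n_0$.

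With both hypotheses verified, the Milnor sequence collapses to the stated isomorphism. The only real obstacle is the bookkeeping that allows the deformation retractions at the level of pairs to descend to a chain homotopy equivalence of the kernel complexes $\mathcal K^{n,\bullet}(\pi^{-1}(U))$: the compatibility of $r^n_X$ and $r^n_{X_{d-2}}$, both of which descend from the canonical retraction of $\overline{TX_{d-r}}\setminus T^{n+1}X_{d-r}$ onto $\overline{TX_{d-r}}\setminus T^{n}X_{d-r}$, is what lets the homotopy operator act on the relative complex, and Lemma~\ref{homdeg} is the indispensable ingredient ensuring the homotopy preserves the vanishing condition defining $\mathcal K^{n,\bullet}$.
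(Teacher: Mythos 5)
Your argument is correct, but it follows a genuinely different route from the paper's. You invoke the Milnor $\varprojlim^1$ short exact sequence as a black box and verify its two hypotheses: that the transitions in the tower of cochain complexes are degree-wise surjective (by extension by zero, using $I^{\bar p,n}(X_{d-2})=I^{\bar p,n+1}(X_{d-2})\cap I^{\bar p,n}X$ to keep the extended cochain in the kernel, as the paper itself does when proving flabbiness in Theorem~\ref{th:COHOMOLOGY}), and that the tower of cohomology groups is eventually constant, hence Mittag--Leffler, so $\varprojlim^1$ vanishes. The paper instead carries out an explicit two-step argument (surjectivity and injectivity of the comparison map $\beta$), in effect reproving the relevant special case of the Milnor sequence and the $\varprojlim^1$ vanishing simultaneously with bare hands: it constructs a compatible family in $\varprojlim\mathcal K^{n,\bullet}$ by pushing a cocycle forward with the retractions $r^{n\#}_U$ and pulling back with the $i^{n,n_0+1}$, then kills a nullcohomologous family by an explicit telescoping series $\epsilon^{i-1}_n=\sum(r^{k,n}_U\circ h^{k-1}_U)(\xi^i_k)+h^{n-1}_U(\xi^i_n)$. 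Your abstraction is shorter; the paper's version is more self-contained and makes visible where the property $i^{n\#}\circ h^n_U=0$ enters.

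On that last point, one small misattribution: the reason the homotopy operator preserves the kernel $\mathcal K^{n+1,\bullet}$ is not Lemma~\ref{homdeg} but the compatibility of $H^n_X$ and $H^n_{X_{d-2}}$ (the latter is the restriction of the former), which makes the homotopy on cochains commute with $\nu^{n+1\#}$. Lemma~\ref{homdeg} is used by the paper for something else: to obtain the extra vanishing $i^{n\#}\circ h^n_U=0$, which is exactly what makes the telescoping series $\{\epsilon^{i-1}_n\}$ land in the inverse limit in the injectivity step. Your Milnor-sequence route does not need that extra vanishing at all; in fact for the quasi-isomorphism claim you could bypass the kernel-level homotopies entirely and argue, as in Remark~\ref{homeq}, by the five lemma applied to the short exact sequence $0\to\mathcal K^{n,\bullet}\to j^n_*\mathcal C^{n,\bullet}_X\to\mu^n_*\mathcal C^{n,\bullet}_{X_{d-2}}\to 0$ and the fact that the two outer transitions are quasi-isomorphisms. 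That would make your proof both shorter and cleaner.
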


Now, we need some preliminary work in order to prove Proposition \ref{propprng}.

\begin{lemma}\label{retract}
Let $U$ be equal to $X$ or a principal neighbourhood or a carved principal neighbourhood of some
$x\in X_{d-k}\setminus X_{d-(k+1)}$. Then, there exist $n_0\in \NN$ such that if $n>n_0$, there exist a morphism
$$r^{n \#}_U:\mathcal{K}^{n, \, \bullet}(\pi^{-1}(U)) \rightarrow \mathcal{K}^{n+1, \, \bullet}(\pi^{-1}(U))$$
such that $i^{n \#}(\pi^{-1}(U))\circ r^{n \#}_U =Id_{\mathcal{K}^{n, \, \bullet}(\pi^{-1}(U))}$ and
$r^{n \#}_U \circ i^{n \#}(\pi^{-1}(U))$ is homotopic to the identity.

Moreover, there exists a homotopy $h^n_U$
between $r^{n \#}_U\circ i^{n \#} (\pi^{-1}(U))$ and $Id_{\mathcal{K}^{n+1, \, \bullet}(\pi^{-1}(U))}$
such that $i^{n \#} (\pi^{-1}(U)) \circ h^n_U =0$.

\end{lemma}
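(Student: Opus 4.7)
The plan is to build $r^{n\#}_U$ by dualizing the strong deformation retractions already established at the space level, and to exhibit the required cochain homotopy via the explicit singular-cochain formula recalled in Section \ref{sec:cubical}. First, I choose $n_0$ large enough so that, by Proposition \ref{pro:tortura1} (parts (1) and (4)) when $U$ is $X$ or a principal neighbourhood, or by Proposition \ref{pro:diagrprneigh2} when $U$ is a carved principal neighbourhood, the retractions $r^n_X$ and $r^n_{X_{d-2}}$ restrict to strong deformation retractions
$$r^n_X: I^{\bar p,n+1}X \cap \pi^{-1}(U) \to I^{\bar p,n}X \cap \pi^{-1}(U),$$
$$r^n_{X_{d-2}}: I^{\bar p,n+1}(X_{d-2}) \cap \pi^{-1}(U) \to I^{\bar p,n}(X_{d-2}) \cap \pi^{-1}(U),$$
fitting into the restricted version of diagram (\ref{diagtop}).

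Next, I define $r^{n\#}_U$ by passing to cubical cochains. Applying the contravariant functor $C^\bullet$ to diagram (\ref{diagtop}) gives cochain maps $(r^n_X)^{\#}$ and $(r^n_{X_{d-2}})^{\#}$ going from level $n$ to level $n{+}1$. From the commutation $r^n_X\circ\nu^{n+1}=\nu^n\circ r^n_{X_{d-2}}$ afforded by (\ref{diagtop}), dualization gives $\nu^{(n+1)\#}\circ (r^n_X)^{\#}=(r^n_{X_{d-2}})^{\#}\circ\nu^{n\#}$, so $(r^n_X)^{\#}$ carries $\ker\nu^{n\#}=\mathcal{K}^{n,\bullet}(\pi^{-1}(U))$ into $\ker\nu^{(n+1)\#}=\mathcal{K}^{n+1,\bullet}(\pi^{-1}(U))$. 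I take $r^{n\#}_U$ to be this induced map on kernels. Because $r^n_X\circ i^n_X=\mathrm{id}$ as maps of pairs, dualization immediately yields $i^{n\#}(\pi^{-1}(U))\circ r^{n\#}_U=\mathrm{Id}_{\mathcal{K}^{n,\bullet}(\pi^{-1}(U))}$.

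To construct the homotopy, I pick a strong deformation retract homotopy $H^n:I\times (I^{\bar p,n+1}X\cap\pi^{-1}(U))\to I^{\bar p,n+1}X\cap\pi^{-1}(U)$ from $i^n_X\circ r^n_X$ to the identity that is \emph{constant} along $I^{\bar p,n}X\cap\pi^{-1}(U)$, and analogously for $X_{d-2}$; these can be chosen compatibly under $\nu^{n+1}$ since the retractions already are. As recalled in Section \ref{sec:cubical}, $H^n_\#\circ\rho$ is a chain homotopy between $(i^n_X\circ r^n_X)_\#$ and the identity on cubical chains. Dualizing produces a cochain homotopy which, by the intertwining of the previous paragraph, restricts to a homotopy $h^n_U$ on $\mathcal{K}^{n+1,\bullet}(\pi^{-1}(U))$ between $r^{n\#}_U\circ i^{n\#}(\pi^{-1}(U))$ and the identity.

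Finally, for the moreover clause, I apply Lemma \ref{homdeg} to $h=i^n_X$: since $H^n$ is constant along $\im(i^n_X)$, every chain of the form $H^n_\#\circ\rho\circ (i^n_X)_\#(\sigma)$ is degenerate, hence vanishes in $C_\bullet/D_\bullet$. Therefore any cubical cochain annihilates this image, so $i^{n\#}(\pi^{-1}(U))\circ h^n_U=0$ identically. The main obstacle in executing this plan is bookkeeping: one must check that the retractions, the homotopies $H^n$, and the threshold $n_0$ can be selected so that all the constructions are simultaneously compatible with the pair structure $(I^{\bar p,n}X,I^{\bar p,n}(X_{d-2}))$ and restrict coherently to $\pi^{-1}(U)$; this is precisely the content of Proposition \ref{pro:tortura1}(4) and Proposition \ref{pro:diagrprneigh2}, which is why those statements were isolated beforehand.
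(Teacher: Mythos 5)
Your proposal is correct and follows essentially the same route as the paper's proof: dualize the strong deformation retractions of Propositions \ref{pro:tortura1} and \ref{pro:diagrprneigh2} to cochain retractions, use the commutativity with $\nu^n$ to induce maps on the kernels $\mathcal{K}^{n,\bullet}$, and produce the homotopy via $H_\#\circ\rho$ with Lemma \ref{homdeg} giving the vanishing $i^{n\#}\circ h^n_U=0$. The only stylistic difference is that you make explicit the compatibility of the chosen homotopies under $\nu^{n+1}$, a point the paper leaves implicit in asserting the existence of the induced $h^n_U$ on the kernel.
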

\begin{proof}
Let $U$ be a principal neighbourhood and let $n_0$ be the natural number of Proposition \ref{pro:tortura1}~(4).
For every $n>n_0$, the diagram (\ref{diagprneigh}) induces a diagram
$$
\xymatrix{j^{n+1}_* \mathcal{C}_X^{n+1, \, \bullet}(\pi^{-1}(U))  \ar[r]^{\nu^{n+1 \#}} \ar[d]^{i^{n \#}_X} &
\mu^{n+1}_* \mathcal{C}_{X_{d-2}}^{n+1, \, \bullet}(\pi^{-1}(U)) \ar[d]_{i^{n \#}_{X_{d-2}}}
\\ j^{n}_* \mathcal{C}_X^{n, \, \bullet}(\pi^{-1}(U)) \ar[r]^{\nu^{n \#}} \ar@/^/[u]^{r^{n \#}_{X}}
& \mu^{n}_* \mathcal{C}_{X_{d-2}}^{n, \, \bullet}(\pi^{-1}(U)) \ar@/_/[u]_{r^{n \#}_{X_{d-2}}}}
$$
where $i_{X_{d-2}}^{n \#} \circ r^{n \#}_{X_{d-2}}= Id_{ \mu^{n}_* \mathcal{C}_{X_{d-2}}^{n, \, \bullet}(\pi^{-1}(U))}$,
$i_{X}^{n \#} \circ r^{n \#}_{X}= Id_{ j^{n}_* \mathcal{C}_{X}^{n, \, \bullet}(\pi^{-1}(U))}$ and
$r^{n \#}_{X_{d-2}}\circ i_{X_{d-2}}^{n \#}$ and $r^{n \#}_{X}\circ i_{X}^{n \#}$ are homotopic to the identity.

Then, we obtain a canonical morphism
$$r^{n \#}_U:\mathcal{K}^{n, \, \bullet}(\pi^{-1}(U)) \rightarrow \mathcal{K}^{n+1, \, \bullet}(\pi^{-1}(U))$$
 such that $i^{n \#}(\pi^{-1}(U))\circ r^{n \#}_U =Id_{\mathcal{K}^{n, \, \bullet}(\pi^{-1}(U))}$ and
$r^{n \#}_U \circ i^{n \#}(\pi^{-1}(U))$ is homotopic to the identity.

Consider the diagram (\ref{diagprneigh}) of Proposition \ref{pro:tortura1}~(4). There exist homotopies
$$H^{n}_X:\pi^{-1}(U) \cap I^{\bar p,n+1} X \times I \rightarrow \pi^{-1}(U) \cap I^{\bar p,n+1} X$$
between $i^n_X \circ r^n_X$ and $Id_{\pi^{-1}(U) \cap I^{\bar p,n+1} X}$ and
$$H^{n}_{X_{d-2}}:\pi^{-1}(U) \cap I^{\bar p,n+1} (X_{d-2}) \times I \rightarrow \pi^{-1}(U) \cap I^{\bar p,n+1} (X_{d-2})$$
between $i^n_{X_{d-2}} \circ r^n_{X_{d-2}}$ and $Id_{\pi^{-1}(U) \cap I^{\bar p,n+1} (X_{d-2})}$.
Moreover, we can suppose that, for every $t\in I$, the restrictions of $H^n_{X t}$ and $H^n_{X_{d-2} t}$
to $\pi^{-1}(U) \cap I^{\bar p,n} X$ and $\pi^{-1}(U) \cap I^{\bar p,n} (X_{d-2})$
are the identity respectively.

Following the procedure explained in Section~\ref{sec:cubical} the mapping $H^n_X$ induces a homotopy between $i^{n \#}_X \circ r^{n \#}_X$ and
$Id_{j^{n+1}_* \mathcal{C}_{X}^{n+1, \, \bullet}(\pi^{-1}(U))}$.  Moreover, applying Lemma \ref{homdeg} we have that
$i^{n \#}_X \circ h^n_X$ is equal to $0$.

Similarly the mapping $H^n_{X_{d-2}}$ induce a homotopy $h^n_{X_{d-2}}$ between
$i^{n \#}_{X_{d-2}} \circ r^{n \#}_{X_{d-2}}$ and \\$Id_{\mu^{n+1}_* \mathcal{C}_{X_{d-2}}^{n+1, \, \bullet}(\pi^{-1}(U))}$ such that
$i^{n \#}_{X_{d-2}} \circ h^n_{X_{d-2}}$ is equal to $0$.

So, there exist a homotopy $h^n_U$ between $i^{n \#} (\pi^{-1}(U)) \circ r^{n \#}_U$ and
$Id_{\mathcal{K}^{n+1, \, \bullet}(\pi^{-1}(U))}$ such that $i^{n \#} (\pi^{-1}(U)) \circ h^n_U =0$.

If $U$ is equal to $X$ or a carved principal neighbourhood we can apply the same method using the diagram (\ref{diagtop})
of Proposition \ref{pro:tortura1} or the diagram of Proposition~\ref{pro:diagrprneigh2}, respectively.
\end{proof}

\begin{remark}
The diagram (\ref{diagtop}) of Proposition \ref{pro:tortura1} is valid for every $n\in \NN$. So, in the previous lemma, we can take
$n_0=0$ if $U$ is the total space.
\end{remark}

\begin{remark}
Note that the morphisms $r^{n \#}_U$ do not induce a morphism of complexes of sheaves since they are
not defined for every open subset.
\end{remark}

\begin{definition}
Given a pair of natural numbers $n_1<n_2$ such that $n_0<n_1$, we define

$$r_U^{n_1,n_2}:= r_U^{n_2 - 1 \#} \circ ... \circ r^{n_1 \#}_U:
\mathcal{K}^{n_1, \, \bullet}(\pi^{-1}(U)) \rightarrow \mathcal{K}^{n_2, \, \bullet}(\pi^{-1}(U))$$

\end{definition}

\begin{remark}\label{homeq}
For every $n>n_0$, the inclusions
$\pi^{-1}(U)\cap I^{\bar p, n} X \hookrightarrow \pi^{-1}(U)\cap I^{\bar p, n +1} X$ and
$\pi^{-1}(U)\cap I^{\bar p, n} X_{d-2} \hookrightarrow \pi^{-1}(U)\cap I^{\bar p, n +1} X_{d-2}$
are homotopy equivalences. So, $i^{n \#}_X(\pi^{-1}(U))$ and $i^{n \#}_{X_{d-2}}(\pi^{-1}(U))$
are quasi-isomorphisms.

Then,
$$i^{n \#}(\pi^{-1}(U)):\mathcal{K}^{n+1, \, \bullet}(\pi^{-1}(U))) \rightarrow
\mathcal{K}^{n, \, \bullet}(\pi^{-1}(U)))$$
is also a quasi-isomorphism and we have an isomorphism
$$\xymatrix@R=0.1cm{\varprojlim_{n \in \NN} H^i(\mathcal{K}^{n, \, \bullet}(\pi^{-1}(U)))\ar[r]
& H^i(\mathcal{K}^{n_0 +1, \, \bullet}((\pi^{-1}(U))))\\
\{[\xi^i_n]\}_{n \in \NN}\ar[r] & [\xi^i_{n_0+1}]}$$
\end{remark}

\begin{notation}
For every open subset $V\subset X'$, the elements of
$H^i(\varprojlim_{n \in \NN} \mathcal{K}^{n, \, \bullet}(V))$ are equivalence classes of elements
$$\{\xi^i_n\}_{n\in\NN} \in \ker (\varprojlim_{n\in \NN} \partial_n^{i+1}(V))
 \subset \varprojlim_{n \in \NN} \mathcal{K}^{n, \, i}(V)$$
which we are going to denote with $[\{\xi^i_n\}_{n\in\NN}]$.

In addition, given $n\in \NN$ and an element
$\xi^i_n \in \ker (\partial_n^{i+1}(V)) \subset \mathcal{K}^{n, \, i}(V)$, we are going to
denote its equivalence class in $H^i( \mathcal{K}^{n, \, \bullet}(V))$ with $[\xi^i_n]$.
\end{notation}

\begin{proof}[Proof of Proposition \ref{propprng}]
It is enough to prove that, if $U$ is equal to $X$ or a principal neighbourhood or a carved principal
neighbourhood of $x$, then the morphism
$$\xymatrix@R=0.1cm{\ker (\varprojlim_{n\in \NN} \partial_n^{i+1}(\pi^{-1}(U))) \ar[r]^{\alpha}
&\varprojlim_{n \in \NN} H^i(\mathcal{K}^{n , \bullet}(\pi^{-1}(U)))\\ \{\xi^i_n\}_{n \in \NN}\ar[r] &
\{[\xi^i_n]\}_{n \in \NN}}$$
factorices into a morphism
$$\xymatrix{H^i(\varprojlim_{n \in \NN} \mathcal{K}^{n, \bullet}(\pi^{-1}(U)))
\ar[r]^{\beta} & \varprojlim_{n \in \NN} H^i(\mathcal{K}^{n, \bullet}(\pi^{-1}(U)))}$$
which is an isomorphism.

First, we prove $\alpha$ factorices. Let us consider an element
$\{\xi^i_n\}_{n\in \NN} \in \im (\varprojlim_{n\in \NN} \partial_n^i(\pi^{-1}(U)))$.
Then, there exist an element
$\{\delta^{i-1}_n\}_{n\in \NN} \in \varprojlim_{n \in \NN} \mathcal{K}^{n, i-1}(\pi^{-1}(U))$
such that

$$\varprojlim_{n\in \NN} \partial_n^i(\pi^{-1}(U))(\{\delta^{i-1}_n\}_{n\in \NN})=
\{\xi^i_n\}_{n\in \NN}.$$

So, for every $n\in \NN$, $\partial^{i}_n(\pi^{-1}(U)) (\delta^{i-1}_n)=\xi^i_n$. Consequently,
$\alpha(\{\xi^i_n\}_{n\in \NN})=\{[\xi^i_n]\}_{n \in \NN}=0$ and we conclude that the morphism factorices.

Now, we prove that $\alpha$ and, consequently, $\beta$ are surjective.

Because of Remark \ref{homeq}, it is enough to prove that, for every element
$\xi \in \ker(\partial_{n_0+1}^{i+1}((\pi^{-1}(U))))$, there exist an element
$\{\xi^i_n\}_{n\in \NN} \in \ker (\varprojlim_{n\in \NN} \partial_n^{i+1}(\pi^{-1}(U)))$
such that $\xi_{n_0+1}^i= \xi$.

Let us consider
$$\xi^i_n = \left\{ \begin{array}{ccc}
(i^{n, n_0 +1}(\pi^{-1}(U)))(\xi) & \text{if} & n<n_0+1\\
\xi & \text{if} & n=n_0+1\\
r^{n_0 +1, n}_U(\xi) & \text{if} & n>n_0+1
\end{array}\right.$$

Then, for every pair of natural numbers $n_1 <n_2$, we have the equality
$$((i^{n_1,n_2})(\pi^{-1}(U)))(\xi^i_{n_2})=\xi^i_{n_1}.$$
Hence $\{\xi^i_n\}_{n\in \NN}$ belongs to $\varprojlim_{n \in \NN} \mathcal{K}^{n , i} (\pi^{-1}(U))$.

Moreover
\begin{itemize}
 \item we have the vanishing $(\partial^{i+1}_{n_0+1} (\pi^{-1}(U)))(\xi)=0$,
 \item for every pair of natural numbers $n_1 < n_2$ we have the equality
$$\partial^i_{n_1} \circ i^{n_1,n_2} = i^{n_1,n_2} \circ \partial^i_{n_2} $$ and,
\item if $n_1> n_0$, we have the equality
$$ \partial^i_{n_2}(\pi^{-1}(U)) \circ r^{n_1,n_2}_U = r^{n_1,n_2}_U \circ \partial^i_{n_1}(\pi^{-1}(U)).$$
\end{itemize}

Consequently $(\partial^{i+1}_n (\pi^{-1}(U)))(\xi^i_n)=0$ for every $n\in \NN$,
$\{\xi^i_n\}_{n\in \NN}$ belongs to $\ker (\varprojlim_{n\in \NN}$ $\partial_n^{i+1}(\pi^{-1}(U)))$
and $\alpha$ and $\beta$ are surjective.

Finally, we prove $\beta$ is injective, that is,
$\ker (\alpha)=\im (\varprojlim_{n\in \NN} \partial_n^{i}(\pi^{-1}(U)))$.

Let $\{\xi^i_n\}_{n \in \NN} \in \ker (\varprojlim_{n\in \NN} \partial_n^{i+1})(\pi^{-1}(U))$ such that
$\alpha(\{\xi^i_n\}_{n \in \NN})=\{[\xi^i_n]\}_{n \in \NN}=0$. Then,
$\xi^i_{n_0+1} \in \im(\partial^{i}_{n_0+1}(\pi^{-1}(U)))$. So, there exist
$\delta \in \mathcal{K}^{n_0 + 1, \, i-1}(\pi^{-1}(U))$ such that
$(\partial^{i}_{n_0+1}(\pi^{-1}(U)))(\delta)=\xi^i_{n_0+1}$.

For every $n\in \NN$, we define
$$\delta^{i-1}_n = \left\{ \begin{array}{ccc}
(i^{n, n_0 +1}(\pi^{-1}(U)))(\delta) & \text{if} & n<n_0+1\\
\delta & \text{if} & n=n_0+1\\
r^{n_0 +1, n}_U(\delta) & \text{if} & n>n_0+1
\end{array}\right.$$
Then, $\{\delta_n^{i-1}\}_{n\in\NN} \in \varprojlim_{n\in \NN} \mathcal{K}^{n, \, i-1}(\pi^{-1}(U))$.

Let us denote $\widetilde{\xi_n^i}:=(\partial^{i}_n (\pi^{-1}(U)))(\delta_n^{i-1})$
for every $n\in \NN$. Then we have the equality $(\varprojlim_{n\in \NN} \partial_n^{i} (\pi^{-1}(U)))(\{\delta_n^{i-1}\}_{n\in\NN})
= \{\widetilde{\xi_n^i}\}_{n\in\NN}$ and $[\{\widetilde{\xi_n^i}\}_{n\in\NN}]=0$
in $H^i(\varprojlim_{n \in \NN} \mathcal{K}^{n, \, \bullet}(\pi^{-1}(U)))$.

So, to prove $\beta$ is injective, it is enough to prove the equality
$[\{\widetilde{\xi_n^i}\}_{n\in\NN}]=[\{\xi_n^i\}_{n\in\NN}]$ in
$H^i(\varprojlim_{n \in \NN} \mathcal{K}^{n, \, \bullet}(\pi^{-1}(U)))$.

If $n<n_0+1$, we have
$$\widetilde{\xi_n^i}=\partial^{i}_n (\pi^{-1}(U))(\delta_n^{i-1})=
\partial^{i}_n (\pi^{-1}(U))(i^{n, n_0 +1}(\pi^{-1}(U)))(\delta))=$$ $$=i^{n, n_0 +1}(\pi^{-1}(U))
(\partial^{i}_{n_0 +1} (\pi^{-1}(U))(\delta))=i^{n, n_0 +1}(\pi^{-1}(U))(\xi_{n_0+1}^i)=\xi_n^i.$$

If $n=n_0+1$, we have
$$\widetilde{\xi_{n_0 +1}^i}=\partial^{i}_{n_0 +1} (\pi^{-1}(U))(\delta)=\xi_{n_0+1}^i.$$

If $n>n_0+1$, we have
$$\widetilde{\xi_n^i}=\partial^{i}_n (\pi^{-1}(U))(\delta_n^{i-1})=\partial^{i}_n (\pi^{-1}(U))
(r_U^{n_0+1, n}(\delta))=r_U^{n_0+1, n}(\partial^{i}_{n_0 +1} (\pi^{-1}(U))(\delta))=$$ $$=r_U^{n_0+1, n}(\xi^i_{n_0 +1})
=r_U^{n_0+1, n} (i^{n_0+1, n}(\pi^{-1}(U)) (\xi_n^i)).$$

For every $n>n_0+1$, let $h^n_U$ be the homotopy defined in Lemma \ref{retract}. A simple computation shows that,
for every $n>n_0+1$, we have the equality:
$$\widetilde{\xi_n^i} - \xi_n^i = \partial^{i}_{n}(\sum_{k=n_0+1}^{n-1} (r^{k,n}_U \circ
h^{k-1}_U)(\xi^i_k)+ h^{n-1}_U(\xi_n^i)).$$

Let $$\epsilon^{i-1}_n=\left\{ \begin{array}{ccc}
0 & \text{if} & n \leq n_0+1 \\
\sum_{k=n_0+2}^{n-1} (r^{k,n}_U \circ h^{k-1}_U)(\xi^i_k)+ h_U^{n-1}(\xi_{n}^i) & \text{if} & n > n_0+1
\end{array}\right.$$

Let us prove $\{\epsilon^{i-1}_n\}_{n\in \NN} \in \varprojlim_{n\in \NN} (\mathcal{K}^{n, \, i-1}(\pi^{-1}(U)))$.

Since, for every $n>n_0+1$, $i^{n \#}(\pi^{-1}(U)) \circ h^n_U=0$  and $i^{n \#}(\pi^{-1}(U)) \circ r^{n \#}_U =
Id_{\mathcal{K}^{n, \, \bullet}(\pi^{-1}(U))}$, if $n_1 > n_0 +1$, we have the equality
$$i^{n_1,n_2}(\pi^{-1}(U)) (\sum_{k=n_0+2}^{n_2-1} (r^{k,n_2}_U \circ h^{k-1})(\xi^i_k)+ h_U^{n_2-1}(\xi_{n_2}^i))
= \sum_{k=n_0+2}^{n_1-1} (r^{k,n_1}_U \circ h_U^{k-1})(\xi^i_k)+$$
$$+ h_U^{n_1-1}(\xi_{n_1}^i),$$

and if $n_1 \leq n_0 +1$, we have

$$i^{n_1,n_2}(\pi^{-1}(U))(\sum_{k=n_0+2}^{n_2-1} (r^{k,n_2}_U \circ h_U^{k-1})(\xi^i_k)+ h_U^{n_2-1}(\xi_{n_2}^i))= 0.$$

Then, $\{\epsilon^{i-1}_n\}_{n\in \NN}$ belongs to $\varprojlim_{n\in \NN} (\mathcal{K}^{n, \, i-1}(\pi^{-1}(U)))$ and we have the equality
$$\varprojlim_{n\in \NN} \partial^{i}(\pi^{-1}(U)) (\{\epsilon^{i-1}_n\}_{n\in \NN})= \{\widetilde{\xi_n^i}\}_{n\in\NN}
- \{\xi_n^i\}_{n\in\NN}.$$
Therefore, $[\{\widetilde{\xi_n^i}\}_{n\in\NN}]$ equals $[\{\xi_n^i\}_{n\in\NN}]$ in
$H^i(\varprojlim_{n\in \NN} (\mathcal{K}^{n, \, i}(\pi^{-1}(U))))$ and we conclude.
\end{proof}

\begin{definition}
\label{def:IS}
 Let us define $IS:= \pi_* \varprojlim_{n\in\NN}\mathcal{K}^{n, \, \bullet}$.
\end{definition}

\begin{theorem}
\label{th:COHOMOLOGY}
The hypercohomlogy of $IS$ is isomorphic to the cohomology of the intersection space pair.
\end{theorem}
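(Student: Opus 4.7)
The plan is to compute $\HH^*(X, IS)$ directly via global sections and then identify the resulting chain complex with one computing the relative cohomology of the intersection space pair, invoking Proposition~\ref{propprng} in the total-space case $U = X$ together with the deformation retractions of Proposition~\ref{pro:tortura1}(1).

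First I would verify that every component of $IS$ is $\Gamma$-acyclic, so that $\HH^i(X, IS) \cong H^i(\Gamma(X, IS))$. Section~\ref{sec:cubical} shows the cubical cochain sheaves $\mathcal{C}^i$ are flabby on any space; pushing them forward along the closed inclusions $j^n$ and $\mu^n$ preserves flabbiness, so both sheaves appearing in diagram~(\ref{morsh}) are flabby. Consequently the kernel $\mathcal{K}^{n,i}$ of the surjection in~(\ref{morsh}) is flabby, and a snake lemma chase on the two vertical short exact sequences of~(\ref{morsh}) shows that the transition map $i^{n \#}\colon \mathcal{K}^{n+1, i} \to \mathcal{K}^{n, i}$ is itself surjective. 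Applying $\pi_*$ preserves flabbiness, and a standard Mittag--Leffler argument gives that the inverse limit of flabby sheaves along surjective transition morphisms remains flabby; hence each $IS^i$ is flabby, and in particular $\Gamma$-acyclic.

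Next, since $\Gamma$ commutes with direct images and with inverse limits,
\[
\Gamma(X, IS) \;=\; \Gamma\bigl(X', \varprojlim_n \mathcal{K}^{n, \bullet}\bigr) \;=\; \varprojlim_n \mathcal{K}^{n, \bullet}(X').
\]
Proposition~\ref{propprng} applied with $U = X$ (so $\pi^{-1}(U) = X'$) then gives
\[
H^i\bigl(\varprojlim_n \mathcal{K}^{n, \bullet}(X')\bigr) \;\cong\; \varprojlim_n H^i\bigl(\mathcal{K}^{n, \bullet}(X')\bigr).
\]
By Remark~\ref{homeq}---which for the total space applies with threshold $n_0 = 0$---every transition morphism on the right is a quasi-isomorphism, so the inverse limit stabilizes at $H^i(\mathcal{K}^{n_0+1, \bullet}(X'))$, which by the definition of $\mathcal{K}^{n, \bullet}$ as the kernel of $\nu^{n \#}$ is the relative cubical cohomology $H^i(I^{\bar p, n_0+1}X, I^{\bar p, n_0+1}(X_{d-2}); \QQ)$.

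Finally, iterated composition of the strong deformation retractions of Proposition~\ref{pro:tortura1}(1) identifies each $H^i(I^{\bar p, n}X, I^{\bar p, n}(X_{d-2}); \QQ)$ with $H^i(I^{\bar p}X, I^{\bar p}(X_{d-2}); \QQ)$, closing the chain of isomorphisms. The main obstacle is establishing flabbiness of the inverse limit in the first step; once the surjectivity of the transition morphisms is obtained from the snake lemma on~(\ref{morsh}), the Mittag--Leffler lift of sections proceeds routinely, and the remainder of the argument is a clean bookkeeping of Proposition~\ref{propprng} together with the retraction properties already built into the construction of the sequence.
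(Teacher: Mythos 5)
Your proposal follows the same three-step architecture as the paper's proof: establish that $IS$ is a complex of flabby sheaves so that hypercohomology equals cohomology of global sections, invoke Proposition~\ref{propprng} for $U = X$ to commute $H^i$ past $\varprojlim$, and identify the resulting stabilized group with the cohomology of the pair. Steps two and three of your argument are essentially the paper's (the paper uses $H^i(\Gamma(X',\mathcal K^{n,\bullet}))$ directly rather than passing through Remark~\ref{homeq} and the retractions, but this is only a cosmetic rearrangement). The problem is the first step, and it is a genuine gap, not a matter of terseness.

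Concretely, you assert that ``the kernel $\mathcal K^{n,i}$ of the surjection in~(\ref{morsh}) is flabby'' as a consequence of both terms of~(\ref{morsh}) being flabby. This is false in general: take $K = \ZZ_{\RR}$ (not flabby), embed it in its Godement sheaf $F = C^0(K)$, and set $G = F/K$; since $H^1(U,\ZZ) = 0$ for every open $U \subseteq \RR$, one checks $G$ is flabby, giving a surjection of flabby sheaves with non-flabby kernel. Similarly, your ``standard Mittag--Leffler argument'' that an inverse limit of flabby sheaves along surjective transition morphisms is flabby also fails as stated: when one tries to inductively build compatible extensions $\tilde s_n \in F_n(X)$ of the $s_n \in F_n(U)$, the obstruction at each stage is a section of $\ker(i^{n})$ over $U$ that one needs to extend to $X$, which again requires flabbiness of $\ker(i^n)$ (or surjectivity of $F_{n+1}(X)\to F_n(X)$ for \emph{all} opens, which is the same issue). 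Surjectivity of $i^{n\#}$ as a sheaf map, which is all the snake lemma gives you, is not enough. What actually makes both flabbiness claims true here is the concrete \emph{extension by zero} of cubical cochains: a section of $\mathcal K^{n,i}$ over $V\subset X'$ is represented by a cochain on $V\cap I^{\bar p,n}X$ vanishing on $V\cap I^{\bar p,n}(X_{d-2})$, and extending it by zero outside $V$ stays in the relative kernel; moreover, these extensions are visibly compatible with the transition maps $i^{n_1,n_2}$, which is exactly what makes $\varprojlim_n\mathcal K^{n,i}$ flabby. The paper's proof supplies precisely this construction, and it cannot be replaced by the purely abstract argument you give. Once you substitute the explicit extension-by-zero argument for the false general facts, the rest of your write-up is correct.
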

\begin{proof}
The sheaves $\mathcal{K}^{n, \, i}$ are flabby. Let $U$ be any open subset in $X'$. Every section in $\mathcal{K}^{n, \, i}(U)$, $\xi$, is also a section of
$j^{n}_*\mathcal{C}^{n, \, i}(U)$. Then, $\xi$ is the equivalence class of a singular cubical $i$-cochain $\xi'\in C^i(U\cap I^{\bar p, n} X, \QQ)$. We can extend
$\xi'$ by $0$ to get a singular cubical $i$-cochain $\xi'_{I^{\bar p, n} X} \in C^i(I^{\bar p, n} X, \QQ)$, that is, for every singular
$i$-cube $\sigma$ in ${I^{\bar p, n} X}$, we have
$$\xi'_{I^{\bar p, n} X} (\sigma)=\left\{ \begin{array}{ccc} \xi'(\sigma) & \text{if} & \im (\sigma) \subset U \\
0 & \text{if} & \im (\sigma) \not \subset U \end{array}\right.$$
The equivalence class of $\xi'_{I^{\bar p, n} X}$ in the sheaf of singular cubical $i$-cochains is a section $\xi_{X'}$ in $j^{n}_*\mathcal{C}^{n, \, i}(X')$.
It is easy to check that $\xi_{X'}$ is contained in $\mathcal{K}^{n, \, i}(X')$ since it is a extension by $0$ of $\xi$.

Now, we prove $\varprojlim_{n\in\NN}\mathcal{K}^{n, \, i}$ is also flabby. Let $\{\xi_n\}_{n\in\NN}$ be a section of
$(\varprojlim_{n\in\NN}\mathcal{K}^{n, \, i})(U)=\varprojlim_{n\in\NN}(\mathcal{K}^{n, \, i}(U))$. Since
$\mathcal{K}^{n, \, i}(U)$ is flabby for every natural $n$, the sections $\xi_n$ extend by $0$ to sections $\xi_{n}^{X'}$ in
$\mathcal{K}^{n, \, i}(X')$. It is easy to check that $i^{n_1,n_2}(X')(\xi_{n_2}^{X'})$ is equal to $\xi_{n_1}^{X'}$ for every pair of natural numbers
$n_1 <n_2$. So, $\{\xi_{n}^{X'}\}_{n\in \NN}$ is a global section of $\varprojlim_{n\in\NN}\mathcal{K}^{n, \, i}$.

Then, all the sheaves of the complex $IS$ are flabby and, consequently, the hypercohomology of $IS$ is equal to the cohomology of the global sections
of $IS$. By Proposition \ref{propprng}, there is an isomorphism
$$H^i(\Gamma(X, IS))\cong \varprojlim_{n \in \NN} H^i(\Gamma(X',\mathcal{K}^{n \, \bullet})).$$
Since the cohomology $H^i(\Gamma(X',\mathcal{K}^{n \, \bullet}))$ is the cohomology of the intersection space pair for every $n\in \NN$, we conclude.
\end{proof}

Now we prove a set of properties of the complex $IS$, in a similar vein that those satisfied by intersection cohomology sheaves.

\begin{definition}
For $k=2,..., d$, we define $U_k:=X \setminus X_{d-k}$ and we denote the canonical inclusions with $i_k:U_k \rightarrow U_{k+1}$ and $j_k: X_{d-k} \setminus X_{d-(k+1)} \rightarrow U_{k+1}$.
\end{definition}

\begin{theorem}
\label{th:existints}
The complex of sheaves $IS$ satisfies the following properties.
\begin{enumerate}
\item\label{th1} $IS_{|U_2}$ is quasi-isomorphic to $\QQ_{U_2}$.
\item\label{th2} The cohomology sheaves $\mathcal H^i(IS)$ are $0$ if $i\notin \{0,1,...,d\}$
\item\label{th3} For $k=2,...,d$, the cohomology sheaves $\mathcal H^i(j_k^*IS_{|U_{k+1}})$ are $0$ if $i \leq \bar q(k)$.
\item For $k=2,...,d$, the usual morphisms between the  cohomology sheaves $\mathcal H^i(j_k^*IS_{|U_{k+1}}) \rightarrow \mathcal H^i(j_k^*i_{k \, *}IS_{|U_{k}})$ are isomorphisms if $i > \bar q(k)$.
\end{enumerate}
\end{theorem}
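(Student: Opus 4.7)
The plan is to verify the four properties in turn, relying on the local cohomology computations in Propositions \ref{cohprng} and \ref{cohrprng} together with Theorem \ref{th:COHOMOLOGY} and the structural results of Section \ref{sec:sequence}.

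Property (\ref{th1}) is an immediate consequence of Lemma \ref{qis}, since $U_2 = X \setminus X_{d-2}$. For Property (\ref{th2}), I would argue stalkwise. Negative degrees vanish because the cubical cochain complex is concentrated in non-negative degrees. For $i > d$, take a principal neighbourhood $U_x$ of a point $x \in X_{d-k} \setminus X_{d-k-1}$; by Propositions \ref{propprng} and \ref{cohprng}, the stalk $\mathcal{H}^i(IS)_x$ is identified with an inverse limit of relative cohomology groups of pairs built from the link of $x$ inside the previously constructed intersection space pair. The relevant pairs have topological dimension bounded by $d$, so their singular cohomology vanishes above $d$, and by the proof of Proposition \ref{propprng} the inverse limit inherits this vanishing.

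For Property (\ref{th3}), fix $x \in X_{d-k} \setminus X_{d-k-1}$ and a principal neighbourhood $U_x$. The stalk $\mathcal{H}^i(j_k^* IS_{|U_{k+1}})_x$ equals $\mathcal{H}^i(IS)_x$, which by Propositions \ref{propprng} and \ref{cohprng} is realised as $\varprojlim_n H^i(\mathcal{K}^{n,\bullet}(\pi^{-1}(U_x)))$, and each term vanishes for $i \leq \bar q(k)$; hence the stalk vanishes in this range along the stratum.

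For Property (4), at a point $x$ on the same stratum, the stalk $\mathcal{H}^i(j_k^* i_{k*} IS_{|U_k})_x$ is the direct limit of $\mathcal{H}^i(IS(V \cap U_k))$ over open neighbourhoods $V$ of $x$ in $U_{k+1}$. Choosing $V$ to be a principal neighbourhood $U_x$, the intersection $V \cap U_k = U_x \setminus X_{d-k}$ is precisely a carved principal neighbourhood. By Propositions \ref{propprng} and \ref{cohrprng}, this stalk is the $i$-th cohomology of the link pair $(\sigma^\partial_{d-k})^{-1}(x) \subset (I^{\bar p}_{k-1} X, I^{\bar p}_{k-1}(X_{d-2}))$ for every $i \in \ZZ$. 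By Proposition \ref{cohprng}, the same description holds for $\mathcal{H}^i(j_k^* IS_{|U_{k+1}})_x$ whenever $i > \bar q(k)$. The canonical morphism between the two sheaves is induced at the stalk level by the restriction of sections from $\pi^{-1}(U_x)$ to $\pi^{-1}(U_x \setminus X_{d-k})$, and both Proposition \ref{cohprng} and Proposition \ref{cohrprng} compute these via the very same long exact sequence of the pair $((\sigma^\partial_{d-k})^{-1}(x), \ldots)$; so the comparison map is the identity on this common description. The main obstacle is precisely this last verification: one must trace through the construction of the morphism $\alpha$ in the proof of Proposition \ref{propprng} and check that the restriction map from a principal to a carved principal neighbourhood induces, on cohomology, the identity between the two link-pair descriptions, which amounts to a diagram chase using the retractions $r^{n\#}_U$ of Lemma \ref{retract}.
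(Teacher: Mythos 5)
Your proposal is correct and follows essentially the same route as the paper: property (1) via Lemma~\ref{qis}, and properties (2)--(4) via stalk computations over principal and carved principal neighbourhoods, interchanging limits by Proposition~\ref{propprng} and invoking Propositions~\ref{cohprng} and~\ref{cohrprng}. The compatibility check you flag at the end of (4) --- that the canonical comparison morphism is the identity under the common link-pair description --- is likewise left implicit in the paper's own proof, which simply identifies both stalks via Propositions~\ref{cohprng} and~\ref{cohrprng} and declares the result.
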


\begin{proof}
(\ref{th1}) is shown in lemma \ref{qis}.

Let $x \in X_{d-k}\setminus X_{d-(k+1)}$ for some $k\in\{2,3,...,d\}$. Given a complex of sheaves we denote by $H^i$ and $\mathcal H^i$ its
$i$-th cohomology presheaf and sheaf respectively. We have the obvious chain of equalities:

$$\mathcal H^i(\pi_* \varprojlim_{n \in \NN}\mathcal{K}^{n, \, \bullet})_x = H^i(\pi_* \varprojlim_{n \in \NN} \mathcal{K}^{n, \, \bullet})_x = \varinjlim_{x \in U \text{open}} H^i((\varprojlim_{n \in \NN} \mathcal{K}^{n, \, \bullet})(\pi^{-1}(U)))=$$ $$=\varinjlim_{x \in U \text{open}} H^i(\varprojlim_{n \in \NN} (\mathcal{K}^{n, \, \bullet}(\pi^{-1}(U)))).$$

Since the principal neighbourhoods form a system of neighborhoods for any point, we can suppose every open subset $U$ appearing in the previous formula
is a principal neighbourhood of $x$. Then, applying Proposition \ref{propprng} we have

$$\mathcal H^i(\pi_* \varprojlim_{n \in \NN} \mathcal{K}^{n, \, \bullet})_x = \varinjlim_{\substack{U \, \text{principal}\\ \text{neighbourhood of} \, x}} \varprojlim_{n \in \NN} H^i((\mathcal{K}^{n, \, \bullet}(\pi^{-1}(U)))).$$

So, applying proposition \ref{cohprng},
$\mathcal H^i(\pi_* \varprojlim_{n \in \NN} \mathcal{K}^{n, \, \bullet})_x$ is $0$ if
$i\leq \bar q(k)$ and equal to
$$H^i((\sigma^\partial_{d-r})^{-1}(x);\QQ)$$
if $i> \bar q(k)$.

Hence, we have proven (\ref{th2}) and (\ref{th3}) of the theorem.

Moreover, applying again Proposition \ref{propprng}
$$\mathcal H^i(j_k^*i_{k \, *}IS_{|U_{k}})_x=\varinjlim_{\substack{U \, \text{principal}\\ \text{neighbourhood of} \, x}}
H^i(\varprojlim_{n \in \NN} \pi_* \mathcal{K}^{n, \, \bullet}(U \setminus X_{d-k}))=$$
$$=\varinjlim_{\substack{U \, \text{principal}\\ \text{neighbourhood of} \, x}}
\varprojlim_{n \in \NN} H^i(\pi_* \mathcal{K}^{n, \, \bullet}(U \setminus X_{d-k})) $$
and, because of Proposition \ref{cohrprng},
$$\mathcal H^i(j_k^*i_{k \, *}IS_{|U_{k}})_x=H^i((\sigma^\partial_{d-r})^{-1}(x);\QQ)$$
for every $i\in \ZZ$, which concludes the proof.
\end{proof}

\section{Axioms of intersection space complexes}
\label{sec:axiomatic}

From now on, we do not need to assume that our topological pseudomanifold has a conical structure with respect to the stratification, like in
Remark \ref{re:fixconical}.

Let $X$ be a topological pseudomanifold with the following stratification:

\begin{equation}\label{str}
X=X_d\supset X_{d-2} \supset ... \supset X_0 \supset X_{-1}=\emptyset
\end{equation}

Let $U_k:=X\setminus X_{d-k}$ and let $i_k: U_k \rightarrow U_{k+1}$ and $j_k: X_{d-k} \setminus X_{d-k-1}
\rightarrow U_{k+1}$ be the usual inclusions.

Let us denote by $D^b_{cc}(X)$ the bounded derived category of cohomologically constructible sheaves of rational
vector spaces on $X$ with the previous stratification. Fix a perversity $\bar p$ and let us consider the
following sets of properties in this category:

\begin{enumerate}
  \item We say that $B^\bullet \in D^b_{cc}(X)$ verifies $[AX1]_k$ for perversity $\bar p$ if:

  \begin{itemize}
    \item[($a$)] $B^\bullet_{|U_2}$ is quasi-isomorphic to $\QQ_{U_2}$,
    \item[($b$)] the cohomology sheaf $\mathcal{H}^i(B^\bullet)$ is $0$ if $i \notin \{0,1,...,n\}$,
    \item[($c_k$)] $\mathcal{H}^i(j_k^* B^\bullet_{|U_{k+1}})$ is equal to $0$ if $i> \bar p(k)$,
    \item[($d_k$)] the natural morphism $\mathcal{H}^i(j_k^* B^\bullet_{|U_{k+1}})\rightarrow
    \mathcal{H}^i(j_k^* i_{k *} B^\bullet_{|U_{k}})$ is an isomorphism if $i\leq \bar p(k)$.
  \end{itemize}

  \item Let $\bar q$ be the complementary perversity of $\bar p$. We say that $B^\bullet \in D^b_{cc}(X)$
  verifies $[AXS1]_k$ for perversity $\bar p$ if:

  \begin{itemize}
    \item[($a$)] $B^\bullet_{|U_2}$ is quasi-isomorphic to $\QQ_{U_2}$,
    \item[($b$)] the cohomology sheaf $\mathcal{H}^i(B^\bullet)$ is $0$ if $i \notin \{0,1,...,n\}$,
    \item[($c_k$)] $\mathcal{H}^i(j_k^* B^\bullet_{|U_{k+1}})$ is equal to $0$ if $i \leq \bar q(k)$,
    \item[($d_k$)] the natural morphism $\mathcal{H}^i(j_k^* B^\bullet_{|U_{k+1}})\rightarrow
    \mathcal{H}^i(j_k^* i_{k *} B^\bullet_{|U_{k}})$ is an isomorphism if $i> \bar q(k)$.
  \end{itemize}
\end{enumerate}

\begin{remark}
$B^\bullet$ verifies $[AX1]_k$ for $k=2,..., d$ if and only if $B^\bullet[d]$ verifies the axioms [AX1] of
\cite[section 3.3]{GorMP}, that is, if $B^\bullet[d]$ is the intersection cohomology sheaf of $X$. So, we
will denote an object of $D^b_{cc}(X)$ verifying $[AX1]_k$ for $k=2,...,d$
by $IC_{\bar p}[-d]$.
\end{remark}

\begin{definition}\label{def:intspacecomplex}
An \emph{intersection space complex} of $X$ with perversity $\bar p$ and stratification (\ref{str}) is a complex of
sheaves verifying $[AXS1]_k$ for $k=2,...,d$.

We denote by $IS_{\bar p}$ a complex of sheaves in $X$ with these properties.
\end{definition}

\begin{remark}\label{re:relationintspace}
If the stratification of $X$ induces a conical structure (see Definition \ref{def:conical}) and there exist an
intersection space pair of $X$ with perversity $\bar p$ in the sense of
Definition \ref{def:intersection space}, then there exist an intersection space complex of $X$ (see Theorem
\ref{th:existints}).
\end{remark}

In the sequel, we will need equivalent axioms to $[AXS1]_k$. In the following remark, we review the method of
\cite[section 3.4]{GorMP} to get equivalent axioms to $[AX1]_k$ and $[AXS1]_k$.

\begin{remark}\label{rem:eqaxiomas}
Using the long exact sequence of cohomology associated to the distinguished triangle
$$ j_k^! B^\bullet_{|U_{k+1}}\rightarrow j_k^* B^\bullet_{|U_{k+1}} \rightarrow j_k^* i_{k *} B^\bullet_{|U_{k}}
\xrightarrow{[1]}$$
($c_k$) and ($d_k$) of $[AX1]_k$ are equivalent to ($c_k$) and($d'_k$) where ($d'_k$) is the following property:
\begin{itemize}
  \item[ ] $\mathcal{H}^i(j_k^! B^\bullet_{|U_{k+1}})=0$ if $i\leq \bar p(k)+1$.
\end{itemize}

Given $x\in X_{d-k}\setminus X_{d-k-1}$, let $u_x:\{x\} \rightarrow X_{d-k}\setminus X_{d-k-1}$ and $j_x:\{x\}
\rightarrow X$ be the canonical inclusions. Then, applying the property 1.13(15) of \cite{GorMP},

$$ j_x^! B^\bullet = u_x^! j_k^! B^\bullet_{|U_{k+1}} = u_x^* j_k^! B^\bullet_{|U_{k+1}} [k-d] $$

So, ($d'_k$) is equivalent to the following property ($d''_k$):

For every $x\in X_{d-k}\setminus X_{d-k-1}$, $\mathcal{H}^i(j_x^! B^\bullet_{|U_{k+1}})=0$ if $i\leq
\bar p(k)+1+d-k=d-\bar q(k)-1$.

Now, we apply the same method to properties $[AXS1]_k$. Using again the long exact sequence of cohomology associated to
$$ j_k^! B^\bullet_{|U_{k+1}}\rightarrow j_k^* B^\bullet_{|U_{k+1}} \rightarrow j_k^* i_{k *} B^\bullet_{|U_{k}}
\xrightarrow{[1]},$$
we deduce ($d_k$) of $[AXS1]_k$ is equivalent to ($d1_k$) and($d2_k$) where ($d1_k$) and($d2_k$) are the following properties.

\begin{itemize}
  \item[($d1_k$)] $\mathcal{H}^i(j_k^! B^\bullet_{|U_{k+1}})=0$ if $i> \bar q(k)+1$.
  \item [($d2_k$)] The canonical morphism $\mathcal{H}^{\bar q(k)+1}(j_k^! B^\bullet_{|U_{k+1}})\rightarrow
  \mathcal{H}^{\bar q(k)+1}(j_k^* B^\bullet_{|U_{k+1}})$ is the morphism $0$.
\end{itemize}

Moreover, using the property 1.13(15) of \cite{GorMP}, these properties are equivalent to:

\begin{itemize}
  \item[($d1'_k$)] For every $x\in X_{d-k}\setminus X_{d-k-1}$, $\mathcal{H}^i(j_x^! B^\bullet_{|U_{k+1}})=0$
  if $i> \bar q(k)+1+d-k= d- \bar p(k)-1$.
  \item [($d2'_k$)] For every $x\in X_{d-k}\setminus X_{d-k-1}$, the canonical morphism
  $$\mathcal{H}^{d- \bar p(k)-1}(j_x^! B^\bullet_{|U_{k+1}})\rightarrow
  \mathcal{H}^{\bar q(k)+1}(j_x^* B^\bullet_{|U_{k+1}})$$ (given by property 1.13(15) of \cite{GorMP}) is the
  morphism $0$.
\end{itemize}

\end{remark}

Now, we recall useful definitions to compare the axioms $[AX1]_k$ with $[AXS1]_k$.

\begin{definition}
Let $B^\bullet$ be a complex of sheaves in a topological space $X$ and, for every $x\in X$, let
$j_x:\{x\} \rightarrow X$ be the canonical inclusion. Then,
\begin{itemize}
  \item The \emph{local support} of $B^\bullet$ in degree $m$ is
  $$\{x\in X | \H^m(j_x^*B^\bullet) \neq 0\}$$
  \item The \emph{local cosupport} of $B^\bullet$ in degree $m$ is
  $$\{x\in X | \H^m(j_x^!B^\bullet) \neq 0\}$$
\end{itemize}
\end{definition}

The properties ($c_k$) of $[AX1]_k$, ($c_k$) of $[AXS1]_k$ and ($d'_k$), ($d1'_k$), ($d2'_k$) of Remark
\ref{rem:eqaxiomas} can be defined in terms of support and cosupport.

Let us consider a complex stratified variety $X$. Then, the upper middle perversity and the lower middle
perversity (see Definition \ref{def:perversity}) are equal over the codimension of the strata of $X$.

Let $\bar m$ be the middle perversity. The following table, taken from~\cite{CaMi}, illustrate the conditions of support and cosupport
for a complex of sheaves $IC_{\bar m}[-d]$ verifying $[AX1]_k$ with perversity $\bar m$ for $k=2,...,d$.

$$\begin{array}{cc|c|c|c|c|c}
    \multirow{10}{1.2cm}{degree}
    &8 & c & c & c & c & c \\ \cline{2-7}
    &7 &   &   & c & c & c \\ \cline{2-7}
    &6 &   &   &   & c & c \\ \cline{2-7}
    &5 &   &   &   &   & c \\ \cline{2-7}
    &4 &   &   &   &   &   \\ \cline{2-7}
    &3 &   &   &   &   & \times \\ \cline{2-7}
    &2 &   &   &   & \times & \times \\ \cline{2-7}
    &1 &   &   & \times & \times & \times \\ \cline{2-7}
    &0 & \times & \times & \times & \times & \times \\ \cline{2-7}
    &  & 0 & 1 & 2 & 3 & 4 \\
    \multicolumn{7}{r}{} \\
    \multicolumn{7}{r}{\substack{\text{complex codimension}\\ \text{of the strata}}}
  \end{array}$$

The symbol $c$ means the complex can have local cosupport at that place, while the symbol $\times$ means the
complex can have local support at that place.

The following tables illustrate the conditions of support and cosupport for an intersection space complex $IS_{\bar m}$ with perversity $\bar m$.

\begin{multicols}{2}

$$\begin{array}{cc|c|c|c|c|c}
    \multirow{10}{1.2cm}{degree}
    &8 &      &\times&\times&\times&\times\\ \cline{2-7}
    &7 &      &\times&\times&\times&\times\\ \cline{2-7}
    &6 &      &\times&\times&\times&\times\\ \cline{2-7}
    &5 &      &\times&\times&\times&\times\\ \cline{2-7}
    &4 &      &\times&\times&\times&\times^*\\ \cline{2-7}
    &3 &      &\times&\times&\times^*&      \\ \cline{2-7}
    &2 &      &\times&\times^*&      &      \\ \cline{2-7}
    &1 &      &\times^*&      &      &      \\ \cline{2-7}
    &0 &\times&      &      &      &      \\ \cline{2-7}
    &  & 0 & 1 & 2 & 3 & 4 \\
    \multicolumn{7}{r}{} \\
    \multicolumn{7}{r}{\substack{\text{complex codimension}\\ \text{of the strata}}\hspace{5mm}}
  \end{array}$$

$$\begin{array}{cc|c|c|c|c|c}
    \multirow{10}{1.2cm}{degree}
    &8 &c& & & & \\ \cline{2-7}
    &7 & &c^*& & & \\ \cline{2-7}
    &6 & &c&c^*& & \\ \cline{2-7}
    &5 & &c&c&c^*& \\ \cline{2-7}
    &4 & &c&c&c&c^*\\ \cline{2-7}
    &3 & &c&c&c&c\\ \cline{2-7}
    &2 & &c&c&c&c\\ \cline{2-7}
    &1 & &c&c&c&c\\ \cline{2-7}
    &\,0 \,& &c&c&c&c \\ \cline{2-7}
    &  &\,\, 0 \,\,&\,\, 1 \,\,&\,\, 2\, \,&\,\, 3 \,\,& \,\,4 \,\,\\
    \multicolumn{7}{r}{} \\
    \multicolumn{7}{r}{\substack{\text{complex codimension}\\ \text{of the strata}}\hspace{5mm}}
  \end{array}$$

\end{multicols}

The symbol $c$ means the complex can have local cosupport at that place, while the symbol $\times$ means the complex
can have local support at that place. Moreover, the symbol $*$ means the support and the cosupport must verify a
special condition given by ($d2'_k$).

Note that in $U_2$, $(IS_{\bar m})_{|U_2} \cong (IC_{\bar m}[-d])_{|U_2} \cong \QQ_{U_2}$. However, in $X_{d-2}$, the
place at which $IS_{\bar m}$ can have support is exactly the place at which $IC_{\bar m}[-d]$ cannot have support and
the place at which $IS_{\bar m}$ can have cosupport is exactly the place at which $IC_{\bar m}[-d]$ cannot have
cosupport.

\section{A derived category approach to intersection space complexes}
\label{sec:constructibleapproach}

In this section, we study necessary and sufficient conditions for the existence of an intersection space complex of $X$
with a perversity $\bar p$. Unlike intersection cohomology sheaves, intersection space complexes are not unique. We study the space parametrizing
the different choices of intersection space complexes for a fixed perversity.

\subsection{Homological algebra review}
We need the following lemma, that should be well known, we include a proof for convenience of the reader.

\begin{lemma}\label{lem:split}
Let
$$\xymatrix@C=1.4cm{A^\bullet \ar[r]^{f} & B^\bullet \ar[r]^{g} & C^\bullet \ar@/^{7mm}/[ll]^{[1]}_{\phi}}$$
be a distinguished triangle in the category $D^b_{cc}(X)$.

The following four conditions are equivalent:
\begin{enumerate}
  \item $f$ admits a retract
  \item $g$ admits a section
  \item There is an isomorphism in the derived category $\gamma:B^\bullet\cong A^\bullet \oplus C^\bullet$ such that
  $f=\gamma^{-1} \circ i_A$ and $g=p_C \circ \gamma$ where $i_A: A^\bullet \rightarrow A^\bullet \oplus C^\bullet$ is
  the natural inclusion and $p_C:A^\bullet \oplus C^\bullet \rightarrow C^\bullet$ is the natural projection.
  \item $\phi$ is the morphism $0$
\end{enumerate}
\end{lemma}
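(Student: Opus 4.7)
The plan is to prove the circular chain of implications by passing through condition (4), since the vanishing of $\phi$ is the most convenient technical hypothesis to manipulate. I will first note the trivial implications $(3)\Rightarrow(1)$ and $(3)\Rightarrow(2)$, obtained by taking $r$ to be the projection $\gamma^{-1}$ onto $A^\bullet$ composed with $\gamma$, and $s$ to be $\gamma^{-1}$ applied to the inclusion of $C^\bullet$. The heart of the argument is then to establish $(1)\Leftrightarrow (4)$, $(2)\Leftrightarrow(4)$, and $(4)\Rightarrow(3)$.

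For $(1)\Rightarrow(4)$, I use the fact that in any triangulated category the composition of two consecutive arrows of a distinguished triangle vanishes; in particular $f[1]\circ \phi=0$. If $r\circ f=\mathrm{id}_{A^\bullet}$ then $\phi=(r[1]\circ f[1])\circ\phi=r[1]\circ 0=0$. The implication $(2)\Rightarrow(4)$ is symmetric using $\phi\circ g=0$: $\phi=\phi\circ g\circ s=0$. For the converses, I apply the cohomological functors $\mathrm{Hom}(-,A^\bullet)$ and $\mathrm{Hom}(C^\bullet,-)$ to the distinguished triangle and use the resulting long exact sequences
\begin{equation*}
\mathrm{Hom}(B^\bullet,A^\bullet)\xrightarrow{f^*}\mathrm{Hom}(A^\bullet,A^\bullet)\xrightarrow{\phi[-1]^*}\mathrm{Hom}(C^\bullet[-1],A^\bullet),
\end{equation*}
\begin{equation*}
\mathrm{Hom}(C^\bullet,B^\bullet)\xrightarrow{g_*}\mathrm{Hom}(C^\bullet,C^\bullet)\xrightarrow{\phi_*}\mathrm{Hom}(C^\bullet,A^\bullet[1]).
\end{equation*}
If $\phi=0$, the connecting maps vanish, so $f^*$ and $g_*$ are surjective; lifts of $\mathrm{id}_{A^\bullet}$ and $\mathrm{id}_{C^\bullet}$ supply a retract $r$ and a section $s$.

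Finally, for $(4)\Rightarrow(3)$ I build the candidate isomorphism. Having obtained a section $s$ from step two, consider the morphism $\delta:=f\oplus s:A^\bullet\oplus C^\bullet\to B^\bullet$. Together with the identities on $A^\bullet$ and $C^\bullet$, this gives a morphism of distinguished triangles from the split triangle $A^\bullet\to A^\bullet\oplus C^\bullet\to C^\bullet\xrightarrow{0}A^\bullet[1]$ to the given one $A^\bullet\to B^\bullet\to C^\bullet\xrightarrow{\phi}A^\bullet[1]$; commutativity of the connecting square is exactly the hypothesis $\phi=0$. The triangulated version of the five lemma then forces $\delta$ to be an isomorphism, and I take $\gamma:=\delta^{-1}$. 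The required compatibilities $f=\gamma^{-1}\circ i_A$ and $g=p_C\circ\gamma$ are built into the construction. The only delicate point, and the one I expect to require the most care, is checking the commutativity of the morphism of triangles needed to invoke the five lemma; everything else is formal manipulation in a triangulated category.
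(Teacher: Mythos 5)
Your proof is correct. Let me compare it with the paper's. You choose condition (4), the vanishing of the connecting morphism, as the hub of the argument: you show $(1)\Rightarrow(4)$ and $(2)\Rightarrow(4)$ via the vanishing of consecutive compositions in a triangle, $(4)\Rightarrow(1)$ and $(4)\Rightarrow(2)$ via the long exact $\mathrm{Hom}$-sequences, and $(4)\Rightarrow(3)$ by extending $(\mathrm{id}_{A^\bullet},\mathrm{id}_{C^\bullet})$ to a morphism from the split triangle $A^\bullet\to A^\bullet\oplus C^\bullet\to C^\bullet\xrightarrow{0}A^\bullet[1]$ to the given one and invoking the triangulated five lemma. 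The paper instead makes $(3)$ the pivot: from a retract $r$ it builds $\gamma=(r,g)^{T}$ explicitly, observes that the cohomology long exact sequence forces $\mathcal H^i(\phi)=0$ and hence that $\mathcal H^i(\gamma)$ is an isomorphism in every degree, and then argues $(3)\Leftrightarrow(4)$ through the rotation axiom and completion of a triangle morphism. The essential difference is the tool used to conclude that the comparison map is an isomorphism: you use the five lemma for triangulated categories, a purely formal device valid in any triangulated category, whereas the paper uses the cohomology sheaves $\mathcal H^i(-)$, relying on the fact that in $D^b_{cc}(X)$ a map inducing isomorphisms on all cohomology sheaves is an isomorphism. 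Your route is more general and slightly more economical in the forward implications $(1),(2)\Rightarrow(4)$, where the paper passes through the full construction of $\gamma$; the paper's is somewhat more concrete, exhibiting $\gamma$ before checking it is invertible. Both reach the same commutativity check at the connecting square, and your identification that $\phi=0$ is exactly what makes that square commute matches the paper's reasoning in its proof of $(4)\Rightarrow(3)$.
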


\begin{proof}
First we prove that (1) implies (3). Let $r$ be a retract of $f$. Then, $r\circ f=Id_{A^\bullet}$. So, the morphism
induced by $f$ between the cohomology sheaves $\H^i(f):\H^i(A^\bullet) \to \H^i(B^\bullet)$ is injective for every
$i\in \ZZ$. Hence, using the long exact sequence of cohomology
$$... \rightarrow\H^{i-1}(C^\bullet)\xrightarrow{\H^{i-1}(\phi)}\H^i(A^\bullet)\xrightarrow{\H^i(f)}\H^i(B^\bullet)
\xrightarrow{\H^i(g)} \H^i(C^\bullet) \xrightarrow{\H^{i}(\phi)} \H^{i+1}(A^\bullet)\rightarrow ... ,$$
we deduce $\H^i(\phi)=0$ for every $i\in \ZZ$. Consequently, we have short exact sequences
\begin{equation}\label{shortexseq}
0 \rightarrow \H^i(A^\bullet)\xrightarrow{\H^i(f)}\H^i(B^\bullet)\xrightarrow{\H^i(g)}\H^i(C^\bullet)\rightarrow 0
\end{equation}

Now, let us consider the morphism
$$\gamma=\left( \begin{array}{c} r \\ g \end{array}\right): B^\bullet \to A^\bullet \oplus C^\bullet$$

The morphism induced by $\gamma$ between the cohomology sheaves is
$$\H^i(\gamma)=\left( \begin{array}{c} \H^i(r) \\ \H^i(g) \end{array}\right): \H^i(B^\bullet) \to \H^i(A^\bullet)
\oplus \H^i(C^\bullet),$$
which is an isomorphism, since $\H^i(r)$ is a retract of $\H^i(f)$ and (\ref{shortexseq}) is exact.

Moreover, since $f\circ g=0$, we have $\gamma \circ f = i_A$ and it is clear that $p_C \circ \gamma=g$. So, we have proven
(1) implies (3).

Now, we prove (2) implies (3). Let $s$ be a section of $g$ and let us consider the morphism
$$\gamma'=(f,s): A^\bullet \oplus C^\bullet \rightarrow B^\bullet$$
In the same way that in the previous implication, we can show that $\gamma'$ is a quasi-isomorphism, and that we have $f=\gamma'
\circ i_A$ and $g\circ \gamma'=p_C$. So, $\gamma=(\gamma')^{-1}$ is the isomorphism which appears in condition (3).

Moreover, if condition (3) is true, $p_A:A^\bullet \oplus C^\bullet \rightarrow A^\bullet$ denotes the natural
projection and $i_C: C^\bullet \rightarrow A^\bullet \oplus C^\bullet$ denotes the natural inclusion, then
$p_A \circ \gamma$ is a retract of $f$ and $\gamma^{-1} \circ i_C$ is a section of $g$. So, (3) implies (1) and (2).

Now, it is enough to prove $(3) \Leftrightarrow (4)$. (3) implies that we have the following
isomorphism between distinguished triangles:
$$\xymatrix@C=1.4cm{A^\bullet \ar[r]^f \ar[d]_{Id_{A^\bullet}} & B^\bullet \ar[r]^g \ar[d]^\gamma & C^\bullet
\ar[d]^{Id_{C^\bullet}} \ar@/_{8mm}/[ll]_{[1]}^\phi \\ A^\bullet \ar[r]^{i_A\hspace{4mm}}& A^\bullet \oplus C^\bullet
\ar[r]^{\hspace{4mm}p_C} & C^\bullet \ar@/^{8mm}/[ll]^{[1]}_0}$$

So, $\phi$ is the morphism $0$.

Now, suppose $\phi=0$ and let us prove condition (3). By properties of the triangulated categories, we know
$$\xymatrix@C=1.4cm{C^\bullet[-1] \ar[r]^{\hspace{2mm}-\phi[-1] }& A^\bullet\ar[r]^{f} & B^\bullet
\ar@/^{7mm}/[ll]^{[1]}_{g}}$$
is a distinguished triangle. So, if $\phi=0$, there is an isomorphism $\gamma: B^\bullet \cong A^\bullet \oplus
C^\bullet$ which completes the following isomorphism of triangles
$$\xymatrix@C=1.4cm{C^\bullet[-1]\ar[d]_{Id_{C^\bullet}[-1]} \ar[r]^{\hspace{4mm}0} & A^\bullet \ar[r]^f
\ar[d]^{Id_{A^\bullet}} & B^\bullet \ar@/_{8mm}/[ll]^g_{[1]} \ar[d]^\gamma \\ C^\bullet[-1] \ar[r]^{\hspace{3mm}0} &
A^\bullet \ar[r]^{i_A\hspace{3mm}} & A^\bullet \oplus C^\bullet \ar@/^{8mm}/[ll]^{[1]}_{p_C} }$$
$\gamma$ is the isomorphism which appears in condition (3).

\end{proof}

\begin{definition}
The triangle
$$\xymatrix@C=1.4cm{A^\bullet \ar[r]^{f} & B^\bullet \ar[r]^{g} & C^\bullet \ar@/^{7mm}/[ll]^{[1]}_{\phi}}$$
is said to be split if it verifies the conditions of Lemma \ref{lem:split}.
\end{definition}

\subsection{Charaterization of existence and study of uniqueness}

Now, we can state the main theorem in this section.

\begin{theorem}\label{th:existunic}
The following holds:
\begin{itemize}

  \item[\textbf{1.}] [Goreski-McPherson] There exist one object in $D^b_{cc}(X)$ verifying $[AX1]_k$ for $k=2,..., d$. This object is
  unique up to isomorphism.

  \item[\textbf{2.}] Suppose there exist an intersection space complex in $U_r$, $IS_{r-1}$, that is, $IS_{r-1}$ is an object
  in $D^b_{cc}(U_r)$ and it verifies $[AXS1]_k$ for $k=2,...,r-1$. Then, there exist an intersection space complex in
  $U_{r+1}$, $IS_{r}$, such that $(IS_{r})_{|U_r}\cong IS_{r-1}$ if and only if the distinguised triangle:
  \begin{equation}\label{tr1}
  \tau_{\leq \bar q(r)} j_{r*} j_r^* i_{r*} IS_{r-1}\xrightarrow{f} j_{r*} j_r^* i_{r*} IS_{r-1} \rightarrow \tau_{> \bar q(r)}
  j_{r*} j_r^* i_{r*} IS_{r-1} \xrightarrow{[1]}
  \end{equation}
  is split.
  Moreover, there is a bijection
  $$
  \left\{\begin{array}{c}\text{intersection space complexes}\\ IS_{r} \in D^b_{cc}(U_{r+1}) \, \text{such}  \\
  \text{that} \, (IS_{r})_{|U_r}\cong IS_{r-1} \end{array}\right\} /
  \left\{\text{isomorphism}\right\} \longleftrightarrow \left\{\text{retracts of}\, f\right\}/\sim
  $$
  where $\sim$ is the equivalence relation such that $\lambda_1 \sim \lambda_2$ if and only if there exit isomorphisms
  $\alpha:\tau_{\leq \bar q(r)} j_{r*} j_r^* i_{r*} IS_{r-1} \rightarrow \tau_{\leq \bar q(r)} j_{r*} j_r^* i_{r*} IS_{r-1}$ and
  $\beta:i_{r*} IS_{r-1} \rightarrow i_{r*} IS_{r-1}$ such that $\lambda_2 = \alpha \circ \lambda_1 \circ j_{r*} j_r^* \beta $.

\end{itemize}
\end{theorem}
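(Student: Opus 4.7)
The strategy will be inductive on $r$, with the common tool being the attaching triangle for the open-closed decomposition $U_{r+1} = U_r \sqcup (X_{d-r}\setminus X_{d-r-1})$ and Lemma~\ref{lem:split}. I would treat each part in turn.

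For Part 1 (Goresky-McPherson), the classical choice is $IC_r := \tau_{\leq \bar p(r)} i_{r*}IC_{r-1}$, where the truncation is the standard one in $D^b_{cc}(U_{r+1})$. Restricting to $U_r$ recovers $IC_{r-1}$ since the axioms $[AX1]_k$ for $k<r$ already force $IC_{r-1}$ to have cohomology in the relevant range on every smaller stratum. On the codimension-$r$ stratum, condition $(c_r)$ is built in by the truncation and $(d_r)$ follows from the universal property of $\tau_{\leq \bar p(r)}$. Uniqueness is obtained because, for any candidate $B^\bullet$ satisfying the axioms, the adjunction unit $B^\bullet \to i_{r*}i_r^*B^\bullet = i_{r*}IC_{r-1}$ factors canonically through $\tau_{\leq \bar p(r)} i_{r*}IC_{r-1}$, and the resulting map is a quasi-isomorphism by the axioms.

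For Part 2, assume first that (\ref{tr1}) splits and choose a retract $\lambda$ of $f$ by Lemma~\ref{lem:split}. I would define $IS_r$ as the fiber of the composition
$$\mu := \lambda \circ \eta \colon i_{r*}IS_{r-1} \xrightarrow{\eta} j_{r*}j_r^*i_{r*}IS_{r-1} \xrightarrow{\lambda} \tau_{\leq \bar q(r)} j_{r*}j_r^*i_{r*}IS_{r-1},$$
where $\eta$ is the adjunction unit. Since $i_r^* j_{r*} = 0$, applying $i_r^*$ to the defining triangle yields $i_r^* IS_r \cong IS_{r-1}$, which inductively guarantees all $[AXS1]_k$ for $k < r$. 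Applying $j_r^*$, and using the identities $j_r^* j_{r*} = \mathrm{Id}$ and $j_r^* \eta = \mathrm{Id}$ together with the fact that standard truncation commutes with the exact functors $j_r^*$ and $j_{r*}$, one sees that $j_r^* IS_r$ is the fiber of a retract of the truncation inclusion on the stratum; hence Lemma~\ref{lem:split} gives $j_r^* IS_r \cong \tau_{>\bar q(r)} j_r^* i_{r*} IS_{r-1}$, from which $(c_r)$ and $(d_r)$ of $[AXS1]_r$ follow directly. Conversely, given any $IS_r$ extending $IS_{r-1}$, axioms $(c_r)$ and $(d_r)$ force the composition $j_r^* IS_r \to j_r^* i_{r*} IS_{r-1} \to \tau_{>\bar q(r)} j_r^* i_{r*} IS_{r-1}$ to be an isomorphism; its inverse pushed forward by $j_{r*}$ is a section of $g$, so (\ref{tr1}) splits.

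For the bijection, the forward map sends $\lambda$ to $IS_r := \mathrm{fib}(\mu)$, and the reverse extracts from $IS_r$ a section $s$ of $g$ via the isomorphism $j_r^* IS_r \cong \tau_{>\bar q(r)} j_r^* i_{r*} IS_{r-1}$ pushed forward by $j_{r*}$, which corresponds canonically to a retract by Lemma~\ref{lem:split}. Two retracts $\lambda_1, \lambda_2$ produce isomorphic $IS_r$'s exactly when there exist automorphisms $\alpha$ of $\tau_{\leq \bar q(r)}j_{r*}j_r^*i_{r*}IS_{r-1}$ and $\beta$ of $i_{r*}IS_{r-1}$ intertwining the two choices, yielding the relation $\lambda_2 = \alpha \circ \lambda_1 \circ j_{r*}j_r^*\beta$. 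The main obstacle will be to verify this bijection precisely at the level of isomorphism classes: one must show that any isomorphism $IS_r \cong IS_r'$ descends to compatible automorphisms of $i_{r*}IS_{r-1}$ and of the truncation sheaf, and conversely that such automorphisms assemble via the functoriality of the cone into an isomorphism of the two fiber constructions; this requires a careful diagram chase through the distinguished triangles defining the two candidate complexes.
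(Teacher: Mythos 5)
Your proposal gets the two easier halves of the theorem right. For Part~1, you replace the paper's construction $IC_{\bar p\,r}[-d]:=\operatorname{cone}(\phi_r)[-1]$, where $\phi_r$ is the composition $i_{r*}IC_{\bar p\,r-1}[-d]\to j_{r*}j_r^*i_{r*}IC_{\bar p\,r-1}[-d]\to\tau_{>\bar p(r)}j_{r*}j_r^*i_{r*}IC_{\bar p\,r-1}[-d]$, with the classical Deligne truncation $\tau_{\leq\bar p(r)}i_{r*}IC_{r-1}$. These agree: since $\bar p$ is nondecreasing, the axioms $[AX1]_k$ for $k<r$ force $\mathcal H^i(IC_{r-1})=0$ for $i>\bar p(r)$, so the truncation does not disturb the restriction to $U_r$, and the uniqueness argument via factorisation of the adjunction unit through the truncation is sound. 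This is a legitimate alternative to the paper, though the paper's cone formulation has the merit of matching the construction used in Part~2. For the forward direction of Part~2 you define $IS_r$ as the fibre of $\varphi_\lambda=\lambda\circ a$, which is exactly $\operatorname{cone}(\varphi_\lambda)[-1]$, i.e.\ the paper's construction; the verification of $[AXS1]_k$ via $i_r^*j_{r*}=0$, $j_r^*j_{r*}=\mathrm{Id}$ and exactness of $j_r^*$ is correct. Your converse, recovering a section of $g$ from the isomorphism $j_r^*IS_r\cong\tau_{>\bar q(r)}j_r^*i_{r*}IS_{r-1}$ forced by $(c_r)$ and $(d_r)$ and pushing forward by $j_{r*}$, is also correct and matches the paper's argument in spirit.

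The genuine gap is the bijection, which you yourself flag as ``the main obstacle'' and then do not carry out. This is not a detail: it is where the technical substance of Part~2 resides, and it is precisely what makes the parameter space of intersection space complexes identifiable with $\{\text{retracts of }f\}/\sim$. Concretely, two things are missing. First, you must show that if $\lambda_2=\alpha\circ\lambda_1\circ j_{r*}j_r^*\beta$ with $\alpha,\beta$ automorphisms, then $\operatorname{cone}(\varphi_{\lambda_1})\cong\operatorname{cone}(\varphi_{\lambda_2})$; the paper does this via the octahedral axiom applied to the compositions $\lambda\circ a$ and $\lambda\circ j_{r*}j_r^*\beta\circ a$, matching the auxiliary cones through a chain of triangle isomorphisms. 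Second, you must show that an isomorphism $IS_r\cong IS'_r$ over $U_{r+1}$ produces, after choosing compatible identifications $i_{r*}(IS_r)_{|U_r}\cong i_{r*}IS_{r-1}$ on both sides, a pair $(\alpha,\beta)$ exhibiting $\lambda(IS_r)\sim\lambda(IS'_r)$; the paper extracts the retracts from the truncated diagrams (\ref{retract1}) and (\ref{retract2}) and then verifies explicit relations $a'=\tau_{\leq\bar q(r)}\delta\circ a\circ(\tau_{\leq\bar q(r)}j_{r*}j_r^*\beta)^{-1}$, $b'=\delta\circ a\circ(j_{r*}j_r^*\beta)^{-1}$, $c'=\delta\circ c\circ(\tau_{\leq\bar q(r)}\delta)^{-1}$ before concluding $\lambda=(\tau_{\leq\bar q(r)}j_{r*}j_r^*\beta)^{-1}\circ\lambda'\circ j_{r*}j_r^*\beta$. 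Without these two computations the stated bijection is an assertion, not a theorem, and a reader cannot verify Corollary~\ref{cor:obstructions}, the mixed Hodge module lift, or the parametrisation used in the generic duality results downstream.
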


\begin{proof}
 \textbf{1.} is proved in \cite[section 3]{GorMP}, but we give proof adapted to our needs.

 There exist a unique object (up to isomorphism), $\QQ_{U_2}$, in $D^b_{cc}(U_2)$ verifying $(a)$ and $(b)$ of $[AX1]_k$.
 Suppose there exist a unique object (up to isomorphism), $IC_{\bar p \, r-1}[-d]$, in $D^b_{cc}(U_r)$ verifying $[AX1]_k$ for $k=2,..., r-1$,
 and consider the following composition of natural morphisms:
 $$\xymatrix{ i_{r*} IC_{\bar p \, r-1}[-d] \ar[r] \ar@/^1pc/[rr]^{\phi_r} & j_{r*} j_r^* i_{r*}IC_{\bar p \, r-1}[-d] \ar[r]
 & \tau_{> \bar p(r)} j_{r*} j_r^* i_{r*} IC_{\bar p \, r-1}[-d]}$$

 Let us define $IC_{\bar p \, r}[-d] := cone(\phi_r) [-1]$. Then, there is a distinguished triangle:
 \begin{equation}\label{tr2}
IC_{\bar p \, r}[-d]\rightarrow i_{r*} IC_{\bar p \, r-1}[-d] \xrightarrow{\phi_r} \tau_{> \bar p(r)} j_{r*} j_r^* i_{r*}
IC_{\bar p \, r-1}[-d] \xrightarrow{[1]}
 \end{equation}

 Using the long exact sequence of cohomology associated to this triangle we can prove that $IC_{r}[-d]$ verifies
 $[AX1]_k$ for $k=2,..., r$.

 Now, suppose there exist another object $B^\bullet$ in $D^b_{cc}(U_{r+1})$ verifying $[AX1]_k$ for $k=2,..., r$. Then,
 $B^\bullet_{|U_r}$ verifies $[AX1]_k$ for $k=2,..., r-1$. So, there exist an isomorphism $B^\bullet_{|U_r} \cong
 IC_{\bar p \, r-1}[-d]$.

 Let $\varphi: B^\bullet \rightarrow i_{r*} IC_{\bar p \, r-1}[-d]$ be the composition of the canonical morphism $B^\bullet
 \rightarrow i_{r *} B^\bullet_{U_r}$ and an isomorphism $i_{r*}B^\bullet_{|U_r} \cong i_{r*}IC_{\bar p \, r-1}[-d]$ and
 let $C^\bullet := cone(\varphi)$. Since $i^* \varphi$ is an isomorphism, we have the isomorphism $i^* C^\bullet\cong 0$ in the derived category.

 Then, the distinguised triangle
 $$ i_{r!}i_r^* C^\bullet \rightarrow C^\bullet \rightarrow j_{r*}j_r^* C^\bullet \xrightarrow{[1]}$$
 implies there exist an isomorphism $ C^\bullet \cong j_{r*}j_r^* C^\bullet$.

 Moreover, with the long exact sequence of cohomology associated to
 $$j_{r*}j_r^* B^\bullet \xrightarrow{j_{r*}j_r^* \varphi} j_{r*}j_r^* i_{r*} IC_{\bar p \, r-1}[-n] \xrightarrow{\psi}
 j_{r*}j_r^* C^\bullet$$
 we prove $\mathcal H^i(j_{r*}j_r^* C^\bullet)=0$ if $i \leq \bar p(r)$ and $\H^i(\psi):\mathcal H^i(j_{r*}j_r^* i_{r*} IC_{\bar p \, r-1}[-d]) \rightarrow
 \mathcal H^i(j_{r*}j_r^* C^\bullet)$ is an isomorphism if $i> \bar p(r)$. Then, we obtain isomorphisms
 $$C^\bullet \cong \tau_{> \bar p(r)} j_{r*} j_r^* i_{r*} IC_{\bar p \, r-1}[-d] \hspace{1cm} \text{and} \hspace{1cm} B^\bullet\cong IC_{\bar p \, r}[-d].$$

 Repeating this process finitely we obtain $IC_{\bar p}[-d] \in D^b_{cc}(X)$ verifying $[AX1]_k$ for $k=2,..., d$ and it
 is unique up to isomorphism.

  Now, we prove \textbf{2.} Let $IS_{r-1}$ be an intersection space complex in $U_r$. We have to prove that there is a bijective map

 $$\left\{\begin{array}{c}\text{intersection space complexes}\\ IS_{r} \in D^b_{cc}(U_{r+1}) \, \text{such}  \\
  \text{that} \, (IS_{r})_{|U_r}\cong IS_{r-1} \end{array}\right\} /
  \left\{\text{isomorphism}\right\} \longleftrightarrow \left\{\text{retracts of}\, f\right\}/\sim
$$

Let $\lambda$ be a retract of $f$ and consider the following composition of morphisms:
$$\xymatrix{ i_{r*} IS_{r-1} \ar[r]^{a \hspace{4mm}} \ar@/^{6mm}/[rr]^{\varphi_\lambda} & j_{r*} j_r^* i_{r*}IS_{r-1}
\ar[r]^{\lambda \hspace{0.5cm}} & \tau_{\leq \bar q(r)} j_{r*} j_r^* i_{r*}IS_{r-1}}$$
where $a$ is the canonical morphism.

Let us define $IS_{r}:= cone(\varphi_ \lambda) [-1]$. Then, there is a distinguised triangle:
\begin{equation}\label{tr3}
IS_{r}\rightarrow i_{r*} IS_{r-1}\xrightarrow{\varphi_\lambda}\tau_{\leq \bar q(r)}j_{r*}j_r^*i_{r*}IS_{r-1}\xrightarrow{[1]}
\end{equation}

 Since $(j_{r*}j_r^* i_{r*}IS_{r-1})_{|U_r}$ equals $0$, $(IS_{r})_{|U_r}$ is isomorphic to $IS_r$ and, using the long exact sequence
 associated to the triangle, one proves that $IS_{r}$ verifies $[AXS1]_k$ for $k=2,..., r$.

 Let $\lambda'$ be a different retract of $f$ such that $\lambda \sim \lambda'$. We have to prove $cone(\varphi_\lambda)
 \cong cone(\varphi_{\lambda'})$.

 Let $\alpha:\tau_{\leq \bar q(r)} j_{r*} j_r^* i_{r*} IS_{r-1} \rightarrow \tau_{\leq \bar q(r)} j_{r*} j_r^* i_{r*} IS_{r-1}$
 and $\beta:i_{r*} IS_{r-1} \rightarrow i_{r*} IS_{r-1}$ be isomorphisms such that $\lambda' = \alpha \circ \lambda \circ j_{r*}
 j_r^* \beta $.

 Since we know the equalities $\varphi_\lambda= \lambda \circ a$ and $\varphi_{\lambda'}= \lambda' \circ a$, we have to prove
 the isomorphism $cone(\lambda
 \circ a) \cong cone(\alpha \circ \lambda \circ j_{r*} j_r^* \beta \circ a)$. Moreover, since $\alpha$ is an isomorphism, $cone(
 \alpha \circ \lambda \circ j_{r*} j_r^*\beta \circ a)$ is isomorphic to $cone(\lambda \circ j_{r*} j_r^*\beta \circ a)$. Now, consider
 the following diagrams associated to the octahedral axiom of distinguished triangles.
\begin{equation}\label{diag1}
 \xymatrix@C=2mm{i_{r*} IS_{r-1} \ar@/^1.5pc/[rrrr]^{\lambda \circ a}\ar[rr]^a & & j_{r*} j_r^* i_{r*}IS_{r-1} \ar[rr]^\lambda
 \ar[ld]& & \tau_{\leq \bar q(r)} j_{r*} j_r^* i_{r*} IS_{r-1} \ar[ld] \ar@/^1.5pc/[lldd]\\ & cone(a) \ar[lu]_{[1]} \ar[rd]& &
 cone(\lambda) \ar[lu]^{[1]} \ar[ll]^{[1]} & \\ & & cone(\lambda\circ a) \ar[ru] \ar@/^2pc/[lluu]^{[1]} & & }
\end{equation}
\begin{equation}\label{diag2}
 \xymatrix@C=2mm{i_{r*} IS_{r-1} \ar@/^1.5pc/[rrrr]^{\lambda \circ j_{r*} j_r^* \beta \circ a}\ar[rr]^a & & j_{r*} j_r^* i_{r*}
 IS_{r-1} \ar[rr]^{\lambda \circ j_{r*} j_r^* \beta} \ar[ld]& & \tau_{\leq \bar q(r)} j_{r*} j_r^* i_{r*} IS_{r-1} \ar[ld]
 \ar@/^2.5pc/[lldd] \\ & cone(a) \ar[lu]_{[1]} \ar[rd]& & cone(\lambda \circ j_{r*} j_r^* \beta) \ar[lu]^{[1]} \ar[ll]^{[1]}
 & \\ & & cone(\lambda \circ j_{r*} j_r^*  \beta\circ a) \ar[ru] \ar@/^2.5pc/[lluu]^{[1]} & & }
 \end{equation}

 Let $\phi: cone (\lambda\circ j_{r*} j_r^* \beta) \rightarrow cone(\lambda)$ be an isomorphism completing the following isomorphism
 between triangles
$$
\xymatrix@C=1.4cm{j_{r*} j_r^* i_{r*}IS_{r-1} \ar[r]^{\lambda\circ j_{r*} j_r^* \beta\hspace{4mm}} \ar[d]^{j_{r*} j_r^* \beta} &
\tau_{\leq \bar q(r)} j_{r*} j_r^* i_{r*} IS_{r-1} \ar[r] \ar[d]^{Id} & cone(\lambda \circ j_{r*} j_r^* \beta) \ar[d]^\phi\\
j_{r*} j_r^* i_{r*}IS_{r-1} \ar[r]^{\lambda \hspace{5mm}} & \tau_{\leq \bar q(r)} j_{r*} j_r^* i_{r*} IS_{r-1} \ar[r]&cone(\lambda)}
 $$

 Then, if $\rho: cone (a) \rightarrow cone (a)$ is an isomorphism completing the triangles isomorphism
$$
\xymatrix{i_{r*} IS_{r-1} \ar[r]^{a\hspace{5mm}} \ar[d]^{\beta} & j_{r*} j_r^* i_{r*}IS_{r-1} \ar[r] \ar[d]^{j_{r*} j_r^* \beta}&
cone(a)\ar[d]^\rho \\ i_{r*} IS_{r-1} \ar[r]^{a\hspace{5mm}} & j_{r*} j_r^* i_{r*}IS_{r-1} \ar[r] & cone(a)}
$$
the diagram
$$ \xymatrix{ cone(\lambda\circ j_{r*} j_r^* \beta) \ar[r]^{\hspace{5mm}[1]} \ar[d]^\phi & cone(a) \ar[d]^\rho\\ cone(\lambda)
\ar[r]^{[1]} & cone(a)}$$
is commutative.

Therefore, there exist an isomorphism $cone(\lambda\circ j_{r*} j_r^* \beta \circ a) \cong cone(\lambda\circ a )$, completing
the following morphism between triangles
$$ \xymatrix{ cone(\lambda\circ j_{r*} j_r^* \beta) \ar[r]^{\hspace{5mm}[1]} \ar[d]^\phi & cone(a) \ar[d]^\rho \ar[r] &
cone(\lambda\circ j_{r*} j_r^* \beta \circ a) \ar[d]\\ one(\lambda) \ar[r]^{[1]} & cone(a) \ar[r] & cone(\lambda\circ a )}$$

Now, suppose there exist an intersection space complex in $U_{r+1}$, $IS_{r}$ such that $(IS_{r})_{|U_r}\cong IS_{r-1}$.
We have to prove that the triangle (\ref{tr1}) is split.

Let $h:IS_{r} \rightarrow i_{r*} IS_{r-1}$ be the composition of the canonical morphism $IS_{r} \rightarrow
i_{r*} (IS_{r})_{|U_r}$ and an isomorphism $i_{r*} (IS_{r})_{|U_r}\cong i_{r*} IS_{r-1}$ and let $C^\bullet:=cone(h)$. Then,
there is a distinguished triangle: $$IS_{r}\xrightarrow{h} i_{r*} IS_{r-1} \xrightarrow{g} C^\bullet \xrightarrow{[1]}$$

Note that $i_r^*h:i_r^* IS_{r} \rightarrow IS_{r-1}$ is an isomorphism. So, $i_r^* C^\bullet$ is isomorphic to $0$ in the derived category.
Then, the canonical triangle
$$ i_{r!}i_r^* C^\bullet \rightarrow C^\bullet \rightarrow j_{r*}j_r^* C^\bullet \xrightarrow{[1]}$$
implies the isomorphism $ C^\bullet \cong j_{r*}j_r^* C^\bullet$.

Moreover, the long exact sequence of cohomology associated to the triangle
$$j_{r*}j_r^*IS_{r}\xrightarrow{j_{r*}j_r^* h} j_{r*}j_r^*i_{r*} IS_{r-1} \xrightarrow{j_{r*}j_r^*g} j_{r*}j_r^*C^\bullet
\xrightarrow{[1]}$$
implies that $$\mathcal{H}^i(j_{r*}j_r^*C^\bullet)\cong\left\{ \begin{array}{cc} 0 & \text{if}\, i> \bar q(r) \\
\mathcal{H}^i(j_{r*}j_r^*i_{r*} IS_{r-1}) & \text{if}\, i\leq \bar q(r)\end{array}\right.$$

Applying the functor $\tau_{\leq \bar q(r)}$ to $j_{r*}j_r^*g$, we obtain the following commutative diagram:
\begin{equation}\label{retract1}
\xymatrix{ \tau_{\leq \bar q(r)} j_{r*}j_r^*i_{r*} IS_{r-1} \ar[r]^{a} \ar[d]^{f} & \tau_{\leq \bar q(r)} j_{r*}j_r^*
 C^\bullet\ar[d]^{c} \\ j_{r*}j_r^*i_{r*} IS_{r-1} \ar[r]^{b}& j_{r*}j_r^* C^\bullet}
\end{equation}
where $f$ and $c$ are the canonical morphisms, $a =\tau_{\leq \bar q(r)} j_{r*}j_r^*g$ and $b=j_{r*}j_r^*g$. Moreover, $a$
and $c$ are isomorphisms and the composition $\lambda=a^{-1} \circ c^{-1} \circ b$ is a retract of $f$. So, the triangle
(\ref{tr1}) is split by Lemma~\ref{lem:split}.

Let $IS'_{r} \in D^b_{cc}(U_{r+1})$ be isomorphic to $IS_{r}$. Then, we have an isomorphism $(IS'_{r})_{|U_r}\cong IS_{r-1}$ and
$IS'_{r}$ is an intersection space complex in $U_{r+1}$.

Let $h':IS'_{r} \rightarrow i_{r*} IS_{r-1}$ be the composition of the canonical restriction morphism and an isomorphism
$i_{r*} (IS'_{r})_{|U_r}\cong i_{r*} IS_{r-1}$, let $K^\bullet:=cone(h')$ and consider the triangle
$$IS'_{r}\xrightarrow{h'} i_{r*} IS_r \xrightarrow{g'} K^\bullet \xrightarrow{[1]}$$

Applying the functor $\tau_{\leq \bar q(r)}$ to $j_{r*}j_r^*g'$, we obtain the following commutative diagram:
\begin{equation}\label{retract2}
\xymatrix{\tau_{\leq \bar q(r)} j_{r*}j_r^*i_{r*} IS_{r-1} \ar[r]^{a'} \ar[d]^{h} & \tau_{\leq \bar q(r)} j_{r*}j_r^* K^\bullet
\ar[d]^{c'}\\j_{r*}j_r^*i_{r*} IS_{r-1} \ar[r]^{b'}& j_{r*}j_r^* K^\bullet}
\end{equation}
where $h$ and $c'$ are the canonical morphisms, $a' =\tau_{\leq \bar q(r)} j_{r*}j_r^*g'$ and $b'=j_{r*}j_r^*g'$. Moreover,
$a'$ and $c'$ are isomorphisms and $\lambda':=a'^{-1} \circ c'^{-1} \circ b'$ is a retract of $f$.

Let $\alpha:IS_{r} \rightarrow IS'_{r}$ be an isomorphism; let $h$ be the composition of the canonical morphism $IS_{r}
\rightarrow i_{r *} (IS_{r})_{|U_r}$ and an isomorphism $\gamma: i_{r *} (IS_{r})_{|U_r} \rightarrow i_{r *} IS_{r-1}$; let
$h'$ be the composition of the canonical morphism $IS'_{r} \rightarrow i_{r *} (IS'_{r})_{|U_k}$ and an isomorphism
$\gamma': i_{r *} (IS'_{r})_{|U_k} \rightarrow i_{r *} IS'_{r-1}$. Finally define $\beta:= \gamma' \circ i_{r *} i_r^* \alpha \circ
\gamma^{-1}$. Then, there is an isomorphism between triangles:
$$\xymatrix{j_{r*}j_r^*IS_{r}\ar[r]^{j_{r*}j_r^* h} \ar[d]^{ j_{r*}j_r^* \alpha} & j_{r*}j_r^*i_{r*} IS_{r-1}
\ar[r]^{j_{r*}j_r^*g} \ar[d]^{j_{r*}j_r^* \beta} &  j_{r*}j_r^*C^\bullet \ar[d]^\delta \\
j_{r*}j_r^* IS'_{r}\ar[r]^{j_{r*}j_r^* h'} & j_{r*}j_r^* i_{r*} IS_{r-1} \ar[r]^{j_{r*}j_r^*g'} & j_{r*}j_r^* K^\bullet}$$
where $\alpha$, $\beta$ and $\delta$ are isomorphisms.

Then, the morphisms of diagrams (\ref{retract1}) and (\ref{retract2}) have the following relations:
$$ a'=  \tau_{\leq \bar q(r)} \delta \circ  a  \circ (\tau_{\leq \bar q(r)} j_{r*}j_r^*\beta)^{-1}$$
$$ b'=   \delta \circ  a  \circ (j_{r*}j_r^*\beta)^{-1}$$
$$ c'=   \delta \circ  c  \circ (\tau_{\leq \bar q(r)} \delta)^{-1}$$
and, we obtain $\lambda= (\tau_{\leq \bar q(r)} j_{r*}j_r^* \beta)^{-1} \circ \lambda' \circ j_{r*}j_r^* \beta$. So,
$\lambda$ is equivalent to $\lambda'$ by the equivalence relation $\sim$.
\end{proof}

\begin{remark}
There is a unique object in $D^b_{cc}(U_2)$ up to isomorphism verifying $(a)$ and $(b)$ from $[AXS1]_k$, $\QQ_{U_2}$.
\end{remark}

Now we are going to study the equivalence relation $\sim$, which appears in Theorem \ref{th:existunic}.

Remember that two retracts $\lambda_1$ and $\lambda_2$ of $\tau_{\leq \bar q(r)} j_{r*} j_r^* i_{r*} IS_{r-1} \xrightarrow{f}
j_{r*} j_r^* i_{r*} IS_{r-1}$ are equivalent by $\sim$ if and only if there exit isomorphisms
$$\alpha:\tau_{\leq \bar q(r)}
j_{r*} j_r^* i_{r*} IS_{r-1} \rightarrow \tau_{\leq \bar q(r)} j_{r*} j_r^* i_{r*} IS_{r-1}$$
$$\beta:i_{r*} IS_{r-1} \rightarrow i_{r*} IS_{r-1}$$
such that $\lambda_2 = \alpha \circ \lambda_1 \circ j_{r*} j_r^* \beta $.

Let $\alpha:\tau_{\leq \bar q(r)} j_{r*} j_r^* i_{r*} IS_{r-1} \rightarrow \tau_{\leq \bar q(r)} j_{r*} j_r^* i_{r*} IS_{r-1}$ and
$\beta:i_{r*} IS_{r-1} \rightarrow i_{r*} IS_{r-1}$ be isomorphisms and let $\lambda$ be a retract of $f$. Then, $\alpha \circ \lambda
\circ j_{r*} j_r^* \beta $ is a retract of $f$ if and only if $\alpha \circ \lambda \circ j_{r*} j_r^* \beta \circ f =Id$,
that is, if $\alpha$ is equal to $(\lambda \circ j_{r*} j_r^* \beta \circ f)^{-1}$. So, $\alpha$ is determined by $\beta$ and $\lambda$ and
the set of retracts of $f$ which are equivalent to
$\lambda$ is determined by the automorphisms $i_{r*} IS_{r-1} \rightarrow i_{r*} IS_{r-1}$.

Since $i_r^*$ is left-adjoint to $i_{r *}$, the space of automorphisms $Aut(i_{r*} IS_{r-1})$ is isomorphic to the space of
isomorphisms $Iso(i_r^* i_{r*} IS_{r-1}, IS_{r-1})=Aut(IS_{r-1})$.

\subsection{The space of obstructions to existence and uniqueness.}

In particular, if $r$ is the dimension of the largest non-trivial
stratum, we have $U_r =U_2$ and $Aut(IS_{r-1})$ is isomorphic to $Aut(\QQ_{U_2})$, which is the space of homothetic transformations.
Moreover, if $\beta \in Aut(i_{r*} IS_{r-1})$ is a homothetic transformation $\alpha =(\lambda \circ j_{r*} j_r^* \beta \circ
f)^{-1}$ is the inverse homothetic transformation. So,if $r$ is the dimension of the largest non-trivial stratum, the
equivalence relation $\sim$ is trivial. Consequently, there is a bijective map
$$
 \left\{\begin{array}{c}\text{intersection space complexes}\\ IS_{r} \in D^b_{cc}(U_{r+1}) \end{array}\right\} /
  \left\{\text{isomorphism}\right\} \longleftrightarrow \left\{\text{retracts of}\, f\right\}
 $$

\begin{remark}\label{rem:modretracts}
For any $r$, the triangle (\ref{tr1}) induces a exact sequence
$$...\rightarrow [\tau_{\leq \bar q(r)} j_{r*} j_r^* i_{r*} IS_{r-1}, \tau_{\leq \bar q(r)} j_{r*} j_r^* i_{r*} IS_{r-1} [-1]]
\rightarrow $$ $$\to[\tau_{> \bar q(r)} j_{r*} j_r^* i_{r*} IS_{r-1}, \tau_{\leq \bar q(r)} j_{r*} j_r^* i_{r*} IS_{r-1}]
\xrightarrow{\widetilde g}[j_{r*} j_r^* i_{r*} IS_{r-1}, \tau_{\leq \bar q(r)} j_{r*} j_r^* i_{r*}
IS_{r-1}] \xrightarrow{\widetilde f}$$ $$\xrightarrow{\widetilde f} [\tau_{\leq \bar q(r)} j_{r*} j_r^* i_{r*} IS_{r-1}, \tau_{\leq \bar q(r)} j_{r*} j_r^* i_{r*}
IS_{r-1}] \rightarrow ...$$

Moreover, the retracts of $f$ are de elements $\lambda \in [j_{r*} j_r^* i_{r*} IS_{r-1}, \tau_{\leq \bar q(r)} j_{r*} j_r^* i_{r*}
IS_{r-1}]$ such that $\widetilde{f} (\lambda)$ is the identity. So, the space $\{$retracts of $f\}$ is modulated by the vector space
$$[\tau_{> \bar q(r)} j_{r*} j_r^* i_{r*} IS_{r-1}, \tau_{\leq \bar q(r)} j_{r*} j_r^* i_{r*} IS_{r-1}]$$
\end{remark}

The following is an immediate consequence of the previous construction.

\begin{corollary}
\label{cor:obstructions}
Suppose there exist an intersection space complex $IS_{r-1}$ in $U_r$, that is, $IS_{r-1}$ is an object
  in $D^b_{cc}(U_r)$ and it verifies $[AXS1]_k$ for $k=2,...,r-1$.
 \begin{itemize}
  \item The obstruction to existence of intersection space in the next stratum lives in
 $$Ext^1(\tau_{> \bar q(r)} j_{r*} j_r^* i_{r*} IS_{r-1}, \tau_{\leq \bar q(r)} j_{r*} j_r^* i_{r*} IS_{r-1})=[\tau_{> \bar q(r)} j_{r*} j_r^* i_{r*} IS_{r-1}, \tau_{\leq \bar q(r)} j_{r*} j_r^* i_{r*} IS_{r-1}[1]].$$
  \item The obstructions for uniqueness live in  the group
 $$Hom(\tau_{> \bar q(r)} j_{r*} j_r^* i_{r*} IS_{r-1}, \tau_{\leq \bar q(r)} j_{r*} j_r^* i_{r*} IS_{r-1})=[\tau_{> \bar q(r)} j_{r*} j_r^* i_{r*} IS_{r-1}, \tau_{\leq \bar q(r)} j_{r*} j_r^* i_{r*} IS_{r-1}],$$
 modulo the equivalence relation described above. The equivalence relation is trivial for the second stratum.
 \end{itemize}
\end{corollary}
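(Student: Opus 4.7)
The plan is to read off both obstruction spaces directly from Theorem~\ref{th:existunic} combined with Lemma~\ref{lem:split} and Remark~\ref{rem:modretracts}; no new technical input should be required, so this is advertised as an immediate corollary.

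For the existence obstruction, I would first invoke Theorem~\ref{th:existunic}(2): an extension $IS_r$ with $(IS_r)_{|U_r}\cong IS_{r-1}$ exists precisely when the distinguished triangle~(\ref{tr1}) splits. I would then apply the equivalence (1)$\Leftrightarrow$(4) of Lemma~\ref{lem:split} to translate splitting into the vanishing of the connecting morphism
\[
\phi : \tau_{>\bar q(r)} j_{r*} j_r^* i_{r*} IS_{r-1} \longrightarrow \tau_{\leq \bar q(r)} j_{r*} j_r^* i_{r*} IS_{r-1}[1].
\]
Since a morphism of this shape in $D^b_{cc}(X)$ is by definition an element of the stated $Ext^1$-group, the class $[\phi]$ is precisely the obstruction class, and its vanishing is both necessary and sufficient for the induction to continue.

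For the uniqueness statement I would combine the bijection of Theorem~\ref{th:existunic}(2), between isomorphism classes of extensions and retracts of $f$ modulo $\sim$, with the observation recorded in Remark~\ref{rem:modretracts}: applying $\hom(-,\tau_{\leq \bar q(r)} j_{r*} j_r^* i_{r*} IS_{r-1})$ to the triangle~(\ref{tr1}) and inspecting the resulting long exact sequence identifies the fibre $\widetilde f^{-1}(\mathrm{id})$ of retracts as a torsor under $\hom(\tau_{>\bar q(r)} j_{r*} j_r^* i_{r*} IS_{r-1},\,\tau_{\leq \bar q(r)} j_{r*} j_r^* i_{r*} IS_{r-1})$. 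Hence, once existence holds, the set of retracts is parametrised by this $\hom$-group and the isomorphism classes of $IS_r$ by its quotient under $\sim$.

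The only spot that I expect to require a brief check is the triviality of $\sim$ at the second stratum, which I would argue exactly as in the paragraph preceding Remark~\ref{rem:modretracts}. When $r$ equals the dimension of the largest non-trivial stratum, $U_r=U_2$ and $IS_{r-1}\cong\QQ_{U_2}$, so $\mathrm{Aut}(IS_{r-1})\cong\QQ^{\times}$ consists only of homotheties. The relation $\sim$ sends $\lambda$ to $\alpha\circ\lambda\circ j_{r*}j_r^*\beta$ with $\alpha=(\lambda\circ j_{r*}j_r^*\beta\circ f)^{-1}$; for a scalar $\beta$ this forces $\alpha$ to be the inverse scalar, the two homotheties cancel, and the equivalence relation collapses. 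Assembling these three observations yields the Corollary.
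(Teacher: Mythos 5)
Your proposal is correct and mirrors exactly the argument the paper leaves implicit: the paper states the corollary as an ``immediate consequence of the previous construction,'' and you have filled in the details by citing Theorem~\ref{th:existunic}(2), Lemma~\ref{lem:split}(1)$\Leftrightarrow$(4), Remark~\ref{rem:modretracts}, and the paragraph on triviality of $\sim$ for the second stratum, precisely as the preceding text sets up. Nothing to add.
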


\section{A mixed Hodge module structure in intersection space complexes of algebraic varieties}

\begin{theorem}
\label{th:mhm}
Let $X$ be an algebraic variety. Consider a stratification
$$
X=X_d\supset X_{d-2} \supset ... \supset X_0 \supset X_{-1}=\emptyset
$$
by algebraic subvarieties, which makes $X$ a topological pseudomanifold. An intersection space complex on $X$ associated with the stratification above
admits a lifting to the derived category $D^bMHM(X)$ of mixed Hodge modules on $X$ if and only if the choices of the retractions are morphisms
of mixed Hodge modules. Consequently, its hypercohomology groups carry a rational polarizable mixed Hodge structure.
\end{theorem}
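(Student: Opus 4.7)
The plan is to mirror the inductive construction of Theorem~\ref{th:existunic} inside the triangulated category $D^bMHM(X)$, using Saito's six-functor formalism for mixed Hodge modules. The induction proceeds on the depth of the strata.

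For the initial step, on $U_2 = X \setminus X_{d-2}$ the constant sheaf $\QQ_{U_2}$ lifts canonically to the constant mixed Hodge module $\QQ_{U_2}^H$ of weight zero. For the inductive step, assume $IS_{r-1}$ has been lifted to $IS_{r-1}^H \in D^bMHM(U_r)$. The functors $i_{r*}$, $j_r^*$, $j_{r*}$ act on $D^bMHM$ by Saito's theory, and the ordinary truncation functors $\tau_{\leq \bar q(r)}$, $\tau_{>\bar q(r)}$ are available on $D^bMHM(U_{r+1})$ compatibly with the realization functor to $D^b_{cc}$. We therefore obtain in $D^bMHM(U_{r+1})$ a distinguished triangle
$$\tau_{\leq \bar q(r)} j_{r*} j_r^* i_{r*} IS_{r-1}^H \xrightarrow{f^H} j_{r*} j_r^* i_{r*} IS_{r-1}^H \rightarrow \tau_{> \bar q(r)} j_{r*} j_r^* i_{r*} IS_{r-1}^H \xrightarrow{[1]}$$
whose realization is triangle~\eqref{tr1}. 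By Theorem~\ref{th:existunic} the extension $IS_r$ of $IS_{r-1}$ is determined by a retraction $\lambda$ of $f$, and we set $IS_r^H := \mathrm{cone}(\lambda^H \circ a^H)[-1]$, where $a^H: i_{r*}IS_{r-1}^H \to j_{r*}j_r^* i_{r*}IS_{r-1}^H$ is the canonical MHM adjunction morphism. This construction succeeds precisely when $\lambda$ lifts to a morphism $\lambda^H$ in $\mathrm{Hom}_{D^bMHM}$.

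For the converse, I would follow the argument from the proof of Theorem~\ref{th:existunic}: if $IS_r^H$ is an MHM lift of $IS_r$, then applying $\tau_{\leq\bar q(r)}$ to $j_{r*}j_r^*g^H$, where $g^H:i_{r*}IS_{r-1}^H \to C^{\bullet,H}$ is the canonical MHM morphism associated with the truncation, yields a retraction in $D^bMHM$ whose realization is the original $\lambda$. Iterating through the strata produces the stated equivalence. The induced graded-polarizable mixed Hodge structure on $\HH^*(X, IS^H)$ is then an immediate consequence of Saito's theorem that hypercohomology of an object of $D^bMHM(X)$ carries such a structure.

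The main obstacle I expect is confirming the compatibility of the ordinary truncation functors with the MHM realization, since the standard $t$-structure on $D^bMHM$ is the perverse one. Stratum by stratum, however, the ordinary and perverse $t$-structures on the underlying constructible complex differ by the shift by the dimension of the stratum, and the inductive step only uses truncations of the complex $j_{r*}j_r^* i_{r*} IS_{r-1}^H$, which is supported on the single stratum $X_{d-r}\setminus X_{d-r-1}$; so the compatibility reduces to bookkeeping with shifts, parallel to what is done for intersection cohomology modules in Theorem~\ref{th:icmhm}.
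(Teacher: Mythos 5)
Your proposal follows essentially the same route as the paper's proof: both run the inductive construction of Theorem~\ref{th:existunic} inside $D^bMHM$, using Saito's six operations, lift $\QQ_{U_2}$ to the constant Hodge module to start, and at each step define $IS_r$ as the shifted cone of $\lambda^H\circ a^H$, with the extension going through precisely when the chosen retraction $\lambda$ is a morphism in $D^bMHM$. Where the paper is terse—it just invokes ``Saito's anomalous $t$-structure'' to justify that the ordinary truncation triangle lives in $D^bMHM$—you supply a concrete justification: $j_r$ is a closed immersion onto a single smooth stratum, so $j_{r*}$ is $t$-exact for the ordinary $t$-structure and on the smooth stratum the perverse $t$-structure is a shift of the ordinary one; this is a legitimate and somewhat cleaner way to handle the point. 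You also spell out the ``only if'' direction (recover $\lambda^H$ from a given $IS_r^H$ by mirroring the converse step of Theorem~\ref{th:existunic}), which the paper leaves implicit beyond the observation that $\mathrm{rat}$ is faithful. These are refinements of detail rather than a different argument; the structure and key ideas coincide.
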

\begin{proof}
The proof follows the inductive construction of the intersection space complex considered in the proof of Theorem~\ref{th:existunic}.
To start with, we notice that $\QQ_{U_2}$ is a mixed Hodge module
on the algebraic variety $U_2$. Assume, by induction, that the intersection space complex $IS_{r-1}$ defined in $U_r$ is a mixed Hodge module.

In order to
construct $IS_{r}$ we proceed as follows: observe that the triangle
$$\tau_{\leq\bar q(r)}j_{r*}j^{*}_r i_{r*} IS_{r-1} \to j_{r*}j^{*}_ri_{r*}IS_{r-1}\to \tau_{>\bar q(r)}j_{r*}j^{*}_r i_{r*} IS_{r-1}\xrightarrow{[1]}$$
is a triangle of mixed Hodge modules in $U_{r+1}$. This is true because the derived category of mixed Hodge modules are preserved by Grothendieck's
$6$ operations, and because the truncation triangle is a distinguished triangle in the derived category of mixed Hodge modules (use Saito's anomalous
$t$-structure).

By Lemma~\ref{lem:split}, the triangle splits if and only if the connecting morphism equals $0$, and if this happens in the derived category of
constructible sheaves, then it happens too in the derived category of mixed Hodge modules, since the functor $rat$ is faithful. We consider a
retraction $\lambda:j_{r*}j^{*}_ri_{r*}IS_{r-1}\to \tau_{\leq\bar q(r)}j_{r*}j^{*}_r i_{r*}IS_{r-1}$, which by assumption is a morphism in $D^bMHM(U_{r+1})$.
Since $IS_{r}$ is, by definition, the shifted cone $cone(\varphi_\lambda)[-1]$, where $\varphi_\lambda$ is the composition of $\lambda$ with
the canonical morphism $a:i_{r*}IS_{r-1}\to j_{r*}j^{*}_ri_{r*}IS_{r-1}$ we have that $IS_{r}$ belongs to $D^bMHM(U_{r+1})$.
\end{proof}

The obstructions to existence and uniqueness can be lifted to mixed Hodge modules:

\begin{corollary}
\label{cor:obstructionsmhm}
Let $X$ be an algebraic variety. Consider an stratification
$$
X=X_d\supset X_{d-2} \supset ... \supset X_0 \supset X_{-1}=\emptyset
$$
by algebraic subvarieties, which makes $X$ a topological pseudomanifold.
Suppose there exist an intersection space complex $IS_{r-1}$ in $U_r$ which belongs to $D^b(MHM(U_{r+1}))$.
 \begin{itemize}
  \item The obstruction to existence of intersection space in the next stratum lives in
 $$Ext^1_{D^bMHM(U_{r+1})}(\tau_{> \bar q(r)} j_{r*} j_r^* i_{r*} IS_{r-1}, \tau_{\leq \bar q(r)} j_{r*} j_r^* i_{r*} IS_{r-1}).$$
  \item The obstructions for uniqueness live in  the group
 $$Hom_{D^bMHM(U_{r+1})}(\tau_{> \bar q(r)} j_{r*} j_r^* i_{r*} IS_{r-1}, \tau_{\leq \bar q(r)} j_{r*} j_r^* i_{r*} IS_{r-1}),$$
 modulo the equivalence relation described above. The equivalence relation is trivial for the second stratum.
 \end{itemize}
\end{corollary}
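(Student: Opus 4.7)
The plan is to transcribe the proof of Corollary~\ref{cor:obstructions} verbatim into the derived category of mixed Hodge modules on $U_{r+1}$, using only the formal properties that also hold in $D^bMHM$. The hypothesis gives us an intersection space complex $IS_{r-1}$ on $U_r$ lifted to $D^bMHM(U_r)$. As already noted inside the proof of Theorem~\ref{th:mhm}, Saito's six operations and the (anomalous) constructible $t$-structure on mixed Hodge modules produce the distinguished triangle
$$\tau_{\leq\bar q(r)}j_{r*}j^{*}_r i_{r*} IS_{r-1} \xrightarrow{f} j_{r*}j^{*}_r i_{r*} IS_{r-1}\to \tau_{>\bar q(r)}j_{r*}j^{*}_r i_{r*} IS_{r-1}\xrightarrow{[1]}$$
entirely inside $D^bMHM(U_{r+1})$, and the functor $\mathrm{rat}$ is faithful so splittings in $D^b_{cc}$ coincide with splittings in $D^bMHM$.

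Next, I would apply the cohomological functor $\mathrm{Hom}_{D^bMHM(U_{r+1})}(-,\tau_{\leq\bar q(r)}j_{r*}j^{*}_r i_{r*} IS_{r-1})$ to the triangle, exactly as in Remark~\ref{rem:modretracts}, obtaining a long exact sequence in which the relevant portion reads
$$\mathrm{Hom}(\tau_{>\bar q(r)}j_{r*}j^{*}_r i_{r*} IS_{r-1},\tau_{\leq\bar q(r)}j_{r*}j^{*}_r i_{r*} IS_{r-1}) \xrightarrow{\widetilde g} \mathrm{Hom}(j_{r*}j^{*}_r i_{r*} IS_{r-1},\tau_{\leq\bar q(r)}j_{r*}j^{*}_r i_{r*} IS_{r-1})$$
$$\xrightarrow{\widetilde f} \mathrm{Hom}(\tau_{\leq\bar q(r)}j_{r*}j^{*}_r i_{r*} IS_{r-1},\tau_{\leq\bar q(r)}j_{r*}j^{*}_r i_{r*} IS_{r-1}) \xrightarrow{\delta} \mathrm{Ext}^{1}(\tau_{>\bar q(r)}j_{r*}j^{*}_r i_{r*} IS_{r-1},\tau_{\leq\bar q(r)}j_{r*}j^{*}_r i_{r*} IS_{r-1}).$$
The retractions of $f$ in $D^bMHM(U_{r+1})$ are the preimages under $\widetilde f$ of the identity, so a retraction exists if and only if $\delta(\mathrm{Id})=0$, which identifies the obstruction class with an element of the $\mathrm{Ext}^{1}$ group and proves the first bullet.

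For the second bullet, assuming the obstruction vanishes, the fibre of $\widetilde f$ over the identity is a torsor under $\mathrm{Hom}_{D^bMHM(U_{r+1})}(\tau_{>\bar q(r)}j_{r*}j^{*}_r i_{r*} IS_{r-1},\tau_{\leq\bar q(r)}j_{r*}j^{*}_r i_{r*} IS_{r-1})$, by the exact sequence above. Two such retractions $\lambda_1,\lambda_2$ produce isomorphic lifted intersection space complexes exactly when they are related by the equivalence relation $\sim$ of Theorem~\ref{th:existunic}; the proof there used only the octahedral axiom and the adjunction $(i_r^*,i_{r*})$, both valid in $D^bMHM$, so it transports without change. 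For the second stratum, $IS_{r-1}=\QQ_{U_2}$ has only homothety automorphisms (as a mixed Hodge module, $\mathrm{End}(\QQ_{U_2})=\QQ$), and the same cancellation observation made after Remark~\ref{rem:modretracts} shows $\sim$ is trivial. The only point requiring mild care, and the one I expect to be the main subtlety rather than obstacle, is checking that the truncation triangle genuinely lies in $D^bMHM$ (i.e.\ that the truncations with respect to Saito's $t$-structure match the perverse/constructible truncations used in the original obstruction computation), but this is standard Saito theory and has already been invoked in Theorem~\ref{th:mhm}.
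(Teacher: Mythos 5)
Your proof is correct and follows essentially the same approach the paper intends: the corollary is an immediate transposition of Corollary~\ref{cor:obstructions} and Remark~\ref{rem:modretracts} into $D^bMHM$, using (as already established in the proof of Theorem~\ref{th:mhm}) that the truncation triangle is a distinguished triangle of mixed Hodge modules and that $\mathrm{rat}$ is faithful, together with the fact that the octahedral-axiom and adjunction arguments behind the equivalence relation in Theorem~\ref{th:existunic} are purely formal and hence carry over.
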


A simplification of the proof of Theorem~\ref{th:mhm} yields:

\begin{theorem}
\label{th:icmhm}
Let $X$ be an algebraic variety. Let $\bar p$ be any perversity. The intersection cohomology complex associated to it belongs to the derived category of
mixed Hodge modules of $X$. Consequently the intersection homology complexes $IH_{\bar p}^k(X,\QQ)$ carry a canonical polarizable mixed Hodge structure.
\end{theorem}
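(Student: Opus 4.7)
The plan is to mimic the inductive construction of $IC_{\bar p}[-d]$ given in part 1 of the proof of Theorem~\ref{th:existunic}, checking at each inductive step that every object and every morphism involved admits a canonical lift to the bounded derived category of mixed Hodge modules. The essential advantage over Theorem~\ref{th:mhm} is that the construction of intersection cohomology complexes involves no choices: the morphism $\phi_r$ appearing in triangle~(\ref{tr2}) is canonical, and uniqueness of $IC_{\bar p}[-d]$ (proved in Theorem~\ref{th:existunic}) means that no retraction selection problem arises.

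First I would set up the base of the induction: on $U_2$ the intersection cohomology complex is $\QQ_{U_2}$, which is trivially a mixed Hodge module. Assume by induction that $IC_{\bar p, r-1}[-d] \in D^bMHM(U_r)$. Then the objects $i_{r*}IC_{\bar p, r-1}[-d]$ and $j_{r*}j_r^* i_{r*}IC_{\bar p, r-1}[-d]$ belong to $D^bMHM(U_{r+1})$, because Grothendieck's six operations are defined at the level of mixed Hodge modules (Saito). The adjunction morphism $i_{r*}IC_{\bar p,r-1}[-d] \to j_{r*}j_r^* i_{r*}IC_{\bar p,r-1}[-d]$ is a morphism in $D^bMHM(U_{r+1})$ by naturality of the six-functor formalism.

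Next, I would address the truncation: the functor $\tau_{>\bar p(r)}$ with respect to the standard (cohomological) $t$-structure preserves $D^bMHM$, since Saito's anomalous $t$-structure on mixed Hodge modules refines the standard one on constructible complexes (this is exactly the argument invoked in the proof of Theorem~\ref{th:mhm}). Hence the truncation triangle
$$\tau_{\leq \bar p(r)}j_{r*}j_r^*i_{r*}IC_{\bar p,r-1}[-d] \to j_{r*}j_r^*i_{r*}IC_{\bar p,r-1}[-d] \to \tau_{>\bar p(r)}j_{r*}j_r^*i_{r*}IC_{\bar p,r-1}[-d] \xrightarrow{[1]}$$
is a distinguished triangle in $D^bMHM(U_{r+1})$. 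Composing with the adjunction map yields a morphism $\phi_r$ of mixed Hodge modules, and its shifted cone $IC_{\bar p,r}[-d]:=cone(\phi_r)[-1]$ inherits a mixed Hodge module structure, since cones exist in the triangulated category $D^bMHM(U_{r+1})$. After finitely many steps, I obtain $IC_{\bar p}[-d] \in D^bMHM(X)$, and the rational polarizable mixed Hodge structure on $IH_{\bar p}^k(X,\QQ)$ is the hypercohomology mixed Hodge structure produced by Saito's theory.

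The main potential obstacle is verifying that the cohomological truncation $\tau_{\leq k}$ used in the construction of $IC$ lifts to the derived category of mixed Hodge modules. This is not the perverse $t$-structure on $D^bMHM$ whose heart is $MHM$, but rather Saito's finer cohomological $t$-structure; its existence and compatibility with the forgetful functor $rat$ is standard in Saito's theory and is already invoked in Theorem~\ref{th:mhm}. Once this is in place, everything else is formal: canonicity of $\phi_r$ replaces the retraction-choice discussion, and uniqueness of the lift follows from the uniqueness of $IC_{\bar p}[-d]$ in $D^b_{cc}(X)$ together with the faithfulness of $rat$.
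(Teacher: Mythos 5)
Your proof is correct and follows exactly the approach the paper intends: the paper itself only remarks that Theorem~\ref{th:icmhm} is ``a simplification of the proof of Theorem~\ref{th:mhm},'' and your write-up carries out precisely that simplification, running the inductive construction of $IC_{\bar p}[-d]$ from Theorem~\ref{th:existunic}.1 through Saito's six-functor formalism and the anomalous (standard/cohomological) $t$-structure on $D^bMHM$, observing that canonicity of $\phi_r$ eliminates the choice-of-retraction issue present in Theorem~\ref{th:mhm}.
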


\section{Classes of spaces admitting intersection space complexes and counterexamples}
\label{sec:examples}
In this section we provide some examples and counterexamples to illustrate our theory.
First, we introduce two classes of varieties which admit an intersection space complex for every
perversity. The first class depends on the tubular neighbourhoods of the strata: if every stratum admits
a trivial tubular neighbourhood, then there exist the intersection space complex for every perversity.
The second class depends on the dimension of the strata: if every singular stratum has homological dimension for locally
constant sheaves bounded by $1$, then there exist the intersection space complex for every perversity. Then, we find concrete examples of pseudomanifolds
(including an algebraic variety) not admiting intersection space complexes and, hence, not admiting intersection space pairs.

\subsection{Pseudomanifolds satisfying triviality properties}

Let $X$ be a topological pseudomanifold with the following stratification:
$$
X=X_d\supset X_{d-2} \supset ... \supset X_0 \supset X_{-1}=\emptyset
$$
For $k=2,...,d+1$, let $U_k:=X\setminus X_{d-k}$. We will also denote by $i_k:U_k \to U_{k+1}$,
\\$j_k:X_{d-k} \setminus X_{d-k-1} \to U_{k+1}$ and $i_{k_1,k_2}:U_{k_1} \to U_{k_2}$ the canonical inclusions.

If $X$ has a trivial conical structure then, by Theorem~\ref{th:existTr}, there exists an intersection space pair. Hence, by Theorem~\ref{th:existints}, $X$ has an
intersection space complex. We have shown:

\begin{corollary}
 If $X$ has a trivial conical structure, then it has an intersection space complex.
\end{corollary}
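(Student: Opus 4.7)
The plan is to prove this as an essentially immediate consequence of the two main theorems established earlier in the paper, chained together. First, I would invoke Theorem~\ref{th:existTr}: since $X$ has a trivial conical structure, we fix a compatible system of trivializations and run the inductive topological construction of Subsection~\ref{subsec:constr_trivial}. At the initial step one chooses a rational $\bar q(m)$-homology truncation of the (fixed, trivial) fibre and propagates it via the trivialization to a fibrewise $\bar q(m)$-homology truncation of $\sigma^\partial_{d-m}$. At each subsequent inductive step, the compatibility of the trivializations (Definition~\ref{def:systriv}) guarantees that this fibrewise truncation induces property (iv) on the modified pair $(X'_k, I^{\bar p}_k X)$, $(I^{\bar p}_k X, I^{\bar p}_k(X_{d-2}))$, so that the inductive step is never obstructed and the construction terminates at $k = d$. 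This yields an intersection space pair $(I^{\bar p}X, I^{\bar p}(X_{d-2}))$ in the sense of Definition~\ref{def:intersection space}, for every perversity $\bar p$.

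Next, I would apply Theorem~\ref{th:existints}: from the intersection space pair we construct the sequence $(I^{\bar p, n}X, I^{\bar p, n}(X_{d-2}))_{n \in \NN}$ of Section~\ref{sec:sequence}, form the sheaves $\mathcal K^{n, \bullet}$ of Section~\ref{sec:sheafification}, and define $IS := \pi_* \varprojlim_{n \in \NN} \mathcal K^{n, \bullet}$ as in Definition~\ref{def:IS}. Theorem~\ref{th:existints} tells us that $IS$ satisfies exactly the four properties listed there, which are precisely the axioms $[AXS1]_k$ of Definition~\ref{def:intspacecomplex} for $k = 2, \ldots, d$. Therefore $IS$ is an intersection space complex of $X$ with perversity $\bar p$.

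There is essentially no obstacle here, since all the substantive content is already contained in Theorems~\ref{th:existTr} and~\ref{th:existints}; the corollary merely records their composition. The only thing to verify is the bookkeeping that the properties produced by Theorem~\ref{th:existints} match the axiomatic definition in Section~\ref{sec:axiomatic} verbatim, which they do by the way the axioms were set up (with $\bar q = \bar t - \bar p$ the complementary perversity and the support/cosupport conditions aligned as in the inductive construction).
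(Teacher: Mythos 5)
Your proof is correct and is exactly the paper's argument: combine Theorem~\ref{th:existTr} (trivial conical structure implies existence of an intersection space pair) with Theorem~\ref{th:existints} (an intersection space pair yields a complex $IS$ satisfying the axioms $[AXS1]_k$, i.e.\ an intersection space complex). The paper states this composition in a single sentence immediately preceding the corollary.
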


\begin{example}
\label{ex:toric2}
Toric varieties admit
an intersection space pairs and intersection space complex for every perversity.
\end{example}

However, as we prove below, in order to ensure the existence of the intersection space complex we can relax the triviality hypothesis on the stratification:
one only needs that the triviality property ($T_r$) (see~\ref{def:Tr}) is satisfied for any stratum. Having a trivial conical structure requires further
compatibilities between the trivializations predicted by properties $(T_r)$ (see Definition~\ref{def:systriv}).

\begin{definition}
\label{def:formal}
A complex of sheaves is {\em formal and constant} if it is quasi-isomorphic to the direct sum of its cohomology sheaves with zero differentials and these cohomology sheaves are constant.
\end{definition}

\begin{definition}\label{def:Pk}
If $B^\bullet_{k-1}$ is a complex of sheaves in $U_k$, we say $B^\bullet_{k-1}$ verifies the property $(P_r)$, where $r\geq k$, if
$j_{r}^*i_{k, r+1 *} B^\bullet_{k-1}$ is formal and constant.
\end{definition}

\begin{remark}
If $B^\bullet$ is a formal and constant complex of sheaves in $U_k$ and
$(X,X_{d-2})$ has a conical structure which verifies the property $(T_r)$ of Definition \ref{def:Tr} for some $r\geq k$, then
$B^\bullet$ verifies the property $(P_r)$.
\end{remark}

\begin{proposition}\label{prop:intsptriv}
Given $k\in\{2,...,d\}$, if there exist an intersection space $IS_{k-1}$ with perversity $\bar p$ in $U_k$, which
verifies $(P_k)$, then there exist an intersection space $IS_{k}$ with perversity $\bar p$ in $U_{k+1}$, such that
$(IS_{k})_{|U_k}$ is quasi-isomorphic to $IS_{k-1}$.
\end{proposition}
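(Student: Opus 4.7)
The plan is to verify the splitting criterion of Theorem~\ref{th:existunic}(2) for the distinguished triangle
$$\tau_{\leq \bar q(k)} j_{k*} j_k^* i_{k*} IS_{k-1}\to j_{k*} j_k^* i_{k*} IS_{k-1} \to \tau_{> \bar q(k)} j_{k*} j_k^* i_{k*} IS_{k-1} \xrightarrow{[1]}$$
using the formality input provided by $(P_k)$.

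First I would unwind the hypothesis: by $(P_k)$ (applied with $r=k$, so that $i_{k,k+1*}=i_{k*}$), the complex $j_k^*i_{k*}IS_{k-1}$ on $X_{d-k}\setminus X_{d-k-1}$ is formal and constant, i.e.\ there is a quasi-isomorphism
$$j_k^*i_{k*}IS_{k-1}\;\cong\;\bigoplus_{i\in\ZZ}\mathcal{H}^i(j_k^*i_{k*}IS_{k-1})[-i]$$
with each cohomology sheaf a constant rational sheaf on the stratum.

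Next I would push forward this decomposition by $j_{k*}$. The key observation is that the inclusion
$$j_k:X_{d-k}\setminus X_{d-k-1}\hookrightarrow U_{k+1}=X\setminus X_{d-k-1}$$
is a \emph{closed} embedding, since $X_{d-k}$ is closed in $X$ and intersects $U_{k+1}$ exactly in $X_{d-k}\setminus X_{d-k-1}$. Thus $j_{k*}$ is an exact functor, and commutes with arbitrary direct sums in the derived category, yielding
$$j_{k*}j_k^*i_{k*}IS_{k-1}\;\cong\;\bigoplus_{i\in\ZZ}j_{k*}\mathcal{H}^i(j_k^*i_{k*}IS_{k-1})[-i],$$
with $\mathcal{H}^i(j_{k*}j_k^*i_{k*}IS_{k-1})=j_{k*}\mathcal{H}^i(j_k^*i_{k*}IS_{k-1})$ in each degree. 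Hence $j_{k*}j_k^*i_{k*}IS_{k-1}$ is itself a formal complex on $U_{k+1}$.

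For any formal complex $B^\bullet\cong\bigoplus_i\mathcal{H}^i(B^\bullet)[-i]$, the canonical truncation triangle
$$\tau_{\leq n}B^\bullet\to B^\bullet\to \tau_{>n}B^\bullet\xrightarrow{[1]}$$
is tautologically split: $\tau_{\leq n}B^\bullet$ and $\tau_{>n}B^\bullet$ are quasi-isomorphic to the partial direct sums $\bigoplus_{i\leq n}\mathcal{H}^i(B^\bullet)[-i]$ and $\bigoplus_{i>n}\mathcal{H}^i(B^\bullet)[-i]$ respectively, and the obvious direct-summand projection provides a retract of the first arrow. Applying this with $n=\bar q(k)$ to $B^\bullet=j_{k*}j_k^*i_{k*}IS_{k-1}$ shows that the triangle of Theorem~\ref{th:existunic}(2) splits. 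Invoking that theorem then produces an intersection space complex $IS_k$ on $U_{k+1}$ with $(IS_k)_{|U_k}\cong IS_{k-1}$.

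The content is really concentrated in Step~2, since once one knows $j_{k*}$ preserves the formal decomposition the splitting is immediate; the only subtlety is making sure the pushforward used here is along a closed embedding so that the direct sum decomposition survives degree by degree, which is precisely the geometric input that distinguishes this setting from the general obstruction analysis of Corollary~\ref{cor:obstructions}.
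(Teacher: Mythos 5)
Your proof is correct and takes the same approach as the paper's: apply the splitting criterion of Theorem~\ref{th:existunic}(2) by observing that formality of $j_k^*i_{k*}IS_{k-1}$ (guaranteed by $(P_k)$) forces the truncation triangle of $j_{k*}j_k^*i_{k*}IS_{k-1}$ to split. Your additional observation that $j_k$ is a closed embedding, so $j_{k*}$ is exact and preserves the direct-sum decomposition degree by degree, supplies precisely the detail that the paper's very terse proof leaves implicit.
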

\begin{proof}
$j_{k}^*i_{k *} IS_{k-1}$ is (up to isomorphism in the derived category) a complex of constant
sheaves with zero differentials. So, the triangle
$$\tau_{\leq q} j_{k *} j_{k}^*i_{k *} IS_{k-1} \rightarrow j_{k *} j_{k}^*i_{k *} IS_{k-1} \rightarrow
\tau_{> q} j_{k *} j_{k}^*i_{k *} IS_{k-1}\xrightarrow{[1]}$$
is split for every $q\in \ZZ$ and, applying Theorem \ref{th:existunic}, we conclude.
\end{proof}

\begin{lemma}
\label{lem:existintspace}
Let us suppose that there exist an intersection space complex $IS_{k-1}$ with perversity $\bar p$ in $U_k$.
If $(IS_{k-1})_{|U_{k-1}}$ verifies the properties $(P_{k-1})$ and $(P_{r})$ for a certain $r\geq k$ and $(X,X_{d-2})$ has a conical
structure which verifies the property $(T_{r})$ of Definition \ref{def:Tr}, then $IS_{k-1}$ verifies the property $(P_r)$.
\end{lemma}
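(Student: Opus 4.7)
The plan is to analyse $j_r^*i_{k,r+1*}IS_{k-1}$ via the open--closed decomposition of $U_k$ along the codimension-$(k-1)$ stratum $X_{d-k+1}\setminus X_{d-k}$. I would start from the distinguished triangle in $U_k$
\[
j_{k-1,*}j_{k-1}^!IS_{k-1}\longrightarrow IS_{k-1}\longrightarrow i_{k-1,k*}(IS_{k-1})_{|U_{k-1}}\xrightarrow{[1]},
\]
and apply $j_r^*i_{k,r+1*}$. Setting $\ell_{k-1}:=i_{k,r+1}\circ j_{k-1}$, the result is a distinguished triangle
\[
j_r^*\ell_{k-1,*}j_{k-1}^!IS_{k-1}\longrightarrow j_r^*i_{k,r+1*}IS_{k-1}\longrightarrow j_r^*i_{k-1,r+1*}(IS_{k-1})_{|U_{k-1}}\xrightarrow{[1]}
\]
in $D^b_{cc}(X_{d-r}\setminus X_{d-r-1})$, whose right-hand term is formal and constant by the hypothesis $(P_r)$ for $(IS_{k-1})_{|U_{k-1}}$.

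My next step is to show that the left-hand term is formal and constant. First, the axioms $[AXS1]_{k-1}$ for $IS_{k-1}$ combined with the hypothesis $(P_{k-1})$ for $(IS_{k-1})_{|U_{k-1}}$ identify $N:=j_{k-1}^*i_{k-1,k*}(IS_{k-1})_{|U_{k-1}}$ as a formal and constant complex, force $j_{k-1}^*IS_{k-1}\cong\tau_{>\bar q(k-1)}N$ (a direct summand of $N$), and hence yield $j_{k-1}^!IS_{k-1}\cong \tau_{\leq\bar q(k-1)}N[-1]$, formal and constant on $X_{d-k+1}\setminus X_{d-k}$. Then I would factor $\ell_{k-1}=\bar j\circ a$ and $j_r=\bar j\circ j'_r$ through the closed inclusion $\bar j:X_{d-k+1}\setminus X_{d-r-1}\hookrightarrow U_{r+1}$, reducing $j_r^*\ell_{k-1,*}$ to $(j'_r)^*a_*$. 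By $(T_r)$, on each connected component $S$ of $X_{d-r}\setminus X_{d-r-1}$ the local picture is $V'\times c^\circ(L)$ and $a_0:(L_{r-k}\setminus L_{r-k-1})\times(0,1)\hookrightarrow c^\circ(L_{r-k})$ is the fibre of $a$ at the vertex selected by $j'_r$; hence $(j'_r)^*a_*$ applied to a constant sheaf gives a constant complex of sheaves on $S$ whose stalk at the vertex is computed by $H^*(L_{r-k}\setminus L_{r-k-1};\QQ)$, which is formal as an object of $D^b(\mathrm{Vec}_\QQ)$. Applying this summand-wise to the formal decomposition of $j_{k-1}^!IS_{k-1}$ proved above shows that the left-hand term is formal and constant.

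To conclude, I would observe that $(T_r)$ trivialises the whole triangle over each connected component $S$: every object in sight is pulled back, via the projection of the local product onto the vertex, from a complex of $\QQ$-vector spaces, and the connecting morphism is canonical. The entire triangle is therefore the pullback of a distinguished triangle in $D^b(\mathrm{Vec}_\QQ)$, where every bounded complex is formal and every triangle splits. Hence $j_r^*i_{k,r+1*}IS_{k-1}$ is the direct sum of the outer terms (shifted appropriately), which are formal and constant, establishing $(P_r)$ for $IS_{k-1}$.

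The main obstacle will be the last step: verifying rigorously that the connecting morphism in the triangle over $S$ is genuinely pulled back from $D^b(\mathrm{Vec}_\QQ)$ and contains no higher components coming from $H^{>0}(S;\QQ)$. This hinges on the compatibility of the adjunction triangle with the product trivialisation supplied by $(T_r)$, which is the essential content of the triviality hypothesis in this setting.
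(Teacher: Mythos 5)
Your proposal follows essentially the same route as the paper, just presented in a rotated form. The paper starts from the defining triangle of Theorem~\ref{th:existunic},
\[
IS_{k-1}\to i_{k-1*}(IS_{k-1})_{|U_{k-1}}\to \tau_{\leq\bar q(k-1)}\,j_{k-1*}j_{k-1}^*i_{k-1*}(IS_{k-1})_{|U_{k-1}}\xrightarrow{[1]},
\]
and applies $j_r^*i_{k,r+1*}$, observing that $(P_r)$ and $(P_{k-1})$ make the middle and right terms formal and constant and that $(T_r)$ is needed to carry the $\tau_{\leq\bar q(k-1)}j_{k-1*}N$ term (with $N:=j_{k-1}^*i_{k-1*}(IS_{k-1})_{|U_{k-1}}$) across the strata. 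You instead start from the adjunction triangle $j_{k-1*}j_{k-1}^!\to\mathrm{id}\to i_{k-1*}i_{k-1}^*$ and first prove, using $[AXS1]_{k-1}$ and $(P_{k-1})$, that $j_{k-1}^!IS_{k-1}\cong\tau_{\leq\bar q(k-1)}N[-1]$; after that identification your triangle is a rotation of the paper's, so the two arguments coincide. The identification $j_{k-1}^*IS_{k-1}\cong\tau_{>\bar q(k-1)}N$ and hence $j_{k-1}^!IS_{k-1}\cong\tau_{\leq\bar q(k-1)}N[-1]$ is correct and is exactly what makes the rotation legitimate.

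One genuine merit of your write-up: you explicitly flag the point the paper passes over. Knowing that the two outer terms of the triangle are formal and constant does \emph{not} by itself make the third one formal and constant — the cone of a map between constant complexes can fail to be constant, since $\mathrm{Hom}_{D^b(S)}(\QQ_S,\QQ_S[i])=H^i(S;\QQ)$ may be nonzero. One really needs $(T_r)$ to say that the entire triangle over each component $S$ is pulled back from the point (equivalently, that the connecting map has no components along $H^{>0}(S;\QQ)$). You identify precisely this as ``the main obstacle,'' which is the right place to worry. Two small corrections to your last paragraph: (i) it is not true that every distinguished triangle in $D^b(\mathrm{Vec}_\QQ)$ splits — the connecting map $C\to A[1]$ can be nonzero (it realises the boundary maps of the long exact sequence). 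Fortunately you do not need splitting: once the whole triangle, and in particular its middle term, is pulled back from $D^b(\mathrm{Vec}_\QQ)$ along $S\to\mathrm{pt}$, that middle term is automatically formal and constant because every object of $D^b(\mathrm{Vec}_\QQ)$ is formal. (ii) Your computation of the stalk of $(j_r')^*a_*$ via $H^*(L_{r-k}\setminus L_{r-k-1};\QQ)$ is plausible but should be stated with care about which link bundle in the $2(r-k+1)$-uple is being used; the relevant object is the link pair coming from $(\overline{TX_{d-k+1}},X_{d-k+1})$ over $X_{d-r}$, not the bare stratum link.

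In summary: same triangle up to rotation, same use of the hypotheses, and an honest acknowledgement of the same delicate step that the paper treats tersely; just delete the splitting claim and keep the (correct) argument that pullback from the point forces formality of the middle term.
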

\begin{proof}
By Theorem \ref{th:existunic},
$j_{r}^* i_{k, r+1 *} IS_{k-1}[1]$ is quasi-isomorphic to the cone of a morphism
$$j_{r}^* i_{k-1, r+1 *} (IS_{k-1})_{|U_{k-1}} \to j_{r}^* i_{k, r+1 *} \tau_{\leq \bar q(k-1)} j_{k-1 *} j_{k-1}^* i_{k-1 *} (IS_{k-1})_{|U_{k-1}}$$
Since $(IS_{k-1})_{|U_{k-1}}$ verifies the properties $(P_{k-1})$ and $(P_{r})$, the complexes $j_{r}^* i_{k-1, r+1 *}(IS_{k-1})_{|U_{k-1}}$ and
$\tau_{\leq \bar q(k-1)} j_{k-1}^* i_{k-1 *} (IS_{k-1})_{|U_{k-1}}$
are formal and constant. Then, using that $(X,X_{d-2})$ has a conical
structure verifying the property $(T_r)$, we deduce that the constructible complex 
$j_{r}^* i_{k, r+1 *} \tau_{\leq \bar q(k-1)} j_{k-1 *} j_{k-1}^* i_{k-1 *} (IS_{k-1})_{|U_{k-1}}$
is also formal and constant. So, $j_{r}^* i_{k, r+1 *} IS_{k-1}$ is formal and constant as well and we conclude.
\end{proof}

\begin{theorem}\label{th:exintspacecomplex}
If the pair $(X,X_{d-2})$ has a conical structure which verifies the property $(T_r)$ of Definition \ref{def:Tr}
for any $r$, then there exist the intersection space complex of $X$ for every perversity.
\end{theorem}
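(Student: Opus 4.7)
The plan is to prove by induction on $k \in \{2, \ldots, d+1\}$ the following strengthening of the theorem: there exists an intersection space complex $IS_{k-1}$ in $U_k$ which additionally satisfies the property $(P_r)$ of Definition~\ref{def:Pk} for every $r$ with $k \leq r \leq d$. Specialising to $k = d+1$ will produce an intersection space complex on $U_{d+1} = X$, which is what is required.

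For the base case $k = 2$, I would set $IS_1 := \QQ_{U_2}$. This is an intersection space complex on $U_2$: the axioms $[AXS1]_k$ are vacuous for $k \in \{2, \ldots, 1\}$, and only conditions $(a)$ and $(b)$ are relevant, both of which hold trivially. The complex $\QQ_{U_2}$ is constant and formal in the sense of Definition~\ref{def:formal}, and property $(T_r)$ is assumed to hold for every $r \geq 2$, so the remark immediately following Definition~\ref{def:Pk} yields $(P_r)$ for every $r \geq 2$.

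For the inductive step, assume that $IS_{k-1}$ in $U_k$ has been constructed and satisfies $(P_r)$ for all $r \in \{k, \ldots, d\}$. Since in particular $IS_{k-1}$ verifies $(P_k)$, Proposition~\ref{prop:intsptriv} provides an intersection space complex $IS_k$ in $U_{k+1}$ with $(IS_k)_{|U_k} \cong IS_{k-1}$. To close the induction I must verify $(P_r)$ for $IS_k$ for each $r \in \{k+1, \ldots, d\}$: fix such an $r$ and apply Lemma~\ref{lem:existintspace} with the index $k$ of the lemma replaced by our $k+1$. The required assumption that $(IS_k)_{|U_k} = IS_{k-1}$ satisfies both $(P_k)$ (playing the role of $(P_{(k+1)-1})$ in the lemma) and $(P_r)$ is supplied by the inductive hypothesis, property $(T_r)$ holds by assumption, and hence the lemma yields $(P_r)$ for $IS_k$. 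This completes the inductive step.

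The essential content of the argument is already packaged in Proposition~\ref{prop:intsptriv} and Lemma~\ref{lem:existintspace}: the former uses formality of $j_k^* i_{k*} IS_{k-1}$ to split the obstruction triangle of Theorem~\ref{th:existunic}, while the latter propagates this formality along the inductive construction under the hypothesis $(T_r)$. The main thing to observe, and arguably the only point at which anything really happens, is that the formality hypothesis required at the $k$-th step by Proposition~\ref{prop:intsptriv} is exactly what Lemma~\ref{lem:existintspace} delivers from the previous step, so the induction closes with no new obstruction appearing at any level.
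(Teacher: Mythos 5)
Your proposal is correct and follows essentially the same inductive scheme as the paper's proof: start from $\QQ_{U_2}$, which verifies $(P_r)$ for all $r$ under $(T_r)$, then at each step use Proposition~\ref{prop:intsptriv} to extend the intersection space complex across the next stratum and Lemma~\ref{lem:existintspace} to propagate the formality properties $(P_r)$. You have merely spelled out the index bookkeeping that the paper leaves implicit.
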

\begin{proof}

The constant sheaf $\QQ_{U_2}$ verifies $(P_r)$ for every $r \geq 2$ such that the pair $(X,X_{d-2})$ has a conical
structure with the property $(T_r)$. So, if $(X,X_{d-2})$ has a conical structure which verifies the property $(T_r)$
for any $r$, using Lemma \ref{lem:existintspace} and Proposition \ref{prop:intsptriv}, we can construct inductively for every $k$
an intersection space complex with perversity $\bar p$ in $U_k$ which verifies $(P_{r})$ for every $r\geq k$.
\end{proof}

\subsection{Pseudomanifolds with strata of small homological dimension}

\begin{definition}
\label{def:homdim}
 A space $Y$ has homological dimension for locally constant sheaves bounded by $m$ if any locally constant sheaf in $Y$ has no cohomology in degree
 higher than $m$.
\end{definition}

\begin{theorem}
\label{th:homdim}
Let $X$ be a topological pseudomanifold with the following stratification:
$$
X=X_d\supset X_{d-2} \supset ... \supset X_0 \supset X_{-1}=\emptyset
$$
such that $X_{d-r}\setminus X_{d-r-1}$ has homological dimension for locally constant sheaves bounded by $1$ for any $r$.

Then, there exist the intersection space complex of $X$ for every perversity $\bar p$.

Moreover, if $X_{d-r}\setminus X_{d-r-1}$ has homological dimension for locally constant sheaves bounded by $0$ for any $r$,
the intersection space complex is unique.
\end{theorem}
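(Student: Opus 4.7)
The plan is to proceed by induction on the codimension $r$, following the inductive construction of Theorem~\ref{th:existunic} and using the obstruction groups recorded in Corollary~\ref{cor:obstructions}. The base case is $IS_1 = \QQ_{U_2}$ on $U_2$. Assume inductively that an intersection space complex $IS_{r-1}$ has been constructed on $U_r$. To build $IS_{r}$ on $U_{r+1}$ (and identify the space of choices), we must analyze the triangle
$$\tau_{\leq \bar q(r)} j_{r*} j_r^* i_{r*} IS_{r-1}\to j_{r*} j_r^* i_{r*} IS_{r-1}\to \tau_{>\bar q(r)} j_{r*} j_r^* i_{r*} IS_{r-1}\xrightarrow{[1]};$$
by Corollary~\ref{cor:obstructions}, existence of $IS_r$ amounts to vanishing of the $\Ext^1$-class of this triangle (computed in $D^b_{cc}(U_{r+1})$), and uniqueness up to the equivalence described there amounts to vanishing of the corresponding $\Hom$ group.

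The main step is to translate both obstruction groups into cohomology on the stratum $S_r := X_{d-r}\setminus X_{d-r-1}$ and exploit the dimension hypothesis. Since $IS_{r-1}\in D^b_{cc}(U_r)$ is cohomologically constructible, the complex $j_r^* i_{r*}IS_{r-1}$ on $S_r$ has locally constant cohomology sheaves; the same is then true of its truncations $A^\bullet := \tau_{\leq \bar q(r)}j_{r*}j_r^* i_{r*}IS_{r-1}$ and $B^\bullet := \tau_{>\bar q(r)}j_{r*}j_r^* i_{r*}IS_{r-1}$, both supported on $S_r$. Using adjunction $(j_{r!}, j_r^*)$ (or equivalently working on $S_r$), I reduce
$$\Ext^n(B^\bullet,A^\bullet) \;\cong\; \Ext^n_{S_r}(j_r^* B^\bullet,\, j_r^* A^\bullet).$$
The standard hypercohomology spectral sequence for $\RHom$ between constructible complexes,
$$E_2^{p,q} \;=\; \bigoplus_{j-i = q}\Ext^p_{S_r}\!\big(\mathcal H^i(j_r^*B^\bullet),\mathcal H^j(j_r^*A^\bullet)\big) \;\Rightarrow\; \Ext^{p+q}_{S_r}(j_r^*B^\bullet,j_r^*A^\bullet),$$
combined with the fact that for locally constant sheaves $\mathcal L_1,\mathcal L_2$ on $S_r$ one has $\Ext^p_{S_r}(\mathcal L_1,\mathcal L_2) = H^p(S_r,\mathcal Hom(\mathcal L_1,\mathcal L_2))$ with $\mathcal Hom(\mathcal L_1,\mathcal L_2)$ again locally constant, reduces everything to the cohomological dimension of $S_r$ for local systems.

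Now I combine the degree constraints coming from the truncations, $i > \bar q(r) \geq j$ (so $q = j - i \leq -1$), with the dimension bound. For existence under the hypothesis $\mathrm{cd}(S_r)\leq 1$, the vanishing $E_2^{p,q} = 0$ for $p\geq 2$ together with $q\leq -1$ forces all contributions with $p + q = 1$ to vanish (since the only surviving $p$ values are $0$ and $1$, requiring $q = 1$ or $q = 0$, both incompatible with $q\leq -1$), giving $\Ext^1(B^\bullet,A^\bullet) = 0$ and hence splitting of the triangle. For uniqueness under $\mathrm{cd}(S_r)\leq 0$, the vanishing $E_2^{p,q} = 0$ for $p\geq 1$ together with $q\leq -1$ forces $\Ext^0(B^\bullet,A^\bullet) = 0$; by Remark~\ref{rem:modretracts} this makes the affine space of retractions a single point, so the intersection space complex at step $r$ is unique up to isomorphism. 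Iterating through $r = 2,\dots,d$ yields the theorem.

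The main technical obstacle will be confirming that $\mathcal H^i(j_r^*A^\bullet)$ and $\mathcal H^i(j_r^*B^\bullet)$ are genuinely locally constant on the stratum $S_r$ (not just constructible in some coarser sense) so that the $\Ext^p_{S_r}$-vanishing from the dimension hypothesis applies termwise. This is inherited from cohomological constructibility of $IS_{r-1}$ plus the fact that $j_{r*}j_r^*$ preserves this property and truncation functors commute with $\mathcal H^\bullet$, but one must be careful with the identification of the abutment of the spectral sequence and with adjunctions to pass between $U_{r+1}$ and $S_r$. Once that bookkeeping is in place, the rest is a direct application of Theorem~\ref{th:existunic} and Corollary~\ref{cor:obstructions}.
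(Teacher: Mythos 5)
Your proposal is correct and follows essentially the same route as the paper: reduce to the stratum $S_r$, place the obstruction in $\Ext^1$ (respectively $\Ext^0$ for uniqueness), and kill it with the local-to-global spectral sequence by pairing the degree separation forced by the truncations ($q \leq -1$) with the cohomological-dimension bound on $p$. The only cosmetic differences are that the paper computes the $\mathcal{E}xt^q$ sheaves directly by a stalk argument rather than pre-expanding them as $\bigoplus_{j-i=q}\mathcal{H}om(\mathcal{H}^i,\mathcal{H}^j)$ as you do (a field-specific simplification that gives the same $E_2$ page), and the adjoint pair you should invoke for the closed inclusion $j_r$ is $(j_{r*},j_r^!)$ with $j_r^!j_{r*}\cong\mathrm{id}$ (or just full faithfulness of $j_{r*}$), not $(j_{r!},j_r^*)$ — the conclusion $\Ext^n_{U_{r+1}}(j_{r*}C,j_{r*}D)\cong\Ext^n_{S_r}(C,D)$ you want is of course correct.
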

\begin{proof}
To prove the existence it is enough to prove that, for any topological space $Y$ which has  homological dimension for locally constant sheaves
bounded by $1$, any complex of sheaves $B^\bullet$ in $Y$ and any integer $m$, the triangle
$$\tau_{\leq m} B^\bullet \rightarrow B^\bullet \rightarrow \tau_{>m} B^\bullet \xrightarrow{\phi}
\tau_{\leq m} B^\bullet [1] \rightarrow ...$$
is split.

This triangle is split if and only the morphism
$$\phi \in Ext^1 (\tau_{> m} B^\bullet, \tau_{\leq m} B^\bullet)$$
is $0$.

$Ext^1 (\tau_{> m} B^\bullet, \tau_{\leq m} B^\bullet)$ is the first hipercohomology group of the complex
of sheaves \\$\mathcal Hom^\bullet(\tau_{> m} B^\bullet, \tau_{\leq m} B^\bullet)$.

Let $E_r^{p,q}$ be the local to global spectral sequence of
$\mathcal Hom^\bullet(\tau_{\leq m} B^\bullet, \tau_{> m} B^\bullet)$.
Then,
$$E_2^{p,q}= \HH^p (Y, \mathcal Ext^q(\tau_{> m} B^\bullet, \tau_{\leq m} B^\bullet)).$$
Moreover, the fiber of the sheaf $\mathcal Ext^q(\tau_{\leq m} B^\bullet, \tau_{> m} B^\bullet)$ in a point
$x$ is $[\tau_{> m} B^\bullet_x, \tau_{\leq m} B^\bullet_x[q]]$. Since $ \tau_{\leq m} B^\bullet_x[q]$ is a
complex of injective sheaves, $[\tau_{> m} B^\bullet_x, \tau_{\leq m} B^\bullet_x[q]]$ is equal to $0$
for every $q \geq 0$. So, the sheaf $\mathcal Ext^q(\tau_{\leq m} B^\bullet, \tau_{> m} B^\bullet)$ is $0$
for every $q \geq 0$.

Since  $Y$ has  homological dimension for locally constant sheaves bounded by $1$, the group
$\HH^p (Y, \mathcal Ext^q(\tau_{> m} B^\bullet, \tau_{\leq m} B^\bullet))$ is equal
to $0$ for every $p> 1$. So, $E_2^{p,q}=0$ for every $p, q \in \ZZ$ such that $p+q=1$.

Hence, $Ext^1 (\tau_{> m} B^\bullet, \tau_{\leq m} B^\bullet)=0$, and we conclude the proof of the existence.

Moreover, by Remark \ref{rem:modretracts}, the retracts of the triangle are modulated by $Ext^0 (\tau_{> m} B^\bullet, \tau_{\leq m} B^\bullet)$.
With the previous method, we show that, if $Y$ has homological dimension for locally constant sheaves bounded by $0$,
$Ext^0 (\tau_{> m} B^\bullet, \tau_{\leq m} B^\bullet)=0$. Then, the retract of the triangle is unique and we conclude.
\end{proof}

\begin{corollary}
\label{cor:smalldim}
If the strata $X_{d-r}\setminus X_{d-r-1}$  has the homotopy type of a $1$-dimensional $CW$-complex of dimension bounded by $1$ for any $r$,
then here exist the intersection space complex of $X$ for every perversity $\bar p$.
\end{corollary}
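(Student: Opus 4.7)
The plan is to reduce Corollary \ref{cor:smalldim} to Theorem \ref{th:homdim} by verifying that any space $Y$ having the homotopy type of a $CW$-complex of dimension at most $1$ satisfies the bound in Definition \ref{def:homdim}, i.e.\ that $H^i(Y,\mathcal L)=0$ for every locally constant sheaf $\mathcal L$ on $Y$ and every $i>1$.

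First, I would invoke the fact that cohomology with locally constant coefficients is a homotopy invariant on the class of spaces we consider (paracompact and locally contractible, which includes the strata of the pseudomanifolds in the statement). This reduces the computation to the case where $Y$ is literally a $CW$-complex of dimension at most $1$. Next, I would use that a locally constant sheaf $\mathcal L$ on $Y$ corresponds to a representation of $\pi_1(Y)$ on the stalk, and that $H^*(Y,\mathcal L)$ may be computed via the cellular cochain complex of the universal cover twisted by this representation, i.e.\ via $\mathrm{Hom}_{\mathbb Z[\pi_1(Y)]}(C_\bullet^{\mathrm{cell}}(\widetilde Y),\mathcal L_x)$. Because $Y$ has cells only in dimensions $0$ and $1$, this cochain complex is concentrated in degrees $0$ and $1$, and hence its cohomology vanishes in every degree $>1$.

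Once this bound is established, each stratum $X_{d-r}\setminus X_{d-r-1}$ satisfies the hypothesis of Theorem \ref{th:homdim} (``homological dimension for locally constant sheaves bounded by $1$''), and the existence of an intersection space complex of $X$ for every perversity $\bar p$ follows directly from that theorem. No additional inductive construction is needed beyond the one already carried out in the proof of Theorem \ref{th:homdim}.

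The only mildly delicate point is making sure that sheaf cohomology of a locally constant sheaf agrees with the cellular/singular cohomology of the associated local system; this is standard for the paracompact, locally contractible spaces at hand, and so the argument goes through without obstruction. I therefore do not anticipate any real difficulty in the proof: the corollary is essentially a translation between a homotopical hypothesis (dimension of a $CW$-model) and the sheaf-theoretic hypothesis appearing in Theorem \ref{th:homdim}.
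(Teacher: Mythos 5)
Your reduction to Theorem \ref{th:homdim} is exactly what the paper intends; the paper gives no explicit proof, treating the corollary as immediate, and your argument simply fills in the standard verification that a space homotopy equivalent to a CW-complex of dimension at most $1$ has homological dimension at most $1$ for locally constant sheaves (via homotopy invariance of sheaf cohomology on paracompact locally contractible spaces and the twisted cellular cochain complex of the universal cover, applied componentwise). The argument is correct and matches the paper's implicit route.
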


\begin{example}
Any algebraic variety with $1$ dimensional critical set and the canonical Whitney stratification if each connected component of the critical set is not a stratum as
a whole (that happens in the non Whitney-equisingular case) satisfy the hypothesis of the previous corollary, and hence admits an intersection space complex.
\end{example}

\subsection{Counterexamples}
Now, we illustrate the limits of our theory with a class of varieties which does not admit an intersection space complex for some perversities.
With this purpose, the following proposition gives a necessary condition
for the splitting of a triangle
$$\tau_{\leq m} B^\bullet \rightarrow B^\bullet \rightarrow \tau_{>m} B^\bullet \xrightarrow{[1]} $$

\begin{proposition}
\label{prop:lerayobs}
Let $X$ be a topological space, let $B^\bullet$ be a bounded complex of sheaves on $X$ and
let $E_r^{p,q}$ be the local to global spectral sequence of $B^\bullet$.

Then, if the canonical triangle $$\tau_{\leq m} B^\bullet \rightarrow B^\bullet \rightarrow
\tau_{>m} B^\bullet \xrightarrow{[1]} $$ is split, the morphisms $d_r^{p,q}: E_r^{p,q}
\rightarrow E_r^{p+r,q-r+1}$ are $0$ for every $r\geq 2$, $p\in \ZZ$ and $m<q\leq m+r-1$.
\end{proposition}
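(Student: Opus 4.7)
The plan is to deduce the vanishing of the differentials from the fact that a split triangle identifies $B^\bullet$ with the direct sum of its truncations in the derived category, and then observe that the local to global spectral sequence is additive and that the truncations have disjointly supported cohomology.

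First, by Lemma~\ref{lem:split}, the splitting of the canonical triangle
$$\tau_{\leq m} B^\bullet \to B^\bullet \to \tau_{>m} B^\bullet \xrightarrow{[1]}$$
is equivalent to the existence of an isomorphism
$$B^\bullet \cong \tau_{\leq m} B^\bullet \oplus \tau_{>m} B^\bullet$$
in $D^b_{cc}(X)$ compatible with the natural morphisms. I would then invoke the fact that the local to global spectral sequence is functorial and additive, i.e.\ it commutes with finite direct sums. Consequently, the spectral sequence of $B^\bullet$ decomposes as a direct sum of spectral sequences:
$$E_r^{p,q}(B^\bullet) \cong E_r^{p,q}(\tau_{\leq m} B^\bullet) \oplus E_r^{p,q}(\tau_{>m} B^\bullet),$$
and the differential $d_r^{p,q}$ of the left-hand side is the direct sum of the differentials of the two summands.

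Next, I would compute where each summand is supported on the $E_2$-page. Since $\mathcal{H}^q(\tau_{\leq m} B^\bullet) = \mathcal{H}^q(B^\bullet)$ for $q\leq m$ and $0$ otherwise, the spectral sequence $E_2^{p,q}(\tau_{\leq m} B^\bullet) = \HH^p(X, \mathcal{H}^q(\tau_{\leq m} B^\bullet))$ is concentrated in the strip $q \leq m$. Dually, $E_2^{p,q}(\tau_{>m} B^\bullet)$ is concentrated in $q > m$. Both strips are preserved by the differentials of the respective spectral sequences, so the same is true on every subsequent page.

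Finally, the arithmetic: for $r\geq 2$, $p\in\ZZ$ and $m<q\leq m+r-1$, the source $(p,q)$ lies in the strip $q>m$, so inside the direct sum decomposition it lies in the $\tau_{>m}$-summand; whereas the target $(p+r, q-r+1)$ satisfies $q-r+1 \leq (m+r-1)-r+1 = m$, so it lies in the $\tau_{\leq m}$-summand. Since $d_r^{p,q}$ respects the direct sum decomposition, its component from the $\tau_{>m}$-summand to the $\tau_{\leq m}$-summand is forced to be zero, yielding the claimed vanishing. The only mildly technical point to verify carefully is the additivity statement for the local to global spectral sequence, but this is standard once one realises it follows from choosing an injective resolution of the direct sum as the direct sum of injective resolutions of the summands.
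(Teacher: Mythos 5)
Your proof is correct and takes a genuinely different, and arguably cleaner, route than the paper. The paper works with only one side of the splitting: it takes a retraction $\lambda : B^\bullet \to \tau_{\leq m} B^\bullet$, compares the two local-to-global spectral sequences via the induced morphism $\lambda^{p,q}_r$, and runs a page-by-page induction showing simultaneously that $\lambda_r^{p,q}$ is an isomorphism for $q \leq m$, that $E'^{p,q}_r$ vanishes for $q > m$, and that the claimed differentials vanish; the argument threads through several cases to verify that the induction hypothesis propagates. You instead use the full strength of Lemma~\ref{lem:split} to write $B^\bullet \cong \tau_{\leq m} B^\bullet \oplus \tau_{>m} B^\bullet$ in the derived category, invoke additivity of the hypercohomology spectral sequence under direct sums, and then observe that the summands live in disjoint horizontal strips on every page from $E_2$ onward (because $d_r$ strictly decreases $q$ when $r \geq 2$, each strip is closed under passing to subquotients), so that any differential whose source has $q > m$ and target has $q - r + 1 \leq m$ must vanish term by term. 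This gets the result at once with no induction. The one thing worth making explicit in a final write-up is that the isomorphism from the splitting is only an isomorphism in the derived category, so you should note that the local-to-global spectral sequence is a quasi-isomorphism invariant from the $E_2$-page onward, which justifies transporting the direct-sum structure; combined with the standard fact (which you mention) that a direct sum of injective resolutions is an injective resolution of the direct sum, the argument is complete.
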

\begin{proof}
Let us suppose the triangle is split and let $\lambda:B^\bullet \rightarrow \tau_{\leq m} B^\bullet$
be a retract of the canonical morphism.

Let $E_r^{p, \, q}$ be the local to global spectral sequence of $B^\bullet$, $E'^{p, \, q}_r$ the
local to global spectral sequence of $\tau_{\leq m} B^\bullet$ and $\lambda^{p,\, q}_r:E^{p, \, q}_r \rightarrow
E'^{p, \, q}_r$ the morphism induced by $\lambda$.

For $r=2$,
$$\lambda^{p,q}_2: \HH^p(X, \mathcal H^q(B)) \rightarrow \HH^p(X, \mathcal H^q(\tau_{\leq m} B))$$
is an isomorphism if $q \leq m$ and $\HH^p(X, \mathcal H^q(\tau_{\leq m} B))= 0$ if $q > m$.

Given $r\geq 2$, suppose $\lambda_{r}^{p,\, q}:E^{p, \, q}_{r} \rightarrow E'^{p, \, q}_{r}$ is an isomorphism
for every $q \leq m$ and $E'^{p, \, q}_{r}= 0$ for every $q > m$. Then, $E'^{p, \, q}_{r+1}= 0$ for every $q >m$.

Moreover, let us consider the commutative diagram:

$$\xymatrix@C=3cm{E^{p, \, q}_{r} \ar[r]^{\lambda^{p, \, q}_{r}} \ar[d]^{d^{p, \, q}_{r}} & E'^{p, \, q}_{r}
\ar[d]^{d'^{p, \, q}_{r}} \\ E^{p+r, \, q-r+1}_{r} \ar[r]^{\lambda^{p+r, \, q-r+1}_{r}}& E'^{p+r, \, q-r+1}_{r}}$$

If $q\leq m$, $\lambda^{p, \, q}_{r}$ induces an isomorphism between $\ker(d^{p, \, q}_{r})$ and $\ker(d'^{p, \, q}_{r})$
and $\lambda^{p+r, \, q-r+1}_{r}$ induces an isomorphism between $\im(d^{p, \, q}_{r})$ and $\im(d'^{p, \, q}_{r})$.

If $q >m $ and $q-r+1\leq m$, $E'^{p, \, q}_{r}=0$ and, since the diagram is commutative, $d^{p, \, q}_{r}=
d'^{p, \, q}_{r}=0$. So, we deduce $d_r^{p,q}$ is $0$ for every $m<q\leq m+r-1$.

Moreover, $\im(d^{p, \, q}_{r})=\im(d'^{p, \, q}_{r})=0$. Therefore, for every $q \leq m$, $\lambda_{r+1}^{p,\, q}$ is
an isomorphism and we can finish the proof by induction.
\end{proof}

\begin{corollary}\label{cor:cex}
Let $X$ be a topological pseudomanifold with stratification
$$
X=X_d\supset X_{d-2} \supset ... \supset X_0 \supset X_{-1}=\emptyset
$$
and let $k$ be the codimension of $X_{d-2}$, that is, $X_{d-2}=X_{d-k}$.

Let $\bar p$ a perversity and $\bar q$ its complementary perversity. If the local to global spectral sequence of
$j_k^* i_{k *} \QQ_{U_2}$ has any differential $d_r^{p,q}: E_r^{p,q} \rightarrow E_r^{p+r,q-r+1}$ different from $0$
for some $r\geq 2$, $p\in \ZZ$ and $\bar q(k) < q \leq \bar q(k)+r-1$, then there does not exist any intersection space complex of $X$ with perversity $\bar p$.
\end{corollary}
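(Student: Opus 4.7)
The approach is to combine the inductive existence criterion of Theorem~\ref{th:existunic} with the spectral sequence obstruction of Proposition~\ref{prop:lerayobs}. The first observation is that, since $X_{d-2}=X_{d-k}$, the open subsets $U_2=U_3=\dots=U_k$ all coincide, so by properties $(a)$ and $(b)$ of $[AXS1]_k$ the (essentially unique) intersection space complex on $U_k$ is $IS_{k-1}=\QQ_{U_2}$. The first non-trivial inductive step in the construction of an intersection space complex of $X$ is therefore the passage from $U_k$ to $U_{k+1}$.

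Suppose, toward a contradiction, that an intersection space complex $IS_{\bar p}$ of $X$ with perversity $\bar p$ exists. Its restriction to $U_{k+1}$ is an intersection space complex whose further restriction to $U_k$ is isomorphic to $\QQ_{U_2}$. By the ``only if'' direction of Theorem~\ref{th:existunic}(2), the distinguished triangle
$$\tau_{\leq \bar q(k)}\, j_{k*}j_k^{*}i_{k*}\QQ_{U_2}\to j_{k*}j_k^{*}i_{k*}\QQ_{U_2}\to \tau_{> \bar q(k)}\, j_{k*}j_k^{*}i_{k*}\QQ_{U_2}\xrightarrow{[1]}$$
is split in $D^b_{cc}(U_{k+1})$.

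Because $j_k\colon X_{d-k}\setminus X_{d-k-1}\hookrightarrow U_{k+1}$ is a closed embedding, the functor $j_{k*}$ is exact, commutes with the truncation functors $\tau_{\leq m}$ and $\tau_{>m}$, and satisfies $j_k^{*}j_{k*}\cong \mathrm{id}$. Consequently, applying $j_k^{*}$ to the triangle above, respectively $j_{k*}$ to the triangle below, shows that the splitting of the previous triangle is equivalent to the splitting of
$$\tau_{\leq \bar q(k)}\, j_k^{*}i_{k*}\QQ_{U_2}\to j_k^{*}i_{k*}\QQ_{U_2}\to \tau_{> \bar q(k)}\, j_k^{*}i_{k*}\QQ_{U_2}\xrightarrow{[1]}$$
in the bounded derived category of sheaves on $X_{d-k}\setminus X_{d-k-1}$.

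Applying Proposition~\ref{prop:lerayobs} to $B^\bullet=j_k^{*}i_{k*}\QQ_{U_2}$ with $m=\bar q(k)$, the splitting of this second triangle forces every differential $d_r^{p,q}$ of the associated local to global spectral sequence to vanish for $r\geq 2$, $p\in \ZZ$ and $\bar q(k)<q\leq \bar q(k)+r-1$. Contraposing the chain of implications yields the corollary. The only step that deserves explicit verification is the equivalence of splittings under $j_k^{*}$ and $j_{k*}$, which is routine for a closed inclusion; there is no genuine obstacle to the argument.
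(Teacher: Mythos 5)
Your proof is correct and takes the route the paper implicitly intends: the corollary is stated without proof, and the two ingredients it sits between, Theorem~\ref{th:existunic} and Proposition~\ref{prop:lerayobs}, are exactly the ones you combine. The one step you flag as "deserving explicit verification" is indeed fine: since $j_k$ is a closed embedding, $j_{k*}$ is exact (hence commutes with $\tau_{\leq m}$, $\tau_{>m}$) and satisfies $j_k^*j_{k*}\cong\mathrm{id}$, so applying the exact triangulated functor $j_k^*$ to the split triangle on $U_{k+1}$ produces the (necessarily split) truncation triangle of $j_k^*i_{k*}\QQ_{U_2}$ on $X_{d-k}\setminus X_{d-k-1}$; this one direction is all the contrapositive requires. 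You correctly identify that $U_2=\cdots=U_k$ forces the complex on $U_k$ to be $\QQ_{U_2}$ (up to isomorphism, by axiom $(a)$), so the first genuinely constrained inductive step is $U_k\to U_{k+1}$, which is where Theorem~\ref{th:existunic}(2) yields the splitting and Proposition~\ref{prop:lerayobs} then kills the differentials in the stated range.
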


Now, we construct an example which verify these conditions using the Hopf fibration.

\begin{example}
\label{ex:hopf}
Let $\rho^\partial: S^3 \rightarrow S^2$ be the Hopf fibration and let $\rho:cyl(\rho^\partial) \rightarrow S^2$
be the cone of the fibration (see definition \ref{def:fibrecone}). If $s:S^2 \rightarrow cyl(\rho^\partial)$ is the
vertex section, we consider the space $X:=cyl(\rho^\partial)$ with the stratification
$$X\supset s(S^2)$$

Let $U:= X \setminus s(S^2)$ and let $i:U\rightarrow X$ and $j:s(S^2)\rightarrow X$ be the canonical inclusions.
Then, since the fiber of $\rho^\partial$ is $S^1$
$$\H^i(j_* j^*i_* \QQ_U)\left\{\begin{array}{cc}
\QQ_{S^2} & \text{if} \, i=0,1 \\
0 & \text{otherwise}
\end{array}\right.$$

So, if $E_r^{p,q}$ is the hypercohomology spectral sequence of $j_* j^*i_* \QQ_U$, $E_2^{p,q}$ is
$$\begin{array}{ccc|c|c|c}
    & & 1 & \QQ & 0 & \QQ  \\ \cline{3-6}
    q \uparrow& & 0 & \QQ & 0 & \QQ  \\ \cline{3-6}
    & & & 0 & 1 & 2  \\
    \multicolumn{6}{c}{}\\
    & &\multicolumn{4}{c}{\overrightarrow{p}}
\end{array}$$
where the differential $d_2^{0,1}$ is different from $0$.

Moreover, given any perversity $\bar p$, $\bar p(2)=0$. So, applying Corollary \ref{cor:cex}, there does not exist an
intersection space complex of $X$ with stratification $X \supset s(S^2)$ with any perversity.

Hence, applying Remark \ref{re:relationintspace}, there does not exist any intersection space pair of $X$ with the
previous stratification.
\end{example}

The stratification of $X$ given in the previous example is not natural. Since $X$ is smooth, the natural
stratification of $X$ has no stratum different from $X$ and $\emptyset$. The following example is more natural
in the sense that the nontrivial stratum is the singular part of the variety.

\begin{example}
\label{exhopf2}
Let $\rho^\partial: S^3 \rightarrow S^2$ be the Hopf fibration and let us consider the locally trivial fibration
$$\sigma^\partial: S^3 \times_{S^2} S^3 \to S^2 $$

Moreover, let $\sigma:cyl(\sigma^\partial) \rightarrow S^2$ be the cone of $\sigma^\partial$ and
$s:S^2 \rightarrow cyl(\sigma^\partial)$ the vertex section. Then, we define $X:=cyl(\sigma^\partial)$ and we
consider the stratification
$$X\supset s(S^2)$$

Let $U:= X \setminus s(S^2)$ and let $i:U\rightarrow X$ and $j:s(S^2)\rightarrow X$ be the canonical inclusions.
Then, since the fiber of $\sigma^\partial$ is $S^1\times S^1$
$$\H^i(j_* j^*i_* \QQ_U)\left\{\begin{array}{ccc}
\QQ_{S^2} & \text{if} & i=0,2 \\
\QQ^2_{S^2} & \text{if} & i=1 \\
0 & \multicolumn{2}{c}{\text{otherwise}}
\end{array}\right.$$

So, if $E_r^{p,q}$ is the hypercohomology spectral sequence of $j_* j^*i_* \QQ_U$, $E_2^{p,q}$ is
$$\begin{array}{ccc|c|c|c}
    & & 2 & \QQ & 0 & \QQ  \\ \cline{3-6}
    q \uparrow& & 1 & \QQ^2 & 0 & \QQ^2  \\ \cline{3-6}
    & & 0 & \QQ & 0 & \QQ  \\ \cline{3-6}
    & & & 0 & 1 & 2  \\
    \multicolumn{6}{c}{}\\
    & &\multicolumn{4}{c}{\overrightarrow{p}}
\end{array}$$
where the differentials $d_2^{0,1}$ and $d_2^{0,2}$ are different from $0$.

Moreover, given any perversity $\bar p$, either $\bar p(3)=0$ or $\bar p(3)=1$. So, applying Corollary \ref{cor:cex},
there does not exist an intersection space complex of $X$ with stratification $X \supset s(S^2)$ with any perversity.

Hence, applying Remark \ref{re:relationintspace}, there does not exist any intersection space pair of $X$ with the
previous stratification.
\end{example}

A great number of examples can be constructed with this technique. For example, if one wishes to have simply connected link and strata one, can use instead
of Hopf fibration the fibration $\phi:S^7\to S^4$ with fibre $S^3$.

Now, we give an example of algebraic variety for which the intersection space does not exist for the middle perversity.

\begin{example}
\label{ex:algebraic}
Let $Fr(2,3)$ be the frame bundle over the Grassmannian $Gr(2,3)$, that is,
$Fr(2,3):=\{M \in \rm{Mat}(3 \times 2, \CC) | \rm{rk}(M)=2\}$ and the canonical bundle
$$\pi:Fr(2,3)\rightarrow Gr(2,3) \cong \mathbb P_\CC^2$$
is a $GL(2,\CC)$-pincipal bundle with the action
$$\xymatrix@R=1mm{\rm{GL} (2, \CC) \times Fr(2,3) \ar[r] & Fr(2,3) \\ (A, M) \ar[r] & A\cdot M}$$

Let $R_1^2:=\{M \in \rm{Mat}(2 \times 2, \CC) | \rm{rk}(M)\leq 1\}$ and let us consider the action
$$\xymatrix@R=1mm{\rm{GL} (2, \CC) \times R_1^2 \ar[r] & R_1^2 \\ (A, M) \ar[r] & A\cdot M}$$

Let $X:=Fr(2,3) \times_{\rm{GL} (2, \CC)} R_1^2$. Since $Sing(R_1^2)=\{0\}$, we have the equality
$$Sing(X)=Fr(2,3) \times_{\rm{GL} (2, \CC)} \{0\} \cong Gr(2,3) \cong \mathbb P_\CC^2$$
and the induced fiber bundle
$$\xymatrix@R=1mm{Fr(2,n) \times_{\rm{GL} (2, \CC)} R_1^2 \setminus \{0\} \ar[r] & \mathbb P_\CC^2 \\
(M_1, M_2) \ar[r] & \pi(M_1)}$$
is the fibration of links over the singularity. The fiber of this morphism is $R_1^2 \setminus \{0\}$.

Now, let us consider the action
$$\xymatrix@R=1mm{\rm{GL} (2, \CC) \times \CC^2 \ar[r] & \CC^2 \\
(A, (a,b)) \ar[r] & A\cdot  \left({\begin{array}{c} a \\ b \end{array}}\right)}$$
and let $Y:=Fr(2,3) \times_{\rm{GL} (2, \CC)} \CC^2$.

The morphism
$$\xymatrix@R=1mm{\CC^2  \ar[r]^f & R_1^2 \\
(a,b) \ar[r] & \left({\begin{array}{cc} a & a\\ b & b \end{array}}\right)}$$
is compatible with the actions. So, it induces a morphism $ g:Y \rightarrow X$.

Moreover, $g^{-1}(Sing(X))=Fr(2,3) \times_{\rm{GL} (2, \CC)} \{0\} \cong Gr(2,3) \cong \mathbb P_\CC^2$
and the fiber bundle
$$\xymatrix@R=1mm{Fr(2,3) \times_{\rm{GL} (2, \CC)} \CC^2 \setminus \{0\} \ar[r] & \mathbb P_\CC^2 \\
(M_1, M_2) \ar[r] & \pi(M_1)}$$
is the fibration of links. The fiber of this morphism is $\CC^2 \setminus \{0\}$.

In addition,
$$\xymatrix{Fr(2,3) \times_{\rm{GL} (2, \CC)} \CC^2 \setminus \{0\} \ar[rr]^g \ar[rd] & &
Fr(2,n) \times_{\rm{GL} (2, \CC)} R_1^2 \setminus \{0\}\ar[ld] \\ & \mathbb P_\CC^2 &}$$
is a morphism of fibrations which induces in the fiber the morphism
$f:\CC^2 \setminus \{0\} \rightarrow R_1^2 \setminus \{0\}$.

Let us denote $U_X:=Fr(2,n) \times_{\rm{GL} (2, \CC)} R_1^2 \setminus \{0\}$ and $U_Y:=Fr(2,n)
\times_{\rm{GL} (2, \CC)} \CC^2 \setminus \{0\}$. Moreover, let $j_X:\mathbb P_C^2\rightarrow X$,
$i_X:U_X \rightarrow X$, $j_Y:\mathbb P_C^2
\rightarrow Y$ and $i_Y:U_Y \rightarrow Y$ be the canonical inclusions.

The morphism between fibrations $g$ produces a morphism of complexes
$$j_{Y *} j_Y^* i_{Y *} \QQ_{U_Y}\xrightarrow{\gamma} j_{X *} j_X^* i_{X *} \QQ_{U_X}.$$

Moreover, $\CC^2 \setminus \{0\}$ is homotopically equivalent to $S^3$, $R_1^2 \setminus \{0\}$ is
homotopically equivalent to $S^3 \times S^2$ and $f:\CC^2 \setminus \{0\} \rightarrow R_1^2 \setminus
\{0\}$ induces an isomorphism between the $0$-th and the third cohomology groups. Then $\gamma$ induces
an isomorphism between the cohomology sheaves
$$\H^0(j_{Y *} j_Y^* i_{Y *} \QQ_{U_Y})\cong\H^0(j_{X *} j_X^* i_{X *} \QQ_{U_X})$$
and
$$\H^3(j_{Y *} j_Y^* i_{Y *} \QQ_{U_Y})\cong\H^3(j_{X *} j_X^* i_{X *} \QQ_{U_X}).$$

Let $E^{p,q}_r$ be the local to global spectral sequence of $j_{X *} j_X^* i_{X *} \QQ_{U_X}$,
let $E'^{p,q}_r$ be the local to global spectral sequence of hypercohomology of $j_{Y *} j_Y^* i_{Y *} \QQ_{U_Y} $, and
$\gamma^{p,q}_r: E'^{p,q}_r\rightarrow E^{p,q}_r$ the morphism induced by $\gamma$. Then,
$$\gamma^{p,q}_2:\HH^p(\mathbb P_C^2, \H^q(j_{Y *} j_Y^* i_{Y *} \QQ_{U_Y}))\rightarrow
\HH^p(\mathbb P_C^2, \H^q(j_{X *} j_X^* i_{X *} \QQ_{U_X}))$$
is an isomorphism if $q=0,3$.

$E'^{p,q}_2$ is
$$\begin{array}{ccc|c|c|c|c|c}
    & & 3 & \QQ & 0 & \QQ & 0 & \QQ \\ \cline{3-8}
    & & 2 & 0 & 0 & 0 & 0 & 0 \\ \cline{3-8}
    q \uparrow& & 1 & 0 & 0 & 0 & 0 & 0 \\ \cline{3-8}
    & & 0 & \QQ & 0 & \QQ & 0 & \QQ \\ \cline{3-8}
    & & & 0 & 1 & 2 & 3 & 4 \\
    \multicolumn{8}{c}{}\\
    & &\multicolumn{6}{c}{\overrightarrow{p}}
\end{array}$$

and it does not degenerate in $r=2$. So, $d'^{0,3}_4$ is different from $0$ since it is the unique
differential different from $0$ which can appear.

Moreover,$E^{p,q}_2$ is
$$\begin{array}{ccc|c|c|c|c|c}
    & & 6 & \QQ & 0 & \QQ & 0 & \QQ \\ \cline{3-8}
    & & 5 & 0 & 0 & 0 & 0 & 0 \\ \cline{3-8}
    & & 4 & 0 & 0 & 0 & 0 & 0 \\ \cline{3-8}
    q \uparrow & & 3 & \QQ & 0 & \QQ & 0 & \QQ \\ \cline{3-8}
    & & 2 & \QQ & 0 & \QQ & 0 & \QQ \\ \cline{3-8}
    & & 1 & 0 & 0 & 0 & 0 & 0 \\ \cline{3-8}
    & & 0 & \QQ & 0 & \QQ & 0 & \QQ \\ \cline{3-8}
    & & & 0 & 1 & 2 & 3 & 4 \\
    \multicolumn{8}{c}{}\\
    & &\multicolumn{6}{c}{\overrightarrow{p}}
\end{array}$$

We prove now $d^{0,3}_r:E^{0,3}_r \rightarrow E^{r,3-r+1}_r$ is different from $0$ for some $r\geq 0$.
If $d^{0,3}_2=0$, we have the following isomorphisms
$$E^{0,3}_4 \cong E^{0,3}_3 \cong E^{0,3}_2 \cong E'^{0,3}_2 \cong E'^{0,3}_4.$$
Moreover,
$$ E^{4,0}_4 \cong E^{4,0}_2 \cong E'^{4,0}_2 \cong E'^{4,0}_4$$
and the diagram
$$\xymatrix{E^{0,3}_4 \ar[d]^{d^{0,3}_4} \ar[r]^{\gamma^{0,3}_4} & E'^{0,3}_4 \ar[d]^{d'^{0,3}_4}\\
 E^{4,0}_4 \ar[r]^{\gamma^{4,0}_4} & E'^{4,0}_4 }$$
is cartesian.

Then, since $d'^{0,3}_4$ is not $0$, $d^{0,3}_4$ is also different from $0$.

If $\bar p$ is a perversity such that $\bar p(6)=2$ and $\bar q$ is the complementary perversity, then we have $\bar q(6)=2$. This happens for the
middle perversity.
Consequently, applying Corollary \ref{cor:cex}, there does not exist an intersection space complex of $X=Fr(2,3)
\times_{\rm{GL} (2, \CC)} R_1^2 $ with perversity $\bar p$ and stratification $X\supset Sing(X) \cong \mathbb
P_C^2$.

Hence, applying Remark \ref{re:relationintspace}, there does not exist any intersection space pair of $X$ with
perversity $\bar p$.

\end{example}

\section{Duality}
\label{sec:dual}

In this section, we establish the duality properties of the intersection space complexes. First, we study the
Verdier dual of the intersection space complex. Next, we give a version of Poincare duality for these complexes.

Let $X$ be a topological pseudomanifold with the following stratification:

$$
X=X_d\supset X_{d-2} \supset ... \supset X_0 \supset X_{-1}=\emptyset
$$

Let $U_k:=X\setminus X_{d-k}$ and let $i_k: U_k \rightarrow U_{k+1}$ and $j_k: X_{d-k} \setminus X_{d-k-1}
\rightarrow U_{k+1}$ be the natural inclusions.

\subsection{Verdier duality}

\begin{theorem}\label{th:verdual}
Let $IS_{\bar p}$ be an intersection space complex of $X$ with perversity $\bar p$ and let $\bar q$ be the
complementary perversity of $\bar p$. Then, $\mathcal{D}IS_{\bar p} [-d]$, where $\mathcal{D}$ denotes the Verdier
 dual, is an intersection space complex of $X$ with perversity $\bar q$.
\end{theorem}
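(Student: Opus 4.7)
\textbf{Proof plan for Theorem \ref{th:verdual}.} The plan is to verify that $G:=\mathcal{D}IS_{\bar p}[-d]$ satisfies the axioms $[AXS1]_k$ with perversity $\bar q$ for every $k=2,\dots,d$, following the same axiomatic template used by Goresky--MacPherson to establish Verdier self-duality of intersection cohomology complexes with complementary perversities. The verification uses only the standard interchange formulas for Verdier duality together with the complementarity relation $\bar p(k)+\bar q(k)=k-2$ and the equivalent forms of $(d_k)$ recorded in Remark \ref{rem:eqaxiomas}.

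\emph{Axiom (a).} On the open dense manifold $U_2$, $IS_{\bar p}|_{U_2}\cong\QQ_{U_2}$, and since $U_2$ is an oriented $d$-manifold the dualizing complex is $\omega_{U_2}\cong\QQ_{U_2}[d]$. Hence $G|_{U_2}\cong\mathcal{D}(\QQ_{U_2})[-d]\cong\QQ_{U_2}$.

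\emph{Axioms $(c_k)$ and $(d_k)$.} Using the Verdier interchange formulas for the closed inclusion $j_k$ and the open inclusion $i_k$,
$$j_k^*\,\mathcal{D}\cong\mathcal{D}\,j_k^!,\qquad j_k^!\,\mathcal{D}\cong\mathcal{D}\,j_k^*,\qquad i_k^*\,\mathcal{D}\cong\mathcal{D}\,i_k^*,\qquad i_{k*}\,\mathcal{D}\cong\mathcal{D}\,i_{k!},$$
and the fact that on the oriented $(d-k)$-manifold $X_{d-k}\setminus X_{d-k-1}$ one has $\mathcal{H}^j(\mathcal{D}F)\cong(\mathcal{H}^{k-d-j}(F))^\vee$ for cohomologically constructible $F$ with $\QQ$-coefficients, one obtains
$$\mathcal{H}^i(j_k^*G|_{U_{k+1}})\cong(\mathcal{H}^{k-i}(j_k^!IS_{\bar p}|_{U_{k+1}}))^\vee,\qquad \mathcal{H}^i(j_k^!G|_{U_{k+1}})\cong(\mathcal{H}^{k-i}(j_k^*IS_{\bar p}|_{U_{k+1}}))^\vee.$$
Setting $j=k-i$ and using $\bar q(k)=k-2-\bar p(k)$: condition $(c_k)$ for $G$ (vanishing for $i\le\bar p(k)$) translates to vanishing of $\mathcal{H}^j(j_k^!IS_{\bar p}|_{U_{k+1}})$ for $j\ge k-\bar p(k)=\bar q(k)+2$, which is contained in $(d1_k)$ for $IS_{\bar p}$. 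The equivalent condition $(d1_k)$ for $G$ (vanishing of $\mathcal{H}^i(j_k^!G)$ for $i>\bar p(k)+1$) translates to vanishing of $\mathcal{H}^j(j_k^*IS_{\bar p}|_{U_{k+1}})$ for $j\le\bar q(k)$, which is $(c_k)$ for $IS_{\bar p}$. For $(d2_k)$ for $G$, apply the contravariant functor $\mathcal{D}$ to the defining triangle $j_k^!IS_{\bar p}\to j_k^*IS_{\bar p}\to j_k^*i_{k*}i_k^*IS_{\bar p}\xrightarrow{[1]}$ and identify the resulting arrows with those in the analogous triangle $j_k^!G\to j_k^*G\to j_k^*i_{k*}i_k^*G\xrightarrow{[1]}$ via the interchange formulas above; the connecting morphism for $G$ at degree $\bar p(k)+1$ is then the transpose of the connecting morphism for $IS_{\bar p}$ at degree $k-\bar p(k)-1=\bar q(k)+1$, which is zero by $(d2_k)$ for $IS_{\bar p}$, and the transpose of the zero morphism is zero.

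\emph{Axiom (b).} The cohomological boundedness in $\{0,\dots,d\}$ for $G$ is verified stratum by stratum: on $U_2$ it is already clear from step~(a), and on each deeper stratum the formulas above together with the assumed range $\{0,\dots,d\}$ for $IS_{\bar p}$ bound the cohomology sheaves of $j_k^*G|_{U_{k+1}}$ in the required range.

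The main obstacle is the careful handling of $(d2_k)$: one must check that the connecting morphism for $G$ corresponds, under the interchange formulas and appropriate sign/shift conventions, exactly to the Verdier transpose of the connecting morphism for $IS_{\bar p}$. This amounts to a bookkeeping exercise with the contravariant functoriality of $\mathcal{D}$ applied to distinguished triangles and the two recollement triangles; once it is done, vanishing of one connecting morphism yields vanishing of the other, and the duality of axioms $[AXS1]_k$ for $\bar p$ and $\bar q$ is complete.
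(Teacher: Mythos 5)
Your proof is correct and follows essentially the same axiomatic route as the paper: verify that $\mathcal{D}IS_{\bar p}[-d]$ satisfies the equivalent forms of $[AXS1]_k$ from Remark \ref{rem:eqaxiomas}, using that Verdier duality exchanges $j_k^*$ and $j_k^!$ together with the complementarity $\bar p(k)+\bar q(k)=k-2$. The only cosmetic difference is that you carry out the degree bookkeeping at the stratum level (via $j_k$ and the shift on the $(d-k)$-manifold $X_{d-k}\setminus X_{d-k-1}$), whereas the paper passes to the point level via $j_x$ and \cite[1.13(15)]{GorMP}; these are equivalent, and your handling of the $(d2_k)$ compatibility as the transpose of the connecting map under $\mathcal{D}$ matches what the paper asserts.
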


\begin{proof}
We have to prove $\mathcal{D}IS_{\bar p} [-d]$ verifies $[AXS1]_k$ for perversity $\bar q $ for $k=2,...,d$.

  \begin{itemize}
    \item[($a$)] We have a chain of isomorphisms
    $$(\mathcal{D}IS_{\bar p} [-d])_{|U_2} \cong \mathcal{D} \QQ_{U_2} [-d] \cong \QQ_{U_2}.$$

    \smallskip

    \item[($b$)] Let $x\in X$, the group $\mathcal{H}^i(j_x^*\mathcal{D}IS_{\bar p}[-d])$ is isomorphic to $\mathcal{H}^{d-i}
    (j_x^! IS_{\bar p})^v$, which is $0$ if $i \notin \{0,1,...,d\}$.

    \smallskip

    \item[($c_k$)] Let $x\in X_{d-k} \setminus X_{d-k-1}$, the group $\mathcal{H}^i(j_x^* \mathcal{D}IS_{\bar p} [-d])$ is
    isomorphic to $\mathcal{H}^{d-i}(j_x^! IS_{\bar p})^v$, which (by property ($d1'_k$)) is $0$ if $d-i > d-
    \bar p(k)-1$, that is, if $i\leq \bar p(k)$.

    \smallskip

    \item[($d1'_k$)] Let $x\in X_{d-k} \setminus X_{d-k-1}$, the group $\mathcal{H}^i(j_x^! \mathcal{D}IS_{\bar p} [-d])$ is
    isomorphic to $\mathcal{H}^{d-i}(j_x^* IS_{\bar p})^v$, which (by property ($c_k$)) is $0$ if $d-i \leq
    \bar q(k)$, that is, if $i > d-\bar q(k)-1$.

    \smallskip

    \item [($d2'_k$)] Let $x\in X_{d-k} \setminus X_{d-k-1}$, the group $\mathcal{H}^{d- \bar q(k)-1}(j_x^!\mathcal{D}
    IS_{\bar p} [-d])$ is isomorphic to $\mathcal{H}^{\bar q(k)+1}(j_x^* IS_{\bar p})^v$, the group $\mathcal{H}^{\bar p(k)+1}
    (j_x^*\mathcal{D} IS_{\bar p} [-d])$ is isomorphic to $\mathcal{H}^{d-\bar p(k)-1}(j_x^! IS_{\bar p})^v$ and the
    canonical morphism $$\mathcal{H}^{d- \bar q(k)-1}(j_x^!\mathcal{D}IS_{\bar p} [-d]) \rightarrow
    \mathcal{H}^{\bar p(k)+1} (j_x^*\mathcal{D}IS_{\bar p} [-d])$$ is the dual morphism of $\mathcal{H}^{d-
    \bar p(k)-1}(j_x^! IS_{\bar p}) \rightarrow \mathcal{H}^{\bar q(k)+1}(j_x^* IS_{\bar p})$, which is the morphism
    $0$ (by property ($d2'_k$)).
  \end{itemize}

\end{proof}

In Corollary~\ref{cor:obstructions} the space of obstructions for existence and uniqueness of intersection spaces are described. Verdier duality
$\mathcal{D}$ interchanges intersection space complexes with complementary perversities. We deduce

\begin{corollary}
 \label{cor:dualityexchange}
 Let $X$ be a topological pseudomanifold as above. Let $\bar p$ and $\bar q$ complementary perversities. An intersection space complex
 for perversity $\bar p$ exists if and only if an intersection space complex for perversity $\bar q$ exists. Verdier duality induces a bijection between
 the set of intersection space
 complexes for perversity $\bar p$ and the set of intersection space complexes for perversity $\bar q$.
\end{corollary}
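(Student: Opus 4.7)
The plan is to derive the corollary as an essentially formal consequence of Theorem~\ref{th:verdual} together with the involutivity of Verdier duality. First I would define the assignment
\[
F_{\bar p}\colon IS_{\bar p}\longmapsto \mathcal{D}IS_{\bar p}[-d]
\]
on the set of isomorphism classes of intersection space complexes of $X$ with perversity $\bar p$. Theorem~\ref{th:verdual} tells us that $F_{\bar p}$ lands in the set of intersection space complexes of perversity $\bar q$, and symmetrically $F_{\bar q}$ lands back in the set of intersection space complexes of perversity $\bar p$. This already yields the equivalence of existence: an intersection space complex for $\bar p$ produces one for $\bar q$ and vice versa.

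Next I would check that $F_{\bar q}\circ F_{\bar p}$ is the identity. Using that $\mathcal{D}$ is an involution on $D^b_{cc}(X)$ and the standard relation $\mathcal{D}(A[n])=\mathcal{D}(A)[-n]$, we compute
\[
F_{\bar q}(F_{\bar p}(IS_{\bar p}))=\mathcal{D}\bigl(\mathcal{D}IS_{\bar p}[-d]\bigr)[-d]=\mathcal{D}(\mathcal{D}IS_{\bar p})[d][-d]\cong IS_{\bar p},
\]
and analogously $F_{\bar p}\circ F_{\bar q}=\mathrm{Id}$. Hence $F_{\bar p}$ is a bijection with inverse $F_{\bar q}$, which is precisely the statement of the corollary.

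There is essentially no obstacle: all the substantive work is already contained in Theorem~\ref{th:verdual}, and the rest is the naturality and involutivity of Verdier duality in $D^b_{cc}(X)$. The only point worth being careful about is that isomorphism classes are preserved by $\mathcal{D}$ and by shifts, so that the maps $F_{\bar p}$ and $F_{\bar q}$ are well-defined on isomorphism classes; this is immediate since $\mathcal{D}$ is an equivalence of triangulated categories. No further computation is required.
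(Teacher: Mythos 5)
Your proof is correct and follows the same approach the paper takes: Theorem~\ref{th:verdual} shows $IS_{\bar p}\mapsto\mathcal{D}IS_{\bar p}[-d]$ lands in the set of intersection space complexes of perversity $\bar q$, and biduality of $\mathcal{D}$ on $D^b_{cc}(X)$ makes the two assignments mutually inverse. The paper leaves these last formal steps implicit; you have spelled them out correctly, including the sign on the shift.
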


\subsection{Poincare duality in the case of 2 strata}

Now, suppose $X$ has a unique non-trivial stratum. So, the stratification of $X$ is
$$X\supset X_{d-k} \supset \emptyset$$
where $k$ is the codimension of $X_{d-k}$.

According with Corollary~\ref{cor:obstructions}, the obstruction for existence of intersection space for perversity $\bar p$ lives in
$Ext^1(\tau_{> \bar q(k)} j_{k*} j_k^* i_{k*} \QQ|_{U_k}, \tau_{\leq \bar q(k)} j_{k*} j_k^* i_{k*} \QQ|_{U_k})$. Assume that the obstruction vanishes
so that intersection space complexes exist. In this case the space of intersection space complexes for perversity $\bar p$
is parametrized by the vector space
$$E_{\bar p}:=Hom(\tau_{> \bar q(k)} j_{k*} j_k^* i_{k*} \QQ|_{U_k}, \tau_{\leq \bar q(k)} j_{k*} j_k^* i_{k*} \QQ|_{U_k}).$$

Corollary~\ref{cor:dualityexchange} implies that the obstruction for existence of intersection space for perversity $\bar q$ vanishes and that the space
$$E_{\bar q}:=Hom(\tau_{> \bar p(k)} j_{k*} j_k^* i_{k*} \QQ|_{U_k}, \tau_{\leq \bar p(k)} j_{k*} j_k^* i_{k*} \QQ|_{U_k})$$
of intersection space complexes for perversity $\bar q$ is isomorphic to $E_{\bar p}$.

\begin{proposition}
\label{prop:genericbetti}
Let $E_{\bar p}$ be the space of all intersection space complexes of $X$ with perversity $\bar p$ up to isomorphisms.

The dimensions of the vector spaces $\HH^i(X, IS_{\bar p})$ with $IS_{\bar p} \in
E_{\bar p}$ have a minimum and the subset
$$\{IS_{\bar p} \in E_{\bar p} |\dim (\HH^i(X, IS_{\bar p})) \, \text{is minimum} \}\subset E_{\bar p}$$
is open for every $i$.
\end{proposition}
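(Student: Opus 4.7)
The plan is to exploit the fact that in the two--strata setting, by the discussion immediately preceding Remark~\ref{rem:modretracts}, the equivalence relation $\sim$ of Theorem~\ref{th:existunic} is trivial, so $E_{\bar p}$ is in bijection with the set of retracts of the canonical morphism $f:\tau_{\leq\bar q(k)} j_{k*}j_k^* i_{k*}\QQ|_{U_k}\to j_{k*}j_k^* i_{k*}\QQ|_{U_k}$. By Remark~\ref{rem:modretracts} this set is a non-empty (by hypothesis of existence) affine $\QQ$-space modelled on the vector space $V=Hom(\tau_{>\bar q(k)} j_{k*}j_k^* i_{k*}\QQ|_{U_k},\tau_{\leq\bar q(k)} j_{k*}j_k^* i_{k*}\QQ|_{U_k})$. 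I would put on $E_{\bar p}$ the Zariski topology induced by this affine structure; this is the topology with respect to which I will verify openness.

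Next, I would use the construction of $IS_{\bar p}^\lambda$ as $cone(\varphi_\lambda)[-1]$, where $\varphi_\lambda=\lambda\circ a$ and $a:i_{k*}\QQ|_{U_k}\to j_{k*}j_k^* i_{k*}\QQ|_{U_k}$ is the adjunction morphism. Taking hypercohomology of the distinguished triangle
$$IS_{\bar p}^{\lambda}\to i_{k*}\QQ|_{U_k}\xrightarrow{\varphi_\lambda}\tau_{\leq\bar q(k)} j_{k*}j_k^* i_{k*}\QQ|_{U_k}\xrightarrow{[1]}$$
yields a long exact sequence from which one reads
$$\dim\HH^i(X,IS_{\bar p}^\lambda)=a_i+b_{i-1}-\operatorname{rk}(\varphi_\lambda^{i})-\operatorname{rk}(\varphi_\lambda^{i-1}),$$
where $a_i:=\dim\HH^i(X,i_{k*}\QQ|_{U_k})$ and $b_{i-1}:=\dim\HH^{i-1}(X,\tau_{\leq\bar q(k)} j_{k*}j_k^* i_{k*}\QQ|_{U_k})$ are independent of $\lambda$, and $\varphi_\lambda^i$ is the induced map on hypercohomology.

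The crucial observation is that $\lambda\mapsto\varphi_\lambda=\lambda\circ a$ is affine in $\lambda$ as a morphism in $D^b_{cc}(X)$, hence the induced family $\lambda\mapsto\varphi_\lambda^i$ of linear maps on hypercohomology is affine as well. The rank of an affine family of linear maps between fixed finite-dimensional vector spaces is lower semi-continuous (the locus where the rank is $\geq r$ is the Zariski-open complement of the common vanishing of all $r\times r$ minors). Therefore each $\lambda\mapsto\operatorname{rk}(\varphi_\lambda^{i})$ attains its maximum on a non-empty Zariski-open subset $W_i\subset E_{\bar p}$, and consequently $\dim\HH^i(X,IS_{\bar p}^\lambda)$ attains its minimum precisely on the non-empty Zariski-open set $W_i\cap W_{i-1}$. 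Since only finitely many indices $i$ contribute non-trivially (both constructible complexes are bounded), the simultaneous minimum locus is a finite intersection of non-empty Zariski-opens in an affine space, hence itself non-empty and open.

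The only point requiring care is verifying that the dependence $\lambda\mapsto\varphi_\lambda^i$ really is affine as a map of $\QQ$-vector spaces, which follows from the linearity of composition in the derived category and of the hypercohomology functor; the other ingredients (the identification $E_{\bar p}\cong\{\text{retracts of }f\}$, finiteness of all dimensions involved, and non-emptiness of $E_{\bar p}$) are immediate from Theorem~\ref{th:existunic}, Remark~\ref{rem:modretracts} and the standing assumption of existence. I do not expect any serious obstacle beyond bookkeeping.
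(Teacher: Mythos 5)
Your proof is correct and takes essentially the same route as the paper: both start from the long exact hypercohomology sequence of the triangle defining $IS_{\bar p}^\lambda$ and reduce openness of the minimum-Betti locus to lower semicontinuity of the rank of $\varphi_\lambda^i$, which varies (affine-)linearly in the retract $\lambda$. Your version is slightly more streamlined (you pass directly to the rank formula rather than splitting off $\ker(a^i)$ and tracking $\im(a^i)\cap\ker(\lambda^i)$ as the paper does, and you make the Zariski topology on the affine parameter space explicit), but the key ingredients are identical.
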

\begin{proof}
For every intersection space complex $IS_{\bar p} \in E_{\bar p}$, we have a triangle

$$IS_{\bar p} \rightarrow i_{k *} \QQ_{U_{2}} \rightarrow \tau_{\leq \bar q(k)} j_{k *} j_k^* i_{k *} \QQ_{U_{2}}$$

This triangle induce the long exact sequence of hypercohomology

$$... \rightarrow \HH^i(X, IS_{\bar p})\rightarrow \HH^i(X, i_{k *}\QQ_{U_{2}}) \xrightarrow{\alpha^i(IS_{\bar p})}\HH^i
(X, \tau_{\leq \bar q(k)}j_{k *} j_k^* \QQ_{U_{2}}) \rightarrow ...$$

So, for every $i\in \ZZ$, there is an isomorphism
$$\HH^i(X, IS_{\bar p}) \cong \ker (\alpha^i(IS_{\bar p})) \oplus \coker (\alpha^{i-1}(IS_{\bar p})).$$

Moreover,
$$\dim (\coker (\alpha^{i}(IS_{\bar p})))= \dim (\HH^i(X, \tau_{\leq \bar q(k)}j_{k *} j_k^* \QQ_{U_{2}}))- \dim (\HH^i(X,
i_{k *}\QQ_{U_{2}})) +\dim(\ker(\alpha^{i}(IS_{\bar p})))$$

So, $\dim (\HH^i(X, IS_{\bar p}))$ is minimum if and only if $\dim(\ker(\alpha^{i}(IS_{\bar p})))$ and $\dim(\ker(\alpha^{i-1}(IS_{\bar p})))$
are minimum.

The morphism $\alpha^i(IS_{\bar p})$ is the morphisms induced in hypercohomology by the composition

$$i_{k *} \QQ_{U_2} \xrightarrow{a} j_{k *} j_k^* i_{k *} \QQ_{U_2} \xrightarrow{\lambda(IS_{\bar p})} \tau_{\leq \bar q(k)}
j_{k *} j_k^* i_{k *} \QQ_{U_2}$$

where $a$ is the canonical morphisms and $\lambda(IS_{\bar p})$ is a retraction of the natural truncation morphism
$f:\tau_{\leq \bar q(k)} j_{k *} j_k^* i_{k *} \QQ_{U_2} \rightarrow j_{k *} j_k^* i_{k *} \QQ_{U_2}$.

Let us denote by
$$a^i:\HH^i(X, i_{k *} \QQ_{U_2}) \rightarrow \HH^i(X, j_{k *} j_k^* i_{k *} \QQ_{U_2}),$$
$$\lambda^i (IS_{\bar p}):\HH^i(X, j_{k *} j_k^* i_{k *} \QQ_{U_2}) \rightarrow \HH^i(X, \tau_{\leq \bar q(k)}j_{k *} j_k^* i_{k *}\QQ_{U_2}),$$
$$f^i:\HH^i(X, \tau_{\leq \bar q(k)}j_{k *} j_k^* i_{k *}\QQ_{U_2}) \rightarrow \HH^i(X,j_{k *}j_k^*i_{k *}\QQ_{U_2})$$
the morphisms induced in hypercohomology. Then, we have
$$\dim(\ker (\alpha^i(IS_{\bar p})))= \dim ((a^i)^{-1}(\ker(\lambda^i(IS_{\bar p})))) = \dim(\ker(a^i)) + \dim( \im(a^i)
\cap \ker(\lambda^i(IS_{\bar p})))$$

Hence, $\dim(\ker(\alpha^{i}(IS_{\bar p})))$ is minimum if and only if $\dim(\im(a^i) \cap \ker(\lambda^i(IS_{\bar p})))$
is minimum.

Since $\lambda^i(IS_{\bar p}) \circ f^i$ is the identity, the homomorphism $\lambda^i(IS_{\bar p})$ is surjective and we have the equality
$$\dim (\ker(\lambda^i(IS_{\bar p})))= \dim (\HH^i(X, j_{k *} j_k^* i_{k *}\QQ_{U_2}))- \dim(\HH^i(X, \tau_{\leq \bar q(k)}
j_{k *} j_k^* i_{k *}\QQ_{U_2})).$$
So, $\dim (\ker(\lambda^i(IS_{\bar p})))$ is independent of $IS_{\bar p}$. Then, for every $IS_{\bar p}\in E_{\bar p}$, there is an isomorphism
$$\HH^i(X, j_{k *} j_k^* i_{k *}\QQ_{U_2}))/\ker(\lambda^i(IS_{\bar p}))\cong  \QQ^{d^i}$$
where $d^i:=\dim(\HH^i(X, \tau_{\leq \bar q(k)}j_{k *} j_k^* i_{k *}\QQ_{U_2})).$

Now, consider the composition of morphisms
$$\xymatrix@C=1.2cm{\HH^i(X, i_{k *} \QQ_{U_2}) \ar[r]^{a^i\hspace{4mm}} \ar@/^{8mm}/[rr]^{\phi(IS_{\bar p})} & \HH^i(X,
j_{k *} j_k^* i_{k *}\QQ_{U_2})\ar[r]^{\pi(IS_{\bar p})\hspace{1.9cm}} & \HH^i(X, j_{k *} j_k^* i_{k *}\QQ_{U_2}))/\ker
(\lambda^i(IS_{\bar p}))\cong  \QQ^{d^i}}$$
where $\pi(IS_{\bar p})$ is the canonical projection.

Then, $\im(a^i)\cap \ker(\lambda^i(IS_{\bar p}))$ gets the minimum dimension when the morphism $\phi(IS_{\bar p})$ gets the
maximum rank, which happens in an open subset.
\end{proof}

\begin{definition}
The \emph{general} $i$-\emph{th Betti number} of the intersection space complexes of $X$ with perversity $\bar p$ is the
minimum of the dimensions of the vector spaces $\HH^i(X, IS_{\bar p})$ with $IS_{\bar p} \in E_{\bar p}$.
\end{definition}

\begin{definition}
A \emph{general intersection space complex} of $X$ with perversity $\bar p$ is an intersection space complex $IS_{\bar p}
\in E_{\bar p}$ such that $ \dim (\HH^i(X, IS_{\bar p}))$ is the general $i$-th Betti number for $i=0,1,...,d$.
\end{definition}

\begin{theorem}
\label{th:genericduality}
Let $\bar p$ be a perversity and let $\bar q$ be its complementary perversity. If $IS_{\bar p}$ is a general intersection space complex of $X$ with perversity $\bar p$ and $IS_{\bar q}$ is a general intersection space complex of $X$ with perversity
$\bar q$, then, for $i=0,1,...,d$, there is an isomorphism of $\QQ$-vector spaces
$$\HH^i (X, IS_{\bar p}) \cong \HH^{d-i} (X, IS_{\bar q})^v$$
\end{theorem}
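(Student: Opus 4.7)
\textbf{Proof plan for Theorem \ref{th:genericduality}.}

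The strategy is to combine three ingredients: Poincar\'e--Verdier duality on $X$, the fact that Verdier duality (shifted by $[-d]$) sends intersection space complexes for $\bar p$ to those for $\bar q$ (Theorem~\ref{th:verdual}), and the fact that it does so bijectively on equivalence classes (Corollary~\ref{cor:dualityexchange}). Assuming $X$ is compact (so that global sections and global sections with compact support agree), Poincar\'e--Verdier duality gives, for any $F\in D^b_{cc}(X)$, a natural isomorphism
$$\HH^i(X,F)\;\cong\;\HH^{-i}(X,\mathcal DF)^v\;\cong\;\HH^{d-i}(X,\mathcal DF[-d])^v.$$
Applied to $F=IS_{\bar p}\in E_{\bar p}$, and using Theorem~\ref{th:verdual} to recognize $\mathcal DIS_{\bar p}[-d]$ as an element of $E_{\bar q}$, this yields the dimension identity
$$\dim_\QQ \HH^i(X,IS_{\bar p})\;=\;\dim_\QQ \HH^{d-i}(X,\mathcal DIS_{\bar p}[-d])$$
for \emph{every} $IS_{\bar p}\in E_{\bar p}$ (not only for general ones).

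The key step is to transfer this pointwise identity to the minima defining the generic Betti numbers. Denote by $b_i(\bar p):=\min_{IS\in E_{\bar p}}\dim_\QQ\HH^i(X,IS)$ the generic $i$-th Betti number, and similarly $b_{d-i}(\bar q)$. By Corollary~\ref{cor:dualityexchange} the assignment $IS\mapsto \mathcal DIS[-d]$ is a bijection $E_{\bar p}\to E_{\bar q}$. Combined with the dimension identity above, the multiset
$\{\dim\HH^i(X,IS):IS\in E_{\bar p}\}$ coincides with $\{\dim\HH^{d-i}(X,IS'):IS'\in E_{\bar q}\}$. Taking the minimum of both sides gives $b_i(\bar p)=b_{d-i}(\bar q)$. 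Since $IS_{\bar p}$ and $IS_{\bar q}$ are general, they realize these minima in every degree simultaneously (Proposition~\ref{prop:genericbetti} guarantees this by intersecting the finitely many nonempty Zariski-open loci). Hence $\dim_\QQ\HH^i(X,IS_{\bar p})=\dim_\QQ\HH^{d-i}(X,IS_{\bar q})$ for all $i$, and since two finite-dimensional $\QQ$-vector spaces of equal dimension are abstractly isomorphic (and dualization preserves dimension), the desired isomorphism $\HH^i(X,IS_{\bar p})\cong \HH^{d-i}(X,IS_{\bar q})^v$ follows.

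The step I expect to be the main obstacle is a conceptual one rather than a computational one: Verdier duality is in general not expected to map a \emph{general} $IS_{\bar p}$ to a \emph{general} $IS_{\bar q}$, because the group acting in the equivalence relation defining $E_{\bar p}$ and $E_{\bar q}$ need not be respected by $\mathcal D[-d]$ in a way that preserves the generic loci. The plan bypasses this by only using that $\mathcal D[-d]$ gives a \emph{bijection of sets} of equivalence classes and preserves the dimension function degree by degree; matching the two minima does not require matching the generic loci themselves. A secondary point to check is the compactness hypothesis (or else a reformulation with $\HH^i_c$ on one side): in the non-compact case the statement should be understood with cohomology with compact supports replacing ordinary cohomology on the dual side, and the argument goes through verbatim.
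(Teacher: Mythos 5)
Your proof is correct and uses the same ingredients as the paper, but it organizes them a bit differently. The paper's proof first shows that $\mathcal D[-d]$ sends a general $IS_{\bar p}$ to a general intersection space complex for $\bar q$, using a total-sum argument: if $\mathcal DIS_{\bar p}[-d]$ were not general, one could find $IS'_{\bar q}$ with strictly smaller total Betti sum, and then $\mathcal DIS'_{\bar q}[-d]$ would have strictly smaller total Betti sum than $IS_{\bar p}$, contradicting generality of $IS_{\bar p}$. Having established that $\mathcal DIS_{\bar p}[-d]$ is general, the paper then invokes the fact that all general complexes for a fixed perversity share Betti numbers. Your proof instead compares the minima $b_i(\bar p)$ and $b_{d-i}(\bar q)$ directly via the multiset equality induced by the bijection $\mathcal D[-d]\colon E_{\bar p}\to E_{\bar q}$, and only then uses that a general complex realizes all minima simultaneously (Proposition~\ref{prop:genericbetti}). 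Both routes are valid; yours is marginally cleaner in that it never has to establish that duality preserves the generic locus. Worth noting: your concluding worry — that Verdier duality might not send a general $IS_{\bar p}$ to a general $IS_{\bar q}$ — is in fact unfounded; the paper's total-sum argument shows it does, without needing any compatibility with the group action on retracts. Your compactness remark is a legitimate point: the identity $\HH^i(X,F)\cong\HH^{-i}(X,\mathcal DF)^v$ requires either compactness of $X$ or passage to compactly supported cohomology on one side, an issue the paper's proof does not make explicit.
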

\begin{proof}
Given any intersection space complex $IS_{\bar p}$ of $X$ with perversity $\bar p$, we have
$$\HH^i (X, \mathcal{D}IS_{\bar p} [-d])^v \cong \HH^{i-d} (X, \mathcal{D}IS_{\bar p})^v \cong \HH^{d-i}(X, IS_{\bar p}).$$
Applying Theorem~\ref{th:verdual}, the complex $\mathcal{D}IS_{\bar p} [-d]$ is an intersection space complex of $X$ with perversity $\bar q$.
We denote $IS_{\bar q}:=\mathcal{D}IS_{\bar p} [-d]$.

Suppose $IS_{\bar q}$ is not a general intersection space complex of $X$. Then, there exist another intersection space complex of
$X$ with perversity $\bar q$, $IS'_{\bar q}$, such that we have the strict inequality
$$\sum_{i=0}^d \dim (\HH^i (X, IS'_{\bar q})) < \sum_{i=0}^d \dim (\HH^i (X, IS_{\bar q})).$$

Consequently we have,
$$\sum_{i=0}^d \dim (\HH^i (X, \mathcal{D}IS'_{\bar q} [-d]))=\sum_{i=0}^d \dim (\HH^i (X, IS'_{\bar q})^v) <
\sum_{i=0}^d \dim (\HH^i (X, IS_{\bar q})^v)=$$ $$=\sum_{i=0}^d \dim (\HH^i (X, \mathcal{D}IS_{\bar q}[-d]))=
\sum_{i=0}^d \dim (\HH^{i}(X, IS_{\bar p})).$$

So, $IS_{\bar p}$ is not a general intersection space complex of $X$.

We deduce that if $IS_{\bar p}$ is a general intersection space complex, then $IS_{\bar q}$ is also a general intersection space complex. So, there are isomorphisms
$$\HH^i (X, IS_{\bar p}) \cong \HH^{d-i} (X, IS_{\bar q})^v$$
for some general intersection space complexes $IS_{\bar p}$ and $IS_{\bar q}$.

Since the hypercohomology groups of general intersection space complexes with the same perversity are isomorphic, we conclude.
\end{proof}

\end{document}